\theoremstyle{definition}
\newtheorem{rk}{Remark}[section]
\newtheorem{rema}[rk]{Remark}
\newtheorem{defi}[rk]{Definition}
\theoremstyle{plain}
\newtheorem{thm}{Theorem}[section]
\newtheorem{prop}[thm]{Proposition}
\newtheorem{cor}[thm]{Corollary}
\newtheorem*{claim}{Claim}
\newtheorem{coro}[thm]{Corollary}
\newtheorem{lemma}[thm]{Lemma}
\newtheorem{conj}{Conjecture}
\newtheorem*{mainthm}{Main Theorem}
\newtheorem{problem}[rk]{Problem}
\newcommand{\NN}{\mathbb N}
\newcommand{\diff}{{\operatorname{Diff}}}
\newcommand{\orb}{\operatorname{Orb}}
\newcommand{\D}{{\mathcal{D}}}
\newcommand{\C}{\mathcal{C}}
\newcommand{\cR}{{\mathcal{R}}}
\newcommand{\cG}{{\mathcal{G}}}
\newcommand{\cO}{{\mathcal{O}}}
\newcommand{\cB}{{\mathcal{B}}}
\newcommand{\QQ}{{\mathbb{Q}}}
\newcommand{\cU}{\mathcal{U}}
\newcommand{\F}{\mathcal{F}}
\newcommand{\mO}{\mathcal{O}}
\newcommand{\HT}{\overline{\rm HT}}
\newcommand{\La}{\Lambda}
\newcommand{\be}{\beta}
\newcommand{\ep}{\epsilon}
\newcommand{\de}{\delta}
\newcommand{\al}{\alpha}
 \def\NN{{\mathbb N}} 
\def\QQ{{\mathbb Q}} \def\RR{{\mathbb R}} 
\def\TT{{\mathbb T}}
 \def\ZZ{{\mathbb Z}}
\begin{document}

\title[Partial hyperbolicity and homoclinic tangencies] {Partial hyperbolicity and homoclinic tangencies}
\author{S. Crovisier, M. Sambarino, D. Yang}

\subjclass[2000]{}
\date{}
\keywords{}

\begin{abstract}
We show that any diffeomorphism of a compact manifold can be $C^1$ approximated by diffeomorphisms exhibiting a homoclinic tangency or by diffeomorphisms having a partial hyperbolic structure.
\end{abstract}

\maketitle

\section{Introduction}
\subsection{Characterization of partial hyperbolicity}
We are interested in describing the dynamics of a large class of diffeomorphisms
of a compact manifold $M$. This goal was achieved in a satisfactory way
about 40 years ago for the \emph{hyperbolic diffeomorphisms}.
These are the systems $f$ admitting
a filtration, i.e. a finite collection of open sets $\emptyset=U_0\subset U_1\subset \dots\subset U_s=M$ satisfying $f(\overline{U_i})\subset U_i$, such that the maximal invariant set $\Lambda_i$
in each level $U_i\setminus U_{i-1}$ is hyperbolic: there exists a splitting of the tangent bundle
into two invariant linear subbundles $T_{\Lambda_i}M=E^s\oplus E^u$
such that $E^s$ and $E^u$ are respectively strictly contracted and expanded by a forward iterate
of $f$.
These systems satisfy many good properties.
For instance, they possess a spectral decomposition: taking a finer filtration the sets $\Lambda_i$ are transitive. Moreover, they are stable under perturbations,
they can be coded,...

We are far from well understanding the dynamics beyond hyperbolicity
and one would like to extend the previous properties to weaker forms of hyperbolicity.
This paper deals with the following version of partial hyperbolicity.

\begin{defi}
An invariant set $\Lambda$ of a diffeomorphism $f$ is \emph{partially hyperbolic}
if its tangent bundle splits into invariant linear subbundles:
$$T_{\Lambda}M=E^s\oplus E^c_1\oplus\cdots\oplus E^c_k\oplus E^u,$$
such that each central bundle $E^c_i$ is one-dimensional
and such that considering a Riemannian metric on $M$
there exists a forward iterate $f^N$ which satisfies:
\begin{itemize}
\item[--] $\|Df^N.v\|\leq 1/2$ for each unitary $v\in E^s$ (one says $E^s$ is \emph{(uniformly) contracted}.),
\item[--] $\|Df^{-N}.v\|\leq 1/2$ for each unitary $v\in E^u$ (one syas $E^u$ is \emph{(uniformly) expanded}.),
\item[--] $\|Df^{N}_x.u\|\leq 1/2\|Df^N_x.v\|$ for each $x\in \Lambda$, each $i=0,\dots, k$
and each unitary vectors $u\in E^s\oplus\dots\oplus E_i^c$, $v\in E^c_{i+1}\oplus \dots\oplus E^u$ in $T_xM$.
\end{itemize}

A diffeomorphism $f$ is \emph{partially hyperbolic} if there exists a filtration
$\emptyset=U_0\subset U_1\subset \dots\subset U_s=M$
such that the maximal invariant set $\Lambda_i$ in each level $U_i\setminus U_{i-1}$
is partially hyperbolic.
\end{defi}
The type of the decomposition can be different on each piece $\Lambda_i$.
We allow $k=0$: in this case the piece is hyperbolic. We also allow
$E^s$ and/or $E^u$ to be trivial.
Different notions of partial hyperbolicity appear in the literature;
the decomposition of the central part into one-dimensional central bundles here
provides a good control of the tangent dynamics.
In particular, this gives a symbolic description and
the existence of equilibrium states for these systems, see the discussions in Section~\ref{ss.consequences}.
\medskip

The set of partially hyperbolic diffeomorphisms is open in the space $\diff^1(M)$
of diffeomorphisms endowed with the $C^1$-topology.
It is well known that there exist such diffeomorphisms that can not be
approximated by hyperbolic ones, see for instance~\cite{shub}.
Also partially hyperbolic diffeomorphisms are not dense
in $\diff^1(M)$ when $\dim(M)\geq 3$ (on surface this question is open):
from \cite{bd}, for an open set of diffeomorphisms
each filtration contains a piece $\Lambda_i$ whose tangent dynamics
has no non-trivial invariant continuous sub-bundle.
The lack of partial hyperbolicity in these examples is related
to the following obstruction.

\begin{defi}
A diffeomorphism $f:M\to M$ exhibits a \emph{homoclinic tangency} if there is a hyperbolic periodic orbit whose invariant manifolds $W^s(O)$ and $W^u(O)$ have a non transverse intersection.
\end{defi}
The homoclinic tangencies generate interesting dynamical instabilities,
see for instance~\cite{pt}.
Our goal is to provide a dichotomy between these two notions.

\begin{mainthm}
Any diffeomorphism $f$ can be approximated in $\diff^1(M)$ by diffeomorphisms which exhibit
a homoclinic tangency or by partially hyperbolic diffeomorphisms.
\end{mainthm}

This result thus gives a complete obstruction to the partial hyperbolicity
and decomposes the set of diffeomorphisms in two parts:
on one of them, the global dynamics is quite well described,
on the other one, the systems exhibit rich dynamical instabilities through local bifurcations.
One thus obtains an example of decomposition by \emph{phenomenon and mechanism} as discussed in~\cite{CP}.
\medskip

In fact, our result was motivated by a conjecture of Palis which proposes~\cite{palis} to characterize
the lack of hyperbolicity by homoclinic tangencies and heterodimensional cycles.
Several previous papers are related to our work.
\begin{itemize}
\item[--] \cite{PS} solved Palis conjecture on surfaces and implied
our result in this case.
\item[--] \cite{W} and \cite{gourmelon-tangence} showed that for diffeomorphisms far from tangencies,
the tangent bundle splits on the closure of the hyperbolic periodic orbits of a given stable dimension.
\item[--] The previous results imply the theorem for \emph{tame diffeomorphisms},
i.e. those which do not admit filtrations with an arbitrarily large number of non-trivial levels.
See~\cite{BDPR03,abcdw}.
For the same reason, the theorem holds in the conservative setting.
\item[--] \cite{GYW} gave a local version of the theorem:
far from homoclinic tangencies, any minimally non-hyperbolic sets is partially hyperbolic.
\item[--] \cite{Cro08} proved the theorem above
for diffeomorphisms that are not approximated
by diffeomorphisms exhibiting a heterodimensional cycle; this was used in
\cite{CP} for obtaining partial results on Palis conjecture.
\item[--] \cite{yang-aperiodic} obtained the partial hyperbolicity on aperiodic pieces
of generic diffeomorphisms far from homoclinic tangencies.
\end{itemize}

In view to~\cite{W}, one could hope to
generalize our result and obtain a positive answer to the following question.

\begin{problem}
Is it equivalent for a diffeomorphism to be not partially hyperbolic
and to be limit in $\diff^1(M)$ of diffeomorphisms exhibiting a homoclinic tangency?
\end{problem}
\medskip

\subsection{Precise statement}\label{ss.statement}
When one studies the global dynamics of a homeomorphism $f$ of a compact metric space $M$,
one splits the dynamics into pieces that can not be further decomposed by
filtrations. Let us define on $M$ the following relation: $x\sim y$
if and only if for any $\varepsilon>0$ there exists a periodic $\varepsilon$-pseudo-orbit
which contains both $x$ and $y$. Then,
the \emph{chain-recurrent set} $\cR(f)$ is the set of points $x\in M$ satisfying $x\sim x$.
On this set $\sim$ becomes an equivalence relation which defines a compact invariant
decomposition of $\cR(f)$ into its \emph{chain-recurrence classes}.
A \emph{chain-transitive set} of $f$ is an invariant compact set $K\subset M$
such that the restriction $f_{|K}$ has a single chain-recurrence class.

For any hyperbolic periodic orbit $O$ of a diffeomorphism, the closure of the transverse intersections
between the stable and the unstable manifolds of $O$ is called the
\emph{homoclinic class} of $O$ and denoted by $H(O)$.
In \cite{BC} it is proved that for a dense $G_\delta$ subset of $\diff^1(M)$
the chain-recurrence classes which contain periodic points are the homoclinic classes.
The other ones are called \emph{aperiodic classes}.

In the following we denote by $\HT\subset \diff^1(M)$
the closure of the set of diffeomorphisms exhibiting a homoclinic tangency.
Recall also that if $\mu$ is an invariant probability measure and if
$E$ is a continuous one-dimensional bundle over the support of $\mu$,
then one can define the \emph{Lyapunov exponent} of $\mu$ along $E$ as
$\int\log \|Df_{|E}\|d\mu$ by considering any riemannian metric on $M$.

\begin{thm}\label{main}
The diffeomorphisms $f$ in a dense $G_\delta$ subset $\cG\subset \diff^1(M)\setminus \HT$ has the following properties.
\begin{enumerate}
\item Any aperiodic class $\C$ is partially hyperbolic with a one-dimensional central bundle.
Moreover, the Lyapunov exponent along $E^c$ of any invariant measure supported on $\C$ is zero.
\item Any homoclinic class $H(p)$ has a partially hyperbolic structure
$$T_\C M=E^s\oplus E^c_1\oplus\ldots\oplus E^c_k\oplus E^u.$$
Moreover the minimal stable dimension of the periodic orbits of $H(p)$ is
$\dim(E^s)$ or $\dim(E^{s})+1$. Similarly the maximal stable dimension of the periodic orbits of $H(p)$ is
$\dim(E^s)+k$ or $\dim(E^{s})+k-1$. For every $i, 1\le i\le k$ there exist periodic points in $H(p)$ whose
Lyapunov exponent along $E^c_i$ is arbitrarily close to $0$.
\end{enumerate}
\end{thm}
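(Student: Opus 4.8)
The plan is to reduce everything to a single mechanism: controlling the tangent dynamics over chain-recurrence classes of $C^1$-generic diffeomorphisms that are *not* limits of homoclinic tangencies. First I would set up the generic hypotheses: work in the dense $G_\delta$ where periodic chain-recurrence classes are homoclinic classes (\cite{BC}), where the results of \cite{W,gourmelon-tangence} apply so that the closure of hyperbolic periodic orbits of any fixed stable dimension carries a dominated splitting, where homoclinic classes depend continuously on the periodic orbit, and where the various index and exponent quantities vary semicontinuously. The key structural input is that far from tangencies one cannot create, by $C^1$-perturbation inside a homoclinic class, a periodic orbit with a weak eigenvalue that is ``too central'' without producing a tangency (the Gourmelon/Wen mechanism turning a non-dominated behavior into a tangency). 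This will be the engine for both the domination estimates and the ``one-dimensional central bundle'' claim.

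For part (1), the aperiodic classes: I would first invoke \cite{yang-aperiodic} (or reprove it along the same lines) to get that aperiodic classes of generic $f\in\diff^1(M)\setminus\HT$ are partially hyperbolic. To pin the central bundle down to dimension one, use that an aperiodic class supports no periodic orbits, hence by a Mañé-type / ergodic closing argument any two-dimensional central sub-bundle with nonzero ``spread'' of exponents would, after perturbation, yield periodic orbits with a weak complex or weak real eigenvalue of the wrong index, which far from tangencies forces either a finer dominated splitting (reducing the central dimension) or a tangency (contradiction). The vanishing of the Lyapunov exponent along $E^c$ then follows: if some ergodic measure $\mu$ on $\C$ had $\int\log\|Df|_{E^c}\|\,d\mu\neq 0$, the ergodic closing lemma of \cite{Cro08}-type would produce nearby periodic orbits shadowing $\mu$, hence periodic points inside (a continuation of) $\C$, contradicting aperiodicity; one must do this carefully so the shadowing orbits actually land in the class, which is where chain-recurrence and the generic continuity of classes are used.

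For part (2), the homoclinic classes $H(p)$: the existence of the partially hyperbolic splitting $E^s\oplus E^c_1\oplus\cdots\oplus E^c_k\oplus E^u$ with one-dimensional centers is obtained by combining the dominated splitting over the periodic orbits (\cite{W,gourmelon-tangence}) with an argument that far from tangencies each central block admits no further obstruction to being split into lines — again because a ``large'' indecomposable central block would let one perturb to a periodic orbit exhibiting a homoclinic tangency (this is essentially the content promised by the citations, and is the reason the hypothesis $f\notin\HT$ is used). The statements about minimal and maximal stable dimensions come from: on one hand periodic orbits of $H(p)$ of minimal index $d$ give a dominated splitting $E^s\oplus\cdots$ with $\dim E^s\le d$; on the other, if $\dim E^s\le d-2$ then the two ``extra'' central directions over such an orbit would have exponents that could be pushed across zero by perturbation, creating either an index-$(d-1)$ periodic orbit in the class (lowering the minimal index) or a tangency. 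So $\dim E^s\in\{d-1,d\}$; symmetrically for the maximal index using $E^u$. Finally, the existence of periodic points in $H(p)$ with Lyapunov exponent along each $E^c_i$ arbitrarily close to $0$ follows from a connecting-lemma argument: the one-dimensionality of $E^c_i$ and domination let one interpolate the exponent along $E^c_i$ between the values realized by orbits of index $\le i$ and index $\ge i+1$ (which straddle $0$ by the index statements just proved), and a perturbation realizing an intermediate value near $0$ is available precisely because pushing it to $0$ or past is what would create a tangency — so one can get arbitrarily close without crossing.

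The main obstacle I expect is the same in every part: making the ergodic-closing / connecting-lemma perturbations produce periodic orbits that genuinely lie in (a generic continuation of) the class $\C$ or $H(p)$ one started with, rather than merely nearby in the manifold, and keeping simultaneous control of the eigenvalues (indices and exponents) *and* the domination constants so that the dichotomy ``finer splitting or tangency'' can actually be invoked. This requires a quantitative version of the Gourmelon–Wen argument — transforming a quasi-periodic sequence of non-dominated linear maps along a periodic pseudo-orbit into an actual homoclinic tangency by a $C^1$-small perturbation — uniformly enough to run inside the $G_\delta$. Everything else (semicontinuity bookkeeping, the $G_\delta$ construction, reading off $\dim E^s, \dim E^u$) is routine once that mechanism is in hand.
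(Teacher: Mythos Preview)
You have correctly located the central difficulty --- ensuring that the periodic orbits produced by closing/connecting arguments actually approximate the chain-recurrence class in the Hausdorff topology, so that the finer dominated splitting they carry extends to the whole class --- but your proposed mechanism for resolving it is not the right one, and this is a genuine gap.

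Your plan is to overcome the linking problem with ``a quantitative version of the Gourmelon--Wen argument,'' i.e.\ a sharper way of turning non-domination along a periodic pseudo-orbit into a tangency. But the Gourmelon--Wen mechanism lives entirely on (sequences of) periodic orbits: it tells you that far from $\HT$ the periodic orbits of a given index carry a uniform dominated splitting. It says nothing about \emph{which} periodic orbits Hausdorff-approximate a given class, and no amount of sharpening the tangency-creation estimate addresses that. Concretely: suppose $\C$ has a splitting $E\oplus F$ with $E$ not uniformly contracted. Liao's selecting lemma and Ma\~n\'e's ergodic closing lemma produce periodic orbits \emph{near a subset of} $\C$ with weak exponent along some one-dimensional sub-bundle of $E$; the induced finer splitting lives on the Hausdorff limit of those orbits, which is a priori a proper subset of $\C$. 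The obstacle is topological-dynamical (where do the periodic orbits sit relative to $\C$?), not linear-algebraic (how dominated are they?).

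The paper's resolution is entirely different from what you sketch. The engine is a separate technical result (Theorem~\ref{main2}): if a chain-transitive set $\Lambda$ carries $E^s\oplus E^c\oplus F$ with $E^s$ uniformly contracted, $\dim E^c=1$, $E^c$ not uniformly contracted, and some ergodic measure on $\Lambda$ has nonzero exponent along $E^c$, then $\Lambda$ is contained in a local homoclinic class $H(O,U)$ with $\operatorname{ind}(O)\in\{\dim E^s,\dim E^s+1\}$, and in the second case $O$ can be taken with arbitrarily small $E^c$-exponent. The proof uses central models for the one-dimensional bundle $E^c$: one classifies the fibered dynamics over $\Lambda$ as either having a chain-recurrent central segment (handled by a delicate three-step connecting-lemma perturbation to build a weak periodic orbit whose local homoclinic class swallows $\Lambda$) or being thin-trapped (handled by a shadowing argument on a carefully selected $0$-central-Lyapunov-exponent subset). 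Once Theorem~\ref{main2} is available, Theorem~\ref{main} follows by an induction (Lemma~\ref{l.class-domina}) that peels off one-dimensional central bundles until the extreme bundle is uniform; the index statements for $H(p)$ and the vanishing of the $E^c$-exponent on aperiodic classes are read off from the dichotomy in the last application of Theorem~\ref{main2}. None of this is a refinement of Gourmelon--Wen; it is a new argument controlling the one-dimensional central dynamics.

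Your treatment of the index bounds (``if $\dim E^s\le d-2$ push two central exponents across zero'') and of the weak periodic points (``interpolate exponents between orbits of adjacent indices'') also presupposes that the relevant periodic orbits are already known to lie in the class, which is exactly the unresolved point.
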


One obtains the Main Theorem from Theorem~\ref{main} by proving that any diffeomorphism
$f\in \cG$ is partially hyperbolic. We first recall the following properties:
\begin{itemize}
\item[--] If a chain-recurrence class $\C$ is partially hyperbolic,
then the same holds on the maximal invariant set in a neighborhood of $\C$.
\item[--] Any limit of a sequence of chain-recurrence classes for the
Hausdorff topology is contained in a chain-recurrence class.
\end{itemize}
By compactness of the space of compact subsets of $M$,
one deduces from Theorem~\ref{main} that there exists a finite family of
open sets $V_1,\dots,V_\ell$ such that the maximal invariant set in each of them
is partially hyperbolic and any chain-recurrence class is contained in
one of the $V_i$. As a consequence considering a filtration $U_1,\dots,U_s$
such that the maximal invariant set in each level set
$U_{i}\setminus U_{i-1}$ is contained in one of the open sets $V_j$,
one gets the partial hyperbolicity of $f$. This gives the Main Theorem.
\bigskip

\begin{rema}
\begin{itemize}
\item[--] As we said, the first item was already obtained in~\cite{yang-aperiodic}.
\item[--] From~\cite{abcdw}, the set of stable dimensions of periodic orbits in a homoclinic class is an interval of
$\{0,\dots, \dim(M)\}$.
\item[--] For a homoclinic class $H(p)$ whose minimal stable dimension of periodic points
is $\dim(E^s)+1$, the bundle $E^{s}+E^c_1$ satisfies some weak hyperbolicity property. See Section~\ref{ss.central}.
\item[--] From~\cite[Section 2.4]{Cro08}, for aperiodic classes the extremal bundles
$E^s$ and $E^u$ are both non-degenerated.
\end{itemize}
\end{rema}

Theorem~\ref{main} answers partially~\cite[conjecture 1]{abcdw}, but other questions
remains: for diffeomorphisms as in Theorem~\ref{main}, are homoclinic classes
the only possible chain-recurrence classes?
does the minimal stable dimension of periodic orbits in a homoclinic class
coincide with $\dim(E^s)$? One would get positives answers if a spectral decomposition holds:

\begin{conj}[Bonatti~\cite{bonatti-panorama}]\label{c.finiteness}
There exists a dense $G_\delta$ subset in $\diff^1(M)\setminus \HT$
of diffeomorphisms having only finitely many chain-recurrence classes.
\end{conj}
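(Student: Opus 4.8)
The plan is to run a contradiction argument reducing the statement to a local \emph{isolation} property. Fix a dense $G_\delta$ subset of $\diff^1(M)\setminus\HT$ on which the conclusions of Theorem~\ref{main} hold together with the usual genericity hypotheses: Kupka--Smale, the coincidence of chain-recurrence classes containing periodic points with homoclinic classes \cite{BC}, and the continuity of $H(p)$ as a function of $p$. Suppose some $f$ in this set has infinitely many chain-recurrence classes. By compactness of the space of compact subsets of $M$ for the Hausdorff topology, extract a sequence of pairwise distinct classes $\C_n$ converging to a compact set $K$; by the second property recalled after Theorem~\ref{main}, $K$ is contained in a single chain-recurrence class $\C$. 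The goal becomes to show that $\C$ admits a filtrating neighborhood $U$ (a finite intersection of open sets $V$ with $f(\overline V)\subset V$ or $f^{-1}(\overline V)\subset V$) whose maximal invariant set is exactly $\C$; then no other class can accumulate on $\C$, and, letting $\C$ vary and invoking compactness once more, only finitely many classes can occur, a contradiction.

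To construct such a neighborhood one starts from Theorem~\ref{main}: $\C$ is partially hyperbolic, with splitting $T_\C M=E^s\oplus E^c_1\oplus\cdots\oplus E^c_k\oplus E^u$ having one-dimensional central bundles. By the first property recalled after Theorem~\ref{main}, this splitting and the partial hyperbolicity extend to the maximal invariant set $\Lambda_U$ in a sufficiently small neighborhood $U$ of $\C$, so that $\Lambda_U$ carries a dominated splitting with one-dimensional centers and with uniformly contracted $E^s$ and uniformly expanded $E^u$. It then suffices to prove that $\Lambda_U$ is chain-transitive, i.e.\ equals $\C$. Since the extremal bundles are uniformly hyperbolic, any splitting of $\Lambda_U$ into several classes — in particular the existence of the accumulating classes $\C_n\subset U$ — must be carried by the center bundle $E^c=E^c_1\oplus\cdots\oplus E^c_k$. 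The strategy is to use center-stable and center-unstable plaque families tangent to $E^s\oplus E^c$ and to $E^c\oplus E^u$, together with the periodic points of $\C$ whose exponents along each $E^c_i$ are arbitrarily close to $0$ (again from Theorem~\ref{main}), and, via ergodic closing and connecting-lemma arguments performed without leaving $\diff^1(M)\setminus\HT$, to connect all pieces of $\Lambda_U$ into one chain-transitive set.

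The main obstacle is precisely the control of the center dynamics: $E^c$ is only dominated, not uniformly hyperbolic — at best a sub-bundle such as $E^s\oplus E^c_1$ or $E^c_k\oplus E^u$ enjoys the weak hyperbolicity mentioned in the Remark after Theorem~\ref{main} — so normal hyperbolicity and plaque-expansiveness cannot be invoked directly to rigidify $\Lambda_U$. In particular one has to rule out the wild mechanisms of Bonatti--Díaz type (blenders generating infinitely many homoclinic classes, see \cite{bd}), which are themselves partially hyperbolic; the absence of homoclinic tangencies must therefore be used in an essential, non-formal way, presumably through the additional hyperbolicity it forces on extremal sub-bundles along periodic orbits, as in \cite{W,gourmelon-tangence}, to prevent the center exponents from simultaneously degenerating and proliferating. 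A natural intermediate target — itself open — is to first show that generically far from tangencies there are no aperiodic classes and that each homoclinic class admits a filtrating neighborhood; combined with the Hausdorff-compactness reduction above and with the fact \cite{abcdw} that stable dimensions in a homoclinic class form an interval, the finiteness of chain-recurrence classes would then follow.
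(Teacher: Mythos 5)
This statement is Conjecture~\ref{c.finiteness} (attributed to Bonatti): the paper does \emph{not} prove it, it only records it as an open problem and establishes the much weaker Corollary~\ref{c.centraldimension} (finiteness of the classes having \emph{more than one} non-uniform one-dimensional central bundle). Your proposal does not close this gap. Its entire weight rests on the claim that the limit class $\C$ admits a filtrating neighborhood $U$ whose maximal invariant set equals $\C$, i.e.\ that $\C$ is isolated in $\cR(f)$. That is not a lemma you can extract from Theorem~\ref{main}: partial hyperbolicity of $\Lambda_U$ with uniformly hyperbolic extremal bundles in no way forces $\Lambda_U$ to be chain-transitive, since infinitely many distinct classes can perfectly well sit inside a single partially hyperbolic maximal invariant set, separated only along the (merely dominated) central directions. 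This is exactly the difficulty the paper isolates in Section~\ref{ss.link} (``Proving that $O^-$ and $O^+$ are contained in $\C$ is the main difficulty in this subject''), and it is why Theorem~\ref{main2} has to work so hard to place periodic orbits inside \emph{local} homoclinic classes rather than inside the class itself. Your second paragraph (``connect all pieces of $\Lambda_U$ into one chain-transitive set'' via plaque families, ergodic closing and connecting lemmas) is a restatement of the goal, not an argument; and your third paragraph concedes that the intermediate target is ``itself open.'' A plan that reduces an open conjecture to another open problem is not a proof.

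For contrast, the one finiteness statement the paper does prove, Corollary~\ref{c.centraldimension}, uses a genuinely different and more limited mechanism: assuming infinitely many classes share a splitting $E\oplus E^c_1\oplus E^c_2\oplus F$ with \emph{two} weak central bundles, it produces, from weak periodic orbits along $E^c_2$, points of the accumulated set $\Lambda$ that are $(1,\sigma,E\oplus E^c_1)$-hyperbolic for $f$, and from weak periodic orbits along $E^c_1$, points of $H(p_n)$ that are $(C,\sigma,E^c_2\oplus F)$-hyperbolic for $f^{-1}$; Lemmas~\ref{l.hyperbolic-time} and~\ref{l.manifold-intersect} then force the classes to merge. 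With a single central bundle this two-sided argument is unavailable, which is precisely why the full conjecture remains open. If you want a correct contribution here, aim at that corollary or at Conjecture~\ref{c.finiteness} explicitly labeled as conditional on an isolation hypothesis.
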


One can also wonder if a local version of Theorem~\ref{main} holds for homoclinic classes:
\begin{problem}
Does there exists a dense $G_\delta$ subset of $\diff^1(M)$ of diffeomorphisms $f$
such that for any homoclinic class $H(O)$ one of the following holds:
\begin{itemize}
\item[--] either $H(O)$ is partially hyperbolic,
\item[--] or there exists a periodic orbit $O'\subset H(O)$
and diffeomorphisms arbitrarily close to $f$ in $\diff^1(M)$
exhibiting a homoclinic tangency associated to $O'$?
\end{itemize}
\end{problem}
\bigskip

\subsection{Linking periodic orbits to a class}\label{ss.link}
Let us explain the main difficulty for obtaining Theorem~\ref{main}:
we consider a chain-recurrence class $\C$ of a diffeomorphism $f$ which belongs to
a dense $G_\delta$ subset of $\diff^1(M)\setminus \HT$.

From~\cite{crovisier-approximation}, the class $\C$ is the Hausdorff limit
of a sequence of periodic orbits. From~\cite{W}, one gets an integer $N\geq 1$
and an invariant splitting $T_\C M=E\oplus F$ on $\C$ which is \emph{$N$-dominated}:
for each $x\in \C$ and each unitary vectors
$u\in E_x$, $v\in F_x$ one has $\|Df_x^N.u\|\leq 1/2\|Df_x^N.v\|$.

We thus have to analyze non-uniform
bundles and try to split them with one-dimensional central bundles.
From a selecting lemma of Liao, if a bundle $E$ is not uniform,
one can find in $\C$ an invariant compact subset $K$ with an invariant splitting of its tangent bundle of the form $T_KM=E^s\oplus E^c\oplus E^u$, where $E^c$ is one-dimensional and not
uniformly contracted nor expanded. By Ma\~n\'e's ergodic closing lemma,
one can find in any neighborhood of $K$ some periodic orbits $O^-, O^+$ of stable dimensions
$\dim(E^s)$ and $\dim(E^s)+1$ respectively.
If one can prove that there exist other periodic orbits $\widehat O^-,\widehat O^+$
with the same stable dimensions as $O^-,O^+$ and which approximate $\C$
in the Hausdorff topology, then one deduces again from \cite{W} that there exists
a splitting $T_\C M=E'\oplus E^c\oplus F'$ which coincides with $T_KM=E^s\oplus E^c\oplus E^u$
on $K$, giving a one-dimensional central bundle on $\C$ as required.
\medskip

One way to obtain this property is to prove that the periodic orbits $O^-,O^+$
are \emph{contained} in the class $\C$. Indeed by~\cite{BC} one would conclude
that $\C$ coincides with the homoclinic class $H(O^-)$ and $H(O^+)$.
Proving that $O^-$ and $O^+$ are contained in $\C$ is the main difficulty
in this subject, and so it is also for Palis conjecture.
In the case all the chain-recurrence classes are isolated in $\cR(f)$
(their number is finite, and the dynamics is said to be \emph{tame})
this problem does not appear; the same holds when the dynamics is conservative.

This difficulty is addressed by the following technical local result.
When $O$ is a periodic orbit of a diffeomorphism $f$ and $U$ is an open set containing $O$,
we define the \emph{local homoclinic class} $H(O,U)$ as the closure of the set
of transverse homoclinic orbits between the stable and the unstable manifolds of $O$
that are contained in $U$. It is a transitive invariant compact set contained in $\overline{U}$.

\begin{thm}\label{main2}
Let $f$ be a diffeomorphism in a dense $G_\delta$ subset of $\diff^1(M)$ and let $\La$ be a compact invariant
and chain-transitive set with a dominated splitting $T_\La M=E^s\oplus E^c\oplus F$. Assume also that
\begin{itemize}
\item[--] $E^s$ is uniformly contracted.
\item[--] $E^c$ is one dimensional and it is not uniformly contracted.
\item[--] There exists an ergodic measure $\mu$ supported on $\La$ whose Lyapunov exponent along $E^c$ is non-zero.
\end{itemize}
Then, one of the following holds:
\begin{enumerate}
\item For any neighborhood $U_0$ of $\Lambda$ , there exists a local homoclinic class
$H(O,U_0)$ containing $\Lambda$ where the stable dimension of $O$ equals $\dim E^s.$
\item For any $\theta >0$ and any neighborhood $U_0$ of $\La$, there exists a periodic orbit $O\subset U$ such that
\begin{itemize}
\item[--] The Lyapunov exponent of $O$ along $E^c$ belongs to $(-\theta,\theta)$
\item[--] $\La$ is contained in the local homoclinic class $H(O,U_0)$.
\end{itemize}
\end{enumerate}
\end{thm}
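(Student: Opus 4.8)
The plan is to start from the ergodic measure $\mu$ with nonzero Lyapunov exponent along $E^c$ and use the $C^1$-perturbation machinery (Ma\~n\'e's ergodic closing lemma, the connecting lemma for pseudo-orbits, and Gourmelon's flexibility results for adapting the derivative along a periodic orbit) to produce, inside any prescribed neighborhood $U_0$ of $\Lambda$, a hyperbolic periodic orbit $O$ which shadows a large portion of a generic orbit of $\mu$ and whose stable dimension is $\dim E^s$ (since $E^c$ is not uniformly contracted on $\Lambda$, the exponent along $E^c$ is $\geq 0$, so a shadowing periodic orbit can be taken with $E^s$ contracting and $E^c$ either weakly contracting or weakly expanding). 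Because $\Lambda$ is chain-transitive and $f$ is generic, a second application of the connecting lemma for pseudo-orbits lets me thread a pseudo-orbit that starts near $O$, travels through all of $\Lambda$, and returns near $O$; perturbing it to a true orbit produces transverse homoclinic intersections of $W^s(O)$ and $W^u(O)$ that pass $\varepsilon$-close to every point of $\Lambda$ and stay in $U_0$. Taking a $G_\delta$ of generic $f$ and intersecting over a countable basis of neighborhoods, one gets $\Lambda\subset H(O,U_0)$ for the limit orbit, which is the conclusion of case~(1) provided the stable dimension came out to be exactly $\dim E^s$.

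The dichotomy arises from whether this program can be carried out with $\mathrm{ind}(O)=\dim E^s$. Two things can go wrong. First, the shadowing orbit might be forced to have stable dimension $\dim E^s + 1$ (its $E^c$-exponent strictly negative); but then I argue that along $\Lambda$ the bundle $E^s\oplus E^c$ is \emph{not} uniformly contracted (hypothesis) while individual periodic orbits inside are contracting on $E^s\oplus E^c$, so by a Liao/Pliss-type selecting argument combined with the nonzero-exponent hypothesis I can instead find periodic orbits whose $E^c$-exponent is as close to $0$ as desired — this is exactly the alternative~(2). Second, even with the right index, the homoclinic connections might not be forceable to fill out all of $\Lambda$ inside $U_0$: this is the genuine difficulty. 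The standard way around it is to use that $\Lambda$ chain-transitive plus generic $f$ forces $\Lambda$ to be \emph{approached} by periodic orbits in the Hausdorff topology (Crovisier's approximation result, already invoked in the text), and then to run the connecting lemma relative to a filtrating neighborhood; the orbit $O$ lies in a chain-recurrence class that either contains $\Lambda$ or, if not, the failure to connect is again converted — via perturbing the $E^c$-exponent toward zero by Gourmelon's lemma and re-applying the closing lemma at smaller scale — into producing the weak periodic orbits of case~(2).

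The main obstacle, as the authors themselves flag in Section~\ref{ss.link}, is precisely showing that $O$ (hence $\Lambda$) lies in a single local homoclinic class inside an \emph{arbitrarily small} neighborhood $U_0$ and not merely that $O$ approximates $\Lambda$. Controlling the support of the forced homoclinic intersections — keeping all intermediate orbit segments within $U_0$ while simultaneously shadowing a dense subset of $\Lambda$ — requires a careful quantitative version of the connecting lemma for pseudo-orbits in which the perturbation region is confined to $U_0$, together with a covering argument over finitely many points that are dense in $\Lambda$. The delicate point is that each individual connection costs a perturbation, and one must make all of them simultaneously with a single $C^1$-small perturbation supported in $U_0$; this is handled by performing the perturbations at disjoint times along a long periodic pseudo-orbit and invoking the genericity of $f$ to pass to an actual diffeomorphism realizing the limit configuration. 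I expect the bulk of the work, and the place where the non-uniform (as opposed to uniformly hyperbolic) nature of $E^c$ genuinely bites, to be in reconciling the selecting-lemma output (which lives on a sub-set of $\Lambda$) with the requirement that the resulting class recover \emph{all} of $\Lambda$.
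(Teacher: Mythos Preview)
Your opening moves are fine and match the paper: when $L^c(\mu)>0$ the measure is hyperbolic with stable bundle $E^s$ and Proposition~\ref{hypm} gives case~(1); when there are measures with $L^c(\mu)\in(-\varepsilon,0)$ for every $\varepsilon$, the same proposition gives case~(2). The hard case is (*): all central exponents are either $0$ or $\le -\varepsilon_0$, and there is at least one measure with $L^c\le-\varepsilon_0$.

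Here your proposal has a real gap. You want to close a $\mu$-generic orbit to a periodic orbit $O$ and then ``thread a pseudo-orbit that starts near $O$, travels through all of $\Lambda$, and returns near $O$''. But the connecting lemma for pseudo-orbits (Theorem~\ref{t.weaktransitive}) only connects points that are \emph{already} joined by pseudo-orbits inside the prescribed region. The periodic orbit $O$ produced by the ergodic closing lemma lies near $\Lambda$ but there is no a~priori pseudo-orbit from $O$ back into $\Lambda$ staying in $U_0$: this is exactly the linking problem flagged in Section~\ref{ss.link}, and your ``careful quantitative version of the connecting lemma'' does not exist in the form you need. Gourmelon's lemma lets you adjust exponents along $O$, but it does nothing to put $O$ into the local chain-stable and chain-unstable sets of $\Lambda$. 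Likewise, the Liao selecting lemma produces periodic orbits near a \emph{subset} of $\Lambda$; it does not, by itself, force those orbits to belong to a local homoclinic class containing all of $\Lambda$.

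The paper's route is structurally different and supplies precisely the missing mechanism. After reducing to (*) and extracting a $0$-CLE set $K\subset\Lambda$ (Theorem~\ref{C3options}), the argument abandons direct perturbation and instead analyzes the one-dimensional central dynamics through \emph{central models} (Section~\ref{central}). The dichotomy is not ``index of $O$'' but rather the classification of Theorem~\ref{cm}: either some central model over a $0$-CLE set has a non-trivial chain-unstable segment, or every such model is thin trapped. In the first case (Proposition~\ref{p.unstable}, Section~\ref{s.connecting}) one manufactures a chain-recurrent central segment $I\subset M$ and then builds, via a three-site connecting argument at points $z,a,b$, a periodic orbit that spends almost all its time near a $0$-CLE set---this is what forces the central exponent into $(-\theta,\theta)$ \emph{and} places the orbit in both $W^{ch-s}_{U_0}(\Lambda)$ and $W^{ch-u}_{U_0}(\Lambda)$ via Corollary~\ref{c.central-segment}. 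In the second case (Lemma~\ref{l.selection}, Proposition~\ref{p.trapped}, Section~\ref{s.trapped}) the thin-trapped structure yields genuine trapped plaques, and a shadowing argument (Proposition~\ref{NS}) produces a periodic point whose strong stable and unstable manifolds intersect those of $\Lambda$. In both branches the point is that the central-model geometry, not a raw connecting lemma, is what certifies membership in the local chain-stable/unstable sets of $\Lambda$ inside $U_0$; your outline never invokes this structure and so cannot close the argument.
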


We would like to point out that the above theorem does not required the diffeomorphism to be far from homoclinic tangencies.
\bigskip

The goal to link a set having a non-uniform central bundle
to weak periodic orbits contained in a neighborhood is very similar to
Liao's and Ma\~n\'e's selecting lemmas (see for instance~\cite{wen-conjecture} or~\cite{mane-stabilite}).
For instance, with these technics~\cite{BGY09} proved the following:
\medskip

\emph{Let $f$ be a $C^1$-generic diffeomorphism and let $H(O)$ be a homoclinic class with dominated splitting $E^s\oplus F$ where $E^s$ is uniformly contracted, the stable dimension
of $O$ equals $\dim (E^s)$ and $F$ is not uniformly expanded.
Then, for any $\de$ there are periodic orbits in $H$ whose minimal Lyapunov exponent in $F$ belongs to $(0,\de).$}
\medskip

Our proof however is very different.
As in~\cite{Cro08}, it is obtained by analyzing the dynamics
along the central bundle and by using the central models introduced in~\cite{crovisier-palis-faible}.
The fact that we do not forbid heterodimensional cycles changes however
the philosophy and increases a lot the difficulty.
We need to develop two new strategies presented in Sections~\ref{s.connecting} and~\ref{s.trapped}.
\medskip

\paragraph{\bf Organization of the paper.}
The Section~\ref{pre} recalls genericity results and the Section~\ref{central}
presents the central models. The proof of Theorem~\ref{main2} from
Sections~\ref{s.connecting} and~\ref{s.trapped} is given in Section~\ref{s.main2}.
The proof of Theorem~\ref{main} (and hence of the Main Theorem)
appears in~Section~\ref{s.conclusion}. The consequences that are presented in section~\ref{ss.consequences}
are obtained in Section~\ref{s.consequences}.
Note that the first item of Theorem~\ref{main} about aperiodic classes
can be proved in a shorter way (independently from Sections~\ref{s.connecting} and~\ref{s.trapped}), see Section~\ref{ss.aperiodic}.
\bigskip

\subsection{Consequences. Dynamics far from homoclinic tangencies}\label{ss.consequences}
Our result allows to solve for diffeomorphisms far from homoclinic tangencies several problems
stated for $C^1$-diffeomorphisms. In the following,
we will say that a property holds for \emph{$C^1$-generic diffeomorphisms} if it is satisfied on
a dense $G_\delta$ subset of $\diff^1(M)$.
\medskip

\paragraph{\bf Symbolic extensions.}
The dynamics on partially hyperbolic sets where the central bundle has a dominated splitting into
one-dimensional subbundles have nice symbolic descriptions. This is obtained through the property of entropy
expansiveness, introduced by Bowen~\cite{Bow72}.

\begin{defi}
A homeomorphism $f$ of a compact metric space $X$ \emph{entropy-expansive}
if there exists $\varepsilon>0$ such that for any $x\in X$, the set
$$B(x,\varepsilon,f)=\{y\in X,\; \text{for any $n\in \ZZ$}, d(f^n(x),f^n(y))\leq \varepsilon\}$$
has zero entropy.
\end{defi}
Recall that a (non necessarily invariant) set $B\subset X$ has zero entropy when
the entropy of $B$ at any scale $\delta$ is zero:
let us denote by $N(B,\delta,f,n)$
the minimal number of points $x_1,\dots,x_N$ in $B$
so that any $x\in B$ stays $\delta$-close to some $x_i$ during $n$ iterations;
then $N(B,\delta,f,n)$ growths sub-exponentially with $n$ for any $\delta$.
\begin{rk} We provide some remarks from~\cite{Bow72}.
\begin{enumerate}
\item This notion is weaker than expansiveness, which assumes that the dynamical ball
$B(x,\varepsilon, f)$ is reduced to $x$.
\item In the definition, one can also replace $B(x,\varepsilon,f)$ by
the forward dynamical ball
$$B^+(x,\varepsilon,f)=\{y\in X,\; \text{for any $n\in \NN$}, d(f^n(x),f^n(y))\leq \varepsilon\}.$$
\item One can also introduce the ball
$$B_n(x,\varepsilon,f)=\{y\in X,\; \text{for any $0\leq k\leq n$}, d(f^k(x),f^k(y))\leq \varepsilon\}.$$
A map is entropy expansive if for $\varepsilon>0$ small the following quantity is zero
for any $x\in X$:
$$h^\ast_f(\varepsilon)=\lim_{\delta\to 0}\limsup_{n\to +\infty}\frac 1 n \log N(B_n(x,\varepsilon,f),\delta, f,n).$$
\item If $f$ is entropy expansive, then there exists $\varepsilon>0$ such that the topological entropy of $f$ coincides with the topological entropy at scale $\varepsilon$ defined as:
$$h_f(\varepsilon)=
\limsup_{n\to +\infty}\frac 1 n \log N(X,\varepsilon, f,n).$$
\item If the restriction of $f$ to the non-wandering set $\Omega(f)$ is entropy-expansive,
then so is $f$.
\end{enumerate}
\end{rk}
From the last remark, any entropy expansive system admits \emph{equilibrium states} (see~\cite[Proposition
2.19]{Bow75}): more precisely, for any continuous map $\varphi\colon X\to \RR$, there exists a measure that
realizes the maximum of the following quantity over all invariant probability measures $\mu$
$$P_f(\varphi)=\max_{\mu}\left\{h_f(\mu)+\int\varphi d\mu\right\},$$
where $h_f(\mu)$ denotes the entropy of $\mu$ for $f$.

In~\cite{bff} it is shown that any entropy-expansive system admits a \emph{principal
symbolic extension}, i.e. there exists an invariant compact set $Y$ of a full subshift
$(\{1,\dots,k\}^\ZZ,\sigma)$ and a continuous semi-conjugacy $\pi\colon Y\to X$
such that for any invariant measure $\mu$ of $(Y,\sigma)$, the entropy of $\mu$ for $\sigma$
and of $\pi_\ast(\mu)$ for $f$ are the same.

In~\cite{CoY05,dfpv}, it is shown that the restriction
of a diffeomorphism to a compact invariant set which is partially hyperbolic
and whose central bundle has a dominated splitting into one-dimensional subbundles is entropy-expansive. This result with the remark (5) above
and the Main Theorem implies the following.

\begin{cor}
Any diffeomorphism $f$ in an open and dense subset of ${\rm Diff}^1(M)\setminus\HT$
is entropy-expansive. In particular, it admits principal symbolic extensions.
Also, any continuous map $\varphi\colon M\to \RR$ has an equilibrium state.
\end{cor}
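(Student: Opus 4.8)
The plan is to assemble the Corollary from three ingredients that the excerpt has already put in place, so the proof is essentially a citation-and-assembly argument rather than a new construction. First I would invoke the Main Theorem to see that the set of partially hyperbolic diffeomorphisms contains a dense subset of $\diff^1(M)\setminus\HT$; since partial hyperbolicity is an open condition (stated in Section~1.1), this set is in fact open and dense in $\diff^1(M)\setminus\HT$. Thus for $f$ in this open dense set, there is a filtration $\emptyset=U_0\subset\dots\subset U_s=M$ with $f(\overline{U_i})\subset U_i$ such that each maximal invariant set $\Lambda_i$ in $U_i\setminus U_{i-1}$ is partially hyperbolic with one-dimensional central bundles.

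Next I would feed each $\Lambda_i$ into the result of~\cite{CoY05,dfpv}: the restriction $f|_{\Lambda_i}$ is entropy-expansive, because $\Lambda_i$ is partially hyperbolic with a dominated splitting of the central bundle into one-dimensional subbundles. The step requiring a little care is passing from ``each piece is entropy-expansive'' to ``$f$ is entropy-expansive.'' Here I would use remark~(5): the non-wandering set $\Omega(f)$ is contained in $\bigcup_i \Lambda_i$ (every non-wandering point lies in some level of the filtration, hence in that level's maximal invariant set), and the $\Lambda_i$ are pairwise disjoint compact invariant sets, so one can choose a single $\varepsilon>0$ that works simultaneously for all of them and is smaller than the pairwise distances between the $\Lambda_i$; with this $\varepsilon$, for any $x\in\Omega(f)$ the dynamical ball $B(x,\varepsilon,f)$ is contained in the single piece $\Lambda_i$ containing $x$ and hence has zero entropy. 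Therefore $f|_{\Omega(f)}$ is entropy-expansive, and remark~(5) upgrades this to entropy-expansiveness of $f$ itself.

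The last two sentences of the Corollary are then immediate consequences quoted in the surrounding text: entropy-expansiveness gives a principal symbolic extension by~\cite{bff}, and it gives the existence of equilibrium states for every continuous $\varphi\colon M\to\RR$ via remark~(5) combined with~\cite[Proposition 2.19]{Bow75} (an entropy-expansive system has equilibrium states). I would close by noting that the set of $f$ for which all of this holds is the open and dense set produced in the first step, which is exactly what the statement claims.

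The main obstacle, such as it is, is the gluing step in the second paragraph: one must be slightly careful that the $\varepsilon$-expansiveness constant can be chosen uniformly across the finitely many pieces of the filtration and small enough that dynamical balls cannot straddle two pieces, since a priori the constants provided by~\cite{CoY05,dfpv} depend on the piece. Finiteness of the filtration and compactness of $M$ make this routine, but it is the one place where the argument is more than a verbatim citation.
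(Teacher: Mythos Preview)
Your proposal is correct and follows the same approach as the paper's one-sentence proof (``This result with the remark (5) above and the Main Theorem implies the following''). You have simply made explicit the gluing step across the finitely many pieces of the filtration that the paper leaves implicit, and your handling of it via a uniform $\varepsilon$ and remark~(5) is exactly what is needed.
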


At the time we were finishing this paper, we learned that Liao, Viana and Yang \cite{lvy}
proved the entropy expansiveness for \emph{any} diffeomorphism $C^1$-far from homoclinic tangencies.
Their result uses the uniform dominated splitting over the support of ergodic measures for
diffeomorphism $C^1$-far from homoclinic tangencies. In our case, we do not have a uniform
version of Theorem~\ref{main} that holds for general diffeomorphisms in $\diff^1(M)\setminus \HT$.
\medskip

\paragraph{\bf Lack of hyperbolicity and weak periodic points.}
The stability of diffeomorphisms has been related to the hyperbolicity, see~\cite{mane-stabilite}.
This goal has been achieved by considering \emph{star diffeomorphisms}, i.e. systems whose
periodic orbits do not bifurcate by small $C^1$-perturbation.
We propose here to address a local version of this question.

For a homoclinic class $H(O)$ of $f$, if for any $\delta>0$, there
is a hyperbolic periodic point $q$ such that
\begin{itemize}
\item[--] $q$ is \emph{homoclinically related} with
$O$, i.e. its stable and unstable manifolds intersect transversally those of $O$,
\item[--] $q$ has one Lyapunov exponent in $(-\delta,\delta)$,
\end{itemize}
then one says that $H(O)$ contains \emph{weak
periodic orbits related to $p$}.

\begin{problem}
For generic $f\in{\rm Diff}^1(M)$, the homoclinic classes $H(O)$ satisfy:
\begin{itemize}
\item[--] either $H(O)$ is hyperbolic,
\item[--] or $H(O)$ contains weak periodic orbits related to $O$.
\end{itemize}
\end{problem}

Let us discuss how to solve this problem.
If the class $H(O)$ does not contain weak periodic orbits related to $O$,
then by a generic argument, one can prove that $H(O)$
admits a dominated splitting $T_{H(O)}M=E\oplus F$, where $\dim E$ equals to the stable dimension of $O$, and
one can get a uniform estimation for the contraction and the expansion at the period along the invariant bundles of periodic orbits related with $O$. (One can see \cite{YaG09}.)
By a variation of Ma\~n\'e's work, one can thus answer the problem when $E$ is uniformly contracted: \cite{BGY09} shows that $F$ is uniformly expanded in this case.

For diffeomorphisms far from homoclinic tangencies we have:

\begin{cor}\label{c.noweak}
For generic $f\in{\rm Diff}^1(M)\setminus\HT$,
any homoclinic class $H(O)$ satisfy:
\begin{itemize}
\item[--] either $H(O)$ is hyperbolic,
\item[--] or $H(O)$ contains weak periodic orbits related to $O$.
\end{itemize}
\end{cor}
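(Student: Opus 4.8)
The plan is to derive Corollary~\ref{c.noweak} from Theorem~\ref{main} (and its consequences) together with the $C^1$-generic structure theory recalled in the previous paragraph. Fix a diffeomorphism $f$ in the dense $G_\delta$ subset $\cG\subset \diff^1(M)\setminus\HT$ provided by Theorem~\ref{main}, intersected with the usual generic sets: those where every chain-recurrence class containing a periodic point is a homoclinic class~\cite{BC}, where homoclinic classes depend continuously on the orbit, and where the structure of~\cite{YaG09} applies (the existence of a dominated splitting of the right index when no weak related periodic orbits exist). Let $H(O)$ be a homoclinic class and assume it does \emph{not} contain weak periodic orbits related to $O$; we must show $H(O)$ is hyperbolic.

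First I would invoke item~(2) of Theorem~\ref{main}: $H(O)$ carries a partially hyperbolic splitting $T_{H(O)}M=E^s\oplus E^c_1\oplus\cdots\oplus E^c_k\oplus E^u$ with one-dimensional central bundles, and for each $i$ there exist periodic orbits in $H(O)$ whose Lyapunov exponent along $E^c_i$ is arbitrarily close to $0$. If $k=0$ the class is already hyperbolic and we are done, so assume $k\geq 1$ for contradiction. The point is to upgrade these weak periodic orbits to ones that are homoclinically related to $O$ and have a Lyapunov exponent in $(-\delta,\delta)$, which would contradict the hypothesis. For this I would use the generic fact that periodic orbits in a homoclinic class $H(O)$ of the same stable dimension as $O$ and sufficiently $C^1$-robustly inside the class are homoclinically related to $O$ (a standard consequence of the connecting lemma and genericity). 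The delicate issue is the stable dimension: Theorem~\ref{main} only says the minimal stable dimension in $H(O)$ is $\dim E^s$ or $\dim E^s+1$, and likewise for the maximal one. So I would split into cases according to which central bundle $E^c_i$ the weak orbits live along and control their stable dimension via the domination, arranging that the constructed weak orbit has stable dimension exactly that of $O$ — using the minimality/maximality statement of Theorem~\ref{main} and the fact that $O$ itself realizes an extremal stable dimension up to one — and then homoclinically relate it to $O$.

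An alternative and cleaner route, which I would actually prefer to carry out, bypasses the index bookkeeping entirely: combine Theorem~\ref{main} with the existing partial results. By~\cite{YaG09}, the absence of weak related periodic orbits gives a dominated splitting $T_{H(O)}M=E\oplus F$ with $\dim E$ equal to the stable dimension of $O$, together with uniform estimates at the period for periodic orbits related to $O$. If $E$ is uniformly contracted and $F$ uniformly expanded we are done. If $E$ is not uniformly contracted, one applies Liao's selecting lemma inside the non-uniform part to produce a periodic orbit of stable dimension $\dim E$ with a central Lyapunov exponent arbitrarily close to $0$; since it has the same index as $O$ and lies in the class, genericity makes it homoclinically related to $O$, contradicting the hypothesis. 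Hence $E$ is uniformly contracted; the symmetric argument applied to $f^{-1}$ (here one uses that $\HT$ is invariant under $f\mapsto f^{-1}$, so $f^{-1}$ is still generic far from tangencies) shows $F$ is uniformly expanded. Therefore $H(O)$ is hyperbolic.

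The main obstacle I expect is the step that passes from ``there is a periodic orbit in $H(O)$ with small central exponent'' to ``there is a periodic orbit \emph{homoclinically related to $O$} with small central exponent.'' Merely being in $H(O)$ and having the right stable dimension is not automatically enough; one needs a robust version — the orbit should persist in the class under perturbation and intersect the invariant manifolds of $O$ transversally. This is handled by the connecting lemma together with the genericity hypotheses (continuation of homoclinic classes, $H(O)$ equal to the homoclinic class of any of its periodic points of the appropriate type), but it is the place where the weak-orbit construction of Liao's lemma and the generic machinery must be glued carefully, and where one must make sure the exponent stays small while the perturbation creating the homoclinic relation is performed — choosing the perturbation size much smaller than $\delta$ does the job. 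All other steps (the dichotomy on uniform contraction of $E$, the reduction via $f^{-1}$, the case $k=0$) are routine given Theorem~\ref{main} and~\cite{YaG09,BGY09}.
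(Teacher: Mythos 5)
Your first route is essentially the paper's proof: invoke Theorem~\ref{main}(2), reduce to producing a weak periodic orbit in $H(O)$ with the \emph{same index} as $O$, and then use the generic fact that two periodic points of the same index lying in the same (local) homoclinic class are homoclinically related (Corollary~\ref{c.class}). But you leave the index bookkeeping as a declaration of intent, and that is where the actual content lies. The paper resolves it as follows: write $T_{H(O)}M=E\oplus F$ with $\dim E=\operatorname{ind}(O)$ and suppose $E$ is not uniformly contracted, so that the top one-dimensional subbundle of $E$ is a central bundle, $E=E'\oplus E^c$. If $H(O)$ contains a periodic point of index $\dim E'$, Theorem~\ref{indices}(2) directly yields periodic orbits in $H(O)$ of index $\operatorname{ind}(O)$ whose $\operatorname{ind}(O)^{\text{th}}$ exponent is arbitrarily close to $0$; otherwise $\operatorname{ind}(O)$ is the minimal index of the class and the last assertion of Theorem~\ref{main}(2) supplies them. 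Either way Corollary~\ref{c.class} concludes. Your closing worry is also misplaced: no perturbation is needed to relate the weak orbit to $O$, since for generic $f$ same-index periodic points inside a homoclinic class are already homoclinically related; there is no issue of the exponent drifting.

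The route you say you would actually carry out (the second one) has a genuine gap. Liao's selecting lemma, applied to the non-uniformly contracted bundle $E$, produces periodic orbits in an arbitrarily small \emph{neighborhood} of (a compact subset of) $H(O)$, homoclinically related among themselves there --- it does not produce orbits \emph{inside} $H(O)$, let alone orbits related to $O$. Getting such orbits into the class is precisely the central difficulty of the whole paper (see Section~\ref{ss.link}); moreover the alternatives of Theorem~\ref{C3options} include the case of a $0$-CLE subset, where no weak periodic orbit is produced at all without the machinery of Theorem~\ref{main2} and Sections~\ref{s.connecting}--\ref{s.trapped}. So the ``cleaner route'' via \cite{YaG09}, \cite{BGY09} and Liao's lemma only settles the case where $E$ is already uniformly contracted (as the paper itself points out in the introduction); when $E$ is not uniformly contracted one cannot bypass Theorem~\ref{main}. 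You should therefore carry out your first route in the precise form above rather than rely on the second.
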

\medskip

\noindent
\paragraph{\bf Palis conjecture.}
If $f$ has two hyperbolic periodic orbits $O_1$ and $O_2$ with different stable dimension
and whose invariant manifolds intersect (i.e.
such that $W^s(O_1)\cap W^u(O_2)$ and $W^s(O_2)\cap W^u(O_1)$ are non-empty),
then we say $f$ that has a \emph{heterodimensional cycle}.
One can generalize this definition: a \emph{robust heterodimensional cycle}
is a pair of transitive hyperbolic sets $K,L$ having different stable dimensions
such that for any diffeomorphism $C^1$-close to $f$
the stable and unstable sets of the continuations of $K$ and $L$ intersect.

\begin{conj}[Palis]
Every robustly non-hyperbolic diffeomorphism $f$ can be
accumulated by diffeomorphisms exhibiting either a homoclinic tangency or
a heterodimensional cycle.
\end{conj}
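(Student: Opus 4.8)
The plan is to use the Main Theorem to eliminate the tangency alternative, reducing the question to the partially hyperbolic case, and then to create a heterodimensional cycle by a $C^1$-small perturbation. So suppose $f$ is robustly non-hyperbolic --- no diffeomorphism in a $C^1$-neighbourhood $\cU$ of $f$ is hyperbolic --- and suppose moreover that $f$ is not accumulated by diffeomorphisms exhibiting a homoclinic tangency. Since $\HT$ is closed we may shrink $\cU$ so that $\cU\cap\HT=\emptyset$; by the Main Theorem every $g\in\cU$ is then a $C^1$-limit of partially hyperbolic diffeomorphisms, and since partial hyperbolicity is open and $\cU$ is open, after replacing $f$ by a nearby diffeomorphism we may assume $f$ is partially hyperbolic. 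Replacing $f$ once more by a diffeomorphism in the dense $G_\delta$ set $\cG$ of Theorem~\ref{main} and in the dense $G_\delta$ set of \cite{BC} (where chain-recurrence classes carrying periodic points are homoclinic classes), we may assume $f$ is $C^1$-generic; robust non-hyperbolicity and the absence of tangencies are preserved throughout.

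First I would locate the non-hyperbolicity. If every chain-recurrence class of $f$ were hyperbolic, then, arguing exactly as in the deduction of the Main Theorem from Theorem~\ref{main} (Hausdorff limits of classes lie in classes, together with compactness of the hyperspace of $M$), one produces a filtration whose successive maximal invariant sets are hyperbolic, so $f$ would be a hyperbolic diffeomorphism, hence $C^1$-stable --- contradicting robust non-hyperbolicity. So $f$ admits a non-hyperbolic chain-recurrence class $\C$, which by Theorem~\ref{main} is either an aperiodic class (partially hyperbolic, one-dimensional centre, zero central exponent) or a homoclinic class $H(p)$ with a partially hyperbolic splitting $E^s\oplus E^c_1\oplus\cdots\oplus E^c_k\oplus E^u$, $k\ge1$.

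Suppose first $\C=H(p)$ is a non-hyperbolic homoclinic class. Then the bundle past $E^s$ is not uniformly expanded (otherwise $H(p)$ would be hyperbolic), so Corollary~\ref{c.noweak} provides, for every $\theta>0$, a hyperbolic periodic orbit in $H(p)$ homoclinically related to $p$ with one exponent in $(-\theta,\theta)$; alternatively, if the periodic orbits of $H(p)$ already realise two distinct stable dimensions, then \cite{abcdw} and Theorem~\ref{main}(2) directly provide periodic orbits of consecutive stable dimensions $d$ and $d+1$ in $H(p)$, one with a central exponent as close to $0$ as desired. In the first situation, since we are $C^1$-far from tangencies, a Franks-type perturbation pushes the weak exponent across $0$ and changes that orbit's stable dimension by one, while the stable and unstable manifolds of $p$ --- which passed close to the orbit --- can be re-connected to it by a connecting-lemma perturbation; so in every case we obtain, after an arbitrarily small perturbation, hyperbolic periodic orbits $q,q'$ of consecutive stable dimensions in a common chain-recurrence class, one of them with a central exponent arbitrarily close to $0$ along a one-dimensional sub-bundle. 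Hayashi's connecting lemma then produces, after a further $C^1$-perturbation, an orbit in $W^u(q')\cap W^s(q)$, and the weakness of the central exponent together with the one-dimensionality of the central bundle makes available a local model along the dominated splitting, in the spirit of Bonatti and D\'{\i}az, creating an orbit in $W^u(q)\cap W^s(q')$. This yields a heterodimensional cycle, as wanted.

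It remains to handle the case where the only non-hyperbolic classes are \emph{aperiodic}; this is where I expect the main obstacle to lie. An aperiodic class $\C$ contains no periodic orbit, and, being itself a whole chain-recurrence class, it cannot be contained in any homoclinic class; so one must instead create, by perturbation, periodic orbits of two consecutive stable dimensions \emph{near} $\C$ --- the natural candidates being $\dim(E^s)$ and $\dim(E^s)+1$, since every invariant measure on $\C$ has zero central exponent --- and then form a cycle between them, which is exactly the kind of linking problem addressed by Theorem~\ref{main2} and Section~\ref{ss.link} but that is not available in this degenerate regime. This difficulty is tied to Conjecture~\ref{c.finiteness}: a positive answer there would make every chain-recurrence class isolated, hence (for $C^1$-generic $f$) a homoclinic class, ruling out aperiodic classes altogether and reducing everything to the previous paragraph, thereby completing the proof.
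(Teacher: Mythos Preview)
The statement you are trying to prove is a \emph{conjecture}: the paper records it as open and does not prove it. The paper's contribution in this direction is only the partial structural result Corollary~\ref{c.cycle}, which \emph{assumes} one is far from both tangencies and heterodimensional cycles and deduces a constraint on the central bundle; it does not establish the dichotomy itself. There is therefore no proof in the paper to compare your attempt against.

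Your argument has genuine gaps, some of which you already flag. In the homoclinic-class case with a single index, once you apply a Franks perturbation you leave the generic set, so Theorem~\ref{t.diffind} is no longer available, and the step ``creating an orbit in $W^u(q)\cap W^s(q')$'' is precisely the hard direction: here $\dim W^u(q)+\dim W^s(q')<\dim M$, so such an intersection is non-generic and cannot be manufactured by a connecting lemma alone. Producing it requires a concrete mechanism (a blender, or an explicit heteroclinic local model with the right returns), and invoking ``the spirit of Bonatti and D\'iaz'' does not supply one. Note also that Theorem~\ref{main}(2) explicitly allows all periodic points of a non-hyperbolic $H(p)$ to share a single index (for instance $k=1$ with both the minimal and maximal index equal to $\dim E^s+1$), so this case cannot be sidestepped. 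The aperiodic case you correctly identify as open and tied to Conjecture~\ref{c.finiteness}. These are exactly the obstacles that keep Palis's conjecture unresolved; the paper does not claim to overcome them.
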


The following corollary of Theorem~\ref{main} was the main theorem of~\cite{Cro08}.

\begin{cor}\label{c.cycle}
For generic $f\in{\rm Diff}^1(M)$ which can not be approximated by diffeomorphisms exhibiting
a homoclinic tangency or a heterodimensional cycle, any homoclinic class $H(O)$ admits a partially
hyperbolic splitting:
$$T_{H(0)}M=E^s\oplus E^c_1\oplus E^c_2\oplus E^u,$$
where $\dim E^c_i=0~{\rm or} ~1$, for $i=1,2$ and $\dim(E^s\oplus E^c_1)$ coincides with the stable dimension of $O$.
\end{cor}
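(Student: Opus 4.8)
The plan is to derive the statement from Theorem~\ref{main}, using in addition only the standard $C^1$-generic fact that the absence of heterodimensional cycles forces a homoclinic class to contain periodic orbits of a single stable dimension. Let $\mathcal N\subset\diff^1(M)$ be the set of diffeomorphisms that are accumulated neither by diffeomorphisms exhibiting a homoclinic tangency nor by diffeomorphisms exhibiting a heterodimensional cycle. Then $\mathcal N$ is open---it is the complement of the union of the two closed sets $\HT$ and the closure of the set of diffeomorphisms having a heterodimensional cycle---and $\mathcal N\subset\diff^1(M)\setminus\HT$; hence $\cG\cap\mathcal N$ is a dense $G_\delta$ subset of $\mathcal N$, where $\cG$ is the dense $G_\delta$ subset of $\diff^1(M)\setminus\HT$ given by Theorem~\ref{main}. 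I would work on the dense $G_\delta$ subset $\cG'\subset\mathcal N$ obtained by intersecting $\cG\cap\mathcal N$ further with the (residual) sets on which: (i) every homoclinic class is a chain-recurrence class~\cite{BC}; and (ii) whenever a homoclinic class contains hyperbolic periodic orbits of two distinct stable dimensions, $f$ is accumulated in $\diff^1(M)$ by diffeomorphisms exhibiting a heterodimensional cycle.

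The first step is to justify that property~(ii) is $C^1$-generic. By~\cite{abcdw}, the set of stable dimensions realized by the periodic orbits of a homoclinic class $H(O)$ is an interval; so two distinct values give periodic orbits $Q_1,Q_2\subset H(O)$ of consecutive stable dimensions, and, $H(O)$ being chain transitive by~(i), the connecting lemma for pseudo-orbits produces, after an arbitrarily small $C^1$-perturbation, nonempty intersections $W^u(Q_1)\cap W^s(Q_2)$ and $W^u(Q_2)\cap W^s(Q_1)$, i.e. a heterodimensional cycle associated with (continuations of) periodic orbits of $H(O)$; this is the mechanism already exploited in~\cite{abcdw,Cro08}. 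Consequently, for $f\in\cG'$ and any homoclinic class $H(O)$, all hyperbolic periodic orbits contained in $H(O)$ have a common stable dimension $d$; since $O\subset H(O)$, $d$ is the stable dimension of $O$, and the minimal and the maximal stable dimensions of the periodic orbits of $H(O)$ both equal $d$.

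Since $f\in\cG\subset\diff^1(M)\setminus\HT$, Theorem~\ref{main}(2) applies to $H(O)$ and yields a partially hyperbolic, hence dominated, splitting $T_{H(O)}M=E^s\oplus F_1\oplus\cdots\oplus F_k\oplus E^u$ with $\dim F_i=1$. By the minimal-stable-dimension clause, $a:=d-\dim E^s\in\{0,1\}$; by the maximal-stable-dimension clause, $b:=(\dim E^s+k)-d\in\{0,1\}$. As $a+b=k$, this forces $k\le 2$. I would then set
$$E^c_1:=\bigoplus_{1\le i\le a}F_i,\qquad E^c_2:=\bigoplus_{a<i\le k}F_i,$$
with the convention that an empty direct sum is the trivial bundle, so that $\dim E^c_1=a\le1$, $\dim E^c_2=k-a=b\le1$, the coarsened splitting $T_{H(O)}M=E^s\oplus E^c_1\oplus E^c_2\oplus E^u$ is still dominated, and $\dim(E^s\oplus E^c_1)=\dim E^s+a=d$ is the stable dimension of $O$. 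This is exactly the asserted decomposition.

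The only real difficulty I anticipate is the genericity argument of the second paragraph: one must invoke and quote correctly the result converting two stable dimensions inside a homoclinic class into a genuine heterodimensional cycle after a $C^1$-perturbation, checking in particular that the cycle is associated with periodic orbits related to $H(O)$ so that its existence really contradicts $f\in\mathcal N$. Everything else reduces to the dimension bookkeeping above, which uses only the explicit minimal/maximal stable dimension statements of Theorem~\ref{main}.
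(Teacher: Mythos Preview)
Your proposal is correct and follows essentially the same route as the paper: apply Theorem~\ref{main} to obtain the splitting with $k$ one-dimensional central bundles, argue that all periodic orbits in $H(O)$ share the same stable index (since otherwise the no-heterodimensional-cycle hypothesis is violated), and then use the minimal/maximal index clauses of Theorem~\ref{main} to force $k\le 2$ with the correct placement of the central bundles. The only notable difference is in the tool used to rule out two indices in the class: the paper simply invokes Theorem~\ref{t.diffind} (the result of~\cite{bdk} that two indices inside a homoclinic class yield a \emph{robust} heterodimensional cycle for a $C^1$-generic $f$), whereas you build the cycle by hand via the connecting lemma for pseudo-orbits; both arguments are standard and either suffices, though quoting Theorem~\ref{t.diffind} is shorter and already available in the paper.
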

\medskip

\noindent
\paragraph{\bf Lyapunov stable classes.}
One sometimes focuses the study of a system to its attractors, since
they can describe the forward behavior of most orbits.
We will say that a chain-recurrence class is \emph{Lyapunov stable}
if it admits arbitrarily small neighborhoods $U$ satisfying $f(U)\subset U$.
It is known~\cite{BC} that for $C^1$-generic diffeomorphisms,
the forward orbit of points in a dense $G_\delta$ subset of $M$
accumulates on a Lyapunov stable class.

In restriction to these classes, Palis conjecture holds~\cite{CP}.
Combining this result with ours we get the following
statement, which was proposed to us by Yi Shi.

\begin{cor}\label{c.lyap-stable}
For generic $f\in \diff^1(M)\setminus \HT$, if $\C$ is a Lyapunov stable chain-recurrence class, then
\begin{itemize}
\item[--] either it is hyperbolic,
\item[--] or it contains a robust heterodimensional cycle.
\end{itemize}
\end{cor}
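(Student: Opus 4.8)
The plan is to feed Theorem~\ref{main} into the analysis of Lyapunov stable (``quasi-attracting'') classes carried out in~\cite{CP}. Fix $f$ in the relevant dense $G_\delta$ subset of $\diff^1(M)\setminus\HT$. Since $\HT$ is closed, a whole $C^1$-neighborhood of $f$ is disjoint from it, so no diffeomorphism $C^1$-close to $f$ exhibits a homoclinic tangency. Let $\C$ be a Lyapunov stable chain-recurrence class. By Theorem~\ref{main}, $\C$ is partially hyperbolic,
$$T_\C M=E^s\oplus E^c_1\oplus\cdots\oplus E^c_k\oplus E^u,$$
with one-dimensional central bundles (and $k=1$ when $\C$ is aperiodic, by item~(1)). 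If $k=0$, then $\C$ is a chain-transitive set carrying the uniformly hyperbolic splitting $E^s\oplus E^u$, hence is a hyperbolic set; being moreover Lyapunov stable it is in fact a hyperbolic attractor. This gives the first alternative, so from now on I assume $k\geq 1$; in particular $\C$ is not hyperbolic.

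First I would check that $\C$ is a homoclinic class $H(p)$. For $C^1$-generic $f$, a chain-recurrence class containing a periodic orbit is a homoclinic class~\cite{BC}, so it suffices to rule out the aperiodic case. By Theorem~\ref{main}(1) an aperiodic $\C$ would be partially hyperbolic with a one-dimensional central bundle along which every invariant measure has zero exponent; running the central-model machinery of~\cite{crovisier-palis-faible,Cro08,CP} at an extremal central bundle of such a forward-trapped, chain-transitive class produces a periodic orbit inside $\C$, a contradiction. Hence $\C=H(p)$ with $k\geq 1$, and this class is robustly non-hyperbolic (the central bundles of $H(p)$ persist under small perturbations).

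Next I would produce a stabilizable heterodimensional cycle associated to $\C$. Applying the results of~\cite{CP} — ``Palis conjecture holds in restriction to Lyapunov stable classes'' — to the robustly non-hyperbolic Lyapunov stable class $\C$, one gets that $f$ is $C^1$-accumulated by diffeomorphisms exhibiting a homoclinic tangency or a heterodimensional cycle; the first possibility is excluded because $f$ lies in the open set $\diff^1(M)\setminus\HT$, so $f$ is $C^1$-accumulated by diffeomorphisms with a heterodimensional cycle. Here Theorem~\ref{main}(2) is what makes the cycle tractable: it provides in $H(p)$ periodic orbits with $E^c_i$-exponent arbitrarily close to $0$, which together with the interval property of realized stable dimensions~\cite{abcdw} lets one take this cycle of coindex one, with one-dimensional central bundle along it, associated to periodic orbits homoclinically related to continuations of periodic orbits of $\C$. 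The stabilization theorem for coindex-one heterodimensional cycles then yields, $C^1$-close to $f$, a diffeomorphism with a $C^1$-robust heterodimensional cycle between two transitive hyperbolic sets contained in the continuation of $\C$.

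Finally I would pass from this approximation statement to the conclusion that $\C$ itself contains a robust heterodimensional cycle, for $f$ in a dense $G_\delta$ subset of $\diff^1(M)\setminus\HT$, by a Baire category argument: having a robust heterodimensional cycle is a $C^1$-open property, and for $C^1$-generic parameters the two hyperbolic sets of such a cycle lie in a single homoclinic class which, being a quasi-attractor, is Lyapunov stable; combining openness with the density just established over the stratum of parameters possessing a non-hyperbolic Lyapunov stable class yields the statement. The hard part will be exactly this last step — controlling that the cycle-producing mechanism supplied by~\cite{CP} and Theorem~\ref{main} is genuinely located inside $\C$ (and not merely near $f$), which is why one needs that a Lyapunov stable non-hyperbolic class cannot be index-homogeneous, so that the perturbation realizing the robust cycle does not escape $\C$.
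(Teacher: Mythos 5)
There is a genuine gap, and it is exactly the one you flag at the end without resolving. Your route produces robust heterodimensional cycles for diffeomorphisms $C^1$-close to $f$ (via the Palis-conjecture result of~\cite{CP} on Lyapunov stable classes plus a stabilization theorem), whereas the corollary asserts that the class $\C$ of $f$ \emph{itself} contains a robust heterodimensional cycle. The concluding Baire-category step does not bridge this: openness of ``having a robust cycle somewhere in $M$'' says nothing about the cycle sitting inside (the continuation of) $\C$, and your claim that for generic parameters the two hyperbolic sets of the produced cycle lie in a Lyapunov stable class is unsubstantiated --- the perturbations realizing the cycle may very well create it in a different part of the dynamics. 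You cannot pass from ``cycles are created arbitrarily close to $f$'' to ``$\C$ contains a robust cycle'' without an additional localization mechanism.

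The paper's proof avoids this entirely by arguing contrapositively and using Theorem~\ref{t.diffind} (from~\cite{bdk}): for generic $f$, a homoclinic class containing hyperbolic periodic points of \emph{different indices} already contains a robust heterodimensional cycle. So the whole problem reduces to showing that a non-hyperbolic Lyapunov stable class must be index-inhomogeneous. Concretely: assume $\C=H(p)$ has no robust cycle, so all its periodic points have the same index; a separate claim (essentially~\cite{yang-lyapunov}) shows that a Lyapunov stable class contains periodic points of the \emph{maximal} possible index $\dim(E^s)+k$, while Theorem~\ref{main}(2) bounds the minimal index by $\dim(E^s)+1$; index-homogeneity then forces $k\leq 1$, Proposition~\ref{p.TT} gives that the central dynamics along the single $E^c$ is thin trapped, and \cite[Theorem 13]{CP} shows $E^c$ is trivial, i.e.\ $H(p)$ is hyperbolic. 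If you want to salvage your approach, the missing ingredient is precisely this index-inhomogeneity statement for non-hyperbolic Lyapunov stable classes together with Theorem~\ref{t.diffind}; the forward construction of cycles by perturbation cannot be localized in $\C$ by genericity alone.
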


For generic diffeomorphisms in $\diff^1(M)\setminus \HT$,
more is known about Lyapunov stable chain-recurrence classes.
By~\cite{yang-lyapunov} they are homoclinic classes
and by~\cite{bgly} they are \emph{residual attractors},
i.e. for each of them there exists a neighborhood $U$
such that the forward orbit of any point in a dense $G_\delta$ subset of $U$
accumulates on the class.

When $M$ is connected, the bi-Lyapunov stable classes of a generic diffeomorphism,
i.e. the chain-recurrence classes that are Lyapunov stable both for $f$ and $f^{-1}$,
coincide with the whole manifold, see~\cite{rafael} and previous works~\cite{ABD,rafael-martin}.
In particular, if a chain-recurrence class has non-empty interior, $M$ is transitive.
\medskip

\noindent
\paragraph{\bf Bound on the number of classes.}
The conjecture~\ref{c.finiteness} asserts that
for generic diffeomorphism $f\in\diff^1(M)\setminus\HT$
the number of chain-recurrence classes is finite.
From the Main Theorem, the classes are partially hyperbolic and
the number of central bundles may be large.
One can bound the number of classes having a central bundle of dimension larger or equal to $2$.

\begin{cor}\label{c.centraldimension}
For generic $f\in{\rm Diff}^1(M)\setminus \HT$,
there exist only finitely many chain-recurrence classes having more than
one non-uniform one-dimensional central bundle.
\end{cor}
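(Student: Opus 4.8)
The plan is to argue by contradiction using a compactness argument on the space of compact invariant sets, together with the structural information provided by Theorem~\ref{main}. Suppose $f\in\cG$ (the dense $G_\delta$ of Theorem~\ref{main}) admits infinitely many distinct chain-recurrence classes $\C_n$, each carrying a partially hyperbolic splitting with at least two non-uniform one-dimensional central bundles. Since the $\C_n$ are pairwise disjoint compact sets in the compact manifold $M$, after passing to a subsequence they converge in the Hausdorff topology to a compact set $\Lambda_\infty$, which (as recalled in the excerpt, using~\cite{crovisier-approximation}-type semicontinuity) is contained in a single chain-recurrence class $\C_\infty$; moreover infinitely many of the $\C_n$ are distinct from $\C_\infty$ and accumulate on it, so each such $\C_n$ meets any fixed neighborhood $U$ of $\C_\infty$. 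First I would fix such a neighborhood $U$ small enough that the maximal invariant set $\Lambda_U$ in $U$ is partially hyperbolic with central bundle decomposed into one-dimensional subbundles — this is possible by the first bullet recalled after the statement of Theorem~\ref{main} (partial hyperbolicity of a class persists on a neighborhood) applied to $\C_\infty$.

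The heart of the argument is a pigeonhole on the \emph{type} of the central splitting inside $\Lambda_U$. On the partially hyperbolic set $\Lambda_U$ the dominated splitting $T_{\Lambda_U}M=E^s\oplus E^c_1\oplus\cdots\oplus E^c_k\oplus E^u$ has a fixed dimension profile $(\dim E^s, k, \dim E^u)$, in particular a fixed number $k$ of one-dimensional central bundles. Each $\C_n\subset U$ inherits, by restriction and by uniqueness of the finest dominated splitting compatible with the indices, a splitting whose central part is a sub-splitting of $E^c_1\oplus\cdots\oplus E^c_k$ grouped into consecutive blocks; for $\C_n$ this grouping is exactly the one given by Theorem~\ref{main}(2), where the non-uniform one-dimensional central bundles correspond to those indices $i$ for which $E^c_i$ is neither uniformly contracted nor uniformly expanded over $\C_n$ (equivalently, by Theorem~\ref{main}, there are periodic orbits in $\C_n$ with Lyapunov exponent along $E^c_i$ arbitrarily close to $0$). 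Since there are only finitely many such groupings of $\{1,\dots,k\}$ into consecutive blocks, infinitely many of the $\C_n$ share the same pattern, and in particular there are two fixed consecutive blocks — say those spanning $E^c_a$ and $E^c_b$ with $a<b$ — which are one-dimensional and non-uniform for each of these infinitely many $\C_n$. Thus for each such $n$ one can, via Ma\~n\'e's ergodic closing lemma applied inside $\C_n$ (a homoclinic class by~\cite{BC}), produce hyperbolic periodic orbits $O_n^a, O_n^b\subset\C_n\subset U$ whose Lyapunov exponents along $E^c_a$, respectively $E^c_b$, are as close to $0$ as desired.

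Now I would derive the contradiction by showing that all these weak periodic orbits cannot live simultaneously in the single partially hyperbolic set $\Lambda_U$. The point is that the central bundle of $\Lambda_U$ being a \emph{dominated} sum of one-dimensional subbundles puts a definite gap condition between $E^c_a$ and $E^c_b$: there exist $N\geq 1$ and $\lambda<1$ such that $\|Df^N|_{E^c_a}\|\le \lambda\,\mathbf{m}(Df^N|_{E^c_b})$ at every point of $\Lambda_U$ (where $\mathbf m$ is the conorm). Pick a periodic orbit $O_n^b$ with Lyapunov exponent along $E^c_b$ in $(-\theta,\theta)$; by domination its exponent along $E^c_a$ is then bounded above by $\log\lambda/N + \theta < 0$ for $\theta$ small, so $E^c_a$ is uniformly contracted \emph{along this orbit} with a uniform rate. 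Running the same argument with $O_n^a$, whose exponent along $E^c_a$ is in $(-\theta,\theta)$, domination between $E^c_{a}$ (or rather the block just above it) and the blocks above forces $E^c_b$ to be uniformly expanded along $O_n^a$ with a uniform rate. Combined with a Pliss-lemma / uniform-hyperbolicity argument in the spirit of Ma\~n\'e and of~\cite{W} — using that we have infinitely many such orbits densely distributed over $\Lambda_U$ with uniform central rates — one concludes that $E^c_a$ is uniformly contracted over all of $\Lambda_U$ (hence can be merged into $E^s$) or $E^c_b$ is uniformly expanded over all of $\Lambda_U$; either way, for $n$ large $\C_n\subset\Lambda_U$ cannot have \emph{both} $E^c_a$ and $E^c_b$ non-uniform, contradicting the choice of the pattern.

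\textbf{Main obstacle.} The delicate step is the last one: passing from ``each $\C_n$ individually has weak periodic orbits'' to a \emph{uniform} central contraction/expansion rate over the whole neighborhood maximal invariant set $\Lambda_U$. The weak periodic orbits in distinct $\C_n$ a priori have their weak exponents going to $0$ along possibly different subsequences and with no common lower bound on the "strong'' side; one must therefore combine the domination on $\Lambda_U$ with a careful selecting/Pliss argument (this is exactly the mechanism behind~\cite{W} and~\cite{BGY09}) to convert the domination gap into genuine uniform hyperbolicity of the extremal central bundle once it is weak along a dense family of orbits. Making this quantitative — tracking the constant $N$, the domination rate $\lambda$, and choosing $\theta$ and the neighborhood $U$ consistently — is where the real work lies; everything else is soft compactness plus the already-established Theorem~\ref{main}.
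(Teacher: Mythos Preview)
Your setup through the pigeonhole step is fine and matches the paper's implicit reduction: one may assume all the $\C_n$ share a common dominated splitting $E\oplus E^c_1\oplus E^c_2\oplus F$ and Hausdorff-converge to a chain-transitive set $\Lambda$. The gap is in your contradiction step. You want to conclude that $E^c_a$ is uniformly contracted (or $E^c_b$ uniformly expanded) over all of $\Lambda_U$, but this is exactly what cannot happen: the very orbits $O_n^a\subset\C_n\subset\Lambda_U$ you produced have Lyapunov exponent along $E^c_a$ in $(-\theta,\theta)$, so $E^c_a$ is \emph{not} uniformly contracted on $\Lambda_U$; symmetrically the $O_n^b$ prevent $E^c_b$ from being uniformly expanded. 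Domination only tells you that $E^c_a$ is contracted along the $O_n^b$ orbits and $E^c_b$ is expanded along the $O_n^a$ orbits --- but the other family of orbits is also present in $\Lambda_U$, so no uniformity over the whole set can follow, however carefully you run Pliss or selecting arguments. The obstacle you flagged is not a technicality; it is fatal to this route.

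The paper's argument is of a different nature: it does not aim at uniform hyperbolicity on a neighborhood. Instead it shows directly that for large $n$ the limit set $\Lambda$ lies in both the chain-stable and chain-unstable sets of $H(p_n)$, whence all the $H(p_n)$ belong to the same chain-recurrence class --- contradiction. The mechanism exploits the two weak orbits in a \emph{complementary} way: since $O_n^2$ is weak along $E^c_2$, Pliss yields a $(1,\sigma,E\oplus E^c_1)$-hyperbolic point $x_n\in O_n^2$ for $f$, and the limit $x\in\Lambda$ then has a uniform local stable manifold tangent to $E\oplus E^c_1$. One then takes $y_n\in H(p_n)\cap W^u(O_n^1)$ close to $x$; since $\alpha(y_n)=O_n^1$ is weak along $E^c_1$, Lemma~\ref{l.hyperbolic-time} produces (after possibly replacing $y_n$ and $x$) points that are uniformly $(C,\sigma,E^c_2\oplus F)$-hyperbolic for $f^{-1}$ near $x$, and their local unstable manifolds meet the local stable manifold of $x$. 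Thus $W^u(H(p_n))$ meets $W^s(\Lambda)$, so $\Lambda\subset W^{ch-u}(H(p_n))$; the symmetric argument gives the other inclusion. The missing idea in your attempt is precisely this: use one weak orbit to manufacture forward hyperbolicity and the other to manufacture backward hyperbolicity of complementary bundles, then intersect the resulting invariant manifolds.
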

\medskip

\paragraph{\bf Index completeness.}
As we explained in section~\ref{ss.link}, it is sometimes important to know
the stable dimensions of the periodic orbits contained in a given neighborhood
of a set $K$ or which approximate a set for the Hausdorff topology.
\cite{abcdw} discussed this problem of \emph{index completeness} for chain
transitive sets of generic diffeomorphisms and proved that
homoclinic classes have some ``inner-index-completeness'' property.

For a chain-transitive set $\Lambda$ of $f$, one defines ${\rm ind}(\Lambda)$
as the set of integers $i$ such that $\Lambda$ is the Hausdorff limit of a sequence
of hyperbolic periodic orbits of stable dimension $i$.
If ${\rm ind}(\Lambda)$ is an interval of $\ZZ$, then one says that
$\Lambda$ is \emph{index complete}.
\begin{problem}
Are chain-transitive sets of $C^1$-generic $f$ index complete?
\end{problem}
We obtain a positive answer far from homoclinic tangencies.
\begin{cor}\label{c.index-completeness}
For generic $f\in \diff^1(M)\setminus \HT$ and for any chain-transitive set $\Lambda$,
\begin{enumerate}
\item $\Lambda$ is index complete.
Moreover, $\Lambda$ admits a partially hyperbolic splitting
$T_\Lambda=E^s\oplus E^c_1\oplus \cdots\oplus E^c_k\oplus E^u$, where
$\dim(E^c_i)=1$
and ${\rm ind}(\Lambda)=\{\dim E^s,\dots, \dim E^s+k\}$.
\item There exists a neighborhood $U$ of $\Lambda$ such that
$\operatorname{ind}(\Lambda)$ coincides with the set of stable dimensions
of the hyperbolic periodic orbits contained in $U$.
\end{enumerate}
\end{cor}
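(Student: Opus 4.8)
The plan is to deduce Corollary~\ref{c.index-completeness} from Theorem~\ref{main} and Theorem~\ref{main2}, using the $C^1$-generic tools that appear throughout the paper: the connecting lemma for pseudo-orbits and the structure of chain-recurrence classes from~\cite{BC}, the approximation of chain-transitive sets by periodic orbits from~\cite{crovisier-approximation}, Ma\~n\'e's ergodic closing lemma, Liao's selecting lemma and Franks's lemma. We work with an $f$ in a dense $G_\delta$ subset $\cG'\subseteq\cG$ on which all of these hold, and a chain-transitive set $\Lambda$. Since every $\varepsilon$-pseudo-orbit contained in $\Lambda$ is an $\varepsilon$-pseudo-orbit of $f$, the set $\Lambda$ is contained in a single chain-recurrence class $\C$, which by~\cite{BC} is a homoclinic class or an aperiodic class. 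By Theorem~\ref{main}, $\C$ carries a partially hyperbolic splitting with one-dimensional central bundles; restricting it to $\Lambda$ and then, on this possibly smaller invariant set, absorbing into $E^s$ (resp.\ into $E^u$) every one-dimensional central sub-bundle that has become uniformly contracted (resp.\ uniformly expanded) over $\Lambda$, we obtain a partially hyperbolic splitting $T_\Lambda M=E^s\oplus E^c_1\oplus\cdots\oplus E^c_k\oplus E^u$ in which every $E^c_i$ is one-dimensional and is neither uniformly contracted nor uniformly expanded over $\Lambda$. Write $d=\dim E^s$; this is the splitting of item~(1), and it remains to prove ${\rm ind}(\Lambda)=\{d,\dots,d+k\}$.

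For the inclusion ${\rm ind}(\Lambda)\subseteq\{d,\dots,d+k\}$: domination, the uniform contraction of $E^s$ and the uniform expansion of $E^u$ are open conditions that persist on the maximal invariant set of a suitable neighborhood $U$ of $\Lambda$, so every hyperbolic periodic orbit contained in $U$ has stable dimension between $d$ and $d+k$; in particular this holds for orbits Hausdorff-close to $\Lambda$. Conversely a periodic orbit realizing an index of ${\rm ind}(\Lambda)$ eventually lies in $U$, so once item~(1) is proved, ${\rm ind}(\Lambda)$ coincides with the set of stable dimensions of the periodic orbits contained in $U$, which is item~(2). Thus everything reduces to the reverse inclusion $\{d,\dots,d+k\}\subseteq{\rm ind}(\Lambda)$.

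To obtain the bottom index $d$ we apply Theorem~\ref{main2} to the coarse dominated splitting $E^s\oplus E^c_1\oplus(E^c_2\oplus\cdots\oplus E^u)$ over $\Lambda$, treating the case where no ergodic measure on $\Lambda$ has non-zero exponent along $E^c_1$ as for aperiodic classes (Section~\ref{ss.aperiodic}). Choosing the neighborhood $U_0$ of $\Lambda$ arbitrarily small, Theorem~\ref{main2} furnishes a periodic orbit $O\subseteq U_0$ whose local homoclinic class $H(O,U_0)$ contains $\Lambda$ and is a transitive set squeezed onto $\Lambda$ as $U_0\downarrow\Lambda$, with $O$ of stable dimension $d$ in the first alternative, or with exponent along $E^c_1$ arbitrarily close to $0$ in the second alternative, in which case a Franks perturbation makes the stable dimension equal to $d$ or $d+1$ while preserving $\Lambda\subseteq H(O,U_0)$. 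Approximating the transitive set $H(O,U_0)$ by periodic orbits via~\cite{crovisier-approximation} and bounding their stable dimension by the dominated structure inherited from $U$, we get $d$ and $d+1$ in ${\rm ind}(\Lambda)$; the symmetric argument for $f^{-1}$ applied to $E^c_k$ gives $d+k-1$ and $d+k$; and iterating the construction bundle by bundle yields every element of $\{d,\dots,d+k\}$.

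This last step is the main obstacle, and within it the genuinely delicate point is the one emphasized in Section~\ref{ss.link}: producing periodic orbits of the prescribed stable dimension that accumulate on \emph{all} of $\Lambda$, rather than on the compact sub-set provided by the selecting lemma, and controlling the stable dimension of the orbits that approximate $H(O,U_0)$. This is exactly what Theorem~\ref{main2} — and behind it the central models of~\cite{crovisier-palis-faible} and the two strategies of Sections~\ref{s.connecting} and~\ref{s.trapped} — is built to handle. A further difficulty is the treatment of an intermediate bundle $E^c_j$ with $2\le j\le k-1$: in front of it the sub-bundle $E^s\oplus E^c_1\oplus\cdots\oplus E^c_{j-1}$ need no longer be uniformly contracted, so Theorem~\ref{main2} does not apply verbatim, and one must first pass, via the selecting lemma, to invariant subsets on which the relevant bundle becomes uniform and only afterwards recover accumulation on the whole of $\Lambda$; this is where most of the care is needed.
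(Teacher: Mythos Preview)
Your outline is sound at the two ends of the index interval but has a real gap in the middle. You correctly note that for an intermediate bundle $E^c_j$ with $2\le j\le k-1$ the sub-bundle $E^s\oplus E^c_1\oplus\cdots\oplus E^c_{j-1}$ is not uniformly contracted, so Theorem~\ref{main2} does not apply. Your proposed remedy---pass to a subset via Liao's selecting lemma on which the bundle becomes uniform, then ``recover accumulation on the whole of $\Lambda$''---is not a solution but a restatement of the linking problem from Section~\ref{ss.link}: selecting-lemma arguments only produce periodic orbits near a \emph{subset} of $\Lambda$, and pushing them back to approximate all of $\Lambda$ in Hausdorff distance is precisely the difficulty that Theorem~\ref{main2} was designed to overcome at the extremal bundles. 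There is no analogous tool for the intermediate bundles, and you have not supplied one.

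The paper circumvents this entirely. It applies Theorem~\ref{main2} only to $E^c_1$ and (for $f^{-1}$) to $E^c_k$, obtaining periodic points $p$ and $q$ of indices in $\{\dim E^s,\dim E^s+1\}$ and $\{\dim E^s+k-1,\dim E^s+k\}$ respectively, each with a local homoclinic class $H(p,U_0)$, $H(q,U_0)$ containing $\Lambda$. Since these two local classes intersect, Corollary~\ref{c.class} gives $q\in H(p,U_1)$ for a slightly larger $U_1$. Now Theorem~\ref{indices} and Remark~\ref{r.indices}---the index-interpolation result of~\cite{abcdw} for local homoclinic classes---yield a periodic point $z$ of \emph{any} intermediate index $i$ inside $H(p,U_2)$; and one more application of Corollary~\ref{c.class} gives $H(z,U)\ni p$, so periodic orbits of index $i$ homoclinically related to $z$ can be taken $\delta$-close to $\Lambda$ in Hausdorff distance. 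Thus the intermediate indices come not from repeating Theorem~\ref{main2}, but from the interval structure of indices in a single local homoclinic class. This is the missing ingredient in your argument.

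A minor additional point: your step ``approximating the transitive set $H(O,U_0)$ by periodic orbits via~\cite{crovisier-approximation} and bounding their stable dimension'' does not control the index of the approximating orbits. The correct way to get orbits of index equal to that of $O$ and Hausdorff-close to $\Lambda$ is to take orbits homoclinically related to $O$ inside $U_0$ that shadow a long recurrent segment of $H(O,U_0)$; this is implicit in the definition of the local homoclinic class.
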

\medskip

\paragraph{\bf Ergodic closing lemma inside homoclinic classes.}
The ergodic closing lemma of Ma\~n\'e~\cite{mane-ergodic}
asserts that for $C^1$-generic diffeomorphisms any ergodic probability measure $\mu$
is the weak limit of a sequence of periodic orbits $(O_n)$.
As focused in section~\ref{ss.link}, it is important to know if in some cases
one can choose the periodic orbits $O_n$ in the chain-recurrence class supporting $\mu$.
This corresponds to~\cite[Conjecture 2]{bonatti-panorama}.

We obtained a positive answer in a very particular case.

\begin{cor}\label{c.ergodic}
For any generic diffeomorphism $f\in{\rm Diff}^1(M)\setminus\HT$,
let $H(p)$ be a homoclinic class and let $i$ be the minimal stable dimension of its
periodic orbits.
Then, for any ergodic measure $\mu$ supported on $H(p)$
and whose $i^\text{th}$ Lyapunov exponent is zero,
there exists periodic orbits $O_n$ contained in $H(p)$
whose associated measures converge for the weak topology towards the measure $\mu$.
\end{cor}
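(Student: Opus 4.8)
The plan is to combine Theorem~\ref{main} with Ma\~n\'e's ergodic closing lemma and the connecting lemma. First, the hypothesis already selects the structure we need. By Theorem~\ref{main}, $H(p)$ carries a partially hyperbolic splitting $T_{H(p)}M=E^s\oplus E^c_1\oplus\cdots\oplus E^c_k\oplus E^u$ and its minimal stable dimension $i$ equals $\dim E^s$ or $\dim E^s+1$. Since $E^s$ is uniformly contracted, the $\dim E^s$ smallest Lyapunov exponents of any ergodic measure on $H(p)$ are negative, uniformly bounded away from $0$, and carried by $E^s$; by domination the next exponent is the one along the one-dimensional bundle $E^c_1$. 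Hence $\lambda_i(\mu)=0$ rules out the case $\dim E^s=i$ and forces $\dim E^s=i-1$, the vanishing $i^{\text{th}}$ exponent of $\mu$ being precisely its Lyapunov exponent along $E^c_1$. In particular $K:=\operatorname{supp}(\mu)$ inherits the dominated splitting $E^s\oplus E^c_1\oplus F$ (with $F=E^c_2\oplus\cdots\oplus E^u$), with $E^s$ uniformly contracted and $E^c_1$ one-dimensional and not uniformly contracted; and $K$ lies in the chain-recurrence class of $p$, which for $C^1$-generic $f$ is $H(p)$ itself. Finally, by definition of $i$ there is a hyperbolic periodic orbit $q\subset H(p)$ of stable dimension $i$, and $H(q)=H(p)$.

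Next I would fix a point $x$ generic for $\mu$, so that for $N$ large the orbit segment $x,f(x),\dots,f^{N}(x)$ stays in $K$ and its empirical measure is close to $\mu$. By Ma\~n\'e's ergodic closing lemma there is a diffeomorphism $g$ arbitrarily $C^1$-close to $f$ with a hyperbolic periodic orbit $O$ shadowing this segment; then $\mu_O$ is close to $\mu$ and $O$ lies in a small neighborhood of $K$. Since the Lyapunov exponent of $\mu$ along the one-dimensional bundle $E^c_1$ vanishes, the closing perturbation can be chosen so that $O$ acquires a small negative exponent along $E^c_1$; thus $O$ has stable dimension $i$ (it cannot be smaller). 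As the continuation of $q$ and the orbit $O$ lie in the same chain-recurrence class, two further applications of the connecting lemma, localized away from the shadowed segment, produce transverse intersections of $W^u(O)$ with $W^s(q)$ and of $W^u(q)$ with $W^s(O)$; since $O$ and $q$ have the same stable dimension, they become homoclinically related, so $O\subset H(q)=H(p)$. Letting $N\to\infty$ and the perturbations tend to $0$, and upgrading from the nearby $g$ to $f$ itself by the standard Baire-category argument (using the generic continuity of $p\mapsto H(p)$ and the robustness of transverse intersections), one obtains hyperbolic periodic orbits $O_n\subset H(p)$ with $\mu_{O_n}\to\mu$.

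The hard part is to guarantee that the orbit produced by the closing and connecting steps genuinely lies in $H(p)$, and not merely Hausdorff-close to it, since $H(p)$ need not be isolated in $\cR(f)$. This is where the hypothesis $\lambda_i(\mu)=0$ is essential: through Theorem~\ref{main} it identifies the stable dimension ($i=\dim E^s+1$), confines the obstructing exponent to the single one-dimensional bundle $E^c_1$ --- so that a closing perturbation can tip it to the negative side and pin down the stable dimension of $O$ --- and supplies a periodic orbit $q$ of exactly that stable dimension inside $H(p)$ to which $O$ can be connected. Making the perturbative bookkeeping precise (keeping the closing and connecting perturbations localized so that neither the shadowing of the $\mu$-segment nor the stable dimension of $O$ is destroyed), handling the case where several central exponents of $\mu$ vanish (then $O$ is connected to a periodic orbit of $H(p)$ of the appropriate larger stable dimension, which exists since the stable dimensions in $H(p)$ form an interval), and carrying out the passage from perturbations of $f$ to $f$ itself, constitute the technical core.
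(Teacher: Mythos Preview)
Your structural analysis in the first paragraph is correct and matches the paper: the hypothesis $\lambda_i(\mu)=0$ forces $i=\dim(E^s)+1$, so the relevant splitting is $E^s\oplus E^c\oplus F$ with $E^c=E^c_1$ one-dimensional and $\dim(E^s\oplus E^c)=i$. (Note also that domination makes the exponents along $E^c_1,\dots,E^c_k$ strictly increasing, so only the one along $E^c_1$ can vanish; your later worry about ``several central exponents of $\mu$ vanish'' never arises.)

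Where your approach diverges from the paper, and where it has a genuine gap, is in placing the approximating periodic orbits inside $H(p)$. You propose a perturbative route: close by Ma\~n\'e, tweak the central exponent with Franks, connect to $q$ by Hayashi, then pass back to $f$ by a Baire argument. Two steps are not justified. First, after the closing perturbation produces $O$ for some $g$, you assert that $O$ and the continuation $q_g$ lie in the same chain-recurrence class of $g$; but chain-recurrence classes are not robust, and the orbit segments that witnessed $\operatorname{supp}(\mu)\subset H(p)$ for $f$ may pass through the region where the closing perturbation was performed. Second, the ``standard Baire-category argument'' is not well-posed here: the target property (``there is a periodic orbit in $H(p)$ whose empirical measure is close to $\mu$'') refers to the measure $\mu$, which is an object attached to the specific $f$ and does not persist to nearby diffeomorphisms, so it is unclear what open condition you are taking the closure of.

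The paper avoids both problems by working entirely with the generic $f$ and never perturbing. The key ingredient you are missing is Proposition~\ref{p.TT}: since the minimal index equals $\dim(E^s\oplus E^c)$ and there are periodic orbits in $H(p)$ with arbitrarily weak exponent along $E^c$, the central dynamics along $E^c$ is \emph{thin trapped}. This yields a trapped plaque family $\D^{cs}$ tangent to $E^s\oplus E^c$. The paper then uses the generic form of the ergodic closing lemma (periodic orbits $\bar O_n$ of $f$ itself with $\mu_{\bar O_n}\to\mu$), finds a $(1,\sigma,F)$-hyperbolic point $x\in\operatorname{supp}(\mu)$ whose unstable manifold meets the plaque $\D^{cs}_{\bar q_n}$ at some $\bar q_n\in\bar O_n$, and uses the trapping to locate a genuine periodic point $q_n\in\D^{cs}_{\bar q_n}$ whose stable set contains the intersection. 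This puts $q_n$ in the chain-unstable set of $H(p)$; shrinking the plaques, the orbit of $q_n$ shadows $\bar O_n$ and its measure still converges to $\mu$. A symmetric argument using Lemma~\ref{basic} (again exploiting the trapping and the weak central exponent of $O_n$) puts $O_n$ in the chain-stable set, hence in $H(p)$. No connecting-lemma perturbation and no Baire passage are needed.
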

\medskip

\section{Preliminary definitions and results}\label{pre}

For the sake of completeness in this section we recall some definitions and results that are useful in our context. The reader may skip this section and return to it when referenced.

\subsection{General definitions} Let $f$ be a diffeomorphism of $M$.

The positive and negative orbits of a point $\{f^{n}(x),n\geq 0\}$
and  $\{f^{-n}(x),n\geq 0\}$ are denoted by $\operatorname{orb}^+(x)$ and
$\operatorname{orb}^-(x)$.

The \emph{chain-stable set} $W^{ch-s}_U(K)$ of a (not necessarily $f$-invariant) compact set $K$
inside a set $U$ that contains $K$
is the set of points $x\in M$ that can be joint to a point of $K$ by an $\varepsilon$-pseudo-orbit
contained in $U$ for any $\varepsilon>0$.
When $U=M$ we denotes $W^{ch-s}(M)=W^{ch-s}_U(K)$.

The \emph{local stable set} $W^s_\eta(x)$ of size $\eta>0$ of a point $x\in M$
is the set of points $y\in M$ such that the distance between $f^n(x)$ and $f^n(y)$
for $n\geq 0$ remains smaller than $\eta$ and goes to $0$ as $n$ goes to $+\infty$.

The (stable) \emph{index} of a hyperbolic periodic point is its stable dimension.

Two hyperbolic periodic orbits $O_1$ and $O_2$ contained in an open set $U$
are \emph{homoclinically related in $U$} if there exist transverse intersection points
in $W^s(O_1)\cap W^u(O_2)$ and $W^u(O_1)\cap W^s(O_2)$ whose orbits are contained in $U$.
This implies that $H(O_1,U)=H(O_2,U)$.

\subsection{Genericity results}
We then recall some properties that hold for diffeomorphisms in
a dense $G_\delta$ subset of $\diff^1(M)$.
\medskip

\paragraph{\bf Weak periodic points.}
Franks lemma~\cite{franks} allows to change the index of a periodic orbit having a weak Lyapunov exponent
by a $C^1$-small perturbation. With a classical Baire argument, one gets the following.
\begin{lemma}\label{l.index}
Let $f$ be a diffeomorphism in a dense $G_\delta$ subset of $\diff^1(M)$
and consider a sequence of hyperbolic periodic orbits $(O_n)$ of stable index $i>0$ which converges for the Hausdorff topology towards a set $\Lambda$. Assume also that the $i^\text{th}$ Lyapunov exponent $\lambda_i(O_n)$ of $O_n$
goes to zero as $n\to \infty$. Then there exist hyperbolic periodic orbits of index $i-1$ arbitrarily close to
$\Lambda$ for the Hausdorff topology whose $i^\text{th}$ Lyapunov exponent is arbitrarily close to zero.
\end{lemma}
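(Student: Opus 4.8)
The plan is to combine Franks' lemma with a Baire-category argument, so that a local perturbation available at each $f$ in a dense set becomes a global property on a residual set. The starting observation is that if $O$ is a hyperbolic periodic orbit of index $i>0$ whose $i^\text{th}$ Lyapunov exponent $\lambda_i(O)$ is very small (in absolute value), then Franks' lemma produces a $C^1$-small perturbation $g$ of $f$, supported in an arbitrarily small neighborhood of $O$, for which the continuation of $O$ becomes a hyperbolic periodic orbit of index $i-1$; moreover one can arrange that the new $i^\text{th}$ exponent is as close to $0$ as one wishes, and that the orbit itself is essentially unchanged as a subset of $M$, hence still close to $\Lambda$ in the Hausdorff topology. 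This is the mechanism; the issue is that it is a perturbation, and we want a statement about $f$ itself.

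First I would set up the countable family of target conditions. Fix countable bases: let $(V_m)_{m}$ be a countable basis of open sets of $M$ (or rather a countable family of finite unions of basis elements, so as to encode Hausdorff-closeness to an arbitrary compact set), and let $(\theta_j)_j$ and $(\eta_l)_l$ be sequences of positive rationals decreasing to $0$. For each triple $(m,j,l)$ define $\mathcal{U}_{m,j,l}\subset \diff^1(M)$ to be the set of $g$ admitting a hyperbolic periodic orbit $O'$ of index $i-1$ with $|\lambda_i(O')|<\theta_j$ and with $O'$ contained in the $\eta_l$-neighborhood coded by $V_m$. Each $\mathcal{U}_{m,j,l}$ is open, since hyperbolic periodic orbits and their exponents vary continuously and the index is locally constant. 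The genericity is then that the diffeomorphisms where "the conclusion of the lemma holds whenever its hypothesis holds" contain a dense $G_\delta$ set; concretely one shows that for $g$ in the intersection over $(m,j,l)$ of the sets $\bigl(\diff^1(M)\setminus \overline{A_{m,j,l}}\bigr)\cup \mathcal{U}_{m,j,l}$, where $A_{m,j,l}$ is the (not necessarily open) set of $g$ with a hyperbolic periodic orbit of index $i$, small $i^\text{th}$ exponent, and Hausdorff-position encoded by $(m,l)$ refined by $j$, the lemma holds. The point of Franks' lemma is precisely that $A_{m,j,l}\subset \overline{\mathcal{U}_{m,j',l'}}$ for suitable refined indices, so that on the complement of a nowhere-dense set one is already in $\mathcal{U}$.

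The main obstacle, and the step requiring the most care, is the bookkeeping that links the hypothesis of the lemma (a sequence $O_n$ with $\lambda_i(O_n)\to 0$ accumulating on $\Lambda$) to the finitely-generated conditions above: one has a single orbit $O_n$ per $n$, and Franks' lemma perturbs only near that one orbit, so one must check that for $f$ generic and $n$ large the orbit $O_n$ already lies in one of the open sets $\mathcal{U}_{m,j,l}$ — equivalently, that $f$ already has an index-$(i-1)$ orbit near $\Lambda$ with small $i^\text{th}$ exponent. This is where the Baire argument does its work: each $O_n$ with $\lambda_i(O_n)$ below a given $\theta_j$ places $f$ in $\overline{A_{m,j,l}}$ for the appropriate $(m,l)$ (Hausdorff-close to $\Lambda$), and genericity forces $f\in\mathcal{U}_{m,j,l}$, yielding the desired orbit. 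Passing $j,l\to\infty$ (using that $\lambda_i(O_n)\to 0$ lets us use ever smaller $\theta_j$, and that the $O_n$ cluster at $\Lambda$ lets us use ever smaller $\eta_l$) gives index-$(i-1)$ orbits arbitrarily Hausdorff-close to $\Lambda$ with $i^\text{th}$ exponent arbitrarily close to $0$, which is exactly the statement. The only genuinely technical point beyond this is verifying that Franks' lemma can be applied keeping the perturbation localized enough that the Hausdorff distance of the orbit to $\Lambda$ is not spoiled; this is standard and I would only remark on it.
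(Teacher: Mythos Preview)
Your proposal is correct and follows exactly the approach the paper indicates: the paper gives no detailed proof of this lemma, merely the one-sentence remark that ``Franks lemma allows to change the index of a periodic orbit having a weak Lyapunov exponent by a $C^1$-small perturbation. With a classical Baire argument, one gets the following.'' Your plan spells out precisely this classical Baire argument, and the bookkeeping you describe (countable basis for Hausdorff neighborhoods, rational thresholds for the exponent, the open sets $\mathcal{U}_{m,j,l}$ and the residual intersection) is the standard way to implement it.
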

\medskip

\paragraph{\bf Connecting lemmas.}

We need the following lemmas for connecting orbits.

\begin{thm}\cite{hayashi}\label{l.connecting}
For any $f$ and any $C^1$ neighborhood $\mathcal U$ of $f$, there is $L=L(\mathcal U)\in\NN$ such that any non-periodic point $z\in M$ admits two arbitrarily small neighborhoods $B_z\subset \hat B_z$ with the following property.

Consider two points $x,y$ outside $U_{L,z}:=\cup_{i=0}^{L-1} f^i(\hat B_z)$ having iterates
$f^{n_x}(x)$ and $f^{-n_y}(y)$ in $B_z$ such that $n_x,n_y\geq 1$.
Then, there exists $g\in \cU$ such that:
\begin{enumerate}
\item[--] $g=f$ on $M\setminus U_{L,z}$,
\item[--] $y$ belongs to the forward orbit of $x$ for $g$: there is $N\leq n_x+n_y$ such that $g^N(x)=y$,
\item[--] the orbit segment $\{x,g(x),\dots,g^N(x)\}$ is contained in the union of the orbit segments
$\{x,f(x),\dots,f^{n_x}(x)\}$, $\{f^{-n_y}(y),\dots,y\}$ and of $U_{L,z}$.
\end{enumerate}
\end{thm}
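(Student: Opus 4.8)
The plan is to establish the Hayashi connecting lemma via the standard ``perturbation-box'' argument: one covers the gap between the forward orbit of $x$ and the backward orbit of $y$ near $z$ by a finite tower of small boxes, and uses the freedom to compose a bounded number $L$ of $C^1$-small local perturbations (each supported in one floor of the tower) to push a selected point of $\operatorname{orb}^+(x)$ onto a selected point of $\operatorname{orb}^-(y)$. First I would fix the neighborhood $\mathcal U$ and choose the integer $L=L(\mathcal U)$ as the maximal number of times one may iterate a uniformly small local perturbation and still stay in $\mathcal U$ (this is exactly where the $C^1$-topology, and the fact that $Df$ is uniformly continuous on the compact manifold $M$, is used). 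Then, given the non-periodic point $z$, I would pick $\hat B_z$ a small ball such that the $L$ iterates $\hat B_z, f(\hat B_z),\dots,f^{L-1}(\hat B_z)$ are pairwise disjoint — possible precisely because $z$ is non-periodic — and set $B_z$ a much smaller concentric ball, with $U_{L,z}=\bigcup_{i=0}^{L-1}f^i(\hat B_z)$ the perturbation support.

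The core step is the combinatorial ``chain of boxes'' construction. Given $x,y$ outside $U_{L,z}$ with $f^{n_x}(x),f^{-n_y}(y)\in B_z$, I would look at the finite orbit segments $\{x,\dots,f^{n_x}(x)\}$ and $\{f^{-n_y}(y),\dots,y\}$ and record their successive entries into the small ball $B_z$. Using the contraction/expansion of orbit pieces that pass through the tower, one arranges (shrinking $B_z$ relative to $\hat B_z$ as needed, and using the uniform continuity of $f^{\pm 1}$) that there is a sub-tower of at most $L$ floors $f^{i_1}(\hat B_z),\dots,f^{i_m}(\hat B_z)$, $m\le L$, each of which contains exactly one point $a_j$ of $\operatorname{orb}^+(x)$ before it is ``redirected'' and one target point $b_j$ of $\operatorname{orb}^-(y)$, with $a_j$ and $b_j$ lying in the same floor and close enough that a single $C^1$-perturbation of size compatible with $\mathcal U/L$ moves $a_j$ to the $f$-image location that lands it, after the remaining dynamics, into the next floor's target. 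Concatenating these $m$ perturbations (they have disjoint supports since the floors are disjoint, so they compose into a single $g\in\mathcal U$) yields $g$ equal to $f$ off $U_{L,z}$ and with $g^N(x)=y$ for some $N\le n_x+n_y$; moreover by construction each intermediate point of the $g$-orbit of $x$ is either an old $f$-iterate of $x$, an old $f$-iterate of $y$, or lies in $U_{L,z}$, which is conclusion~(3).

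The main obstacle, and the technically delicate heart of the argument, is the ``at most $L$ floors suffice'' claim: one must guarantee that after redirecting the orbit of $x$ at floor $j$, the iterate actually re-enters the tower at a controlled later floor, rather than wandering far away and forcing an unbounded number of perturbations — and simultaneously keep each individual perturbation within the $\mathcal U$-budget. This is handled by the usual trick of choosing $B_z$ so small (depending on $\hat B_z$, $L$, and a modulus of continuity of $Df$) that two orbit points which are both in $B_z$ at comparable times are automatically within perturbation reach of each other inside $\hat B_z$, together with a pigeonhole/telescoping argument on the at most $L$ disjoint floors to select which entries to $B_z$ to use; the non-periodicity of $z$ is what makes the disjoint tower, hence the whole scheme, available. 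The remaining verifications — disjointness of supports, the bound $N\le n_x+n_y$, and membership in $\mathcal U$ of the composed map — are then routine bookkeeping.
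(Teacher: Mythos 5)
This statement is not proved in the paper at all: it is quoted as Hayashi's connecting lemma (in the uniform, localized formulation due to Wen--Xia and Bonatti--Crovisier), so there is no ``paper's own proof'' to compare against. Your sketch does reproduce the standard architecture of the known proof: the tower $\hat B_z, f(\hat B_z),\dots,f^{L-1}(\hat B_z)$ of pairwise disjoint floors (using that $z$ is non-periodic), the constant $L$ determined only by $\mathcal U$ via the fact that a single $C^1$-small perturbation can displace a point only by a definite fraction of the radius of its support, and the composition of perturbations with disjoint supports. The bookkeeping for conclusions (2) and (3) --- that the new orbit consists of an initial piece of $\{x,\dots,f^{n_x}(x)\}$, a transit through $U_{L,z}$, and a terminal piece of $\{f^{-n_y}(y),\dots,y\}$, whence $N\le n_x+n_y$ --- is indeed routine once the connection is made.

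However, the step you label the ``technically delicate heart'' and then dispatch with ``the usual trick'' is precisely the content of the theorem, and as written your description of it is not achievable. You cannot arrange that each floor contains \emph{exactly one} point of $\operatorname{orb}^+(x)$ and one target point of $\operatorname{orb}^-(y)$: the orbit segments $\{x,\dots,f^{n_x}(x)\}$ and $\{f^{-n_y}(y),\dots,y\}$ may re-enter $B_z$ (and the tower) an arbitrary, uncontrolled number of times, and no shrinking of $B_z$ relative to $\hat B_z$ prevents this. The actual proof requires a combinatorial \emph{selection} procedure: one orders the visits of the two orbit segments to the tower, discards intermediate returns, and chooses a nested family of connecting pairs in such a way that (a) at most one elementary perturbation is performed per floor, (b) the pairs to be joined in a given floor are close enough relative to the floor's diameter that the required push is $\mathcal U$-admissible, and (c) the redirected orbit never re-enters a floor that has already been modified (otherwise the earlier perturbation would derail it and conclusion (3) would fail). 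This selection, together with the verification of (c), is where Hayashi's argument and its refinements spend most of their effort; a pigeonhole appeal on $L$ disjoint floors does not by itself produce it. As a blind proof the proposal is therefore an accurate roadmap but not a proof: the central lemma it relies on is asserted, not established.
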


This allows to compare local homoclinic classes (the argument is similar to~\cite[Lemma 4.2]{GaW03}).

\begin{coro}\label{c.class}
Let $f$ be a diffeomorphism in a dense $G_\delta$ subset of $\diff^1(M)$,
let $p,q$ be two hyperbolic periodic points and let $U,V$ be two open sets satisfying $\overline U\subset V$.
Then:
\begin{itemize}
\item[--] If $H(p,U)$ contains $q$ and if $p,q$ have the same index, the orbits of $p,q$ are homoclinically related in $V$.
\item[--] If $H(p,U)$ and $H(q,U)$ intersect, $H(p,V)$ contains $H(q,U)$.
\end{itemize}
\end{coro}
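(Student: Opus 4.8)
The plan is to deduce both items from Hayashi's connecting lemma (Theorem~\ref{l.connecting}) together with a genericity argument, following the scheme of~\cite[Lemma 4.2]{GaW03}. First I would fix a countable basis of $\diff^1(M)$ refining the $C^1$-topology and, for each pair of open sets $U\subset\overline U\subset V$ from this basis and each pair of hyperbolic periodic points $p,q$ that persist on a neighborhood of $f$, express the two conclusions as open conditions on the perturbed diffeomorphism; then a standard Baire argument produces the dense $G_\delta$ set on which the implications hold. Concretely: the statement ``the orbits of $p$ and $q$ are homoclinically related in $V$'' and ``$H(p,V)\supset H(q,U)$'' are both $C^1$-open (a transverse intersection persists, and $H(q,U)$ depends lower-semicontinuously on $f$), so it suffices to show that whenever the hypothesis holds for $f$, one can make the conclusion hold for an arbitrarily $C^1$-small perturbation; genericity then upgrades this to the full conclusion for $f$ itself.

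For the first item, suppose $H(p,U)$ contains $q$ and $\operatorname{ind}(p)=\operatorname{ind}(q)$. Since $q\in H(p,U)$, the point $q$ is accumulated by transverse homoclinic points of $p$ whose orbits lie in $U$; in particular there are points of $W^u(p)$ (resp.\ $W^s(p)$) passing arbitrarily close to $q$ while staying in $U$, hence close to points of $W^s_{\mathrm{loc}}(q)$ (resp.\ $W^u_{\mathrm{loc}}(q)$). Applying the connecting lemma at a non-periodic point $z$ near $q$ on the orbit of $q$ --- with $L=L(\mathcal U)$ and the neighborhoods $B_z\subset\hat B_z$ small enough that $U_{L,z}\subset V$ --- I would connect a branch of $W^u(p)$ to a branch of $W^s(q)$ by a perturbation $g\in\mathcal U$ supported in $U_{L,z}\subset V$, and symmetrically connect $W^u(q)$ to $W^s(p)$. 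Because $p$ and $q$ have equal index, the resulting intersections can be made transverse (dimensions add up correctly), so after a further arbitrarily small perturbation $p$ and $q$ are homoclinically related, with all the relevant orbit segments contained in $V$ by item~(3) of Theorem~\ref{l.connecting}. This is an open property, so the generic argument finishes this item.

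For the second item, if $H(p,U)$ and $H(q,U)$ intersect, pick a non-periodic point $z$ in the intersection (or arbitrarily close to it) lying in $U$; near $z$ there pass both a branch of $W^u(p)$ and a branch of $W^s(q)$ staying in $U$, and also $W^u(q)$ and $W^s(p)$. Using the connecting lemma at such $z$, with $U_{L,z}\subset V$, I would create a transverse homoclinic orbit of $p$ that shadows a prescribed orbit segment of $q$, thereby showing that any transverse homoclinic orbit of $q$ contained in $U$ is approximated, after a $C^1$-small perturbation, by a transverse homoclinic orbit of $p$ contained in $V$; taking closures gives $H(q,U)\subset H(p,V)$ for the perturbed map, and genericity plus openness transfers this to $f$. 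The main obstacle, and the point requiring care, is the bookkeeping in the connecting lemma: one must choose $z$ non-periodic, choose $\hat B_z$ small enough that the support $U_{L,z}$ of the perturbation lies inside $V$ (this is where $\overline U\subset V$ is used), and verify that the orbit segments produced by the lemma genuinely stay in $V$ rather than merely in $U\cup U_{L,z}$ --- and, for the first item, that the index equality really yields a \emph{transverse} (not just topological) homoclinic relation after an extra perturbation. Once these geometric preparations are set up, the Baire-category step is routine.
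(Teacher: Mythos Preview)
Your plan is correct and matches the approach the paper indicates (it gives no detailed proof, only the reference to~\cite[Lemma 4.2]{GaW03} and Theorem~\ref{l.connecting}). One small slip to fix when writing it up: the point $z$ at which you apply the connecting lemma cannot lie ``on the orbit of $q$'' since those points are periodic; take instead $z$ in the local stable (or unstable) manifold of $q$ but off $\orb(q)$, or use a nearby transverse homoclinic point of $p$.
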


The connecting lemma for pseudo-orbits proved in~\cite{BC} can be restated in the following way
(see~\cite{crovisier-approximation}):

\begin{thm}\cite{BC}\label{t.weaktransitive}
Let $f$ be a diffeomorphism in a dense $G_\delta$ subset of $\diff^1(M)$,
let $K$ be a compact set, $x,y$ be two points in $K$ such that $y\in W^{ch-u}_K(x)$.
Then, for any neighborhoods $U_x$ of $x$, $U_y$ of $y$ and $U_K$ of
$K$, there are $z\in U_x$ and $n\in\NN$ such that $f^n(z)\in U_y$ and $\{z,f(z),\cdots,f^n(z)\}\subset U_K$.
\end{thm}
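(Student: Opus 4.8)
The plan is to recognize the statement as a form of the connecting lemma for pseudo-orbits of \cite{BC} and to obtain it from Hayashi's connecting lemma (Theorem~\ref{l.connecting}) in two stages: first a purely perturbative lemma valid for an arbitrary $f$, then a Baire category argument to make it generic. For the reduction, fix a countable basis $\mathcal B$ of open sets of $M$. For $U,U'\in\mathcal B$ and a finite union $W$ of elements of $\mathcal B$, let $C(U,U',W)$ be the (open) set of $g$ admitting $z\in U$ and $n\ge 0$ with $g^n(z)\in U'$ and $\{z,\dots,g^n(z)\}\subset W$, and let $P(U,U',W)$ be the set of $g$ admitting $x\in U$, $y\in U'$ with $y\in W^{ch-u}_W(x)$. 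The perturbative lemma will assert $P(U,U',W)\subset\overline{C(U,U',W)}$. Granting it, $\mathcal R(U,U',W):=C(U,U',W)\cup(\diff^1(M)\setminus\overline{C(U,U',W)})$ is open, and dense (its complement is the boundary of an open set, hence closed with empty interior), and on $\mathcal R(U,U',W)$ the conclusion of the theorem holds whenever its hypothesis holds for the data $(U,U',W)$. Intersecting over the countable family of triples $(U,U',W)$ gives a dense $G_\delta$; for $f$ in it and any $x,y,U_x,U_y,U_K$ as in the statement, I would pick $U\in\mathcal B$ with $x\in U\subset U_x$, $U'\in\mathcal B$ with $y\in U'\subset U_y$, and, using compactness of $K$, a finite union $W$ of basis sets with $K\subset W\subset U_K$; then $y\in W^{ch-u}_W(x)$ and membership in $\mathcal R(U,U',W)$ yield the desired $z$ and $n$.

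For the perturbative lemma, fix $f$, a $C^1$-neighbourhood $\cU$ of $f$, and $L=L(\cU)$ from Theorem~\ref{l.connecting}, and suppose $x\in U$, $y\in U'$, $y\in W^{ch-u}_W(x)$, so that for every $\varepsilon>0$ there is an $\varepsilon$-pseudo-orbit $x=x_0,x_1,\dots,x_m=y$ contained in $W$. The finitely many periodic orbits of $f$ of period at most $L$ meet $W$ in a compact set; a fixed small neighbourhood of it is handled by a separate, elementary local perturbation (Franks-type), so one may assume the pseudo-orbits used avoid that neighbourhood, where every point is non-periodic and Theorem~\ref{l.connecting} applies. Cover the remaining part of $\overline W$ by finitely many balls $B_{z_1},\dots,B_{z_r}$ with associated $\hat B_{z_l}$, shrunk so that $U_{L,z_l}=\bigcup_{i=0}^{L-1}f^i(\hat B_{z_l})\subset W$ and $x,y\notin\overline{U_{L,z_l}}$ for all $l$. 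Choosing $\varepsilon$ small and a corresponding pseudo-orbit, I would select inductively along it a sequence of times and indices $l_1,l_2,\dots$ so that the pseudo-orbit enters the small balls $B_{z_{l_k}}$ at those times, so that the chosen perturbation domains $U_{L,z_{l_k}}$ are pairwise disjoint, and so that between consecutive chosen boxes Theorem~\ref{l.connecting} can be applied inside the box to splice the forward $f$-orbit arriving at it to the backward $f$-orbit leaving it after the intervening jumps. Since the $U_{L,z_{l_k}}$ are pairwise disjoint and contained in $W$, the local perturbations furnished by Theorem~\ref{l.connecting} combine into a single $g\in\cU$ with $g=f$ outside $W$; the spliced segments concatenate into a genuine $g$-orbit running from a point $z$ near $x_0=x$ (hence $z\in U$ once $\varepsilon$ and $\cU$ are small) to a point near $y$ (hence in $U'$), contained in the union of the $U_{L,z_{l_k}}$ and of $f$-orbit segments shadowing the pseudo-orbit, hence in $W$. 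This produces $g\in\cU\cap C(U,U',W)$, which is exactly $P(U,U',W)\subset\overline{C(U,U',W)}$.

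The step I expect to be the main obstacle is precisely this inductive selection of perturbation boxes. A long pseudo-orbit — and in the intended applications it is highly recurrent — may return near a given region many times, so one must choose the boxes (and discard the stretches of pseudo-orbit lying between a first entrance into a chosen box and the last subsequent return near it) in such a way that each box is used only once, the domains $U_{L,z_{l_k}}$ remain pairwise disjoint, the segments produced by Theorem~\ref{l.connecting} chain up into a single orbit, and that orbit never leaves $W$. This combinatorial bookkeeping, rather than any individual application of Hayashi's lemma, is the genuinely delicate part and is the technical heart of \cite{BC} (see also the presentation in \cite{crovisier-approximation}); everything else is the routine Baire argument and elementary neighbourhood estimates sketched above.
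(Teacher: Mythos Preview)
The paper does not prove Theorem~\ref{t.weaktransitive}: it is quoted from~\cite{BC} (with a reference to the reformulation in~\cite{crovisier-approximation}), so there is no ``paper's own proof'' to compare against. Your sketch is, in outline, the standard proof from~\cite{BC}: a perturbative connecting lemma for pseudo-orbits built from Hayashi's lemma applied in finitely many disjoint boxes, followed by a Baire argument over a countable family of data $(U,U',W)$. You also correctly identify the genuinely hard step as the combinatorial selection of perturbation boxes so that the domains $U_{L,z_{l_k}}$ are pairwise disjoint and the spliced orbit stays in $W$; that is indeed the technical core of~\cite{BC}.

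One point does need correction. In your perturbative lemma you fix an \emph{arbitrary} $f$ and then speak of ``the finitely many periodic orbits of $f$ of period at most $L$''. For a general $f$ this set need not be finite (nor even closed). In~\cite{BC} this is handled by first restricting to the open and dense set of diffeomorphisms whose periodic orbits of period $\le L$ are hyperbolic (hence finite in number); this restriction is folded into the Baire argument, not assumed inside the perturbative step. Concretely, you should intersect your $G_\delta$ with $\bigcap_{L\ge 1}\mathcal{KS}_L$, where $\mathcal{KS}_L$ is the open dense set of $f$ with only hyperbolic periodic orbits of period $\le L$, and run the perturbative lemma only for $f\in\mathcal{KS}_L$ with $L=L(\cU)$. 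With that adjustment the boxes of Theorem~\ref{l.connecting} can be centered at non-periodic points as required, and your outline goes through (modulo the box-selection combinatorics you already flag as the crux).
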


Using Theorem~\ref{t.weaktransitive}, one can improve Corollary~\ref{c.class}
about local homoclinic classes and get the following (see also~\cite[Section 1.2.3]{BC}).
\begin{coro}\label{c.local-class}
Let $f$ be a diffeomorphism in a dense $G_\delta$ subset of $\diff^1(M)$.
Then, for any hyperbolic periodic point $p$, for any compact sets $\Lambda, \Delta$
such that $\Lambda$ is contained in the local chain-stable and chain-unstable sets of $p$ inside $\Delta$,
for any neighborhood $U$ of $\Delta$, the local homoclinic class $H(p,U)$ contains $\Lambda$.
\end{coro}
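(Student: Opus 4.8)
\emph{Plan.} Since $H(p,U)$ is closed and $f$-invariant, it suffices to prove that every point $x\in\Lambda$ belongs to it; and as $H(p,U)$ is closed, it is enough to show that for every neighbourhood $W\subseteq U$ of $x$ there is a transverse homoclinic orbit of $p$ which is contained in $U$ and meets $W$. (The $f$-invariance lets us ask only that the orbit \emph{meets} $W$, not that it lies in $W$.)

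First I would record the chain relations furnished by the hypothesis. Since $\orb(p)\subseteq\Delta$ and $x$ lies in the chain-stable and the chain-unstable set of $p$ inside $\Delta$, for every $\varepsilon>0$ there are $\varepsilon$-pseudo-orbits contained in $\Delta$ from $\orb(p)$ to $x$ and from $x$ to $\orb(p)$. Concatenating them (using that $p$ is periodic) one obtains, for every $\varepsilon>0$, an $\varepsilon$-pseudo-orbit contained in $\Delta$ which starts and ends at $p$ and passes through $x$. Prolonging it on the left by a local piece of $W^u(\orb(p))$ and on the right by a local piece of $W^s(\orb(p))$ — both contained in $U$, since $U$ is a neighbourhood of $\Delta\supseteq\orb(p)$ — one gets, for every $\varepsilon>0$, an $\varepsilon$-pseudo-orbit contained in $U$ that goes from $W^u_{\mathrm{loc}}(\orb(p))$ to $W^s_{\mathrm{loc}}(\orb(p))$ and meets $W$.

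Next I would realize this chain by a true orbit. This is what the connecting lemma for pseudo-orbits provides: applying Theorem~\ref{t.weaktransitive} in the form used in \cite[Section 1.2.3]{BC} — where the connecting orbit can be taken to shadow a prescribed pseudo-orbit; concretely one runs the argument on the compact set built from $\Delta$ together with the finitely many orbit arcs produced by an auxiliary application of the connecting lemma, so that a fixed compact neighbourhood inside $U$ carries pseudo-orbits of all scales through $W$ — one obtains a point $z$ whose whole $f$-orbit is contained in $U$, meets $W$, issues from a point arbitrarily close to $W^u_{\mathrm{loc}}(\orb(p))$, and lands arbitrarily close to $W^s_{\mathrm{loc}}(\orb(p))$. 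Refining the endpoints via the $\lambda$-lemma and the standard ``land on the invariant manifold'' version of the connecting lemma, one may take $z\in W^u(\orb(p))$ with some forward iterate in $W^s_{\mathrm{loc}}(\orb(p))$; thus $z$ is a homoclinic point of $p$ whose orbit is contained in $U$ and meets $W$. Finally, for $C^1$-generic $f$ this homoclinic intersection can be chosen transverse (Kupka--Smale applied to $\orb(p)$ together with the genericity already built into the connecting construction, see again \cite[Section 1.2.3]{BC}), so $x\in H(p,U)$; as $x\in\Lambda$ was arbitrary, $\Lambda\subseteq H(p,U)$.

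The main obstacle is the third step: ensuring that the true orbit produced by the connecting lemma genuinely threads the prescribed neighbourhood $W$ of $x$ — rather than merely connecting $W^u(\orb(p))$ to $W^s(\orb(p))$ along some unrelated route — while remaining inside $U$. This forces the use of the sharper, ``shadow a given chain'' form of the connecting lemma and a careful choice of the ambient compact set and of the auxiliary neighbourhoods, the more so since $\Delta$ is not assumed invariant. By contrast, landing exactly on $W^s_{\mathrm{loc}}(\orb(p))$ and $W^u(\orb(p))$ and upgrading to a transverse intersection are routine consequences of $C^1$-genericity and the hyperbolicity of $p$.
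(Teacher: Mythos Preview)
Your proposal is essentially correct and follows the route the paper indicates (the paper gives no detailed proof, only the pointer ``Using Theorem~\ref{t.weaktransitive}\dots\ see also~\cite[Section 1.2.3]{BC}''). You correctly isolate the scheme: produce true orbit segments in $U$ from near $p$ through the prescribed neighbourhood $W$ of $x$ and back, then land the endpoints on $W^{u}(p)$ and $W^{s}(p)$ by a connecting-lemma perturbation, and finally pass to the generic $f$ by a Baire argument over a countable family of triples $(p,U,W)$.

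Two small remarks. First, the ``shadow a given chain'' feature you invoke is really Theorem~\ref{t.hausdorff} rather than Theorem~\ref{t.weaktransitive}; either works here, but if you keep citing Theorem~\ref{t.weaktransitive} you should instead argue in two steps (apply it once from $p$ to $x$ and once from $x$ to $p$, obtaining two orbit segments in $U$ each meeting $W$, and then glue them at a non-periodic point of $W$ via Theorem~\ref{l.connecting}). Second, the passage ``refining the endpoints via the $\lambda$-lemma and the standard `land on the invariant manifold' version of the connecting lemma'' is the place where the genericity of $f$ is actually used: you do not land on $W^{u}(p)$ and $W^{s}(p)$ for the original $f$ but for a perturbation, and then invoke openness of ``transverse homoclinic orbit of $p$ in $U$ meeting $W$'' plus a Baire argument to conclude for the generic $f$. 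Making that explicit removes the only vagueness in your sketch.
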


In particular, the homoclinic classes of a $C^1$-generic diffeomorphism are chain-recurrence classes.
Also considering a hyperbolic periodic orbit $O$ and two chain-transitive compact sets $K\subset \La$ contained in an open set $U$ such that $K\subset H(O,U)$, the class $H(O,V)$ contains $\La$ for any open neighborhood
$V$ of $\overline U$.
\medskip

The following global connecting lemma allows to control the support of orbit segments.

\begin{thm}\cite{crovisier-approximation}\label{t.hausdorff}
For any diffeomorphism $f$ in a dense $G_\delta$ subset of $\diff^1(M)$ and for any $\delta>0$,
there exists $\varepsilon>0$ such that for any $\varepsilon$-pseudo-orbit segment
$X=\{z_0,z_1,\dots,z_n\}$, one can find an orbit segment $O=\{x,f(x),\dots,f^m(x)\}$
which is $\delta$-close to $X$ for the Hausdorff distance.

Moreover if $X$ is a periodic pseudo-orbit (i.e. $z_n=z_0$), then the point $x$ can be chosen $m$-periodic.
In particular chain-transitive sets are Hausdorff limit of periodic orbits.
\end{thm}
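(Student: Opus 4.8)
The plan is to derive Theorem~\ref{t.hausdorff} from Hayashi's connecting lemma (Theorem~\ref{l.connecting}) by a Baire argument, the two new difficulties compared with the connecting lemma for pseudo-orbits (Theorem~\ref{t.weaktransitive}, \cite{BC}) being: (i) to make the constant $\varepsilon$ depend only on $f$ and $\delta$, even though there is a continuum of candidate pseudo-orbits, and (ii) to keep the \emph{support} of the orbit segment furnished by the connecting lemma inside the $\delta$-neighborhood of the pseudo-orbit.

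First I would set up the genericity framework. Fix a countable dense set $D\subset M$; for each rational $\delta>0$, fix a finite set $Y_\delta\subset M$ whose mesh is small enough, in terms of $\delta$ and of a modulus of continuity of $f$, that replacing each point of an $\varepsilon$-pseudo-orbit (for $\varepsilon$ small) by a nearest point of $Y_\delta$ still yields a pseudo-orbit, of constant less than $\delta/4$ say, and moves each point by less than $\delta/8$. Such a ``gridded'' pseudo-orbit has a value set $S\subset Y_\delta$, and there are only finitely many possibilities for $S$ together with its type (value set of an $\varepsilon$-pseudo-orbit, resp.\ of a periodic one, contained in the $\delta/8$-neighborhood of $S$). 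Moreover any true orbit segment that stays in the $\delta/2$-neighborhood of $S$ and visits the $\delta/2$-ball around each point of $S$ is automatically $\delta$-Hausdorff-close to the original pseudo-orbit. Hence it suffices to prove: \emph{for $f$ in a dense subset of $\diff^1(M)$ and for each admissible $S$, there is a true orbit segment of $f$ --- a periodic orbit in the periodically admissible case --- contained in the $\delta/2$-neighborhood of $S$ and meeting the $\delta/2$-ball around each point of $S$.} Since only finitely many $S$ occur, the threshold on $\varepsilon$ can be taken uniform; the passage from this finitary statement to a dense $G_\delta$ of $f$ (intersecting over rational $\delta$) is the routine part of the Baire argument, the connecting lemma providing density.

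The core is then the connecting construction for a fixed finite set $S=\{s_1,\dots,s_k\}$ that is chained at scale $\varepsilon$ inside the closure $\overline V$ of its $\delta/2$-neighborhood $V$. I would fix a small $C^1$-neighborhood $\cU$ of $f$, apply Hayashi's lemma (Theorem~\ref{l.connecting}) at suitable non-periodic points near each $s_j$ to obtain the constant $L=L(\cU)$ and nested boxes $B_j\subset\hat B_j$, chosen small enough that the perturbation regions $U_{L,j}$ are pairwise disjoint and contained in $V$, and take $\varepsilon$ small compared with the sizes of the $B_j$ --- this is precisely where $\varepsilon$ becomes uniform. Then, as in the proof of the connecting lemma for pseudo-orbits in \cite{BC}, one uses the $\varepsilon$-pseudo-orbit through $S$ to build, leg by leg, a true orbit of some $g\in\cU$ passing successively near $s_1,\dots,s_k$; each leg comes from one application of Hayashi's lemma, hence is contained in a concatenation of genuine $f$-orbit arcs --- which shadow the corresponding pieces of the pseudo-orbit and therefore stay in $V$ --- together with the boxes $U_{L,j}$. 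The resulting $g$-orbit segment lies in $V$ and meets a small ball around each $s_j$. In the periodically admissible case one arranges the last leg to return to the starting box and closes the orbit up by one further application of the connecting lemma, producing a periodic $g$-orbit. The Baire step of the second paragraph then replaces $g$ by the generic $f$ itself.

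The hard part will be the control of supports in this core construction: Hayashi's lemma only bounds the new orbit by the \emph{previous} orbit arcs plus the tiny perturbation boxes, so one must guarantee that the $f$-orbit arcs used to pass from $s_j$ to $s_{j+1}$ genuinely shadow the relevant sub-segment of the pseudo-orbit (and hence do not leave $V$), and that the $k$ local perturbations, plus the closing one, are \emph{simultaneously} admissible --- pairwise disjoint supports inside $V$, each meeting the hypotheses of Theorem~\ref{l.connecting}. Balancing the three small scales involved --- the mesh of $Y_\delta$, the sizes of the boxes $B_j$, and $\varepsilon$ --- so that all this holds at once is the combinatorial heart of the proof. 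Granting it, the final assertion is immediate: a chain-transitive set $\La$ carries, for every $\varepsilon>0$, a periodic $\varepsilon$-pseudo-orbit that is contained in $\La$ and $\varepsilon$-dense in it (concatenate short $\varepsilon$-chains between the points of a finite $\varepsilon$-net of $\La$), so applying the theorem to it yields a periodic orbit $2\delta$-Hausdorff-close to $\La$; letting $\delta\to0$ exhibits $\La$ as a Hausdorff limit of periodic orbits.
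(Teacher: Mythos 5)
This theorem is not proved in the paper; it is quoted from \cite{crovisier-approximation}, so your sketch has to be measured against the proof there. Your overall architecture (discretize the pseudo-orbit on a finite grid so that only finitely many ``patterns'' $S$ occur, reduce to an open property of orbit segments, and run a Baire argument with a perturbative connecting statement providing density) does match the structure of Crovisier's proof. The gap is in the core connecting step. You propose to realize each ``leg'' from $s_j$ to $s_{j+1}$ by ``one application of Hayashi's lemma'', but Theorem~\ref{l.connecting} takes as input two \emph{genuine} $f$-orbit segments entering the box $B_z$; an $\varepsilon$-pseudo-orbit supplies no such segments. Converting a pseudo-orbit into a true orbit requires perturbing at every jump, with the same region of $M$ typically revisited many times, and resolving the resulting conflicts is exactly the combinatorial selection argument of \cite{BC}. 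You defer to that argument, but it does not give what you need: the orbit produced by the connecting lemma for pseudo-orbits joins the two endpoints inside a neighborhood of the whole pseudo-orbit while possibly taking ``shortcuts'' that skip large portions of it, so it need not visit a neighborhood of each $z_i$. There is no shadowing available in this non-hyperbolic setting to justify your claim that the intermediate $f$-orbit arcs ``shadow the corresponding pieces of the pseudo-orbit''; Hausdorff closeness is precisely the point where Theorem~\ref{t.hausdorff} is strictly stronger than Theorem~\ref{t.weaktransitive}.

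The missing idea, which is the main new contribution of \cite{crovisier-approximation}, is the construction of \emph{topological towers}: large families of pairwise disjoint perturbation domains with arbitrarily long return times, tiling a neighborhood of the relevant dynamics by boxes of arbitrarily small diameter. This is what allows one to (a) place a perturbation box near essentially every point of the pseudo-orbit with all supports disjoint, and (b) run a counting/bookkeeping argument guaranteeing that the connected orbit cannot bypass a definite fraction of the pseudo-orbit, hence stays $\delta$-close in the Hausdorff sense. Without this (or an equivalent device), the ``combinatorial heart'' you acknowledge in your last paragraph remains unproved, and the scales you propose to balance cannot be balanced by Hayashi's lemma alone. Your final deduction that chain-transitive sets are Hausdorff limits of periodic orbits is correct once the theorem is granted.
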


\medskip

\paragraph{\bf Dominated splitting far from homoclinic tangencies.}
The existence of dominated splitting far from homoclinic tangencies was obtained in \cite{PS} in the two dimensional setting and by \cite{W} in the general case. The basic idea is that periodic orbits may have at most one Lyapunov exponent close to zero.

\begin{thm}\cite{W,wen-conjecture}\label{wen}
Let $f\in \diff^1(M)\setminus\HT$ and let $i\in\{1,\ldots,\dim M-1\}.$ Then, the tangent bundle above the set of hyperbolic periodic points with index $i$ has a dominated splitting $E\oplus F$ such that $\dim (E)=i.$bb

Moreover, there exist $\de,C>0$, $\sigma\in (0,1)$ and an integer $N\geq 1$ such that if $\mO$ is a hyperbolic periodic orbit of period $\tau$ and denoting by $E\oplus E^c\oplus F$ the splitting over $T_\mO M$ into the characteristic spaces whose Lyapunov exponents belong to $(-\infty,\de], (-\de,\de), [\de,\infty)$, then
\begin{itemize}
\item[--] $E^c$ has at most dimension one,
\item[--] the splitting is $N$-dominated,
\item[--] for any point $x\in \mO$ we have
\begin{equation}\label{e.wen}
\prod_{k=0}^{[\tau/N]-1}\|Df_{|E}(f^{kN}(x))\|\leq C\sigma^\tau, \quad
\prod_{k=0}^{[\tau/N]-1}\|Df_{|F}^{-1}(f^{-kN}(x))\|\leq C\sigma^\tau.
\end{equation}
\end{itemize}
\end{thm}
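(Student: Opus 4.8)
The plan is to deduce the statement from the characterization of $\HT$-free diffeomorphisms via the absence of weak periodic orbits, combined with Frank's lemma and a linear-algebra argument on cocycles. First I would recall the key dichotomy: if $f \notin \HT$, then by Frank's lemma one cannot create, by a $C^1$-small perturbation localized along a periodic orbit, a pair of complex or real eigenvalues that can be rotated to produce a tangency; quantitatively, this forbids having two Lyapunov exponents simultaneously close to zero along a periodic orbit of a fixed index $i$ (otherwise a small perturbation inside the two-dimensional subbundle spanned by the corresponding eigenspaces would produce a homothety-rotation, hence a homoclinic tangency after a further perturbation connecting stable and unstable manifolds — this is the classical Mañé--Franks mechanism, cf.~\cite{wen-conjecture}). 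This immediately gives the first bullet, that $E^c$ has dimension at most one, once the splitting $E\oplus E^c\oplus F$ is defined via the characteristic exponents in $(-\infty,-\delta]$, $(-\delta,\delta)$, $[\delta,\infty)$, provided $\delta$ is chosen small enough (depending only on $f$, via the size of the allowed perturbation).

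Next I would establish the existence of the dominated splitting $E\oplus F$ over the closure of the set of index-$i$ hyperbolic periodic points, with $\dim E = i$. The strategy is the standard one: uniform domination over a set of periodic orbits follows from a uniform angle/contraction estimate at the periods, which in turn follows from the "no weak periodic points of two different nearby indices can coexist without a tangency" principle; concretely, if domination failed, one could find periodic orbits along which the would-be $E$ and $F$ directions come arbitrarily close and with comparable expansion rates, and then Frank's lemma again produces a tangency. This is exactly the content of~\cite{W,wen-conjecture}, so I would cite it for the existence of the splitting and concentrate on extracting the uniform constants. The passage from "dominated over periodic orbits" to the $N$-domination with uniform $N$ is then a compactness argument: the domination function is upper semicontinuous, so a single $N$ works on the whole (compact) closure.

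For the quantitative estimates~\eqref{e.wen}, the point is that along $E$ the product of the norms of $Df$ over one period is governed by the sum of the Lyapunov exponents in $E$, each of which is $\le -\delta < 0$; so $\prod \|Df_{|E}\| \le e^{-\delta \tau}$ up to a multiplicative constant coming from the (uniformly bounded, by domination) distortion between the operator norm of the $N$-step cocycle and the true exponential rate. Here I would use that $N$-domination gives a uniform bound on $\|Df^N_{|E}\| \cdot \|Df^{-N}_{|F}\|$, which controls the fluctuation of the finite products around their exponential mean; a Pliss-type or sub-additivity argument then yields the constants $C$ and $\sigma = e^{-\delta/N}$ (adjusting $\sigma$ slightly to absorb $C$ into the exponent if one prefers). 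The analogous estimate for $F^{-1}$ is symmetric, applying the argument to $f^{-1}$.

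The main obstacle I expect is not any single step but the careful bookkeeping of the order of quantifiers: one must choose $\delta$ small enough that the Frank's-lemma perturbation argument applies \emph{uniformly} over all index-$i$ periodic orbits (so $\delta$ depends only on $f$, not on the orbit), and then show that the \emph{same} $\delta$, together with a uniform $N$, $C$, $\sigma$, works across \emph{all} indices $i \in \{1,\dots,\dim M -1\}$ simultaneously — which is fine since there are only finitely many indices. The genuinely delicate part, and the one where~\cite{W} does the real work, is proving that the splitting exists and is dominated at all without assuming hyperbolicity; I would treat that as a black box and focus the exposition on why, \emph{given} the splitting, the uniform estimates follow from the exponent gap and the distortion control provided by domination.
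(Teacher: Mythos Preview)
The paper does not contain a proof of this theorem: it is stated in Section~\ref{pre} as a preliminary result quoted directly from Wen's papers~\cite{W,wen-conjecture}, with no argument given. There is therefore nothing in the present paper to compare your proposal against.

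That said, your sketch is a reasonable outline of the ideas behind Wen's original proof: the Franks-lemma mechanism forbidding two simultaneously weak exponents (hence $\dim E^c\le 1$), the passage from this to a uniform dominated splitting over index-$i$ periodic points, and the extraction of uniform constants by compactness. One point to tighten: your justification of the product estimate~\eqref{e.wen} is too quick. The bound $\prod\|Df^N_{|E}\|\le C\sigma^\tau$ does \emph{not} follow merely from knowing that each Lyapunov exponent along $E$ is at most $-\delta$; the product of stepwise operator norms can in principle be much larger than the spectral radius of the period map. What actually makes this work in~\cite{wen-conjecture} is a more refined argument: one first shows (via Franks' lemma) that failure of the uniform product estimate along $E$ would allow one to perturb the cocycle to create a second weak exponent, again yielding a tangency. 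The domination and the exponent gap alone do not suffice; the perturbation argument must be invoked a second time for the quantitative bound, not just for the existence of the splitting.
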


The last result induces dominated splitting over homoclinic classes.
\begin{cor}\label{weak}
For any $f$ in a dense $G_\delta$ subset of $\diff^1(M)\setminus \HT$ there exists $\delta$
such that any homoclinic class $H(O)$ satisfies the following.

If $O$ has no Lyapunov exponent in $(-\de,\de)$,
then, $H(O)$ has a dominated splitting $E\oplus F$ where $\dim(E)$ coincides with the stable index of $O$.

If $O$ has a Lyapunov exponent in $(-\de,\de)$, then, $H(O)$ has a dominated splitting $E\oplus E^c\oplus F$ where $E^c$ is one dimensional and the Lyapunov exponent of $O$ along $E^c$ belongs to $(-\de,\de).$
\end{cor}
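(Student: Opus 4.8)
The plan is to combine Theorem~\ref{wen} with the standard genericity machinery (Corollary~\ref{c.local-class}, Theorem~\ref{t.hausdorff}) that lets one transport a dominated splitting from the set of periodic orbits to the whole homoclinic class. First fix $\delta,C,\sigma,N$ as provided by Theorem~\ref{wen} for the given index range, and work in a dense $G_\delta$ subset where the conclusions of Theorem~\ref{wen} hold together with the Baire-generic continuity/closure properties needed below. Given a homoclinic class $H(O)$, recall that $H(O)$ is by definition the closure of the transverse homoclinic points of $O$, and that for $C^1$-generic $f$ it coincides with the chain-recurrence class of $O$; by Theorem~\ref{t.hausdorff} it is the Hausdorff limit of a sequence of periodic orbits, and by the connecting-lemma technology each of these can be taken homoclinically related to $O$, hence of the same stable index as $O$ in the first case (and, in the second case, of index equal to $\dim E^s$ or $\dim E^s+1$). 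Thus every point of $H(O)$ is a limit of points lying on periodic orbits to which Theorem~\ref{wen} applies.

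Next I would argue that the dominated splitting is inherited in the limit. Theorem~\ref{wen} gives, over the union of all hyperbolic periodic orbits of a fixed index $i$, an $N$-dominated splitting $E\oplus F$ with $\dim E=i$. Domination is a closed condition: if $\Lambda_n$ are invariant sets carrying $N$-dominated splittings $E_n\oplus F_n$ of constant dimensions and $\Lambda_n\to\Lambda$ in the Hausdorff topology, then $\Lambda$ carries an $N$-dominated splitting of the same dimensions (take limits of the subbundles along convergent subsequences, using compactness of the Grassmannian bundle and the uniform inequality $\|Df^N_x.u\|\le \tfrac12\|Df^N_x.v\|$, which passes to the limit). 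In the first case all the approximating periodic orbits have index $i=\dim E^s$ and no exponent in $(-\delta,\delta)$, so the full splitting is $E\oplus F$ with $\dim E=i$, and this passes to $H(O)$, giving the asserted dominated splitting.

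In the second case $O$ itself has a Lyapunov exponent $\lambda$ along a one-dimensional characteristic subbundle $E^c\subset T_OM$ with $\lambda\in(-\delta,\delta)$, so $T_OM=E\oplus E^c\oplus F$ is the $N$-dominated splitting from Theorem~\ref{wen}. The approximating periodic orbits homoclinically related to $O$ have index $\dim E^s$ or $\dim E^s+1$; passing to a subsequence we may assume they all have one of these two indices, and in either case Theorem~\ref{wen} endows the orbit closure with a dominated splitting compatible (by uniqueness of dominated splittings of a given index) with a common three-term splitting $\widehat E\oplus\widehat E^c\oplus\widehat F$, $\dim\widehat E^c=1$, which on $O$ restricts to $E\oplus E^c\oplus F$. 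Taking the Hausdorff limit and using the closedness of domination, $H(O)$ acquires an $N$-dominated splitting $E\oplus E^c\oplus F$ with $\dim E^c=1$; since it restricts on $O$ to the characteristic splitting, the exponent of $O$ along $E^c$ is $\lambda\in(-\delta,\delta)$, as claimed.

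The main obstacle I expect is the bookkeeping needed to make ``the splitting passes to the limit with the \emph{right} dimensions'' precise: one must rule out that, along a sequence of index-$i$ periodic orbits converging to $H(O)$, the one-dimensional central bundle ``jumps'' into $E$ or $F$ in the limit, and one must reconcile the two possible indices $\dim E^s$ and $\dim E^s+1$ of the approximating orbits into a single coherent three-term splitting over $H(O)$. Both points are handled by uniqueness of dominated splittings (for a fixed dimension of the first factor) and by the uniform constants $N,C,\sigma$ of Theorem~\ref{wen}, which are the same for all periodic orbits of the relevant indices; this uniformity is precisely what prevents any degeneration. The remaining ingredients — that $H(O)$ is a Hausdorff limit of periodic orbits homoclinically related to $O$, and the genericity statements used — are quoted directly from Theorem~\ref{t.hausdorff}, Corollary~\ref{c.local-class}, and the discussion following them.
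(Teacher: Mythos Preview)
Your approach is on the right track for the first case but has a genuine gap in the second case, and the paper's proof is much shorter because it invokes a stronger density statement that you are missing.

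For the first case, your argument essentially works: periodic points homoclinically related to $O$ are dense in $H(O)$, they all have index $i=\operatorname{ind}(O)$, the first assertion of Theorem~\ref{wen} gives a uniformly $N$-dominated splitting $E\oplus F$ with $\dim E=i$ over the union of \emph{all} hyperbolic periodic orbits of index $i$, and Lemma~\ref{l.Ndomination} extends it to the closure $H(O)$. (You do not need Theorem~\ref{t.hausdorff} here, and your claim that the Hausdorff-approximating orbits can be taken homoclinically related to $O$ is neither needed nor obviously true.)

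The gap is in the second case. First, orbits homoclinically related to $O$ have \emph{the same} index as $O$, not two possible indices; your sentence ``the approximating periodic orbits homoclinically related to $O$ have index $\dim E^s$ or $\dim E^s+1$'' is incorrect. More importantly, knowing only the index of these approximating orbits, Theorem~\ref{wen} gives you only the two-term splitting $E\oplus F$ over them. The three-term splitting $E\oplus E^c\oplus F$ in the ``Moreover'' part of Theorem~\ref{wen} requires the orbit to have a Lyapunov exponent in $(-\delta,\delta)$, and a periodic orbit homoclinically related to $O$ need not have any weak exponent, even if $O$ does. So you cannot conclude that a uniformly $N$-dominated three-term splitting exists over a dense set of periodic points, and your limiting argument does not get off the ground.

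The paper closes this gap in one line by citing \cite[Lemma~4.1]{bcdg}: for $C^1$-generic $f$, the hyperbolic periodic points whose Lyapunov exponents are \emph{close to those of $O$} are dense in $H(O)$. In the second case these periodic points therefore also have an exponent in $(-\delta,\delta)$, so the ``Moreover'' part of Theorem~\ref{wen} gives the uniformly $N$-dominated splitting $E\oplus E^c\oplus F$ over each of them; Lemma~\ref{l.Ndomination} then extends it to $H(O)$. This is exactly the missing ingredient in your proposal.
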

\begin{proof}
The set of hyperbolic periodic points with Lyapunov exponents close to those of $O$ are dense in $H(O)$
by~\cite[Lemma 4.1]{bcdg}. By the above Theorem the result follows.
\end{proof}

The following is a consequence from Theorem~\ref{wen} and Liao's selecting lemma.
The proof is exactly the same as in~\cite[Theorem 1]{Cro08}, the statement we give now is more local.
\begin{thm}\cite[Theorem 1]{Cro08}\label{C3options}
Let $f\in \diff^1(M)\setminus \HT$ and let $K_0$ be an invariant compact set having dominated splitting $E\oplus F.$ If $E$ is not uniformly contracted, then for any neighborhood $U$ of
$K_0$ one of following occurs:
\begin{enumerate}
\item\label{lessindex} $K_0$ intersects a local homoclinic class $H(O,U)$
associated to a periodic orbit $O$ whose stable index is strictly less than $\dim(E).$
\item\label{equalindex} $K_0$ intersects local homoclinic classes $H(O_n,U)$
associated to a periodic orbit $O_n$ whose stable index is equal to $\dim(E)$ and which contains weak periodic orbits: for any $\de>0$ there exists a sequence of hyperbolic periodic orbits $\mO_n$ that are homoclinically related together in $U$, that converge for the Hausdorff topology toward a compact subset $K$ of $K_0,$ whose stable index are equal to $\dim E$ and whose maximal exponent along $E$ belongs to $(-\de,0).$
\item\label{subset} There exists an invariant compact set $K\subset K_0$ which has partially hyperbolic structure $E^s\oplus E^c\oplus E^u$. Moreover, $\dim E^s<\dim E$ the central bundle $E^c$ is one dimensional and any invariant measure supported on $K$ has a Lyapunov exponent along $E^c$ equal to $0.$
\end{enumerate}
\end{thm}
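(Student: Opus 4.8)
The plan is to follow the strategy of~\cite[Theorem 1]{Cro08}, the only difference being that we want a local conclusion (everything happening inside a prescribed neighborhood $U$ of $K_0$). Since $E$ is not uniformly contracted over $K_0$, there exists a point $x\in K_0$ and times $n_k\to\infty$ along which the product $\|Df^{n_k}_{|E}(x)\|$ fails to contract. The first step is to apply Liao's selecting lemma (in the form used in~\cite{wen-conjecture,Cro08}) to the bundle $E$: one extracts an invariant compact set $K_1\subset K_0$ together with a further dominated decomposition $E=E'\oplus E^c$ where $E^c$ is one-dimensional and \emph{not uniformly contracted}, and moreover the "selected" pieces of orbit can be taken to stay in $U$. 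This is where one must be careful to keep the local character: the selecting lemma is applied to orbit segments whose iterates remain in a small neighborhood of $K_0$, so all the periodic orbits and invariant sets produced below lie in $U$. The dominated splitting $E'\oplus E^c\oplus F$ over $K_1$ is the candidate partially hyperbolic splitting, with $\dim E'<\dim E$.

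Next, I would analyze the behavior of $E^c$ over the selected set. Two cases arise. If every invariant (equivalently, every ergodic) measure supported on the minimal non-uniformly-contracted piece $K\subset K_1$ has zero Lyapunov exponent along $E^c$, then (after checking that $E'$ is uniformly contracted on $K$ and $F$ uniformly expanded on $K$, which follows from domination together with the fact that $E^c$ is the last bundle to lose contraction and, symmetrically on $F$, the analogous argument, or is built into the selecting lemma output) we are exactly in conclusion~(\ref{subset}). If instead some ergodic measure on $K$ has \emph{non-zero} negative Lyapunov exponent along $E^c$, then $E'\oplus E^c$ is not uniformly contracted while $E^c$ carries a negative exponent; here I would invoke Ma\~n\'e's ergodic closing lemma to produce, arbitrarily close to $K$ in the Hausdorff topology and inside $U$, hyperbolic periodic orbits $\mathcal O_n$ whose stable index is either $\dim E'=\dim E-1$ (giving conclusion~(\ref{lessindex}) via Corollary~\ref{c.local-class}: the periodic orbit's invariant manifolds pass near $K_0$, so $K_0$ meets its local homoclinic class) or $\dim E$. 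In the latter subcase the closed orbits have their maximal exponent along $E$ as close to $0^-$ as desired — because they shadow $K$ where $E^c$ barely contracts — and a standard argument (homoclinically relating them inside $U$, using Corollary~\ref{c.class} and the connecting lemma Theorem~\ref{l.connecting}) gives conclusion~(\ref{equalindex}).

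The main obstacle is the first step: extracting from "$E$ not uniformly contracted over $K_0$" a \emph{one-dimensional} sub-bundle $E^c\subset E$ that is itself not uniformly contracted, with the selected orbit segments confined to $U$. This is precisely the content of Liao's selecting lemma combined with the domination coming from Theorem~\ref{wen}; the delicate point is that the selecting lemma, as usually stated, is not automatically local, so one has to run it on the restriction of $f$ to the maximal invariant set in $U$ and verify that domination and the selecting property persist there. Once this localized selecting lemma is in hand, the trichotomy is obtained by the measure-theoretic case split described above, and each branch is closed off by the already-available tools (Ma\~n\'e's ergodic closing lemma, Corollaries~\ref{c.class} and~\ref{c.local-class}, and Theorem~\ref{l.connecting}); these are routine by comparison. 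In fact, as the statement notes, "the proof is exactly the same as in~\cite[Theorem 1]{Cro08}" — the only new bookkeeping is tracking the neighborhood $U$ through each step.
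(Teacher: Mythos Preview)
Your proposal is essentially correct and follows the same route as the paper, which gives no proof beyond citing~\cite[Theorem 1]{Cro08} and remarking that only the local bookkeeping (tracking the neighborhood $U$) is new. One caution: Corollaries~\ref{c.class} and~\ref{c.local-class} and the connecting lemma as you invoke them are stated for $C^1$-\emph{generic} $f$, whereas Theorem~\ref{C3options} is asserted for every $f\in\diff^1(M)\setminus\HT$; the fact that $K_0$ meets the local homoclinic class in cases~(\ref{lessindex}) and~(\ref{equalindex}), and that the periodic orbits are homoclinically related in $U$, should instead be read off directly from Proposition~\ref{hypm}, which needs no genericity and already outputs a sequence of periodic orbits homoclinically related in $U$ and Hausdorff-converging to $\operatorname{supp}(\mu)\subset K_0$.
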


Theorems~\ref{wen} and~\ref{t.weaktransitive} allow to extend the dominated splitting over a chain-transitive set
to larger sets.
\begin{prop}\cite[Proposition 1.10]{Cro08}\label{spread}
Let $f$ be a diffeomorphism in a dense $G_\de$ set  $\cG_{Ext}\subset \diff^1(M)\setminus\HT$ and let $K$ be chain-transitive set which has a partially hyperbolic splitting $E^s\oplus E^c\oplus E^u$ such that $E^c$ is one-dimensional and such that any invariant measure supported on $K$ has a Lyapunov exponent along $E^c$ equal to $0.$ Then, for any chain-transitive set $A$ that strictly contains $K$ there exists a chain-transitive set $A'\subset A$ that strictly contains $K$ and there exists a dominated decomposition $E_1\oplus E^c\oplus E_3$ on $A'$ that extends the partially hyperbolic structure of $K$. Moreover, the set $A'$ is the Hausdorff limit of periodic orbits of stable index $\dim (E^s)$ whose Lyapunov exponent along $E^c$ is arbitrarily close to zero.

\end{prop}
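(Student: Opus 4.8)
The plan is to reduce everything to the construction of a single family of periodic orbits: for every small $\rho>0$ I want a hyperbolic periodic orbit $O_\rho$ of stable index $d:=\dim E^s$ whose Lyapunov exponent along the central direction lies in $(-\rho,\rho)$, which is $\rho$-close for the Hausdorff distance to a periodic $\rho$-pseudo-orbit contained in $A$ passing through a fixed point $y\in A\setminus K$ and through a $\rho$-dense subset of $K$. Once these are in hand, a Hausdorff limit $A'$ of a sequence $O_{\rho_n}$, $\rho_n\to0$, will be the wanted chain-transitive set, and Wen's Theorem~\ref{wen} applied along these index-$d$ orbits will produce the dominated splitting extending the one on $K$.

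\textbf{Construction of the orbits.} Fix $x\in K$ and $y\in A\setminus K$. Since $A$ is chain transitive, for each $\varepsilon>0$ there are $\varepsilon$-pseudo-orbit segments $\alpha$ from $x$ to $y$ and $\beta$ from $y$ to $x$ lying in $A$; since $K$ is chain transitive, there is an arbitrarily long periodic $\varepsilon$-pseudo-orbit $\gamma$ inside $K$ visiting an $\varepsilon$-net of $K$. Concatenating $T$ copies of $\gamma$ with $\alpha$ and $\beta$ gives a periodic $\varepsilon$-pseudo-orbit $X=X_{\varepsilon,T}\subset A$ through $y$ that, for $T$ large, spends a proportion of time arbitrarily close to $1$ inside $K$. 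Extend $E^s\oplus E^c\oplus E^u$ to a (non-invariant) dominated splitting on a neighbourhood of $K$: along the $K$-part of $X$ the cocycle uniformly contracts the $E^s$-direction and expands the $E^u$-direction, and is almost neutral along $E^c$ (all invariant measures on $K$ have zero central exponent, so by uniform convergence in Birkhoff's theorem on the compact set $K$, long orbit segments near $K$ have central average close to $0$), while the contribution of $\alpha\cup\beta$ is bounded and hence negligible for $T$ large. The main technical step is to replace $X$ by a genuine periodic orbit that inherits this tangent behaviour: using $f\notin\HT$ (so that Theorem~\ref{wen} is available along index-$d$ orbits) together with a shadowing adapted to the dominated splitting near $K$ (Liao type) and the global connecting lemma Theorem~\ref{t.hausdorff}, one obtains a hyperbolic periodic orbit $O$ that is $\varepsilon'$-close to $X$ for the Hausdorff distance, has stable index $d$ or $d+1$, and has its weakest non-strong exponent in $(-\varepsilon',\varepsilon')$, with $\varepsilon'\to0$ as $\varepsilon\to0$, $T\to\infty$. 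If this index is $d+1$ (central exponent slightly negative), apply Lemma~\ref{l.index} with $i=d+1$ to replace $O$ by hyperbolic periodic orbits of index $d$, Hausdorff-close to the same limit and with $(d+1)$-th exponent still close to $0$. This yields the orbits $O_{\rho_n}$.

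\textbf{Passing to the limit.} Let $A'$ be a Hausdorff limit of a subsequence of $(O_{\rho_n})$. As a limit of invariant compact sets it is invariant; it contains $K$ (the pseudo-orbits are $\rho_n$-dense in $K$) and contains $y$ (the pseudo-orbits, hence the $O_{\rho_n}$, pass $\rho_n$-close to $y$), so $K\subsetneq A'$; and since each $O_{\rho_n}$ is $O(\rho_n)$-close to a set inside $A$, one gets $A'\subset A$. For chain transitivity, use that $O_{\rho_n}$ is $O(\rho_n)$-Hausdorff-close to a periodic $O(\rho_n)$-pseudo-orbit contained in $A$: given $a,b\in A'$ and $\eta>0$, projecting that pseudo-orbit onto nearby points of $A'$ and adding two short jumps gives, for $n$ large, an $\eta$-pseudo-orbit in $A'$ from $a$ to $b$. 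Finally, by Theorem~\ref{wen} there are uniform constants, in particular an integer $N$, such that each $O_{\rho_n}$ carries an $N$-dominated splitting $E\oplus E^c_n\oplus F$ with $\dim E=d$ and $E^c_n$ one-dimensional (the weak-exponent line, for $n$ large); a Grassmannian limit gives an $N$-dominated splitting $E_1\oplus E^c\oplus E_3$ on $A'$ with $\dim E_1=d$ and $\dim E^c=1$ (a limit of lines). Its restriction to $K$ is a dominated splitting into subbundles of dimensions $(\dim E^s,1,\dim E^u)$, hence by uniqueness of dominated splittings of prescribed type it coincides with $E^s\oplus E^c\oplus E^u$; so $E_1\oplus E^c\oplus E_3$ extends the partially hyperbolic structure of $K$, and the $O_{\rho_n}$ are the periodic orbits of index $\dim E^s$ with central exponent tending to $0$ that the statement requires.

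The delicate point is the middle one: shadowing the spliced pseudo-orbit $X$ by a true periodic orbit that keeps its central exponent near $0$ and keeps exactly $\dim E^s$ (resp. $\dim E^u$) strongly contracted (resp. expanded) directions. The global connecting lemma controls only the Hausdorff distance, so one genuinely needs a quantitative, dominated-splitting-adapted shadowing forcing the periodic orbit to stay in a thin tube around $K$ with the expected behaviour, together with $f\notin\HT$ to apply Wen's uniform estimates and to prevent the weak exponent from drifting away from $0$. The remaining ingredients — the pseudo-orbit splicing, the index correction through Lemma~\ref{l.index}, and the Grassmannian limit of dominated splittings — are comparatively routine.
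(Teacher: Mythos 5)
The paper does not prove this proposition; it is quoted verbatim from \cite[Proposition 1.10]{Cro08}, so your attempt has to stand on its own. It does not: the step you yourself flag as ``the delicate point'' --- producing, for each spliced periodic pseudo-orbit $X_{\varepsilon,T}$, a genuine hyperbolic periodic orbit Hausdorff-close to it, of index $d$ or $d+1$, with weakest non-strong exponent near $0$ --- is the entire content of the proposition, and none of the tools you invoke delivers it. Theorem~\ref{t.hausdorff} controls only the Hausdorff distance between the orbit and the pseudo-orbit; it gives no control whatsoever on the time-distribution of the orbit, hence none on its Lyapunov exponents or its index (the periodic orbit it produces could spend almost all of its period near the excursion $\alpha\cup\beta$, or have a strongly negative central exponent while remaining Hausdorff-close to $X$). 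Liao-type shadowing requires a quasi-hyperbolic string, i.e.\ uniform exponential estimates along both bundles of a dominated splitting along the pseudo-orbit; here the central direction is neutral on $K$ (every invariant measure has zero exponent along $E^c$), so no such estimates hold for either grouping $E^s\mid E^c\oplus E^u$ or $E^s\oplus E^c\mid E^u$ --- this is exactly the situation Liao's lemma cannot handle, and the reason the central-model machinery of \cite{crovisier-palis-faible} exists. Wen's Theorem~\ref{wen} is a statement about periodic orbits that already exist; it creates none.

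What actually fills this gap in \cite{Cro08} (and in the parallel arguments of Sections~\ref{s.connecting} and~\ref{s.trapped} of this paper) is a case analysis on the central model of $K$: in the thin trapped case one uses the shadowing lemma for partially hyperbolic sets with thin trapped one-dimensional center (\cite[Lemma 2.9]{Cro08}) together with intersections of strong stable/unstable manifolds with trapped central plaques to close up orbits; otherwise one exploits a chain-recurrent central segment. In both cases the periodic orbit with weak central exponent through $y$ is first produced for a perturbation $g$ of $f$, by an explicit connecting-lemma surgery in several disjoint perturbation domains arranged so that the period is much larger than the length of the excursion out of a neighborhood of $K$ (compare the constants $\tau$ and $T$ in the proof of Lemma~\ref{l.perturbation}); only then does the uniform-Birkhoff estimate on $K$ that you cite give the bound on the central exponent. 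Finally the orbit is transferred from $g$ back to the generic $f$ by a countable-basis Baire argument such as the set $\cG$ of Section~\ref{ss.generic}. Your surrounding steps (the splicing of pseudo-orbits, the index correction via Lemma~\ref{l.index}, chain-transitivity and the Grassmannian limit of the $N$-dominated splittings, and the identification of the limit splitting on $K$ by uniqueness) are fine, but they are the routine part; without the closing argument the proof is missing its core.
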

\medskip

\paragraph{\bf Indices of a homoclinic class.}
We now state a result from \cite[corollaries 2 and 3]{abcdw}:

\begin{thm}\cite[corollaries 2 and 3]{abcdw}\label{indices}
Let $f$ be a diffeomorphism in a dense $G_\delta$ subset of $\diff^1(M)\setminus\HT$ and let $H$ be a homoclinic class having hyperbolic
saddles of stable indices $\al$ and $\be$ with $\al<\be.$ Then:
\begin{enumerate}
\item $H$ contains periodic points of stable index $j$ for every $\al\le j\le \be.$
\item For every $j\in \{\alpha+1,\dots,\beta\}$ there exist periodic orbits in $H$
of stable index $j-1$ and $j$
whose $j^\text{th}$-Lyapunov exponent is arbitrarily close to $0$.
\end{enumerate}

\end{thm}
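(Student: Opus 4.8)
To prove this (it is~\cite[Corollaries 2 and 3]{abcdw}), the plan is the following. Work inside a dense $G_\delta$ set $\cG\subset\diff^1(M)\setminus\HT$ on which: the conclusions of~\cite{BC} hold (homoclinic classes are chain-recurrence classes, two saddles of equal index in a common class are homoclinically related), Lemma~\ref{l.index}, Theorem~\ref{wen} and Theorem~\ref{t.hausdorff} hold, and homoclinic classes depend semicontinuously on the diffeomorphism. Fix $f\in\cG$ and a homoclinic class $H=H(p)$ containing saddles $P,Q$ of indices $\alpha<\beta$, so that $H(P)=H(Q)=H$. For item~(2), observe that once one has produced, for each $j\in\{\alpha+1,\dots,\beta\}$, periodic orbits of index $j$ in $H$ whose $j$-th Lyapunov exponent is arbitrarily close to $0$, Lemma~\ref{l.index} furnishes the companion index-$(j-1)$ orbits; so the whole theorem reduces to exhibiting, inside $H$, periodic orbits of each index $j\in\{\alpha,\dots,\beta\}$ and, for $\alpha<j\le\beta$, with the $j$-th exponent as small as we wish.

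To produce these, the plan is to use the connecting lemmas. Since $P$ and $Q$ lie in the same chain-recurrence class and have different indices, two localized applications of Hayashi's connecting lemma (Theorem~\ref{l.connecting}) — or of the pseudo-orbit connecting lemma of~\cite{BC} — yield a $C^1$-small perturbation $g$ for which the continuations $P_g,Q_g$ form a heterodimensional cycle of coindex $\beta-\alpha$ whose orbits remain in a prescribed neighborhood $U$ of $H$; here $W^u(P)\cap W^s(Q)$ can be taken transverse by a dimension count, the leg $W^s(P)\cap W^u(Q)$ being the quasi-transverse one. One then unfolds this cycle by the standard perturbative machinery for heterodimensional cycles and blenders~\cite{bd}, obtaining a further perturbation $g'$ for which, \emph{robustly}, $H(P_{g'},U)$ contains hyperbolic periodic orbits of every index in $\{\alpha,\dots,\beta\}$; moreover, by tuning the proportion of time the shadowing orbit spends near $P$ versus near $Q$ and adjusting with Franks' lemma — exactly in the spirit of Theorem~\ref{C3options} — one arranges that, for a prescribed $j$, the index-$j$ orbit has $j$-th Lyapunov exponent arbitrarily close to $0$. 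Thus the required configurations occur on a $C^1$-open set of diffeomorphisms accumulating on $f$; since they are robust, a Baire argument combined with the semicontinuity of $g\mapsto H(P_g)$ and Corollary~\ref{c.local-class} (as $U$ shrinks onto $H$) shows they occur on a dense $G_\delta$, hence for $f$ itself, and with the periodic orbits genuinely contained in $H=H(p)$.

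The heart of the difficulty is this last transfer. A homoclinic class may collapse under an arbitrarily small perturbation, so creating a periodic orbit of the desired index merely \emph{near} $H$ for a perturbation $g$ says nothing a priori about the class of the original $f$; this is precisely why one must use blenders rather than a bare heterodimensional cycle, so that the new saddles are \emph{robustly} homoclinically related to $P$ inside a fixed neighborhood and the property survives to a $C^1$-open set and then to a residual set, and why one needs the generic semicontinuity of homoclinic classes to pass, in the limit $U\to H$, from ``near $H$'' to ``in $H$''. The hypothesis $f\notin\HT$ enters through Theorem~\ref{wen}: it supplies the ambient dominated splitting used to control which indices appear and to guarantee that no more than one exponent can be weak, which keeps the bookkeeping of item~(2) consistent.
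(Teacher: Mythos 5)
The paper does not prove this statement: it is quoted verbatim from \cite[Corollaries 2 and 3]{abcdw}, so there is no internal proof to compare yours against. Judged on its own terms, your sketch does reconstruct the essential mechanism of the ABCDW argument: use the (pseudo-orbit) connecting lemma to create a heterodimensional cycle between the index-$\alpha$ and index-$\beta$ saddles inside a prescribed neighborhood of $H$, then produce periodic orbits shadowing the cycle whose derivative at the period is approximately a product of powers of the two linear parts and of transition matrices, so that tuning the proportion of time spent near each saddle, together with Franks' lemma, sweeps the $j^\text{th}$ exponent through $0$ and realizes every intermediate index with a weak exponent; finally transfer from perturbations back to the generic $f$ by a Baire argument and place the orbits inside $H$ via the generic identification of homoclinic classes with chain-recurrence classes.

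Two caveats. First, the appeal to blenders to make the intermediate saddles \emph{robustly} related to $P$ is both unnecessary and, for co-index $\beta-\alpha\geq 2$, not supplied by the machinery you cite: \cite{bd} does not give, off the shelf, robust homoclinic relations for all intermediate indices of a higher co-index cycle. The transfer only requires that the auxiliary property ``$g$ has a hyperbolic periodic orbit of index $j$ contained in $U$, meeting $V$, with $j^\text{th}$ exponent in $(-\theta,\theta)$'' be \emph{open} --- automatic by persistence of hyperbolic orbits --- combined with the countable dichotomy $\cO(U,V,\theta)\cup(\diff^1(M)\setminus\overline{\cO(U,V,\theta)})$ exactly as in Section~\ref{ss.generic}; containment in $H$ then follows from Corollary~\ref{c.local-class} and the generic coincidence of homoclinic and chain-recurrence classes. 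Relatedly, your reduction of item (2) to Lemma~\ref{l.index} is lossy: that lemma yields the companion index-$(j-1)$ orbits only Hausdorff-close to the class, not inside it, so the same containment argument must be run for them as well. Second, the hypothesis $f\notin\HT$ plays no role here: \cite{abcdw} proves the statement for generic diffeomorphisms of $\diff^1(M)$, and Theorem~\ref{wen} is not needed for the exponent bookkeeping, which is controlled directly by the construction of the shadowing orbits.
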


\begin{rk}\label{r.indices}
The last theorem is also valid for local homoclinic classes (with the same proof):
if $H=H(p,U)$ is a local homoclinic class containing hyperbolic periodic points of indices $\alpha<\beta$,
then for any neighborhood $V$ of $\overline{U}$, the local homoclinic class $H(p,V)$
contains hyperbolic periodic points of any index $i\in \{\alpha,\dots,\beta\}$.
\end{rk}
\medskip

\paragraph{\bf Heterodimensional cycles.}
Robust heterodimensional cycle can be obtained from periodic points with different indices
contained in a same chain-recurrence class.
\begin{thm}\cite{bdk}\label{t.diffind}
For any diffeomorphism $f$ in a dense $G_\delta$ subset of ${\rm Diff}^1(M)$, if $H(p)$ is a homoclinic class containing hyperbolic periodic points of different indices, then $H(p)$ contains a robust heterodimensional cycle.
\end{thm}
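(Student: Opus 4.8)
The plan is to produce a robust heterodimensional cycle inside $H(p)$ in three moves: first, use the connecting lemmas to create, by an arbitrarily $C^1$-small perturbation, an honest but possibly fragile heterodimensional cycle between two periodic orbits of $H(p)$ of different indices, keeping the cycle inside the class; second, perturb this cycle near its orbit so that a \emph{blender} appears and the cycle becomes robust, still inside the class; third, conclude for a dense $G_\delta$ by a Baire argument, exploiting that robustness is a $C^1$-open condition.

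For the first step, let $R,S\subset H(p)$ be hyperbolic periodic orbits with $\operatorname{ind}(R)=\alpha<\beta=\operatorname{ind}(S)$. For a $C^1$-generic $f$ the class $H(p)$ is a chain-recurrence class, and $R,S$ are homoclinically related to periodic orbits that are dense in $H(p)$; in particular $R$ and $S$ lie in a common chain-transitive set, so $S$ belongs to the chain-unstable and to the chain-stable set of $R$ inside any prescribed neighborhood $U$ of $H(p)$. By the pseudo-orbit connecting lemma (Theorem~\ref{t.weaktransitive}) together with Hayashi's connecting lemma (Theorem~\ref{l.connecting}) — and the standard way of routing the connecting orbit through a neighborhood of a point of the local unstable, resp.\ local stable, manifold — a perturbation $g$ as $C^1$-close to $f$ as we wish produces an orbit in $W^u(R_g)\cap W^s(S_g)$ and an orbit in $W^s(R_g)\cap W^u(S_g)$, both contained in $U$; that is, $g$ exhibits a heterodimensional cycle of co-index $\beta-\alpha$ between the continuations $R_g$ and $S_g$. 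Since the modification and the connecting orbits are supported in $U$, the orbits $R_g,S_g$ remain homoclinically related to the continuation of $p$, so the cycle lies in $H(p_g)$.

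The heart of the argument — and the step I expect to be the main obstacle — is to make this cycle robust. One further $C^1$-small perturbation, supported in $U$ near the cycle, creates near the orbit of $S_g$ a hyperbolic basic set $\Gamma$ homoclinically related to $S_g$ which is a \emph{blender}: its local unstable set has a superposition property ensuring that $W^u(\Gamma)$ robustly meets every disc of dimension $\dim W^s(R_g)$ crossing a prescribed activating region, and the perturbation is arranged so that $W^s(R_g)$ does cross that region; the complementary intersection $W^s(\Gamma)\cap W^u(R_g)$ has a dimension count in excess and is made transverse, hence robust as well. Since $\Gamma$ is homoclinically related to $S_g$, hence to $p_g$, both $\Gamma$ and $R_g$ lie in $H(p_g)$, so $H(p_g)$ contains a robust heterodimensional cycle formed by the transitive hyperbolic sets $\Gamma$ and $R_g$. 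Carrying out the blender construction and verifying the covering property is the geometric crux: for co-index one this is the Bonatti--Díaz mechanism, and for larger co-index it requires the stabilization technology of Bonatti--Díaz--Kiriki; everything else is soft.

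Finally, having a robust heterodimensional cycle whose two hyperbolic sets are homoclinically related to a common hyperbolic periodic point is a $C^1$-open property. The previous steps show it can be forced by an arbitrarily small perturbation of any diffeomorphism possessing a homoclinic class with periodic points of two distinct indices, and for $C^1$-generic diffeomorphisms homoclinic classes coincide with chain-recurrence classes (Corollary~\ref{c.local-class}), so this hypothesis persists along the perturbation. A countable exhaustion over the pairs of indices $(\alpha,\beta)$ and over the periodic orbits passing through a dense sequence of points then yields a dense $G_\delta$ subset of $\diff^1(M)$ on which the implication of Theorem~\ref{t.diffind} holds.
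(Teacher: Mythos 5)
This statement is not proved in the paper at all: it is quoted verbatim from the reference \cite{bdk} (Bonatti--D\'{\i}az--Kiriki, listed as ``in preparation''), so there is no internal proof to compare against. Your outline does reproduce the known strategy of that reference --- connect the two saddles to create a fragile cycle, then stabilize it with a blender, then run a Baire argument --- but as a proof it has a genuine gap: the entire content of the theorem is concentrated in your second step, and there you do not prove anything. You describe what a blender is supposed to do and then write that ``carrying out the blender construction and verifying the covering property is the geometric crux,'' deferring precisely to ``the stabilization technology of Bonatti--D\'{\i}az--Kiriki,'' i.e.\ to the result you are supposed to be proving. Creating the fragile cycle (your first step) is soft and standard; producing a hyperbolic set with the robust superposition property near an arbitrary co-index one cycle, and handling the twisted/orientation cases that arise there, is the theorem.

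Two further points would need repair even granting the blender machinery. First, stabilization is a co-index one phenomenon: for co-index $\geq 2$ there exist cycles that cannot be stabilized, so the phrase ``for larger co-index it requires the stabilization technology of BDK'' is backwards. The correct reduction is to invoke the index-interval property of homoclinic classes (Theorem~\ref{indices}, from~\cite{abcdw}) to find saddles of \emph{consecutive} indices $j$ and $j+1$ in $H(p)$ and build a co-index one cycle between those. Second, your claim that ``$\Gamma$ is homoclinically related to $S_g$, hence to $p_g$'' cannot be literally right when $S$ and $p$ have different indices (homoclinic relation requires equal index); keeping the robust cycle inside $H(p_g)$ --- and inside $H(p_{g'})$ for generic $g'$ near $g$, where the class may a priori shrink --- requires working with local homoclinic classes and orbits homoclinically related in a prescribed neighborhood, in the spirit of Corollary~\ref{c.class}, rather than the one-line assertion you give. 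The concluding Baire argument is the right shape once these are fixed.
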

\medskip

\subsection{Dominated splitting, hyperbolic times}\label{ss.domination}
Let us consider in this section an invariant set $\Lambda$ with a dominated splitting
$T_\Lambda M=E\oplus F$.
\medskip

\paragraph{\bf Extension to the closure.} The following property is classical.

\begin{lemma}\label{l.Ndomination}
An $N$-dominated splitting $T_\Lambda M=E\oplus F$ over an invariant set $\Lambda$
extends uniquely on the closure of $\Lambda$ as an invariant $N$-dominated splitting.
\end{lemma}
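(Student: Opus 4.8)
The plan is to extend the splitting by continuity and to verify that domination is preserved by limits. First I would recall the setup: on $\Lambda$ we have a decomposition $T_x M = E_x\oplus F_x$ with $\dim E_x$ locally constant (since $\Lambda$ need not be connected, fix the dimensions on each ``component'' in the obvious sense — on a chain-transitive or more generally on any invariant set they are constant because the dimensions are semicontinuous and $Df$-invariant), and with the $N$-domination inequality $\|Df^N_x u\|\le \tfrac12\|Df^N_x v\|$ for all unit vectors $u\in E_x$, $v\in F_x$. The subbundles define continuous maps $x\mapsto E_x$ and $x\mapsto F_x$ into the appropriate Grassmann bundles over $\Lambda$; this continuity is exactly what I want to exploit.

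The key step is to show that $x\mapsto E_x$ (resp. $x\mapsto F_x$) is \emph{uniformly continuous} on $\Lambda$, equivalently that it extends continuously to $\overline\Lambda$. I would derive this from the domination inequality itself rather than assuming it: a standard argument shows that an $N$-dominated splitting on an invariant set is automatically uniformly continuous. Concretely, pick a sequence $x_n\in\Lambda$ with $x_n\to x\in\overline\Lambda$; by compactness of the Grassmannian, after passing to a subsequence $E_{x_n}\to \widehat E_x$ and $F_{x_n}\to \widehat F_x$, and these are $Df$-invariant subspaces of complementary dimension at $x$ (invariance passes to the limit by continuity of $Df$, and $\widehat E_x\oplus\widehat F_x = T_xM$ because the angle between $E_{x_n}$ and $F_{x_n}$ is bounded below — again a consequence of domination, since uniform domination forbids the angle from degenerating). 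Passing to the limit in the inequality $\|Df^N_{x_n} u\|\le\tfrac12\|Df^N_{x_n}v\|$ gives $\|Df^N_x u\|\le\tfrac12\|Df^N_x v\|$ for unit $u\in\widehat E_x$, $v\in\widehat F_x$. Finally, uniqueness: if $\widehat E_x, \widehat E'_x$ were two limit subspaces of dimension $\dim E$, both dominated by the corresponding $\widehat F$'s, then a vector in $\widehat E_x$ with nonzero component in $\widehat F'_x$ would be simultaneously contracted relative to $\widehat F'_x$ and, writing it in the other decomposition, expanded relative to $\widehat F_x$ — iterating the $N$-domination estimate forward and backward yields a contradiction. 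Hence the limit is independent of the subsequence, so $E$ (and symmetrically $F$) extends continuously, and by the inequalities just established the extension over $\overline\Lambda$ is an invariant $N$-dominated splitting.

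The main obstacle is the uniqueness/well-definedness at boundary points: one must rule out that two different $\dim E$-dimensional invariant subspaces at $x\in\overline\Lambda\setminus\Lambda$ both arise as limits and are both ``the'' stable bundle. This is handled by the classical cone-field characterization of domination — the $N$-dominated splitting is equivalent to the existence of a $Df^N$-invariant cone field around $F$ which is uniformly contracted in backward time, and such a cone field, being an open condition, persists on $\overline\Lambda$; the bundle $F$ is then recovered as $\bigcap_{n\ge0} Df^{nN}(\text{cone})$ and $E$ as the analogous backward intersection, both manifestly unique. I would phrase the argument through this cone-field picture, which makes both the extension and the uniqueness transparent and avoids delicate estimates on angles; the verification that the cone condition is open and that the nested intersections have the right dimension is routine and I would not spell it out.
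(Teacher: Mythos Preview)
The paper does not actually prove Lemma~\ref{l.Ndomination}: it is stated with the remark ``The following property is classical'' and no argument is given. Your proposal supplies precisely the standard justification that the authors are implicitly invoking --- extension by limits in the Grassmannian, with uniqueness coming either from the iteration argument or (more cleanly, as you note at the end) from the invariant cone-field characterization of domination. Both routes are the textbook ones; there is nothing to compare against in the paper itself, and your outline is correct.

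One small comment: in your first uniqueness argument you compare two candidate limit splittings $(\widehat E_x,\widehat F_x)$ and $(\widehat E'_x,\widehat F'_x)$ at a single boundary point $x$ and then speak of ``iterating the $N$-domination estimate forward and backward.'' For this to make sense you need the competing splittings defined and $N$-dominated along the \emph{entire orbit} of $x$, not just at $x$; this requires a diagonal extraction so that $E_{f^k(x_n)}$ and $F_{f^k(x_n)}$ converge for every $k\in\ZZ$ simultaneously. This is routine but worth saying explicitly. The cone-field argument you sketch afterwards sidesteps this issue entirely and is the cleaner way to present the proof.
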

\medskip

\paragraph{\bf Adapted norm.}
Using~\cite{gourmelon},
one can replace the Riemannian norm by another one which is
adapted to the domination: for this norm, there exists $\lambda\in (0,1)$
such that for any point $x\in \Lambda$ and any unitary vectors, $v^E\in E_x$ and $v^F\in F_x$ we have
$$\|Df.v^E\|\leq \lambda \|Df.v^F\|.$$
\medskip

\paragraph{\bf Generalization to forward invariant sets.}
Consider a small neighborhood $U^+$ of $\Lambda$
and consider the set $\Lambda^+$ of points whose forward orbit by $f$ is contained in $\overline{U^+}$.
The bundle $E$ on $\Lambda$ extends uniquely as a forward invariant continuous bundle that is tangent to
a center-stable cone.
This is also defined in the following way: for any $x\in \Lambda^+$ and any vectors
$u\in E_x\setminus \{0\}$ and $v\in T_xM\setminus E_x$, we have
$$ \limsup_{n\to \infty} \frac 1 n \log \|Df^n(x).u\|< \liminf_{n\to \infty} \frac 1 n \log \|Df^n(x).v\|.$$
\medskip

\paragraph{\bf Hyperbolic points.}
Let $\Lambda$ be an invariant compact set and $E$ be a continuous invariant bundle over $\Lambda$.
For any $C>0$ and $\sigma\in (0,1)$, we say that a point $x\in\Lambda$ is
\emph{$(C,\sigma,E)$-hyperbolic for $f$} if for any $n\in\NN$, one has
$$\prod_{i=0}^{n-1}\|Df|_{E(f^{i}(x))}\|\le C\sigma^n.$$
For any $x\in\Lambda$, if $f^n(x)$ is $(C,\sigma,E)$-hyperbolic for $f$,
then $n$ is called a \emph{$(C,\sigma,E)$-hyperbolic time} of $x$ for $f$.

Note that the definition of $(C,\sigma,E)$-hyperbolic points extend to the set $\Lambda^+$
introduced above.
One defines similarly the $(C,\sigma,E)$-hyperbolic times for $f^{-1}$. When there is no ambiguity between $f$
and $f^{-1}$, we just say that a point is a $(C,\sigma,E)$-hyperbolic time.

The next lemma gives the existence
of $(1,\sigma,E)$-hyperbolic points.
\begin{lemma}\label{l.infinite-pliss}
Let $\Lambda$ be an invariant set, $E$ be a continuous invariant bundle on $\Lambda$ and consider $\sigma\in (0,1)$.
Then for any $x\in\Lambda$ with the property
$$\limsup_{n\to\infty}\sum_{i=0}^{n-1}\frac{1}{n}\log\|Df|_{E(f^{i}(x))}\|<\log\sigma,$$
there is a $(1,\sigma,E)$-hyperbolic point $y$ for $f$ in the forward orbit of $x$.
\end{lemma}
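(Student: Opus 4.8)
The plan is to deduce the statement from an elementary fact about partial sums of real sequences, applied to the numbers $a_i:=\log\|Df|_{E(f^i(x))}\|$ for $i\ge 0$.

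First I would record that each $a_i$ is a well-defined real number: since $E$ is a continuous $Df$-invariant bundle over the invariant set $\Lambda$, the map $Df$ restricts on $E(f^i(x))$ to a linear isomorphism onto $E(f^{i+1}(x))$, so $0<\|Df|_{E(f^i(x))}\|<\infty$. Writing $c:=\log\sigma<0$, the hypothesis becomes $\limsup_{n\to\infty}\frac1n\sum_{i=0}^{n-1}(a_i-c)<0$. I then introduce the partial sums $S_n:=\sum_{i=0}^{n-1}(a_i-c)$, with $S_0=0$; the hypothesis says exactly that $\limsup_n\frac1n S_n<0$, whence $S_n\to-\infty$. Since $S_0=0$, the set $\{n\ge 0:S_n\ge 0\}$ is non-empty and (because $S_n\to-\infty$) finite, so $\sup_{n\ge 0}S_n$ is a finite nonnegative number and it is attained at some index $m\ge 0$, for which $S_k\le S_m$ for all $k\ge m$.

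Then I would set $y:=f^m(x)$, which lies in the forward orbit of $x$ (and in $\Lambda$ by invariance), and check that it is $(1,\sigma,E)$-hyperbolic. For $n\ge 1$,
$$\prod_{i=0}^{n-1}\|Df|_{E(f^i(y))}\|=\exp\Big(\sum_{j=m}^{m+n-1}a_j\Big)=\exp\big(S_{m+n}-S_m+nc\big)\le e^{nc}=\sigma^n,$$
using $S_{m+n}\le S_m$, and for $n=0$ the product is the empty product $1=\sigma^0$. This is precisely the required inequality, so $y$ is $(1,\sigma,E)$-hyperbolic for $f$.

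The argument has essentially no obstacle; the only point that needs care is the existence of a genuine maximum of the sequence $(S_n)$. This is where the \emph{strict} inequality in the hypothesis is used — it is what forces $S_n\to-\infty$ — together with the trivial finiteness of the $a_i$. (One could alternatively extract the index $m$ by applying Pliss's lemma on windows of length tending to infinity, but passing directly to the global maximum of the partial sums is cleaner and introduces no auxiliary constant, which is why the conclusion is stated with $C=1$.)
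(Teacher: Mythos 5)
Your proof is correct and follows essentially the same route as the paper's: both arguments select the hyperbolic time by examining the partial sums of $\log\|Df|_{E}\|-\log\sigma$, using the strict negativity of the limsup to guarantee these sums eventually stay negative. The only (immaterial) difference is that the paper takes $m$ to be the last index with nonnegative partial sum, while you take the index realizing the global maximum of the partial sums.
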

\begin{proof}

Let $\varphi\colon \Lambda\to \RR$ be the function defined by
$\varphi(x)=\log\|Df{|E}(x)\|-\log(\sigma)$ so that $\limsup_{n\to +\infty} \frac 1 n \sum_{i=0}^{n-1} \varphi(f^i(x))<0$.
If $\frac{1}{n}\sum_{i=0}^{n-1}\varphi(f^i(x))<0$ for all $n\geq 1$ then $x$ is $(1,\sigma, E)$-hyperbolic. Otherwise, there exists $n_0\geq 1$ such that
\begin{itemize}
\item $\frac{1}{n_0}\sum_{i=0}^{n_0-1}\varphi(f^i(x))\geq 0$.
\item $\frac{1}{n}\sum_{i=0}^{n-1}\varphi(f^i(x))< 0$ for all $n>n_0$.
\end{itemize}
This will imply that $y=f^{n_0}(x)$ is $(1,\sigma,E)$-hyperbolic.
Indeed for any $n\geq 1$,
$$\frac{1}{n}\sum_{i=0}^{n-1}\varphi(f^i(y))=
\frac{1}{n}\left(\sum_{i=0}^{n+n_0-1}\varphi(f^i(x))-\sum_{i=0}^{n_0-1}\varphi(f^i(x))\right).$$
By the above equalities, the first sum of the right hand is less than 0 and the second is not less than 0. So the difference is negative and $y$
is $(1,\sigma,E)$-hyperbolic.
\end{proof}

We are interested by finding $(C,\sigma,E)$-hyperbolic points for $f$ and
$(C,\sigma,F)$-hyperbolic points for $f^{-1}$ that are close.

\begin{lemma}\label{l.hyperbolic-time}
Consider a set $\Lambda$ and a constant $\lambda\in (0,1)$ as above.
Consider also $\sigma,\rho\in (0,1)$ such that $\lambda<\rho\sigma$
and a sequence $(y_k)$ in $\Lambda$ such that one of the two following conditions hold:
$$\limsup_{n\to\infty}\sum_{i=0}^{n-1}\frac{1}{n}\log\|Df^{-1}|_{E(f^{-i}(y_k))}\|<\log(\rho ^{-1}),$$
$$\limsup_{n\to\infty}\sum_{i=0}^{n-1}\frac{1}{n}\log\|Df^{-1}|_{F(f^{-i}(y_k))}\|<\log(\sigma).$$

Then, one of the following properties hold:
\begin{itemize}
\item[--] There exists $C>0$, such that infinitely many points $y_k$ are $(C,\sigma,F)$-hyperbolic for $f^{-1}$.
\item[--] For each $k\geq 0$, there exists a backward iterate $z_k$ of $y_k$ which is $(1,\sigma,F)$-hyperbolic
for $f^{-1}$ and any accumulation point of $(z_k)$ is $(1,\rho,E)$-hyperbolic for $f$.
\end{itemize}
\end{lemma}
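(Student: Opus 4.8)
The plan is to dichotomize on whether the first of the two hypothesized conditions holds for infinitely many $k$. Suppose first that for infinitely many indices $k$ one has
$$\limsup_{n\to\infty}\sum_{i=0}^{n-1}\frac{1}{n}\log\|Df^{-1}|_{F(f^{-i}(y_k))}\|<\log(\sigma).$$
For each such $k$, apply Lemma~\ref{l.infinite-pliss} (to $f^{-1}$, with the bundle $F$ and constant $\sigma$): there is a backward iterate of $y_k$ which is $(1,\sigma,F)$-hyperbolic for $f^{-1}$. But a $(1,\sigma,F)$-hyperbolic point for $f^{-1}$ is in particular a $(1,\sigma,F)$-hyperbolic time of $y_k$ itself at some forward distance; pushing this hyperbolicity estimate back to $y_k$ along the finitely many intermediate iterates only costs a bounded factor depending on $\max_{x\in\Lambda}\|Df^{-1}|_F(x)\|^{\pm1}$, so one obtains a uniform constant $C>0$ such that $y_k$ itself is $(C,\sigma,F)$-hyperbolic for $f^{-1}$. (Alternatively, and more simply: the assumed $\limsup$ condition directly forces $y_k$ to be $(C,\sigma,F)$-hyperbolic for a uniform $C$, since the partial averages exceed $\log\sigma$ only finitely often, with the overshoot bounded by $\|Df^{-1}|_F\|$ on $\Lambda$.) This gives the first alternative.

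In the remaining case, for all large $k$ the first hypothesized condition fails, so by assumption the other one holds:
$$\limsup_{n\to\infty}\sum_{i=0}^{n-1}\frac{1}{n}\log\|Df^{-1}|_{E(f^{-i}(y_k))}\|<\log(\rho^{-1}).$$
Since domination gives $\|Df^{-1}|_E\| \geq \lambda^{-1}\|Df^{-1}|_F\|^{-1}$... more usefully, the adapted norm yields $\|Df|_E\|\le \lambda\|Df|_F\|$, hence $\|Df^{-1}|_F\|\le \lambda\|Df^{-1}|_E\|$ on $\Lambda$; combined with the control on $E$ above and $\lambda<\rho\sigma<\sigma$ one checks that
$$\limsup_{n\to\infty}\sum_{i=0}^{n-1}\frac{1}{n}\log\|Df^{-1}|_{F(f^{-i}(y_k))}\|<\log(\lambda)+\log(\rho^{-1})<\log(\sigma),$$
so Lemma~\ref{l.infinite-pliss} applied to $f^{-1}$ and $F$ produces, for each such $k$, a backward iterate $z_k$ of $y_k$ that is $(1,\sigma,F)$-hyperbolic for $f^{-1}$. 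This is the $z_k$ in the statement; note $z_k$ is obtained as $f^{-m_k}(y_k)$ for the \emph{first} backward time $m_k$ at which all subsequent backward Birkhoff sums of $\varphi^F := \log\|Df^{-1}|_F\|-\log\sigma$ stay negative, exactly as in the proof of Lemma~\ref{l.infinite-pliss}.

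It remains to show every accumulation point $z$ of $(z_k)$ is $(1,\rho,E)$-hyperbolic for $f$, i.e. $\prod_{i=0}^{n-1}\|Df|_{E(f^i(z))}\|\le \rho^n$ for all $n\ge1$; equivalently $\sum_{i=0}^{n-1}\log\|Df|_{E(f^i(z))}\|\le n\log\rho$. The idea is a backward Pliss argument transported through the choice of $z_k$. Write $z_k=f^{-m_k}(y_k)$. For the forward orbit of $z_k$ up to time $m_k$, the segment $\{z_k,f(z_k),\dots,f^{m_k}(z_k)=y_k\}$ is, read backwards from $y_k$, precisely a segment along which the condition defining $z_k$ controls $F$; by domination (adapted norm, $\|Df|_E\|\le\lambda\|Df|_F\|$) the \emph{forward} products along $E$ are controlled at rate $\lambda/\sigma<\rho$, i.e. for every $1\le n\le m_k$, $\prod_{i=0}^{n-1}\|Df|_{E(f^i(z_k))}\| \le (\lambda\sigma^{-1})^n \le \rho^n$. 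Then for $n>m_k$ one splits the product at $m_k$ and uses the hypothesis on $E$ at $y_k$ (which controls $\prod\|Df^{-1}|_E\|$ backward from $y_k$, equivalently $\prod\|Df|_E\|$ forward into the future of $y_k$ at rate $\rho^{-1}$... here one needs the forward-in-time control, which follows since the $E$-hypothesis at $y_k=z_k$ shifted is a $\limsup$ statement stable under iteration) to keep the rate below $\rho$. Passing to the limit $k\to\infty$: for each fixed $n$, eventually $m_k\ge n$ (since $z_k$ ranges over ever-earlier iterates — or, if the $m_k$ stay bounded along a subsequence, $z$ is itself a backward iterate of some $y_k$ and the argument is the finite one), so $\prod_{i=0}^{n-1}\|Df|_{E(f^i(z_k))}\|\le\rho^n$ for all large $k$, and by continuity of $Df$ and of the bundle $E$ on $\overline\Lambda$ (Lemma~\ref{l.Ndomination}) the same holds for $z$. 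Hence $z$ is $(1,\rho,E)$-hyperbolic for $f$.

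\textbf{Main obstacle.} The delicate point is the last step: converting the \emph{asymptotic} ($\limsup$) control of $E$ at the $y_k$ into a \emph{uniform-in-$n$, constant-$1$} forward estimate along $E$ at the limit point $z$, which requires that the backward time $m_k$ at which $z_k$ is selected tends to infinity (otherwise one must handle the degenerate subsequence separately) and that the domination inequality be used in the correct (adapted-norm) form to trade $F$-control for $E$-control with the right exponential rate $\lambda\sigma^{-1}<\rho$. Getting the bookkeeping of "which orbit segment controls which bundle in which time direction" exactly right, and ensuring the split at $m_k$ loses no constant, is where the real work lies; everything else is a routine application of Lemma~\ref{l.infinite-pliss} and continuity of the dominated splitting on the closure.
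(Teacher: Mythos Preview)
Your dichotomy is misplaced, and this creates a genuine gap in the first branch. You split on \emph{which hypothesis} (the $E$-condition or the $F$-condition) holds for infinitely many $k$, and in the branch where the $F$-condition holds you claim that $y_k$ itself is $(C,\sigma,F)$-hyperbolic for a \emph{uniform} $C$. This is false in general: the $\limsup$ hypothesis is a per-$k$ asymptotic statement and gives no uniformity across $k$. Concretely, the ``finitely many intermediate iterates'' between $y_k$ and its first $(1,\sigma,F)$-hyperbolic backward iterate is a number $\ell_k$ that may well tend to infinity with $k$, and then the cost of ``pushing back'' is $(\sup\|Df^{-1}|_F\|/\sigma)^{\ell_k}$, which blows up. Your parenthetical alternative (``the partial averages exceed $\log\sigma$ only finitely often, with the overshoot bounded'') fails for the same reason: \emph{how many} times they exceed $\log\sigma$ is not bounded independently of $k$.

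The paper's dichotomy is the correct one, and it is orthogonal to yours. First observe (as you do in your second branch) that the $E$-condition implies the $F$-condition via domination, so the $F$-condition holds for \emph{every} $k$ regardless. Then for each $k$ pick the \emph{smallest} $\ell_k\ge0$ with $z_k:=f^{-\ell_k}(y_k)$ being $(1,\sigma,F)$-hyperbolic for $f^{-1}$, and split on whether $(\ell_k)$ is bounded along a subsequence. If bounded by $N_0$, set $C=\max_{x\in\Lambda,\,n\le N_0}\prod_{i=0}^{n-1}\|Df^{-1}|_{F}(f^{-i}(x))\|$ and the first alternative holds for that subsequence. If $\ell_k\to\infty$, then the minimality of $\ell_k$ forces, for every $0\le m\le \ell_k$, the lower bound $\prod_{i=1}^{m}\|Df^{-1}|_{F}(f^i(z_k))\|\ge\sigma^m$ (this is the inductive claim you allude to but do not prove; the point is that failure at $m+1$ would make $f^{m+1}(z_k)$ another $(1,\sigma,F)$-hyperbolic point, contradicting minimality). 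Passing to the limit and using domination gives $\prod_{i=0}^{m-1}\|Df|_{E}(f^i(z))\|\le(\lambda/\sigma)^m\le\rho^m$. Note that you never need your ``$n>m_k$'' step or the $E$-hypothesis at $y_k$ in this branch: since $\ell_k\to\infty$, for each fixed $n$ eventually $\ell_k\ge n$ and continuity suffices. The $E$-hypothesis was used only to produce the $F$-condition, not in the endgame.
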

\begin{proof}
Note that under the first condition, the domination between $E$ and $F$ gives for each $k$,
$$\limsup_{n\to\infty}\sum_{i=0}^{n-1}\frac{1}{n}\log\|Df^{-1}|_{F(f^{-i}(y_k))}\|<\log(\lambda)+\log(\rho ^{-1})<\log \sigma,$$
so that the second condition holds.
Thus, by Lemma \ref{l.infinite-pliss}, one can get a
$(1,\sigma,F)$-hyperbolic point $f^{-\ell_k}(y_k)$ for $f^{-1}$ in the backward orbit of $y_k$. We call $z_k$ the last one
(corresponding to the smallest $\ell_k\geq 0$).
By taking a subsequence if necessary, there are two cases:
\begin{enumerate}

\item There is $N_0\in\NN$ such that $\ell_k\le N_0$.

\item $\lim_{k\to\infty}\ell_k=\infty$.

\end{enumerate}

For case (1), we take
$$C=\max_{x\in\Lambda,1\le n\le N_0}\left\{\prod_{i=0}^{n-1}\|Df^{-1}|_{F(f^{-i}(x))}\|\right\}.$$
It is clear that any $y_k$ is $(C,\sigma,F)$-hyperbolic and the first case of the lemma occurs.
\medskip

For case (2), we claim that for each $k$ and
for any $m\in[0,\ell_k]\cap\NN$, one has
$$\prod_{i=1}^{m}\|Df^{-1}|_{F(f^i(z_k))}\|\ge\sigma^m.$$
This is proved by induction (the case $m=0$ being clear). Assume that it holds up to some integer
$m\in[0,\ell_k-1]\cap\NN$ and that it is not satisfied for $m+1$. For each $0\leq n \leq m$ we have
$$\prod_{i=n+1}^{m+1}\|Df^{-1}|_{F(f^i(f^{-\ell_k}(y_k)))}\|<\sigma^{m+1-n}. $$
This and the fact that $f^{-\ell_k}(y_k)$ is $(1,\sigma,F)$-hyperbolic show that
$f^{m+1-\ell_k}(y_k)$ is also $(1,\sigma,F)$-hyperbolic and this contradicts the maximality of $-\ell_k$.
Hence the claim holds.
\medskip

Any limit point $z$ of the sequence $(z_k)$ thus satisfies for each $m\geq 0$,
$$\prod_{i=1}^{m}\|Df^{-1}|_{F(f^i(z_k))}\|\ge\sigma^m.$$
The domination between $E$ and $F$ thus implies
$$\prod_{i=0}^{m-1}\|Df|_{E(f^i(z_k))}\|\le\sigma^{-m}\lambda^m\le \rho^m.$$
The point $z$ is thus $(1,\rho,E)$-hyperbolic.
\end{proof}

\medskip

\paragraph{\bf Local manifolds.} A \emph{(locally invariant) plaque family} tangent to $E$ is a continuous map $\D$ from the linear bundle $E$ over $\Lambda$ into $M$ satisfying:
\begin{itemize}
\item[--] For each $x\in \Lambda$ the induced map $\D_x:E_x\to M$ is a $C^1$ embedding that is tangent to $E_x$ at the point $\D_x(0)=x.$
\item[--] The family $(\D_x)_{x\in \Lambda}$ of $C^1$-embedding is continuous.
\item[--] The plaque family is locally forward invariant, i.e., there exists a neighborhood $U$ of the section $0$ in $E$ such that for each $x\in \Lambda$, the image of $\D_x(E_x\cap U)$ by $f$ is contained in $\D_{f(x)}(E_{f(x)}). $
\end{itemize}
This definition extends to locally forward invariant plaque families tangent to $E$ over the set $\Lambda^+$.
In a symmetric way, one can consider plaque families tangent to $F$ over the set $\Lambda^-$,
that are locally backward invariant.

Plaque families tangent to $E$ over an invariant set $\Lambda$ always exist from~\cite{hirsch-pugh-shub},
but the same proof gives locally forward invariant plaque families over the set $\Lambda^+$.
\medskip

The following lemma ensures the existence of \emph{local stable manifold} at $(C,\sigma,E)$-hyperbolic points.
The proof is classical (see for instance~\cite[section 8.2]{abc-mesure}).

\begin{lemma}\label{l.manifold1}
Let $\D$ be a locally forward invariant plaque family tangent to $E$ over $\Lambda^+$. Then, for any constants
$C>0$, $\sigma<\rho<1$, there exists $C',r>0$ such that for any $(C,\sigma,E)$-hyperbolic point $x\in
\Lambda^+$, the ball $B^E(x,r)$ in $\D_x$ is contracted by forward iterations. More precisely, for each $r'\in
(0,r)$ and $n\geq 0$, the image $f^n(B^E(x,r'))$ is contained in $\D_{f^n(x)}$ and has diameter smaller than
$C'\rho^n r'$.
\end{lemma}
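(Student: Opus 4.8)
The plan is to transport the dynamics into the plaque coordinates provided by $\D$ and to run a telescoping (graph--transform) estimate along the forward orbit of $x$, absorbing the non--uniform constant $C$ into the gap between $\sigma$ and $\rho$. First I would collect uniform data over the compact forward--invariant set $\Lambda^{+}$: set $m_{0}:=\inf\{\,\|Df_{x}.v\|:x\in\Lambda^{+},\ v\in E_{x},\ \|v\|=1\,\}>0$ (positive since $E$ is a continuous forward--invariant bundle over the compact set $\Lambda^{+}$), fix a neighborhood of the zero section witnessing the local forward invariance of $\D$, and choose $\varepsilon_{0}>0$, uniform in $x$, so that the ball $B(0,\varepsilon_{0})\subset E_{x}$ lies in that neighborhood for every $x\in\Lambda^{+}$. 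Normalizing the plaque family so that each $\D_{x}$ has derivative the inclusion $E_{x}\hookrightarrow T_{x}M$ at the origin, define for $x\in\Lambda^{+}$ the map $\hat f_{x}:=\D_{f(x)}^{-1}\circ f\circ\D_{x}$ on $B(0,\varepsilon_{0})\subset E_{x}$; it takes values in $E_{f(x)}$, satisfies $\hat f_{x}(0)=0$ and $D\hat f_{x}(0)=Df_{x}|_{E_{x}}$. By uniform continuity of $Df$ and of the family $(\D_{x})$ over $\Lambda^{+}$, there is a modulus of continuity $\omega$, independent of $x$, with $\|D\hat f_{x}(v)-D\hat f_{x}(0)\|\le\omega(\|v\|)$ whenever $\|v\|\le\varepsilon_{0}$; let also $L_{0}\ge1$ bound the Lipschitz constant of each $\D_{x}$ on $B(0,\varepsilon_{0})$.

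Next I would fix the constants, observing first that $(C,\sigma,E)$--hyperbolicity forces $C\ge1$ (the case $n=0$). Given $\sigma<\rho<1$, pick $\eta>0$ with $\sigma\,(1+\eta/m_{0})\le\rho$, then $\delta_{0}\in(0,\varepsilon_{0}]$ with $\omega(\delta_{0})\le\eta$, and set $r:=\delta_{0}/(2C)$ and $C':=2CL_{0}$. The core is then a claim proved by induction on $n$: for any $(C,\sigma,E)$--hyperbolic $x\in\Lambda^{+}$, writing $x_{k}:=f^{k}(x)$ and $\hat f^{\,n}:=\hat f_{x_{n-1}}\circ\cdots\circ\hat f_{x_{0}}$, the composition $\hat f^{\,n}$ is defined on $B(0,r')\subset E_{x}$ for every $r'\in(0,r)$, maps it into $B(0,\delta_{0})\subset E_{x_{n}}$, and has Lipschitz constant there at most $\prod_{i=0}^{n-1}\bigl(\|Df_{x_{i}}|_{E}\|+\eta\bigr)$. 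In the inductive step the mean value inequality and $\omega(\delta_{0})\le\eta$ give $\operatorname{Lip}\bigl(\hat f_{x_{k}}|_{B(0,\delta_{0})}\bigr)\le\|Df_{x_{k}}|_{E}\|+\eta$; composing, and using $\|Df_{x_{i}}|_{E}\|\ge m_{0}$ together with $(C,\sigma,E)$--hyperbolicity,
$$
\prod_{i=0}^{n-1}\bigl(\|Df_{x_{i}}|_{E}\|+\eta\bigr)\ \le\ \Bigl(\,\prod_{i=0}^{n-1}\|Df_{x_{i}}|_{E}\|\,\Bigr)\bigl(1+\eta/m_{0}\bigr)^{n}\ \le\ C\sigma^{n}\bigl(1+\eta/m_{0}\bigr)^{n}\ \le\ C\rho^{n}.
$$
Hence $\operatorname{diam}\hat f^{\,n}(B(0,r'))\le 2Cr'\rho^{n}\le 2Cr'<\delta_{0}$, and since $0=\hat f^{\,n}(0)$ lies in this image it is contained in $B(0,\delta_{0})$, which closes the induction and, via local forward invariance of $\D$, shows that $f^{n}(B^{E}(x,r'))$ stays inside $\D_{x_{n}}\bigl(B(0,\varepsilon_{0})\bigr)$, so that the next iterate is again described by $\hat f_{x_{n}}$. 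Reading the estimate back through the $L_{0}$--Lipschitz embedding $\D_{x_{n}}$ yields $f^{n}(B^{E}(x,r'))\subset\D_{f^{n}(x)}$ of diameter at most $C'\rho^{n}r'$, as required.

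The only genuine obstacle is the bookkeeping that keeps every intermediate image within the scale $\delta_{0}$ at which the uniform $C^{1}$--control $\omega$ holds: the finite products $\prod_{i=0}^{n-1}\|Df_{x_{i}}|_{E}\|$ may oscillate and temporarily exceed $1$, so a naive step--by--step argument fails. This is precisely why one uses the $(C,\sigma,E)$--hyperbolicity bound on the \emph{whole} product from time $0$, together with $\rho<1$ so that $C\rho^{n}\le C$ for all $n$; the choice $r=\delta_{0}/(2C)$ then makes the estimate self--sustaining through the induction. A secondary point to arrange carefully at the outset is the uniformity over $\Lambda^{+}$ of the bundle $E$, of the plaque family $\D$, and hence of $m_{0}$, $\omega$ and $L_{0}$ — which rests on $\Lambda^{+}$ being compact and forward invariant and on the constructions recalled in Section~\ref{ss.domination}.
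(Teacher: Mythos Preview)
Your argument is correct and is precisely the classical contraction estimate at Pliss--type hyperbolic points that the paper has in mind: the paper does not actually prove Lemma~\ref{l.manifold1} but simply declares it classical and refers to \cite[section~8.2]{abc-mesure}. Your write-up, working in plaque coordinates, controlling the nonlinearity by a uniform modulus $\omega$ over the compact set $\Lambda^{+}$, and absorbing the constant $C$ via the gap $\sigma<\rho$ through the choice $r=\delta_{0}/(2C)$, is exactly that standard proof; the only cosmetic point is that the self-sustaining inductive bound you single out at the end is indeed the heart of the matter, and you have handled it correctly.
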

\medskip

The stable and unstable manifolds of $E$-hyperbolic and $F$-hyperbolic points that are close intersect:

\begin{lemma}\label{l.manifold-intersect}
Let us consider plaque families $\D^{E},\D^F$ tangent to $E,F$ over the sets $\Lambda^+,\Lambda^-$ respectively.
For any $C>0$ and $\sigma\in (0,1)$, there exists $r>0$ such that
if $y\in \Lambda^-$ is $(C,\sigma,F)$-hyperbolic for $f^{-1}$
and $x\in \Lambda^+$ is $(C,\sigma,E)$-hyperbolic for $f$ and if the distance between $x,y$ is smaller
than $r$, then, the local stable manifold at $x$ and the local unstable manifold at $y$ intersect.
\end{lemma}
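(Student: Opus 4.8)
The plan is to turn both local invariant sets into honest $C^1$ disks of uniformly bounded geometry, to check that their tangent spaces at $x$ and $y$ are complementary and uniformly transverse once $x,y$ are close, and then to produce the intersection as the fixed point of a contraction.

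First I would fix $\rho\in(\sigma,1)$ and apply Lemma~\ref{l.manifold1} to the plaque family $\D^E$ over $\Lambda^+$: it provides $C'>0$ and $r_1>0$ such that for every $(C,\sigma,E)$-hyperbolic point $x$ the plaque $W^s_{\mathrm{loc}}(x):=\D^E_x(B^E(x,r_1))$ is genuinely contracted by forward iteration (its $n$-th iterate has diameter at most $C'\rho^n r_1$), so it is a local stable manifold; it is a $C^1$ embedded disk of dimension $\dim E$, tangent to $E_x$ at $x$. Applying the $f^{-1}$ analogue to $\D^F$ over $\Lambda^-$ gives, for every $(C,\sigma,F)$-hyperbolic $y$ for $f^{-1}$, a local unstable manifold $W^u_{\mathrm{loc}}(y)=\D^F_y(B^F(y,r_1))$, a $C^1$ disk of dimension $\dim F=\dim M-\dim E$, tangent to $F_y$ at $y$. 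By continuity of the two plaque families and compactness of $\Lambda^+$ and $\Lambda^-$, after shrinking $r_1$ I may assume that, read in charts of a uniform size centred at the base point, each of these disks is a graph whose $C^1$-size is smaller than a prescribed $\kappa>0$, uniformly in $x$ and $y$.

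Next I would secure the transversality. Since $\Lambda^+\subset\overline{U^+}$ and $\Lambda^-\subset\overline{U^-}$ with $U^+,U^-$ arbitrarily small neighbourhoods of $\Lambda$, and since over $\Lambda^+$ the bundle $E$ is tangent to a centre--stable cone field and over $\Lambda^-$ the bundle $F$ is tangent to a centre--unstable cone field, by choosing $U^+,U^-$ small these cone fields are as narrow as we wish around $E$, resp.\ $F$, over $\Lambda$; the domination of $E\oplus F$ then yields $\alpha_0>0$ such that whenever $x\in\Lambda^+$, $y\in\Lambda^-$ and $d(x,y)<r_1$, the spaces $E_x$ and $F_y$ span $T_xM$ with angle at least $\alpha_0$. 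Working now in a chart at $x$ that identifies a uniform neighbourhood with $E_x\oplus F_x$, the disk $W^s_{\mathrm{loc}}(x)$ is the graph of a $C^1$ map $g\colon E_x\to F_x$ with $g(0)=0$, $Dg(0)=0$ and $\mathrm{Lip}(g)<\kappa$, while (using $\alpha_0$ and $\kappa$ small) $W^u_{\mathrm{loc}}(y)$ is the graph of a $C^1$ map $h\colon F_x\to E_x$ with $\mathrm{Lip}(h)<\kappa$ passing within distance $O(d(x,y))$ of the $F_x$-plane. An intersection point is a solution of $u=h(g(u))$; the map $u\mapsto h(g(u))$ is $\kappa^2$-Lipschitz and, choosing $r$ so that $O(r)+O(\kappa r_1)$ is less than the radius of its domain, it sends a small ball around $0$ into itself. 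The contraction mapping principle then produces a fixed point, which is the sought point of $W^s_{\mathrm{loc}}(x)\cap W^u_{\mathrm{loc}}(y)$.

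The step I expect to be the main obstacle is the transversality estimate: bounding below the angle between $E_x$ and $F_y$ for hyperbolic points $x,y$ that a priori live on the different sets $\Lambda^+$ and $\Lambda^-$ and are merely close to one another. This is exactly where one must exploit that these sets sit in small neighbourhoods of $\Lambda$, that the extended bundles are carried by complementary centre--stable and centre--unstable cone fields, and that the original splitting $E\oplus F$ is dominated; once this uniform transversality and the uniform $C^1$ control of the plaques are in hand, the rest is a routine fixed-point argument and poses no difficulty.
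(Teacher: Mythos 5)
Your proposal is correct and is essentially a fully detailed version of the paper's own (two-line) proof: the paper likewise observes that for $x,y$ close the plaques $\D^E_x$ and $\D^F_y$ intersect at a point near $x,y$ (which is exactly your transversality-plus-contraction argument made explicit), and then invokes Lemma~\ref{l.manifold1} to conclude that the intersection point lies in the local stable manifold of $x$ and the local unstable manifold of $y$.
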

\begin{proof}
For points $x\in \Lambda^+$ and $y\in \Lambda^-$ that are close, the plaques $\D^E_x$ and $D^F_y$ intersect
at a point $z$ close to $x,y$. By the previous lemma, this point belongs to
the local stable manifold at $x$ and the local unstable manifold at $y$.
\end{proof}

In the case the set $\Lambda$ has a splitting $T_\Lambda M=E'\oplus E^c\oplus F$,
the following result allows to get intersections between stable manifolds tangent to $E'$
and unstable manifolds tangent to $F$.
The argument is the
same as in the proof of Theorem 13 in~\cite{bonatti-gan-wen} and Proposition 3.7 in~\cite{crovisier-palis-faible}.

\begin{lemma}\label{l.manifold2}
Let us assume that $\Lambda$ has a dominated splitting $T_\Lambda M=E\oplus F=(E'\oplus E^c)\oplus F$
where $E^c$ is one-dimensional
and consider plaque families $\D^{E'},\D^F$ tangent to $E',F$ over the sets $\Lambda^+,\Lambda^-$
respectively.
Assume also that $\Lambda^+$ contains a curve $I$ tangent to $E$ and transverse to $E'$.
For any interior point $x$ of $I$ there exists $r>0$ with the following property.

For any $y\in \Lambda^-$ that is $r$-close to $x$,
the plaque $\D^F_{y}$ intersects the plaque $\D^{E'}_{x'}$
for some point $x'\in I$.
In particular for any $C>0$ and $\sigma\in (0,1)$, if $r$ is small enough,
the Lemma~\ref{l.manifold1} implies that if the point $y$ is $(C,\sigma,F)$-hyperbolic for $f^{-1}$
and if the points of $I$ are $(C,\sigma,E')$-hyperbolic for $f$,
then the local unstable manifold of $y$ and the local stable manifold of $x'$ intersect.
\end{lemma}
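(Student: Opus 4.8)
The plan is to reduce the statement to an elementary graph--transversality lemma in a well-chosen chart at $x$, following the arguments of Theorem~13 in~\cite{bonatti-gan-wen} and Proposition~3.7 in~\cite{crovisier-palis-faible}. I would first fix an exponential chart at $x$ identifying a neighbourhood of $x$ with a neighbourhood of $0$ in $E'_x\oplus E^c_x\oplus F_x$, and replace $I$ by a sub-arc with $x$ in its interior. By continuity of the bundles $E',E^c,F$ and of the plaque families $\D^{E'},\D^F$ along $\Lambda$, and since $I$ is tangent to $E=E'\oplus E^c$ and transverse to $E'$ --- so its tangent vector has a definite $E^c_x$-component and, being tangent to $E$, an arbitrarily small $F_x$-component once $I$ is shrunk --- there are $\delta>0$ and a small Lipschitz constant $\kappa$ so that in the chart: $I$ is a $\kappa$-Lipschitz graph over an interval $J\subset E^c_x$, parametrised by $s\mapsto x(s)$ with $x(0)=x$; each plaque $\D^{E'}_{x'}$, $x'\in I$, is near $x$ a $\kappa$-Lipschitz graph over a ball in $E'_x$ with values in $E^c_x\oplus F_x$, through $x'$; and for every $y\in\Lambda^-$ close to $x$, the plaque $\D^F_y$ contains a $\kappa$-Lipschitz graph over a ball of some uniform radius $\rho_0>0$ in $F_x$, with values in $E'_x\oplus E^c_x$, through $y$.

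Next I would check that $W:=\bigcup_{x'\in I}\D^{E'}_{x'}$ is, near $x$, the graph over $B_{E'_x}(0,\delta)\times J$ of a Lipschitz map into $F_x$ with Lipschitz constant $O(\kappa)$: the evaluation map $(s,u)\mapsto \D^{E'}_{x(s)}(u)$ is a $C^0$-small perturbation of $(s,u)\mapsto(u,s,0)$, so a contraction/invariance-of-domain argument shows it is a homeomorphism onto a set of this graph form (in particular the plaques $\D^{E'}_{x'}$ are pairwise disjoint near $x$, since a curve joining two of them and tangent to a narrow $E'$-cone cannot produce the $E^c_x$-displacement forced by the transversality of $I$ to $E'$). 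Then $W$ and $\D^F_y$ are graphs over the complementary subspaces $E'_x\oplus E^c_x$ and $F_x$, each with Lipschitz constant $O(\kappa)\ll 1$, of sizes bounded below by $\delta$ and $\rho_0$; hence, as soon as $y$ lies within distance $r<\tfrac12\min(\delta,\rho_0)$ of $x$, they meet in exactly one point $z$, which lies at ambient distance $O(r)$ from $x$. Since $z\in W$, it belongs to $\D^{E'}_{x'}$ where $x'\in I$ is the point whose $E^c_x$-coordinate equals that of $z$, and $z\in\D^F_y$; this proves the first assertion.

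For the ``in particular'' part, given $C>0$ and $\sigma\in(0,1)$ I would pick a rate between $\sigma$ and $1$ and apply Lemma~\ref{l.manifold1} to $f$ with the plaque family $\D^{E'}$ over $\Lambda^+$ and to $f^{-1}$ with $\D^F$ over $\Lambda^-$; this yields a uniform radius $r_1>0$ such that the ball of radius $r_1$ in $\D^{E'}_{x'}$ is a local stable manifold whenever $x'$ is $(C,\sigma,E')$-hyperbolic for $f$, and the ball of radius $r_1$ in $\D^F_y$ is a local unstable manifold whenever $y$ is $(C,\sigma,F)$-hyperbolic for $f^{-1}$. It then suffices to take $r$ in the previous construction small enough that the intersection point $z$, which lies at ambient distance $O(r)$ from both $y$ and $x'$ (recall $x'\in I$ also lies $O(r)$ from $x$), has intrinsic distance $<r_1$ from $x'$ inside $\D^{E'}_{x'}$ and from $y$ inside $\D^F_y$ --- which holds because the plaques have uniformly bounded geometry over the compact sets involved, namely the sub-arc of $I$ and a compact neighbourhood of $x$ in $\Lambda^-$. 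For such $r$, the point $z$ lies in both the local stable manifold of $x'$ and the local unstable manifold of $y$.

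The main obstacle is the middle step: one must carry out the entire graph-and-transversality discussion with plaque families that are only continuous (each plaque is a $C^1$-embedding, but the family is not jointly $C^1$), so the implicit function theorem is unavailable and all the invertibility, disjointness and intersection statements have to be obtained from $C^0$-closeness to the linear model via contraction mappings and invariance of domain, while keeping the constants $\delta,\kappa,\rho_0,r,r_1$ uniform in $x'\in I$ and in $y$ near $x$ --- a uniformity which ultimately rests on the compactness of $\Lambda$ and on the uniform continuity of the invariant bundles coming from the domination.
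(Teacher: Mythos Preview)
Your proposal is correct and follows exactly the approach the paper indicates: the paper does not spell out a proof but simply refers to ``the proof of Theorem 13 in~\cite{bonatti-gan-wen} and Proposition 3.7 in~\cite{crovisier-palis-faible}'', which is precisely the graph--transversality argument you outline (build $W=\bigcup_{x'\in I}\D^{E'}_{x'}$ as a Lipschitz graph over $E_x=E'_x\oplus E^c_x$, intersect it with the Lipschitz graph $\D^F_y$ over the complementary $F_x$, then invoke Lemma~\ref{l.manifold1}). Your identification of the main technical point --- that the plaque families are only $C^0$, forcing one to replace the implicit function theorem by contraction mappings and invariance of domain with uniform constants coming from compactness and domination --- is exactly right.
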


\begin{rema}\label{r.manifold}
Under the setting of the previous lemma and in the case $E'=E^s$ is uniformly contracted,
we note that any point in $I$ is $(C,\sigma,E')$-hyperbolic for $f$.
\end{rema}

\medskip

\paragraph{\bf A weak shadowing lemma for measures.}
We now state a result regarding hyperbolic measures whose Oseledets splitting is dominated.
The argument is classical, see~\cite[Proposition 1.4]{Cro08}.
The version we state now is local but the proof is the same.

\begin{prop}\label{hypm}
Let $f$ be a $C^1$ diffeomorphism and $\mu$ be an ergodic measure
which has no zero Lyapunov exponent and whose Oseledets splitting $E^s\oplus E^u$ is dominated. Then $\mu$ is supported on a homoclinic class. Indeed,
for any neighborhood $U$ of the support of $\mu$,
there is a sequence of hyperbolic periodic orbits $O_n$ whose index is equal to $\dim (E^s)$ that are homoclinically related together in $U$ and converge toward the support of $\mu$ in the Hausdorff topology.
\end{prop}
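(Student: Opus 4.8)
The plan is to reduce to a standard application of the connecting lemma for pseudo-orbits (Theorem~\ref{t.weaktransitive} / Corollary~\ref{c.local-class}) after constructing, along a generic orbit for $\mu$, periodic-like pieces that shadow $\mu$ and whose tangent dynamics is uniformly hyperbolic in the sense of the dominated Oseledets splitting $E^s\oplus E^u$. First, by the Oseledets and Birkhoff theorems, $\mu$-almost every point $x$ has well-defined Lyapunov exponents, all nonzero, and the splitting $T_xM=E^s_x\oplus E^u_x$ varies measurably; since the splitting is dominated it extends continuously over $\operatorname{supp}(\mu)$. Fix $\sigma\in(0,1)$ close to $1$ so that the negative exponents along $E^s$ lie below $\log\sigma$ and the positive exponents along $E^u$ (i.e.\ the negative ones for $f^{-1}$ along $E^u$) lie below $\log\sigma$ as well, after possibly raising to a power; choosing an adapted metric as in Section~\ref{ss.domination} one may take $\lambda<\sigma$. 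For a $\mu$-generic point $x$, Birkhoff's theorem gives $\frac1n\sum_{i=0}^{n-1}\log\|Df_{|E^s}(f^i(x))\|\to \int\log\|Df_{|E^s}\|\,d\mu<\log\sigma$ and the symmetric statement for $f^{-1}$ along $E^u$; hence by Lemma~\ref{l.infinite-pliss} applied to $f$ and to $f^{-1}$ there are infinitely many $(1,\sigma,E^s)$-hyperbolic times $n$ for $x$ and infinitely many $(1,\sigma,E^u)$-hyperbolic times $m$ for $x$ (as $f^{-1}$-hyperbolic times).

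Next I would select, using these two infinite sets of hyperbolic times, a sequence of orbit segments $\{f^{-m_k}(x),\dots,f^{n_k}(x)\}$ whose endpoints are respectively $(1,\sigma,E^u)$-hyperbolic for $f^{-1}$ and $(1,\sigma,E^s)$-hyperbolic for $f$, and which are long enough that the empirical measure along the segment converges (weakly) to $\mu$; this uses that $\mu$-a.e.\ point equidistributes and that the hyperbolic times have density bounded below (a Pliss-type argument). By Lemma~\ref{l.manifold-intersect} (with the plaque families tangent to $E^s$ over $\Lambda^+$ and to $E^u$ over $\Lambda^-$), once the endpoints $f^{n_k}(x)$ and $f^{-m_k}(x)$ are sufficiently close — which one arranges by passing to a subsequence so that both endpoints converge to the same point of $\operatorname{supp}(\mu)$ — the local stable manifold at $f^{n_k}(x)$ meets the local unstable manifold at $f^{-m_k}(x)$, producing a genuine pseudo-orbit that is periodic up to an arbitrarily small error $\varepsilon_k\to 0$, stays in $U$ for $k$ large, and Hausdorff-shadows a set arbitrarily close to $\operatorname{supp}(\mu)$. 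Then Theorem~\ref{t.hausdorff} (for $f$ in the dense $G_\delta$ set) converts this periodic $\varepsilon_k$-pseudo-orbit into an actual periodic orbit $O_k$ contained in $U$ and Hausdorff-close to $\operatorname{supp}(\mu)$; the uniform hyperbolicity estimates at the hyperbolic endpoints, together with the domination and Franks' lemma, force $O_k$ to be hyperbolic of stable index exactly $\dim(E^s)$. Finally, since all the $O_k$ have the same index and accumulate on $\operatorname{supp}(\mu)$, applying the connecting lemma once more (Corollary~\ref{c.class}, first item) shows they are pairwise homoclinically related in $U$, and in particular $\mu$ is supported on the homoclinic class $H(O_k)$.

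The main obstacle I expect is the bookkeeping in the second step: arranging simultaneously that (i) the two endpoints of the selected segment are hyperbolic times for $f$ and $f^{-1}$ in the appropriate bundles, (ii) they are close enough for Lemma~\ref{l.manifold-intersect} to apply, and (iii) the segment is long and ``typical'' enough that the resulting periodic orbit's empirical measure converges to $\mu$. This is the classical tension in Katok-type / ergodic-closing arguments: hyperbolic times occur with positive density but not at prescribed positions, so one must intersect two positive-density sets of times with a recurrence condition coming from the convergence $f^{n_k}(x),f^{-m_k}(x)\to z\in\operatorname{supp}(\mu)$. The domination hypothesis is what makes this tractable — it lets one work with the continuous extension of $E^s\oplus E^u$ and with fixed plaque families, so the local manifolds at hyperbolic times have uniform size (Lemma~\ref{l.manifold1}) and the intersection in Lemma~\ref{l.manifold-intersect} is uniform. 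The rest (hyperbolicity and index of $O_k$, homoclinic relation) is then routine given Theorems~\ref{wen} and~\ref{t.hausdorff} and Corollary~\ref{c.class}.
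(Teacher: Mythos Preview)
Your overall strategy (Pliss hyperbolic times, uniform local manifolds from the domination, closing along a recurrent segment) is the right one and is essentially what the reference \cite[Proposition 1.4]{Cro08} does. But there is a genuine gap in the execution: Proposition~\ref{hypm} is stated for an \emph{arbitrary} $C^1$ diffeomorphism, yet at the two decisive steps you invoke tools that are only available for $f$ in a dense $G_\delta$ set. Specifically, you close the approximate periodic pseudo-orbit via Theorem~\ref{t.hausdorff}, you fix the index via Franks' lemma, and you obtain the homoclinic relation via Corollary~\ref{c.class}; all three are perturbation or genericity statements and are illegitimate here.

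The repair is to close the orbit \emph{directly}, without perturbing $f$. Rather than taking an $E^s$-hyperbolic forward endpoint and an $E^u$-hyperbolic backward endpoint and intersecting their local manifolds (which, as you note, only yields a pseudo-orbit), one selects two \emph{bi-Pliss} times $n_1<n_2$ along the orbit of a $\mu$-generic point $x$: times at which $f^{n_i}(x)$ is simultaneously $(1,\sigma,E^s)$-hyperbolic for $f$ and $(1,\sigma,E^u)$-hyperbolic for $f^{-1}$. Such times have positive density by Pliss' lemma applied to both bundles, so by Poincar\'e recurrence one can take $f^{n_1}(x)$ and $f^{n_2}(x)$ arbitrarily close with $n_2-n_1$ arbitrarily large. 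The segment $\{f^{n_1}(x),\dots,f^{n_2}(x)\}$ is then a quasi-hyperbolic string in Liao's sense, and Liao's shadowing lemma (which holds for every $C^1$ diffeomorphism with a dominated splitting) produces a genuine periodic orbit $O_k\subset U$ that $\delta$-shadows the segment. The hyperbolicity and the index $\dim(E^s)$ of $O_k$ are inherited from the quasi-hyperbolic estimates of the shadowed segment, no Franks lemma needed. Finally, the homoclinic relation between two such $O_k$, $O_{k'}$ is obtained directly: both shadow long overlapping pieces of the same orbit of $x$, so at a common bi-Pliss time the uniform local stable manifold of a point of $O_k$ and the uniform local unstable manifold of a point of $O_{k'}$ intersect transversally inside $U$, and symmetrically; no appeal to Corollary~\ref{c.class} is required.
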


\section{Theorem \ref{main2} implies Theorem \ref{main}}\label{s.conclusion}

In this section we prove Theorem \ref{main} assuming that Theorem \ref{main2} holds. The proof will be based on several results stated in the previous Section \ref{pre}.
\begin{lemma}\label{l.class-domina}
Let $f$ be a diffeomorphism which belongs to a dense $G_\delta$ subset of $\diff^1(M)\setminus \overline{\operatorname{HT}}$
and let $\C$ be a chain-recurrent class of $f$. If
\begin{itemize}
\item[--] $\C$ has a dominated splitting $T_{\C}M=E\oplus F$,

\item[--] $\C$ is the Hausdorff limit of periodic orbits of index $\dim E$,

\item[--] $\C$ does not contains any periodic point whose index is less than $\dim E$,

\item[--] $E$ is not uniformly contracted,

\end{itemize}
then, there exists a dominated splitting $T_{\C}M=E'\oplus E^c\oplus F'$
where $\dim E^c=1$ and $\dim(E')<\dim(E)$ and $\C$ is the Hausdorff limit of periodic orbit of index $\dim(E')$.
\end{lemma}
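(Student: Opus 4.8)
The plan is to apply the trichotomy of Theorem~\ref{C3options} to the set $K_0=\C$ with its dominated splitting $E\oplus F$, and then rule out or convert the bad branches using the hypothesis that $\C$ carries no periodic point of index below $\dim E$, together with the connecting machinery of Section~\ref{pre}. Since $E$ is not uniformly contracted and $\C$ is chain-transitive (being a chain-recurrence class), fix an arbitrary neighborhood $U$ of $\C$; Theorem~\ref{C3options} gives one of three possibilities.

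First I would eliminate option~(\ref{lessindex}): if $\C$ met a local homoclinic class $H(O,U)$ with $\operatorname{ind}(O)<\dim E$, then shrinking $U$ and using Corollary~\ref{c.local-class} (recall $\C$ is a chain-recurrence class, so it lies in the local chain-stable and chain-unstable sets of any periodic point it meets inside $U$) one concludes that $O\subset \C$ after a $C^1$-generic perturbation, contradicting the third hypothesis. Option~(\ref{equalindex}) is the substantive case: one obtains periodic orbits $\mO_n$ of index exactly $\dim E$, homoclinically related in $U$, converging in the Hausdorff topology to a compact $K\subset \C$, with maximal exponent along $E$ in $(-\delta,0)$. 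I would then combine this with the last hypothesis (no periodic point of index $<\dim E$ in $\C$), which via Lemma~\ref{l.index} forces the $\dim E$-th Lyapunov exponent of these orbits \emph{not} to be pushed below zero without creating index $\dim E -1$ orbits near $\C$; but the weak-exponent orbits $\mO_n$ have $\dim E$-th exponent tending to $0$, so Lemma~\ref{l.index} would produce index-$(\dim E-1)$ periodic orbits arbitrarily Hausdorff-close to $\C\supset K$. Since $\C$ is a chain-recurrence class of a generic diffeomorphism, any periodic orbit Hausdorff-close to $\C$ is in fact contained in $\C$ (Theorem~\ref{t.hausdorff} together with the connecting lemma for pseudo-orbits, Theorem~\ref{t.weaktransitive}), again contradicting the third hypothesis. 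Hence option~(\ref{equalindex}) is also impossible.

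This leaves option~(\ref{subset}): there is an invariant compact $K\subset \C$ with partially hyperbolic structure $E^s\oplus E^c\oplus E^u$, $\dim E^s<\dim E$, $E^c$ one-dimensional, and every invariant measure on $K$ has zero exponent along $E^c$. Now I would invoke Proposition~\ref{spread} (applicable since $f$ lies in the generic set $\cG_{Ext}$ and $\C$ strictly contains $K$ — strictness because $\C$, being Hausdorff-limit of index-$\dim E$ orbits, cannot equal the set $K$ whose central exponents all vanish, unless $K=\C$ already has the desired one-dimensional central structure, in which case we are done directly): this yields a chain-transitive $A'$ with $K\subsetneq A'\subset \C$ carrying a dominated splitting $E_1\oplus E^c\oplus E_3$ extending that of $K$, and $A'$ is the Hausdorff limit of periodic orbits of index $\dim E^s$ whose $E^c$-exponent is near $0$. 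These periodic orbits, being Hausdorff-close to $A'\subset\C$, are contained in $\C$ by the generic connecting argument above; thus $\C=H(O')$ for such an orbit $O'$ of index $\dim E^s$, which equals $\dim E_1<\dim E$. Finally I would apply Corollary~\ref{weak} (dominated splitting over a homoclinic class) to $H(O')$: since $O'$ has an $E^c$-exponent in $(-\delta,\delta)$, the class $\C=H(O')$ admits a dominated splitting $E'\oplus E^c\oplus F'$ with $E^c$ one-dimensional, and $\dim E'=\operatorname{ind}(O')-$possibly$-0$, at any rate $\dim E'=\dim E_1<\dim E$; Hausdorff-approximation by index-$\dim E'$ orbits is exactly what Proposition~\ref{spread} delivered. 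This gives the conclusion.

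The main obstacle I anticipate is the bookkeeping in the passage "periodic orbits Hausdorff-close to $\C$ are contained in $\C$": one must be careful that this is a genuinely generic statement — it uses that for generic $f$ a chain-recurrence class $\C$ equals $\bigcap$ of its small filtrating neighborhoods, so that a periodic orbit entering every such neighborhood is chain-related to $\C$ and hence, via Theorem~\ref{t.weaktransitive} and Corollary~\ref{c.local-class}, actually sits inside $\C$. Threading this correctly through options~(\ref{lessindex}), (\ref{equalindex}) and the output of Proposition~\ref{spread} — and in particular making sure the dimension counts $\dim E'=\dim E_1<\dim E$ and the index of the approximating orbits match — is where the argument needs the most care; the rest is a fairly mechanical assembly of the quoted results.
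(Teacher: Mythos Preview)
Your proof has a genuine and fatal gap: the repeated assertion that a periodic orbit Hausdorff-close to $\C$ must lie in $\C$. This is false for generic diffeomorphisms --- it is precisely the ``main difficulty'' the paper singles out in Section~\ref{ss.link}. In fact the very conclusion of the lemma furnishes a counterexample: it produces periodic orbits of index $\dim E'<\dim E$ that Hausdorff-approximate $\C$, yet by the third hypothesis none of them can lie in $\C$. So your treatment of case~(\ref{equalindex}) is wrong: rather than being impossible, that case already yields the conclusion. The orbits $\mO_n$ there \emph{are} in $\C$, but for a specific reason you did not use: their local homoclinic classes intersect $\C$, so by Corollary~\ref{c.local-class} one gets $\C=H(\mO_n)$; then Corollary~\ref{weak} gives the splitting $E'\oplus E^c\oplus F$ with $\dim E'=\dim E-1$, and Lemma~\ref{l.index} gives the Hausdorff approximation by index-$(\dim E-1)$ orbits (which are \emph{not} in $\C$).

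Your handling of case~(\ref{subset}) has the same flaw. Proposition~\ref{spread} only gives periodic orbits converging to a proper subset $A'\subsetneq \C$; there is no mechanism to put them in $\C$, and a single application of \ref{spread} does not spread the splitting to all of $\C$. The paper's argument here is substantially different and is where the real work lies: one takes $K$ with $\dim E^s$ minimal, uses Zorn's lemma to find a maximal chain-transitive $\Lambda\subset\C$ containing $K$ carrying the extended splitting $E'\oplus E^c\oplus F'$ and Hausdorff-approximated by weak orbits, and then, if $\Lambda\neq\C$, invokes Proposition~\ref{spread} in contrapositive form to get a measure on $\Lambda$ with nonzero $E^c$-exponent. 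Minimality of $\dim E^s$ forces $E'$ to be uniformly contracted, so Theorem~\ref{main2} applies to $\Lambda$. Its conclusion is exactly what is needed: a periodic orbit $O$ in a small neighborhood of $\Lambda$ whose \emph{local homoclinic class contains} $\Lambda$ --- this intersection with $\C$ is what puts $O$ into $\C$ and lets one push the splitting and the index-$\dim E'$ approximation to all of $\C$. You never invoke Theorem~\ref{main2}, and without it the argument cannot close.
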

\begin{proof}
Since $E$ is not uniformly contracted, Theorem \ref{C3options} applies.
By our assumptions, the first case in the conclusion does not occur.

If the second case of Theorem \ref{C3options} occurs, then by Corollary~\ref{c.local-class}
for any $\delta>0$ small, $\C$ is the homoclinic class of a periodic orbit whose maximal Lyapunov exponent
along $E$ belongs to $(-\delta,0)$. Then, $\C$ is the Hausdorff limit of periodic orbits of index $\dim(E)-1$
and by Corollary~\ref{weak}, the class has a dominated splitting
$T_{\C}M=E'\oplus E^c\oplus F$, as required.

We thus assume that $\C$ contains a minimal set $K$ with a partially hyperbolic splitting
$T_{K}M=E^s\oplus E^c\oplus E^u$ satisfying the third case of Theorem \ref{C3options}.
We can take such $K$ with least $\dim E^s$: for any other minimal set $K'\subset \C$ having a dominated splitting
$T_{K'}M=F^s\oplus F^c\oplus F^u$ and such that any invariant probability measure supported on $K'$
has a zero Lyapunov exponent along $F^c$, we have $\dim(E^s)\leq \dim(F^s)$.

Consider the
following family $\F$ of compact invariant chain-transitive subsets of $\C$, say $\La$, such that:
\begin{itemize}
\item[--] $\La$ contains $K.$
\item[--] $T_\La M=E'\oplus E^c\oplus F'$ dominated with $\dim E'=\dim E^s$ and $\dim(E^c)=1$.
\item[--] $\La$ is the Hausdorff limit of periodic orbits $\mO$ with Lyapunov exponent along $E^c$ is arbitrarily weak (i.e., for any $\de>0$ there exists a sequence $\mO_n$ of periodic orbits converging in the Hausdorff topology to $\La$ such that the Lyapunov exponent along $E^c$ belongs to $(-\de,\de).)$
\end{itemize}
Notice that the last condition make sense since every periodic orbit near $\La$ has a splitting $E\oplus
E^c\oplus F.$
Note also that $K$ belongs to $\F$ by Theorem~\ref{t.hausdorff}.
Order the family $\F$ by inclusion, that is $\La_1\le \La_2$ if $\La_1\subset \La_2.$ We will
apply Zorn's Lemma to get a maximal set within $\F.$ Let $\{\La_\al:\al\in\Gamma\}$ be a totally ordered chain and
consider
$$\La_\infty=\overline{\bigcup_{\al\in\Gamma} \La_\al}.$$
It follows that $\La_\infty$ is the Hausdorff limit of periodic orbits whose $(\dim E' +1)^{\rm th}$ Lyapunov
exponent is arbitrarily weak. Thus, by Theorem \ref{wen}, the splitting along these periodic orbits extends to
the closure and so $\La_\infty$ has a dominated splitting $E'\oplus E^c\oplus F'.$ Thus, $\La_\infty$ is an upper
bound set of $\{\La_\al:\al\in\Gamma\}.$ Now, applying Zorn's Lemma we get that there exists a maximal set $\La\in\F$ with respect to the order.

In the case $\La=\C$ we obtain immediately the conclusion of the lemma.
We thus assume that $\La\neq\C$. With the maximality of $\Lambda$,
Proposition~\ref{spread} implies that there exists a measure supported on $\Lambda$ whose Lyapunov exponent
along $E^c$ is not zero. Note also that the bundle $E'$ is uniformly contracted since otherwise
Theorem \ref{C3options} would contradict the minimality of $\dim(E')=\dim(E^s)$ for $K$.
Theorem~\ref{main2} can thus be applied.

By our assumptions, the first property of Theorem~\ref{main2} does not happen.
The second property holds and implies that $\C$ is the Hausdorff limit of periodic orbits
whose $(\dim(E')+1)^\text{th}$ Lyapunov exponent is arbitrarily close to zero.
By Theorem~\ref{wen} and Lemma~\ref{l.Ndomination}, the splitting $T_\La M=E'\oplus E^c\oplus F'$
on $\La$ extends as a dominated splitting on $\C$.
By Lemma~\ref{l.index}, $\C$ is the Hausdorff limit of periodic orbit of index $\dim(E')$,
ending the proof.
\end{proof}

Let $\C$ be a chain-recurrent class of $f$. Assume first that $\C$ is aperiodic. By Theorem~\ref{t.hausdorff} it
is the Hausdorff limit of a sequence of periodic orbits. By Theorem~\ref{wen}, we get a dominated splitting
$T_\C M=E\oplus F$ such that $\dim(E)$ coincide with the index of the periodic orbits. Since $\C$ is aperiodic,
it is not a hyperbolic set, hence either $E$ is not uniformly contracted or $F$ is not uniformly expanded. We
will assume for instance that the first case holds. One can hence apply Lemma~\ref{l.class-domina} above. This
gives another dominated splitting $T_\C M=E'\oplus E^c\oplus F'$ such that $\C$ is the Hausdorff limit of
periodic orbits of index $\dim(E')<\dim(E)$. If $E'$ is not uniformly contracted, the lemma applies again.
Applying inductively the lemma, one can thus assume that $\La$ has a splitting $T_\C M=E^s\oplus E^c\oplus
E^{cu}$ where $E^s$ is uniformly contracted and $E^{c}$ has dimension $1$ and is not uniformly contracted.
Since $\C$ is aperiodic, the conclusion of Theorem~\ref{main2} does not occur. Consequently, for any invariant
probability measure supported on $\C$, the Lyapunov exponent along $E^c$ is equal to zero. With the domination,
this implies that $E^{cu}$ is uniformly expanded, as in Theorem~\ref{main}.
\medskip

We assume now that $\C$ is a homoclinic class $H(p)$.
By Theorems~\ref{indices} and~\ref{wen}, there exists a dominated splitting
$$T_{H(p)}M=F^{cs}\oplus F^c_1\oplus\dots\oplus F^c_s\oplus F^{cu}$$
such that each bundle $F^{c}_i$ is one-dimensional,
$\dim(F^{cs})$ is equal to the minimal index of periodic orbits of $H(p)$
and $\dim(M)-\dim(F^{cu})$ is equal to the maximal index of periodic orbits of $H(p)$.
Moreover for each $i$, there exist periodic orbits in $H(p)$ whose Lyapunov exponent
along $F^c_i$ is arbitrarily close to zero.

If the bundle $F^{cs}$ is not uniformly contracted, then the Lemma~\ref{l.class-domina} applies.
As before, by an induction we get a dominated splitting
$T_{H(p)} M=E^s\oplus E^c\oplus  E^{cu}$ such that $\dim(E^s)<\dim(F^{cs})$,
the bundle $E^c$ is one-dimensional and not uniformly contracted and $E^s$ is uniformly contracted.
Since $H(p)$ contains hyperbolic periodic points, there exists an invariant measure supported on
$H(p)$ whose Lyapunov exponent along $E^c$ is not zero; hence Theorem~\ref{main2} can be used.
The first case of the conclusion does not happen since $\dim(E^s)$ is smaller than the minimal
index of periodic points in $H(p)$. The second case is thus satisfies:
we get that the minimal index $\dim(F^{cs})$ coincides with $\dim(E^s)+1$ so that $F^{cs}=E^s\oplus E^c$.
Moreover there exists periodic points whose Lyapunov exponent along $E^c$ is arbitrarily close to zero.

The same holds for the bundle $F^{cu}$, giving the conclusion of~Theorem~\ref{main}.

\section{Central models and some consequences}\label{central}

When dealing with sets having a dominated splitting where one bundle is one dimensional, the first author
developed a tool, known as ``Central Models'', that proved to be very useful in this context (see
\cite{crovisier-palis-faible}, \cite{Cro08}). We will recall the main classification result regarding these
central models.

\subsection{Definition of central models}
Let $K$ be a compact invariant set with a dominated splitting of the form $T_K M=E_1\oplus E^c\oplus E_3$ where $E^c$ is one dimensional. Consider a locally invariant \emph{plaque family} $\D^c$ tangent to $E^c$.
This always exists: using a result of~\cite{hirsch-pugh-shub} recalled in Section~\ref{ss.domination},
one can consider center-stable and center-unstable plaque families $\D^{cs},\D^{cu}$ tangent to the bundles
$E_1\oplus E^c$ and $E^c\oplus E_3$ respectively and choose $\D^c$ such that for each $x\in K$, the image of
$\D^c_x$ is contained in the intersection of the images of $\D^{cs}_x$ and $\D^{cu}_x$.

Plaque families tangent to $E^c$ is a way to introduce central manifolds for points $x\in K$.
When such a family $\D^c$ is fixed and $\eta>0$ small is given, one defines the \emph{central manifold
of size $\eta$} at $x\in K$ as
$$W^c_\eta(x)=\D^c_x(-\eta,\eta).$$
\medskip

The above properties of the central plaque family allow to lift the dynamics of $f$ as a fibered dynamics $\hat
f$ on the bundle $E^c$ that is locally defined in a neighborhood of the (invariant) zero section of $E^c.$ (See
\cite{crovisier-palis-faible} and \cite{Cro08} for details.) First, one notices that the action induced by $f$
on the unitary bundle associated to $E^c$ is the union of one or two chain-recurrence classes. We denote $\hat
K$ one of these classes: this can be viewed as a collection of half central plaques of $\D^c$. These half
plaques can be parameterized, giving a projection map $\pi\colon \hat K\times [0,+\infty)\to M$ such that
$\pi(\{\hat x\}\times [0,+\infty))\subset \D^c_x$ and $\pi(\hat x,0)=x$ for each $\hat x\in \hat K$ above a
point $x\in K$. Since the plaque family $\D^c$ is locally invariant, the map $f$ can be lifted as a map $\hat f$
defined on a neighborhood of $\hat K\times \{0\}$ which is fiber preserving and locally invertible. The set
$\hat K\times [0,+\infty)$ endowed with the map $\hat f$ is called a \emph{central model} of $(K,f)$ for the
bundle $E^c$.

Sometimes it is useful to consider central models associated to smaller plaques. When $\eta>0$ is given we say
that the central model is \emph{associated to the plaques $W^c_\eta$} if for each $\hat x\in \hat K$ the
projection $\pi(\{\hat x\}\times [0,+\infty))$ is contained in $W^c_\eta(x)$ where $x=\pi(\hat x,0)$.
\medskip

Two cases occur when $K$ is chain-transitive:
\begin{itemize}
\item[--] Either the action induced by $f$ on the unitary bundle associated to $E^c$ has only one
chain-recurrence class. Equivalently, there is no continuous orientation of $E^c$ preserved by the action of $Df$.
In this case $\hat K$ is a two-folds covering of $K$. Moreover if $\hat x^-,\hat x^+\in \hat K$ are the two lifts of $x\in K$, then the union $\pi(\{\hat x^-\}\times [0,+\infty))\cup \pi(\{\hat x^+\}\times [0,+\infty))$
contains a uniform neighborhood of $x$ in $\D^c_x$.
\item[--] Or the unitary bundle associated to $E^c$ is the union of two chain-recurrence classes
$\hat K^-,\hat K^+$. In this case, $\hat K^-, \hat K^+$ are copies of $K$.
Considering any two central models $\pi^\pm\colon \hat K^\pm\times [0,+\infty)\to M$, then for any lifts
$\hat x^\pm\in \hat K^\pm$ of a point $x\in K$, the union $\pi(\{\hat x^-\}\times [0,+\infty))\cup \pi(\{\hat x^+\}\times [0,+\infty))$ contains a uniform neighborhood of $x$ in $\D^c_x$.
\end{itemize}

\subsection{Classification of central dynamics}
Let us consider a central model $\hat f\colon \hat K\times [0,1]\to \hat K\times [0,+\infty)$.
We introduce some definitions used to describe its dynamics.

\begin{defi}
\noindent a) When $\hat Z$ is a subset of $\hat K$, we define the \emph{chain-unstable set of $\hat Z$}
for the dynamics of the central model,
$\hat W^{ch-u}(\hat Z)$ as the set of points $y\in \hat Z\times [0,\infty)$ such that
for any $\varepsilon>0$, there exists a sequence $\{x_0,\dots,x_n\}$ in $\hat Z$
and a sequence $\{y_0,\dots,y_n\}$ in $\hat Z\times [0,1]$ satisfying:
\begin{itemize}
\item[--] for each $k\in \{0,\dots,n\}$ we have $y_k\in \{x_k\}\times [0,1]$,
\item[--] $y_0=(x_0,0)$ and $y_n=y$,
\item[--] for each $k\in \{1,\dots,n\}$, the points $y_k$ and $\hat f(y_{k-1})$ are $\varepsilon$-close.
\end{itemize}
We define similarly the \emph{chain-stable set of $\hat Z$}. Note that we did not assume that $\hat Z$ is compact nor invariant.
\smallskip

\noindent
b) A point $\hat x\in \hat K$ has a \emph{chain-recurrent central segment} if there exists $a>0$
such that $\{\hat x\}\times [0,a]$ is contained in the chain-stable and chain-unstable sets of $\hat K\times \{0\}$.
\smallskip

\noindent c) We say that the dynamics of $\hat f$ is \emph{thin trapped} if there exists arbitrarily small open neighborhoods $U$
of $\hat K\times \{0\}$ such that $\hat f(\overline U)\subset U$ and for each $\hat x\in \hat K$
the intersection $U\cap (\{\hat x\}\times [0,+\infty))$ is an interval.
\end{defi}
\medskip

The next classification results are the basis of this theory. They restate~\cite[section 2]{crovisier-palis-faible}
and~\cite[proposition 2.2]{Cro08}
\begin{thm}\label{cm}
Let $(\hat K\times [0,+\infty),\hat f)$ be a central model and assume that $K$ is chain-transitive.
Then, the following properties hold.
\begin{enumerate}
\item The dynamics of $\hat f$ is not thin trapped if and only if
the chain-unstable set of $\hat K$ contains a non-trivial interval $\{\hat y\}\times [0,a]$, $a>0$.
\item The dynamics of $\hat f$ and $\hat f^{-1}$ are not thin trapped
if and only if there is a chain-recurrent central segment.
\item If the dynamics of $\hat f$ is thin trapped and the dynamics of $f^{-1}$ is not
thin trapped then there is a neighborhood of $\hat K\times \{0\}$
contained in the chain-stable set of $\hat K\times \{0\}$.
\end{enumerate}
\end{thm}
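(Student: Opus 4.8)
The plan is to prove Theorem~\ref{cm} by analyzing the dynamics of the one-dimensional fibered map $\hat f$ on $\hat K\times[0,+\infty)$ near the invariant zero section. I would begin with the key dichotomy underlying item (1): either there exist arbitrarily small neighborhoods $U$ of $\hat K\times\{0\}$ with $\hat f(\overline U)\subset U$ whose fibers $U\cap(\{\hat x\}\times[0,+\infty))$ are intervals (thin-trapped), or not. The nontrivial direction is to show that the failure of thin-trapped forces the chain-unstable set of $\hat K\times\{0\}$ to contain a genuine interval $\{\hat y\}\times[0,a]$. Here the strategy is to take a sequence of candidate trapping regions (e.g. the sublevel sets $\{\,(\hat x,t): t\le \varphi(\hat x)\,\}$ of suitable continuous functions $\varphi$, or more simply $\varepsilon$-neighborhoods of the section) and show that if none of them is forward-trapped with interval fibers, then one can build $\varepsilon$-pseudo-orbits starting arbitrarily close to the section that climb to a definite height $a>0$; compactness of $\hat K$ and a diagonal/limit argument over $\varepsilon\to 0$ then produces a point $\hat y$ whose fiber segment $\{\hat y\}\times[0,a]$ lies in $\hat W^{ch-u}(\hat K\times\{0\})$. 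Conversely, if such an interval exists in the chain-unstable set, no interval-fibered forward-trapping neighborhood of small size can exist, since pseudo-orbits escape it; this gives the easy direction of (1).

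For item (2), I would apply item (1) to both $\hat f$ and $\hat f^{-1}$: ``neither $\hat f$ nor $\hat f^{-1}$ thin trapped'' is equivalent, by (1), to the chain-unstable set of $\hat K\times\{0\}$ containing a nontrivial fiber interval \emph{and} the chain-stable set of $\hat K\times\{0\}$ containing a nontrivial fiber interval. The task is to upgrade these two (a priori unrelated, possibly occurring at different base points) one-sided segments into a single two-sided chain-recurrent central segment $\{\hat x\}\times[0,a]$ lying in both the chain-stable and chain-unstable sets. The mechanism is chain-transitivity of $K$ (hence of $\hat K$, or of its two copies $\hat K^\pm$): one uses chain-transitivity of the base dynamics to concatenate an $\varepsilon$-pseudo-orbit realizing the chain-unstable segment at one base point with one realizing the chain-stable segment at another, routing through $\hat K\times\{0\}$, and then a monotonicity/compactness argument in the one-dimensional fiber to find a common height $a$ and a common base point $\hat x$ that works for both directions. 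The converse is immediate: a chain-recurrent central segment is by definition in the chain-stable and chain-unstable sets, so by (1) applied twice neither $\hat f$ nor $\hat f^{-1}$ is thin trapped.

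For item (3), assume $\hat f$ is thin trapped but $\hat f^{-1}$ is not. By (1) applied to $\hat f^{-1}$, the chain-\emph{stable} set of $\hat K\times\{0\}$ contains a nontrivial interval $\{\hat y\}\times[0,a]$. I would then use the thin-trapped structure for $\hat f$: fix a small forward-trapping neighborhood $U$ with $\hat f(\overline U)\subset U$ and interval fibers. Every point of $U$ has forward orbit staying in $U$ and, because the fibers are intervals being crushed toward the section, one shows that from any point of $U$ one can chain \emph{down} to $\hat K\times\{0\}$ (the trapping plus one-dimensionality gives, for any $\varepsilon$, a pseudo-orbit inside $U$ from a given point to the section — this is essentially the statement that a trapped interval map has the section as its only chain class inside $U$ on that side). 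Combining this with the chain-stable interval coming from the $\hat f^{-1}$ side, one propagates: the section's chain-stable set contains a fiber interval over $\hat y$, chain-transitivity of $\hat K$ spreads chain-stability over all base points, and the trapping region $U$ supplies the complementary chaining, so a whole neighborhood of $\hat K\times\{0\}$ lies in the chain-stable set of $\hat K\times\{0\}$.

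The main obstacle I anticipate is the first implication in item (1): showing that \emph{failure} of thin-trapped genuinely produces an interval in the chain-unstable set, rather than merely pseudo-orbits reaching arbitrarily large but $\varepsilon$-dependent heights that collapse as $\varepsilon\to 0$. This requires a careful choice of the family of candidate trapping neighborhoods (one wants the ``largest'' interval-fibered forward-invariant-up-to-$\varepsilon$ region and must argue its fiberwise supremum is upper semicontinuous and either defines a genuine trapping region or is detected by a uniform escape) together with a compactness argument that passes a uniform height $a>0$ to the limit. This is exactly the content of \cite[section 2]{crovisier-palis-faible}; in the plan I would invoke that the relevant trapping regions can be taken to be sublevel sets of continuous functions, which makes the semicontinuity and limit arguments go through, and handle the two-fold-cover versus two-copies dichotomy for $\hat K$ uniformly by working with one chain class $\hat K$ at a time.
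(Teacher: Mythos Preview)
Your treatment of items (1) and (2) matches the paper's approach, which is essentially to restate \cite[section 2]{crovisier-palis-faible}; you correctly identify the hard direction in (1) and the role of chain-transitivity of the base in (2).

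For item (3) there is a genuine gap. You claim that from thin-trapped for $\hat f$ alone, ``from any point of $U$ one can chain down to $\hat K\times\{0\}$'' because ``the trapping plus one-dimensionality gives, for any $\varepsilon$, a pseudo-orbit inside $U$ from a given point to the section''. This is false. If \emph{both} $\hat f$ and $\hat f^{-1}$ are thin trapped (which can happen: already over a single base point, take a fiber map with $\hat f(t)-t$ changing sign on every neighborhood of $0$, so that arbitrarily small intervals are trapped for $\hat f$ and others for $\hat f^{-1}$), then by (1) the chain-stable and chain-unstable sets of $\hat K\times\{0\}$ are both reduced to $\hat K\times\{0\}$, so no point at positive height chains to the section. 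Hence thin-trapped for $\hat f$ by itself does not yield what you assert; the hypothesis that $\hat f^{-1}$ is not thin trapped is essential, not decorative. If your intermediate claim were correct, item (3) would not need that hypothesis at all.

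The paper's argument organizes the two hypotheses differently: from $\hat f$ thin trapped one deduces (via (1), or via (2)) that there is no chain-recurrent central segment; from $\hat f^{-1}$ not thin trapped one deduces (via (1)) that the chain-stable set of $\hat K\times\{0\}$ is nontrivial; then \cite[lemma~2.8]{crovisier-palis-faible} turns ``no chain-recurrent segment $+$ nontrivial chain-stable set'' into ``a full neighborhood lies in the chain-stable set''. Your final ``propagation'' paragraph (spreading a seed chain-stable interval over all fibers using chain-transitivity of $\hat K$ and the trapping) is pointing at the mechanism behind that lemma, but because you have already asserted the conclusion from thin-trapped alone, the logical structure as written does not stand. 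To repair it, drop the standalone ``chain down'' claim and run the propagation argument directly from the seed interval $\{\hat y\}\times[0,a]$ supplied by (1) for $\hat f^{-1}$, using the trapping of $\hat f$ only to carry pseudo-orbits along the base without increasing height.
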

\begin{proof}
Let us explain the first item:
\begin{itemize}
\item[--] In the case the chain-unstable set of $\hat K\times \{0\}$ is reduced to $\hat K\times \{0\}$, we apply~\cite[lemma 2.7]{crovisier-palis-faible} (the $\varepsilon$-chain-unstable sets of $\hat K\times \{0\}$
are arbitrarily small trapped strips): the dynamics is thin trapped.
\item[--] In the case the chain-unstable set of $\hat K\times \{0\}$ contains a point $(\hat y,a)$ with $a>0$, we note that it also contains the interval $\{\hat y\}\times [0,a]$. Clearly, the dynamics can not be thin trapped.
\end{itemize}

The second item is~\cite[proposition 2.5]{crovisier-palis-faible}.
For the third item: if the dynamics of $\hat f$ thin trapped and the dynamics of $f^{-1}$ is not thin trapped, then there is no chain-recurrent central segment
and by the first item the chain-stable set is of $\hat K\times \{0\}$ is not trivial.
By~\cite[lemma 2.8]{crovisier-palis-faible}, this implies that
there is a neighborhood of $\hat K\times \{0\}$
contained in the chain-stable set of $\hat K\times \{0\}$.
\end{proof}
\bigskip

We now come back to the manifold $M$ and discuss the central dynamics of $K$.
\begin{defi}
\noindent a) $K$ has a \emph{chain-recurrent central segment} if in some central model of $(K,f)$, there
exists a chain-recurrent central segment. More precisely, we say that a point $x\in K$ has a chain-recurrent
central segment with respect to a central model of $(K,f)$ if there exists a chain-recurrent central segment
$\{\hat x\}\times [0,a]$, $a>0$, in this central model where $\hat x$ lifts $x$.
\smallskip

\noindent b) The central dynamics is \emph{thin trapped} if there exist one or two central models
$(\hat K\times [0,+\infty), \hat f)$ or $(\hat K^\pm\times [0,+\infty), \hat f^\pm)$ that are thin trapped
and such that $\hat K$ or
$(\hat K^-\cup \hat K^+)$ contain the whole unitary bundle associated to $E^c$.
\end{defi}

From the classification result, if $K$ is chain-transitive and has no central model with non-trivial
chain-unstable set, then the central dynamics of $K$ is thin trapped. Moreover, this does not depend on the choice of
the central models:

\begin{lemma}\label{l.model-uniqueness}
Assume that $K$ is chain-transitive and that its central dynamics is thin trapped.
Then, for any central model $(\hat K\times [0,+\infty),\hat f)$ of $K$,
the dynamics is thin trapped.
\end{lemma}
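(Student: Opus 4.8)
The plan is to deduce the statement from Theorem~\ref{cm}(1), which reduces thin trapping to an intrinsic feature of a central model, and then to compare two central models of $(K,f)$ by a fibered identification along the zero section. Note first that whether the unitary bundle of $E^c$ is a single chain-recurrence class (two-folds covering case) or splits as $\hat K^-\sqcup\hat K^+$ depends only on whether $Df$ preserves a continuous orientation of $E^c$, hence is intrinsic to $(K,f)$; so every central model of $(K,f)$ is built over the same base $\hat K$ (or over the same pair $\hat K^\pm$). Using the definition of ``the central dynamics is thin trapped'', it then suffices to prove: if $(\hat K\times[0,+\infty),\hat f)$ and $(\hat K\times[0,+\infty),\hat f')$ are two central models over the same $\hat K$, arising from plaque families $\D^c$ and $\D'^c$ tangent to $E^c$, then $\hat f$ is thin trapped iff $\hat f'$ is. By Theorem~\ref{cm}(1) and the chain-transitivity of $K$, this is the same as showing that the chain-unstable set of $\hat K\times\{0\}$ is trivial for $\hat f$ iff it is trivial for $\hat f'$.

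To compare the two models, I would build a continuous family of germs of homeomorphisms $h_{\hat x}$, $\hat x\in\hat K$, between the two fibers over $\hat x$, reading each fiber as a parametrization of a half-plaque of $\D^c_x$, resp. $\D'^c_x$ (here $x=\pi(\hat x,0)$). Since $\D^c_x$ and $\D'^c_x$ are $C^1$ curves through $x$ tangent to the same line $E^c_x$, one may identify them near $x$, for instance by matching the $E^c_x$-coordinate in a chart splitting $T_xM$ as $E^c_x\oplus(E_1\oplus E_3)_x$; continuity of the two plaque families gives continuity of $h=(h_{\hat x})$ in $\hat x$, and $h$ fixes the zero section $\hat K\times\{0\}$. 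By local invariance of both plaque families, $h$ conjugates the germ of $\hat f'$ along $\hat K\times\{0\}$ to the germ of $\hat f$ along $\hat K\times\{0\}$, up to an error that tends to $0$ near the zero section: for every $\rho>0$ there is a neighborhood of $\hat K\times\{0\}$ on which $h\circ\hat f'\circ h^{-1}$ differs from $\hat f$ by less than $\rho$.

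It remains to transfer the (non-)triviality of the chain-unstable set. After replacing the two central models by the ones associated to the plaques $W^c_\eta$ and $W'^c_\eta$ with $\eta$ small --- a fiber-preserving reparametrization, which changes no chain-stable or chain-unstable set --- all the pseudo-orbits that detect the chain-unstable set of $\hat K\times\{0\}$ lie in the region where $h$ and $h^{-1}$ are defined and uniformly continuous. Hence for every $\varepsilon>0$ there is $\delta>0$ such that $h$ sends $\delta$-pseudo-orbits of $\hat f'$ issued from the zero section to $\varepsilon$-pseudo-orbits of $\hat f$ issued from the zero section, and symmetrically; consequently $h$ carries the chain-unstable set of $\hat K\times\{0\}$ for $\hat f'$ onto the one for $\hat f$. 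Since $h$ fixes $\hat K\times\{0\}$, one is trivial iff the other is, and Theorem~\ref{cm}(1) gives the lemma.

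The main obstacle is precisely this last step, and more specifically the fact that the plaque families are only \emph{locally} invariant, so $\hat f,\hat f'$ are only partially defined fibered maps and $h$ is only a germ along the zero section. One must check that a non-trivial chain-unstable interval $\{\hat y\}\times[0,a]$ is already produced by pseudo-orbits confined to an arbitrarily thin neighborhood of $\hat K\times\{0\}$ --- which is the content of the trapped-strip analysis underlying Theorem~\ref{cm}(1) and~\cite[Section~2]{crovisier-palis-faible} --- so that the error term between $\hat f$ and $h\circ\hat f'\circ h^{-1}$ can be absorbed into the size of the pseudo-orbits and one never needs to leave the common domain of definition of $h$, $h^{-1}$, $\hat f$ and $\hat f'$. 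Once this localization is secured, the pseudo-orbit transfer is routine.
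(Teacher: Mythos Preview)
The paper's own proof is a bare citation to~\cite[Lemma~2.5]{Cro08}, so there is little to compare directly; your proposal is an attempt to supply what the paper omits. The strategy --- reduce via Theorem~\ref{cm}(1) to the model-independence of ``chain-unstable set of $\hat K\times\{0\}$ is trivial'', then compare two central models through a fiberwise identification $h$ along the common tangent line $E^c$ --- is natural and the outline is correct. You rightly isolate the one genuine difficulty: $h$, $\hat f$, $\hat f'$ are only germs along the zero section, so the pseudo-orbit transfer works only once one knows that non-trivial chain-unstable behavior is already witnessed inside an arbitrarily thin strip. You do not resolve this within the proposal but invoke the strip analysis of~\cite{crovisier-palis-faible}; that is legitimate, but it means your argument, like the paper's, ultimately rests on the same external source. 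One point to watch if you carry this out in full: when transferring a trapped strip $U$ through $h$, you need the conjugacy error on $U$ to be smaller than the gap between $\hat f(\overline U)$ and $\partial U$, and a priori both quantities shrink as $U$ does; the $\varepsilon$-chain-unstable sets (which have gap at least $\varepsilon$) are the natural candidates here.

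The argument in~\cite{Cro08} takes a somewhat different route: rather than building an approximate conjugacy between two abstract models, it characterizes each central type by a condition visible directly on $K\subset M$ --- for thin trapping, the existence of an arbitrarily thin trapped continuous family of one-sided central arcs through the points of $K$ --- and then checks that any central model reads off this condition faithfully. This sidesteps the localization bookkeeping entirely, since one never has to confine pseudo-orbits to the domain of a partially defined conjugacy. Your approach is more hands-on and makes the comparison between two given models explicit; the approach in~\cite{Cro08} is more intrinsic and shorter.
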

\begin{proof}
This is contained in the proof of lemma 2.5 in~\cite{Cro08}.
\end{proof}

\subsection{Dynamics with chain-recurrent central segment}
For next result of this section we need a preliminary lemma.

\begin{lemma}\label{basic}
Let $f$ be a diffeomorphism, $\La$ be a compact invariant chain-transitive set with a dominated splitting
$E\oplus E^c\oplus F$ where $E^c$ is one dimensional and $(W^c(x))$ be a locally invariant plaque family tangent to $E^c$ over $\Lambda$. There is $\theta_0>0$ such that for any small neighborhood $U_\Lambda$ of $\Lambda$, any $\eta>0$ small, any arc $I$ satisfying:
\begin{itemize}
\item[--] there exists $x\in \Lambda$ satisfying $f^k(I)\subset W^c_\eta(f^k(x))$ for all $k\ge 0$;
\item[--] $I\subset W^{ch-s}(\La,U_\Lambda)$;
\item[--] there exists a sequence of periodic points $p_n$ whose orbit $\mO(p_n)$ is contained in $U_\Lambda$,
whose central exponent belongs to $(-\theta_0,\theta_0)$ and such that $p_n$ converge to an interior point $y$ of $I$;
\end{itemize}
then, for some $n$ large enough and any neighborhood $U'_\Lambda$ of $\overline{U_\Lambda}$ we have that
$$W^u_\eta(\mO(p_n))\cap W^{ch-s}(\La,U'_\Lambda)\neq\emptyset.$$
\end{lemma}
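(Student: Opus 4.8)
The plan is to produce the desired intersection by connecting the periodic orbit $\mathcal O(p_n)$ — which carries a weak (nearly zero) central exponent and is spatially close to an interior point $y$ of the central arc $I$ — to the chain-stable set of $\Lambda$, exploiting the domination $E\oplus E^c\oplus F$ and the hyperbolic-time machinery of Section~\ref{ss.domination}. First I would fix the constant $\theta_0$: since $E^c$ is one-dimensional and the splitting is dominated, choose $\theta_0$ small enough that a periodic orbit with central exponent in $(-\theta_0,\theta_0)$ and contained in a small neighborhood $U_\Lambda$ is \emph{partially hyperbolic} with $E$ uniformly contracted and $F$ uniformly expanded along it, with uniform constants $(C,\sigma)$ depending only on $\Lambda$ and the domination (this uses Theorem~\ref{wen}, or rather the local version of the estimates around $\Lambda$). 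In particular each $p_n$ has a well-defined strong unstable manifold $W^u_\eta(\mathcal O(p_n))$ tangent to $E^c\oplus F$ of uniform size, and its local unstable set is $(C,\sigma, E\oplus E^c)$-co-hyperbolic in the sense needed for Lemma~\ref{l.manifold2}.

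The key geometric step is to apply Lemma~\ref{l.manifold2} with the roles: $E'=E$, the one-dimensional central bundle $E^c$, and $F$ as in that lemma; the curve $I$ is tangent to $E\oplus E^c$ and transverse to $E'=E$ (it is a central arc, hence tangent to $E^c$, so transverse to $E$), and it lies in $\Lambda^+$ because $f^k(I)\subset W^c_\eta(f^k(x))$ stays near $\Lambda$ for all $k\ge 0$. Take the interior point $y\in I$ to which the $p_n$ accumulate, and let $r>0$ be the radius provided by Lemma~\ref{l.manifold2} at $y$. For $n$ large, $p_n$ is $r$-close to $y$; since the orbit of $p_n$ is $(C,\sigma,F)$-hyperbolic for $f^{-1}$ (it is a periodic point of $F$-unstable type with uniform constants) and the points of $I$ are $(C,\sigma,E)$-hyperbolic for $f$ because $E$ is uniformly contracted (Remark~\ref{r.manifold}), Lemma~\ref{l.manifold2} yields a point $x'\in I$ whose local stable manifold $W^s_{loc}(x')$ intersects the local unstable manifold of $p_n$ — hence $W^u_\eta(\mathcal O(p_n))\cap W^s_{loc}(x')\neq\emptyset$.

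It remains to transport this intersection into the chain-stable set of $\Lambda$. Here I would use the hypothesis $I\subset W^{ch-s}(\Lambda,U_\Lambda)$: every point of $I$, in particular $x'$, can be joined to $\Lambda$ by $\varepsilon$-pseudo-orbits inside $U_\Lambda$ for all $\varepsilon>0$. A point $w$ in $W^u_\eta(\mathcal O(p_n))\cap W^s_{loc}(x')$ has its forward orbit converging to the orbit of $x'$ and staying, after finitely many steps, as close as we wish to $\{f^k(x')\}\subset \overline{U_\Lambda}$; concatenating the genuine forward orbit of $w$ with an $\varepsilon$-pseudo-orbit from $x'$ to $\Lambda$ produces, for any $\varepsilon>0$, an $\varepsilon$-pseudo-orbit from $w$ to $\Lambda$ contained in $U'_\Lambda$ (any neighborhood of $\overline{U_\Lambda}$, which absorbs the finite transient near $\Lambda^+$ and near the orbit of $x'$). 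Hence $w\in W^{ch-s}(\Lambda,U'_\Lambda)$, giving $W^u_\eta(\mathcal O(p_n))\cap W^{ch-s}(\Lambda,U'_\Lambda)\neq\emptyset$ as claimed.

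The main obstacle I anticipate is the \emph{uniformity of the hyperbolic estimates along the periodic orbits $p_n$}: one must be sure that the central exponent being in $(-\theta_0,\theta_0)$ forces $E$ to be contracted and $F$ expanded along $\mathcal O(p_n)$ with constants \emph{independent of $n$}, so that the radius $r$ from Lemma~\ref{l.manifold2} and the sizes of the local manifolds do not degenerate as $p_n\to y$. This is exactly the kind of estimate provided (near $\Lambda$) by Theorem~\ref{wen} and the domination: shrinking $U_\Lambda$ if necessary, the $N$-dominated splitting $E\oplus E^c\oplus F$ extends to a neighborhood, and the contributions of $E$ and $F^{-1}$ at the period are controlled by $C\sigma^\tau$, while the central contribution is controlled by the exponent $\theta_0$; choosing $\theta_0$ small compared to the domination rate closes the argument.
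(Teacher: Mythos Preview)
There is a genuine gap, and it is precisely the obstacle you flag at the end --- but your proposed fix does not work. Lemma~\ref{basic} is stated for an \emph{arbitrary} diffeomorphism with a dominated splitting; it does not assume $f\in\diff^1(M)\setminus\HT$, so Theorem~\ref{wen} is unavailable. Even if it were, Theorem~\ref{wen} controls the product of $\|Df_{|F}^{-1}\|$ over the \emph{whole period}, not partial products starting at the particular point $p_n$ that happens to be close to $y$. A weak central exponent plus domination guarantees only that the \emph{average} of $\log\|Df^{-1}_{|F}\|$ along $\cO(p_n)$ is negative; it does \emph{not} force $p_n$ itself to be $(C,\sigma,F)$-hyperbolic for $f^{-1}$ with $C$ independent of $n$. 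Without that uniformity the radius $r$ in Lemma~\ref{l.manifold2} may shrink to zero and the intersection argument collapses. (There is a second, smaller gap: $E$ is not assumed uniformly contracted, so you cannot invoke Remark~\ref{r.manifold} to get $(C,\sigma,E)$-hyperbolicity of the points of $I$. The paper obtains it instead from the fact that all forward iterates $f^k(I)$ stay inside the short plaques $W^c_\eta$, bounding the central derivative and then using the domination $E\prec E^c$.)

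The paper's proof resolves the uniformity problem by a dichotomy: it applies Lemma~\ref{l.hyperbolic-time} to the sequence $(p_n)$ (using the weak central exponent as the hypothesis). In the first alternative one does get a uniform $C$ with infinitely many $p_n$ being $(C,\sigma_F,F)$-hyperbolic for $f^{-1}$, and then your Lemma~\ref{l.manifold2} argument goes through. In the second alternative one must abandon the point $p_n$ and pass to another iterate $q_n\in\cO(p_n)$ which is $(1,\sigma_F,F)$-hyperbolic; these $q_n$ accumulate on a limit point $z$ in the maximal invariant set of $\overline{U_\Lambda}$ which is $(1,\rho,E^c)$-hyperbolic for $f$ (hence has a local stable manifold tangent to $E\oplus E^c$) and lies in $W^{ch-s}(\Lambda,\overline{U_\Lambda})$. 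One then applies Lemma~\ref{l.manifold-intersect} at $z$ rather than Lemma~\ref{l.manifold2} at $y$, obtaining an intersection of $W^u_\eta(q_n)$ with the stable manifold of $z$, and the conclusion follows. The missing idea in your plan is this second branch: when the specific approximating point $p_n$ fails to have a uniform unstable manifold, one must relocate within the periodic orbit and change the target of the intersection.
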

\begin{proof}
Let us assume that the Riemannian norm is adapted to the domination $(E\oplus E^c)\oplus F$ and that
$\lambda\in(0,1)$ is a constant as in Section~\ref{ss.domination}.

Consider $\varepsilon>0$ small, then if $\eta>0$ is small enough,
and since all the curves $f^{k}(I)$, $k\geq 0$ have length smaller than $\eta$ we have for each $z\in I$
$$\|Df^{k}_{|E^c}(z)\|\leq \frac{\operatorname{length}(f^{k}(I))}{\operatorname{length}(I)}e^{\varepsilon.k}.$$
With the domination, this implies that there exist $C_E>0$ and $\sigma_E\in (0,1)$ such that any point $z\in I$ is
$(C_E,\sigma_E,E)$-hyperbolic for $f$.

We choose $\rho,\sigma_F \in (0,1)$ such that $\lambda<\rho\sigma_F$ and set $\theta_0=\log( \rho^{-1})$.
We then apply the Lemma~\ref{l.hyperbolic-time} to the sequence $(p_n)$ and discuss the two cases of conclusion.

In the first case, taking a subsequence, the points $(p_n)$ are $(C_F,\sigma_F,F)$-hyperbolic for $f^{-1}$ and some constant $C_F>0$. We then set $C=\sup(C_E,C_F)$ and $\sigma=\sup(\sigma_E,\sigma_F,\rho)$.
From Lemma~\ref{l.manifold2}, for $n$ large there exists an intersection between the local unstable manifold
of $p_n$ in the plaque tangent to $F$ at $p_n$ and the local stable manifold of some point $z_n\in I$
close to $y$ in the plaque tangent to $E$ at $y_n$. The result follows in this case.

In the second case, there exist points $q_n$ in the orbit of $p_n$ that are $(1,\sigma_F,F)$-hyperbolic for $f^{-1}$
and which converge to some point $z$ such that:
\begin{itemize}
\item[--] $z$ belongs to the maximal invariant set in $\overline{U_\Lambda}$,
\item[--] $z$ is $(1,\rho, E^c)$-hyperbolic for $f$,
\item[--] $z$ belongs to $W^{ch-s}(\Lambda,\overline{U_\Lambda})$.
\end{itemize}
By Lemma~\ref{l.manifold-intersect}, for $n$ large the points $z$ and $p_n$ are close, hence the local
stable manifold at $z$ and the local unstable manifold at $p_n$ intersect.
Since $z$ belongs to $W^{ch-s}(\Lambda,\overline{U_\Lambda})$, one deduces that $p_n$
belongs to $W^{ch-s}(\Lambda,U'_\Lambda)$.
\end{proof}

Applying the Lemma~\ref{basic} to $f$ and $f^{-1}$, one gets the following corollary.
\begin{coro}\label{c.central-segment}
Let $f$ be a diffeomorphism, $\La$ be a compact invariant chain-transitive set with a dominated splitting $E\oplus E^c\oplus F$ where $E^c$ is one dimensional and $\D$ be a plaque family tangent to $E^c$.

There is $\theta_0>0$ such that for any small neighborhood $U_\Lambda$ of $\Lambda$ and any $\eta>0$ small
and considering
\begin{itemize}
\item[--] a chain-recurrent central segment $I$ of $K$ associated to some central model and to the plaques
$W^c_\eta$,
\item[--] a sequence $(p_n)$ of periodic points, which converge to an interior point $y$ of $I$,
whose orbit is contained in $U_\Lambda$, and whose central Lyapunov exponent belongs to $(-\theta_0,\theta_0)$,
\end{itemize}
then, for any neighborhood $U'_\Lambda$ of $\overline{U_\Lambda}$
and some $n$ large, the point $p_n$ belongs to the chain-stable and chain-unstable sets
$W^{ch-s}(\Lambda, U'_\Lambda)$ and $W^{ch-u}(\Lambda, U'_\Lambda)$.
\end{coro}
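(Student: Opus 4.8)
The plan is to derive Corollary~\ref{c.central-segment} by applying Lemma~\ref{basic} twice: once to $f$, giving the chain-stable conclusion, and once to $f^{-1}$, giving the chain-unstable conclusion. The main point is to check that the hypotheses of Lemma~\ref{basic} are met in each application, so that the two conclusions combine to place $p_n$ in both $W^{ch-s}(\Lambda,U'_\Lambda)$ and $W^{ch-u}(\Lambda,U'_\Lambda)$ for one large $n$. Let $\theta_0$ be the minimum of the constant provided by Lemma~\ref{basic} applied to $f$ and the one provided by Lemma~\ref{basic} applied to $f^{-1}$ (noting that for $f^{-1}$ the roles of $E$ and $F$ in the dominated splitting are interchanged, so the central bundle $E^c$ is still one-dimensional and the hypothesis on the central Lyapunov exponent being in $(-\theta_0,\theta_0)$ is symmetric under time reversal). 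Fix a small neighborhood $U_\Lambda$ of $\Lambda$ and $\eta>0$ small.

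By definition of a chain-recurrent central segment $I$ associated to a central model and to the plaques $W^c_\eta$: there is a point $x\in\Lambda$ with a lift $\hat x$ such that $I=\pi(\{\hat x\}\times[0,a])$ lies in $W^c_\eta(x)$, and $\{\hat x\}\times[0,a]$ is contained in both the chain-stable and the chain-unstable sets of $\hat K\times\{0\}$ in the central model. First I would translate this into the manifold: forward (resp.\ backward) local invariance of the plaque family $\D^c$ gives that $f^k(I)$ is contained in $W^c_\eta(f^k(x))$ for all $k\ge 0$ (resp.\ $k\le 0$) once $\eta$ is small enough; and the fact that $\{\hat x\}\times[0,a]$ lies in the chain-stable set of $\hat K\times\{0\}$ in the central model translates, by projecting $\varepsilon$-pseudo-orbits of $\hat f$ down via $\pi$ to $\varepsilon'$-pseudo-orbits of $f$ staying $\eta$-close to $\Lambda$, into $I\subset W^{ch-s}(\Lambda,U_\Lambda)$ provided $\eta$ is chosen small relative to $U_\Lambda$. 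Symmetrically, the chain-unstable half gives $I\subset W^{ch-u}(\Lambda,U_\Lambda)$, which is the corresponding hypothesis of Lemma~\ref{basic} for $f^{-1}$. The sequence $(p_n)$ converging to the interior point $y$ of $I$, with orbits in $U_\Lambda$ and central exponent in $(-\theta_0,\theta_0)$, is exactly the third hypothesis of Lemma~\ref{basic} (and it remains valid for $f^{-1}$ since periodicity, containment in $U_\Lambda$, and the central exponent condition are all time-symmetric).

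With these hypotheses verified, Lemma~\ref{basic} applied to $f$ yields that for all large $n$ and any neighborhood $U'_\Lambda$ of $\overline{U_\Lambda}$ one has $W^u_\eta(\mO(p_n))\cap W^{ch-s}(\Lambda,U'_\Lambda)\neq\emptyset$; since $p_n$ is periodic, its orbit lies on $W^u(\mO(p_n))$, so following the unstable branch forward shows $p_n$ itself lies in $W^{ch-s}(\Lambda,U'_\Lambda)$ (after possibly enlarging $U'_\Lambda$ slightly, or by the standard observation that the unstable manifold of a periodic orbit is forward-asymptotic to the orbit, so the pseudo-orbit from a point of $W^u_\eta(\mO(p_n))$ to $\Lambda$ can be prepended with a genuine orbit segment converging to $p_n$). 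Applying Lemma~\ref{basic} to $f^{-1}$ in the same way gives $p_n\in W^{ch-u}(\Lambda,U'_\Lambda)$ for all large $n$. Taking $n$ large enough to satisfy both conclusions simultaneously finishes the proof.

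The step I expect to be the main obstacle is the bookkeeping that passes from the abstract central-model notions (chain-stable/chain-unstable segments of $\hat K\times\{0\}$ under $\hat f$) to the genuine manifold statements ($I\subset W^{ch-s}(\Lambda,U_\Lambda)$ and the forward/backward plaque invariance $f^k(I)\subset W^c_\eta(f^k(x))$), and the uniformity in $\eta$: one has to be careful that shrinking $\eta$ to make the plaque-invariance and the pseudo-orbit-projection work does not conflict with the smallness of $\eta$ already needed inside Lemma~\ref{basic}, and that the point $y$ stays an \emph{interior} point of $I$ throughout. The other, more routine, subtlety is handling the two-fold-cover versus two-component dichotomy for the unitary bundle of $E^c$ when choosing the central model in which $I$ lives, but this is already absorbed into the definition of chain-recurrent central segment of $K$ and requires no extra argument here.
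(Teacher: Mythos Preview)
Your proposal is correct and is exactly the approach the paper takes: the paper's proof of the corollary is the single sentence ``Applying the Lemma~\ref{basic} to $f$ and $f^{-1}$, one gets the following corollary,'' and you have simply filled in the verification of hypotheses that the paper leaves implicit. One small verbal slip: when you write that ``the unstable manifold of a periodic orbit is forward-asymptotic to the orbit,'' you mean \emph{backward}-asymptotic; the argument you actually want (and essentially give) is that a point $w\in W^u_\eta(\mO(p_n))\cap W^{ch-s}(\Lambda,U'_\Lambda)$ has backward iterates $\eta$-close to $\mO(p_n)\subset U_\Lambda$, so $p_n$ reaches $w$ by a pseudo-orbit in $U'_\Lambda$, and hence $p_n\in W^{ch-s}(\Lambda,U'_\Lambda)$.
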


\subsection{Thin trapped central dynamics}
When $K$ is hyperbolic, it is contained in a local homoclinic class.
The following result is analogous in the case the central dynamics is thin trapped.

\begin{prop}\label{NS}
Let $f$ be a diffeomorphism and $\Lambda$ be a compact invariant chain-transitive set. Assume that:
\begin{itemize}
\item[--] There exists a compact invariant set $K\subset \Lambda$ having a partial hyperbolic structure $T_KM=E^s\oplus E^c\oplus E^u$ with $E^c$ one dimensional.
\item[--] The central dynamics of $K$ is thin trapped.
\item[--] There are a neighborhood $U_\Lambda$ of $\Lambda$, some locally invariant plaque family
$\D^c$ tangent to $E^c$ over $K$ and some plaque $\D^c_x$ contained in the chain-stable set $W^{ch-s}_{U_\Lambda}(\La)$ of $\La$ in $U_\Lambda$.
\end{itemize}
Then, for any neighborhoods $U'_\Lambda,U_K$ of $\overline{U_\Lambda}$ and $K$
there exists a periodic point $p$ such that:
\begin{itemize}
\item[--] The whole orbit of $p$ is contained in $U_K$.
\item[--] $p$ is contained both in the local chain-stable and chain-unstable sets of $\Lambda$ in $U_\Lambda'$.
\end{itemize}
\end{prop}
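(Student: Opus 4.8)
The plan is to produce the periodic point $p$ by constructing a suitable periodic pseudo-orbit staying in $U_K$ and running through a point near $\Lambda$, then invoking the connecting lemma for pseudo-orbits (Theorem~\ref{t.weaktransitive}) and the global connecting lemma (Theorem~\ref{t.hausdorff}) to turn it into a genuine periodic orbit, while Corollary~\ref{c.local-class} guarantees the point lies in the local chain-stable and chain-unstable sets of $\Lambda$. First I would exploit the hypothesis that the central dynamics of $K$ is thin trapped: by Lemma~\ref{l.model-uniqueness} this holds for any central model, and by Theorem~\ref{cm}(1) applied to $\hat f$ (and its analog for $\hat f^{-1}$), the chain-unstable set of $\hat K\times\{0\}$ is reduced to $\hat K\times\{0\}$, so the $\varepsilon$-chain-unstable strips shrink; combined with the uniform contraction of $E^s$ and expansion of $E^u$, this lets me show that a small neighborhood $W$ of $K$ in $M$ is such that $W^{ch-s}_{W}(K)$ and $W^{ch-u}_{W}(K)$ together fill a full neighborhood of $K$. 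The key point is that ``thin trapped central dynamics'' forces $K$ to behave, from the point of view of chain-recurrence inside a small neighborhood, like a hyperbolic set: every nearby point either falls into the chain-stable set or the chain-unstable set of $K$.

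Next I would combine this with the third hypothesis: some central plaque $\D^c_x$ lies in $W^{ch-s}_{U_\Lambda}(\Lambda)$. Since $x\in K\subset\Lambda$ and $\Lambda$ is chain-transitive, $x$ is chain-recurrent for $f_{|\Lambda}$, so there are $\varepsilon$-pseudo-orbits from $x$ back to $x$ inside $U_\Lambda$ (hence inside $U'_\Lambda$) of arbitrarily small jump size; at the same time the thin-trapped structure supplies, for each small $\varepsilon$, a genuine orbit segment that starts near $x$, runs along stable and unstable plaques near $K$ (staying in $U_K$), and returns $\varepsilon$-close to $x$. Concatenating these two pieces gives a periodic $\varepsilon$-pseudo-orbit contained in $U'_\Lambda\cup U_K$ which passes $\varepsilon$-close to $x$ and whose excursion near $K$ stays in $U_K$. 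Applying Theorem~\ref{t.hausdorff} to this periodic pseudo-orbit yields an actual periodic orbit $\mathcal{O}(p)$; by choosing $\varepsilon$ small and the neighborhoods appropriately, its orbit stays in $U_K$ and it passes near $x$. Finally, since $\Lambda$ lies in the local chain-stable and chain-unstable sets of $p$ inside $U'_\Lambda$ — because $x\in\Lambda$ and the pseudo-orbit links $p$ to $x$ and back inside $U'_\Lambda$, and $\Lambda$ is chain-transitive — Corollary~\ref{c.local-class} (in the symmetric form stated after it, for $f$ and $f^{-1}$) gives that $p\in W^{ch-s}(\Lambda,U'_\Lambda)\cap W^{ch-u}(\Lambda,U'_\Lambda)$, as desired.

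The delicate point I expect to be the main obstacle is making rigorous the claim that ``thin trapped + $E^s$ contracted + $E^u$ expanded'' produces genuine orbit segments near $K$ that shadow, with controlled support in $U_K$, a pseudo-orbit that enters near a point of $K$ and exits near another point of $K$. This requires using the locally invariant center-stable and center-unstable plaque families over the forward/backward-invariant extensions $K^+, K^-$ (Section~\ref{ss.domination}), together with the thin-trapped neighborhoods $U$ of $\hat K\times\{0\}$ satisfying $\hat f(\overline U)\subset U$, to build nested trapping regions in $M$; one has to check that orbits entering such a region can be steered, via the hyperbolicity of the extremal bundles and the interval structure of $U\cap(\{\hat x\}\times[0,+\infty))$, back close to $K$ without leaving $U_K$. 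Once this shadowing/trapping statement is in place, the pseudo-orbit bookkeeping and the two applications of the connecting lemmas are routine.
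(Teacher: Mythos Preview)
Your approach has two genuine gaps that prevent it from going through.

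\medskip

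\textbf{Genericity.} Proposition~\ref{NS} is stated for an \emph{arbitrary} diffeomorphism $f$, with no genericity hypothesis. All three tools you rely on --- Theorem~\ref{t.weaktransitive}, Theorem~\ref{t.hausdorff}, and Corollary~\ref{c.local-class} --- hold only for $f$ in a dense $G_\delta$ subset of $\diff^1(M)$. So your argument, as written, simply does not apply. The paper's proof avoids this entirely: the periodic orbit near $K$ is produced by a shadowing lemma specific to thin-trapped partially hyperbolic sets (\cite[Lemma~2.9]{Cro08}), which holds for any $f$, and the chain-stable/unstable conclusions come from genuine intersections of strong stable and strong unstable manifolds, not from connecting lemmas.

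\medskip

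\textbf{Chain-stable/unstable membership.} Even granting genericity, your last step fails. You argue that since the periodic pseudo-orbit ``links $p$ to $x$ and back inside $U'_\Lambda$'', the point $p$ lies in $W^{ch-s}(\Lambda,U'_\Lambda)\cap W^{ch-u}(\Lambda,U'_\Lambda)$. But this only shows $p$ is in the $\varepsilon$-chain-stable and $\varepsilon$-chain-unstable sets for the \emph{particular} $\varepsilon$ used to build the pseudo-orbit; the genuine chain-stable set requires this for every $\varepsilon'>0$, and a periodic point at positive distance from $\Lambda$ has no reason to satisfy that. (Relatedly, your concatenated pseudo-orbit lives in $U'_\Lambda\cup U_K$, so the periodic orbit you extract from Theorem~\ref{t.hausdorff} is only Hausdorff-close to that union and need not lie in $U_K$.) The paper handles this with hard dynamics: it first propagates the third hypothesis so that \emph{every} central plaque $\D^c_z$, $z\in K$, lies in $W^{ch-s}_{U_\Lambda}(\Lambda)$ (using the trapping to carry pseudo-orbits from any plaque to the given one); then, once a periodic point $p$ near $K$ is found, it shows $W^{uu}_{\rm loc}(p)$ meets $W^{ss}_{\rm loc}(z')$ for some $z'\in\D^c_z$, giving an actual orbit converging to $W^{ch-s}_{U_\Lambda}(\Lambda)$, hence $p\in W^{ch-s}(\Lambda,U'_\Lambda)$. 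The chain-unstable side is obtained symmetrically, using that $W^{uu}_{\rm loc}(y)$ for some $y\in K$ meets the center-stable plaque of the periodic orbit and invoking \cite[Lemma~3.11]{crovisier-palis-faible} to locate a periodic point inside that center plaque. The ``delicate point'' you flag is indeed the heart of the matter, but it is resolved by this shadowing lemma and these manifold intersections, not by trapping-region bookkeeping combined with generic connecting lemmas.
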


\begin{proof}
Note first that one can assume that \emph{any} plaque $\D^c_y$
is contained in the local chain-stable set $W^{ch-s}_{U_\Lambda}(\La)$.
Indeed if $(z_0,\dots,z_n)$ is a $\varepsilon>0$ pseudo-orbit between $z_0=y$ and $z_n=x$
with $\varepsilon>0$ small enough, then, since the plaque family $\D$ is trapped,
$f(\D_{z_k})$ is contained in a small neighborhood of $\D_{z_{k+1}}$ for each $k$.
This implies that any point of $\D_y$ can be joint to $\D_x$ by pseudo-orbit.

Note also that we can always replace $K$ by a minimal subset, so that one can assume that $K$ is chain-transitive.
Let $U$ be a small neighborhood of $K$, so that the partially hyperbolic structure $E^s\oplus E^c\oplus E^u$
on $K$ extends to the maximal invariant set in $U$. This allows to introduce some locally invariant plaque
families $(W^{cs}(x))$ and $(W^c(x))$ tangent to $E^s\oplus E^c$ and $E^c$ over this maximal invariant set.
We may assume that $\overline{W^c(x)}\subset W^{cs}(x)$ for each $x$.

By assumption and Lemma~\ref{l.model-uniqueness}, for any central model $(\hat K\times [0,+\infty),\hat f)$
over $K$ associated to the plaque family $(W^c(x))$, the dynamics is thin trapped.
We claim that there exists a continuous map $\delta\colon \hat K\to (0,+\infty)$ such that
for each $\hat x\in \hat K$, we have
\begin{equation}\label{e.trapped}
\hat f(\{\hat x\}\times [0,\delta(\hat x)])\subset \{\hat f(\hat x)\}\times [0,\delta(\hat f(\hat x))).
\end{equation}
Indeed, since the dynamics is thin trapped, there exists an open neighborhood $V$ of $\hat K\times \{0\}$ in the
central model such that $\hat f(\overline V)\subset V$. Let $S$ be the union of intervals $\{\hat x\}\times
[0,a]$ contained in $V$. Since $V$ is open, this is an open set of the form $\{\{\hat x\}\times [0,a(\hat
x))\}$, where $a\colon \hat K\to (0,+\infty)$ is lower semi-continuous. The image of $\overline S$ by $\hat f$
is contained in $U$. Since it is a union of segments $\{\hat x\}\times [0,a]$, it is contained in $S$. One deduces that
$\hat f(\overline S)$ is a compact set of the form $\{\{\hat x\}\times [0,b(\hat x)]\}$, where $b\colon \hat K\to (0,+\infty)$ is
upper semi-continuous. Hence, there exists continuous map $\delta\colon \hat K\to (0,+\infty)$ such that
$b(x)<\delta(x)<a(x)$ at every point (such a map exists locally and can be obtained globally by a partition of
the unity). The claim follows.

The property~\eqref{e.trapped} above shows that one can reduce the plaques $W^c(x)$ so that for each $x\in K$
we have
$$f(\overline{W^c(x)})\subset W^c(f(x)).$$
Note that this property extends to any point in the maximal invariant set in a small neighborhood $U'$ of $K$.
We will use the following properties.

\begin{enumerate}
\item
By~\cite[Lemma 3.11]{crovisier-palis-faible}, there exists $\varepsilon>0$, such that for any periodic orbit
$\cO\subset U'$, any point in the $\varepsilon$-neighborhood of $W^c(q)$ inside the plaque $W^{cs}(q)$
for $q\in \cO$, belongs to a periodic point $p\in W^c(q)$. Having chosen $U'$ and the plaques $W^c(x)$ small
enough, such a periodic point $p$ is arbitrarily close to some point $z\in K$.

\item Since the central dynamics of $K$ is thin trapped, for any $z\in K$, any point inside the plaque $\D^c_z$
belongs to $K^+$. When $p$ is close enough to $z\in K$ one can apply Lemma~\ref{l.manifold2} and
Remark~\ref{r.manifold}: the strong unstable
manifold of $p$ meets the strong stable manifold of a point $z'\in \D^c_z$ close to $z$.
Since by assumption $z'$ belongs to
the chain-stable set of $\Lambda$ inside $U_\Lambda$, the periodic point $p$ belongs to the chain-stable set of
$\Lambda$ inside $U'_\Lambda$.
\end{enumerate}
\medskip

By~\cite[Lemma 2.9]{Cro08} any partially hyperbolic set $K$ whose central bundle is one-dimensional and thin trapped
satisfies the shadowing lemma: for any $\delta>0$, there exists $\varepsilon>0$ such that any
$\varepsilon$-pseudo-orbit in $K$ is $\delta$-shadowed by an orbit in $M$. This implies that there exists a periodic
orbit $\cO$ contained in an arbitrarily small neighborhood of $K$. In particular, there exists $y\in K$ such
that the local strong unstable manifold of $y$ meets the $\varepsilon$-neighborhood of $W^c(q)$ inside the
plaque $W^{cs}(q)$ for some $q\in \cO$.
By the first property above, this implies that there is a periodic point $p\in W^c(q)$
which belongs to the local chain-unstable set of $\Lambda$.
By the second property, $p$ belongs to the local chain-stable set of $\Lambda$
inside $U'_\Lambda$, as required.
\medskip

Note that when the bundle $E^s$ (or $E^u$) is trivial
the local strong stable (or unstable) manifold of any point $x$ is reduced to $x$
but the proof is unchanged.
\end{proof}

\begin{rema}\label{r.related}
We can have the following stronger statement, which does not suppose the uniform expansion along $E^u$.
\smallskip

\noindent
\emph{Let $\Lambda$ be a compact invariant chain-transitive set. Assume that:
\begin{itemize}
\item[--] There is a compact invariant set $K\subset \Lambda$ and a dominated splitting $T_KM=E^s\oplus E^c\oplus F$ with $E^c$ one dimensional and $E^s$ uniformly contracted.
\item[--] The central dynamics of $K$ is thin trapped.
\item[--] There exists a $(C,\sigma,F)$-hyperbolic point for $f^{-1}$ in $K$, for some constants $C,\sigma$.
\item[--] There are a neighborhood $U_\Lambda$ of $\Lambda$, some locally invariant plaque family
$\D^c$ tangent to $E^c$ over $K$ and some plaque $\D^c_x$ contained in the chain-stable set $W^{ch-s}_{U_\Lambda}(\La)$ of $\La$ in $U_\Lambda$.
\end{itemize}
Then, for any neighborhoods $U'_\Lambda,U_K$ of $\overline{U_\Lambda}$ and $K$
there exists a periodic point $p$ such that:
\begin{itemize}
\item[--] The whole orbit of $p$ is contained in $U_K$.
\item[--] $p$ is contained both in the local chain-stable and chain-unstable sets of $\Lambda$ in $U_\Lambda'$.
\end{itemize}}
\smallskip

We will use this result only in the alternative proof of Proposition~\ref{p.aperiodic},
at section~\ref{ss.aperiodic}. For this reason, we only sketch the proof.
Let us assume that the Riemannian norm is adapted to the domination
$(E^s\oplus E^c)\oplus F$ and consider $\lambda\in (0,1)$ a constant as in Section~\ref{ss.domination}. As in the proof of Proposition~\ref{NS},
any plaque of the family $\D^c$ is contained in $W^{ch-s}_{U_\Lambda}(\La)$.
\medskip

If $K$ supports an ergodic measure which is hyperbolic (i.e. all its Lyapunov exponents are non-zero),
then the conclusion follows. Indeed, by Proposition~\ref{hypm}, there exists a sequence of hyperbolic periodic orbits that converges towards a subset of $K$ and that are homoclinically related together in a small neighborhood of $K$.

If $E^c$ is uniformly contracted, since there exists a $(C,\sigma,F)$-hyperbolic point
for $f^{-1}$, there exists an ergodic measure $\mu_0$ such that the integral of the function $\log \|Df_{|F}^{-1}\|$ is smaller than $\sigma$.
Such a measure is hyperbolic and we are done.

If $K$ supports an ergodic measure $\mu$ whose central Lyapunov exponent is non-zero
and larger than $\log \lambda$,
then by the domination, all the exponents of $\mu$ along $F$ are positive, the measure is hyperbolic and we are done also.
\medskip

If for some invariant compact set $K'\subset K$, any invariant probability measure
on $K'$ has a central Lyapunov exponent equal to zero, then
Proposition~\ref{NS} can be applied and the conclusion follows.
One deduces that $E^c$ is not uniformly contracted on $K$ and
for any invariant compact set $K'\subset K$,
there exists a measure whose central Lyapunov exponent is smaller than $\log \lambda$.

Liao's selecting argument thus applies (see~\cite[lemma
3.8]{wen-conjecture}): again, there exists a sequence of hyperbolic periodic orbits that converges towards a
subset of $K$ and that are homoclinically related together in a small neighborhood of $K$, concluding the proof.
\end{rema}

\section{Proof of Theorem \ref{main2}}\label{s.main2}
In this section we reduce the proof of Theorem \ref{main2} to some technical propositions that will be proved in
other sections. Let $\La$ be as in the hypothesis of it with a dominated splitting $E^s\oplus E^c\oplus F$ with $E^c$ one dimensional and $ E^s$ uniformly contracted. We consider a small neighborhood $U_0$ of $\Lambda$, a small constant
$\theta$ and we have to show that the Theorem~\ref{main2} is satisfied by $U_0$ and $\theta$. We also fix a locally invariant family of central plaques $\D^c$
over the set $\La$. All the central manifolds $W^c_\eta(x)$ and all central models that we will consider are
taken from this plaque family.

\subsection{Generic assumption}\label{ss.generic}
Consider $U,V$ open sets of $M$ and $\theta>0$. We define $\cO(U,V,\theta)$
as the set of diffeomorphisms of $M$
having a hyperbolic periodic orbit $O$ contained in $U$ which meets $V$ and whose
$(\dim(E^s)+1)^\text{th}$ Lyapunov exponent belongs to $(-\theta,\theta)$.

Let $\cB_0$ be a countable basis of open sets of $M$ and $\cB$ the open sets of $M$
that are finite union of elements of $\cB_0$. It has the following properties: it is countable and
for each compact set $K\subset M$ and each open set $V\subset M$ containing $K$, there exists $U\in \cB$
such that $K\subset U\subset V$.

We then define the dense $G_\delta$ set:
$$\cG=\bigcap_{(U,V,\theta)\in \cB\times \cB\times \QQ^+} \left(\cO(U,V,\theta)\cup (\diff^1(M)\setminus \overline{\cO(U,V,\theta)})\right).$$
\medskip

The dense $G_\delta$ subset of $\diff^1(M)$ given by Theorem~\ref{main2}
is the set of diffeomorphisms whose periodic orbits are hyperbolic and that belong to the intersection of
the dense $G_\delta$ sets given by Theorem~\ref{t.hausdorff},
Corollary~\ref{c.local-class}, Corollary~\ref{c.central-segment} and of $\cG$ defined above.

\subsection{Measures supported on $\Lambda$}
For an ergodic measure $\mu$ supported on $\La$ we denote by $L^c(\mu)$ the Lyapunov exponent of $\mu$ along $E^c.$
Note that this also defines the Lyapunov exponent $L^c(O)$  along $E^c$ of a periodic orbit $O\subset U_0$.

We know that there exists an ergodic measure $\mu$ such that $L^c(\mu)\neq 0.$ If this happen to be positive, i.e., $L^c(\mu)>0$ then by the domination on $F$ and since $E^s$ is uniformly contracted we have that $\mu$ is a hyperbolic measure whose Oseledets splitting $E^s_\mu\oplus E^u_\mu$ coincides a.e. with $E^s\oplus (E^c\oplus F).$ Now,  Proposition \ref{hypm} yields the first option of Theorem \ref{main2}.

Thus, assume from now on that for any ergodic measure $\mu$ in $\La$ we have that $L^c(\mu)\le 0.$ The next lemma says that if there are ergodic measures with negative central Lyapunov exponent arbitrarily close to zero we can conclude.

\begin{lemma}
Assume that for every $\epsilon>0$ there exists an ergodic measure $\mu$ in $\La$ such that $L^c(\mu)\in(-\epsilon,0).$ Then, the second option of Theorem \ref{main2} holds.
\end{lemma}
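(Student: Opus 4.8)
The plan is to fix $\theta>0$ and a neighborhood $U_0$ of $\Lambda$ and to exhibit a hyperbolic periodic orbit $O\subset U_0$ with $L^c(O)\in(-\theta,\theta)$ and $\Lambda\subset H(O,U_0)$. Since $H(O,\cdot)$ only grows when the ambient open set grows, it suffices to treat the case where $U_0$ is an arbitrarily small neighborhood of $\Lambda$; we fix such a $U_0$, small enough that the dominated splitting $E^s\oplus E^c\oplus F$ extends to the maximal invariant set in $U_0$, with $E^s$ uniformly contracted there at a rate $\lambda_0>0$. The global conclusion $\Lambda\subset H(O,U_0)$ will always be obtained in the same way: if one produces a hyperbolic periodic orbit $O$, a chain-transitive compact set $K\subset\Lambda$, and a neighborhood $U$ of $\Lambda$ with $\overline U\subset U_0$ such that $O\subset U$ and $K\subset H(O,U)$, then, $\Lambda$ being chain-transitive, the remark following Corollary~\ref{c.local-class} gives $\Lambda\subset H(O,U_0)$. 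Note that the naive idea---pick an ergodic measure $\mu$ on $\Lambda$ with $L^c(\mu)\in(-\theta,0)$ and feed it to Proposition~\ref{hypm}---does not apply directly, since $F$ is not assumed uniformly expanded on $\Lambda$, so such a $\mu$ need not be hyperbolic.

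Instead I would work with the bundle $E^s\oplus E^c$. It is not uniformly contracted, for otherwise its invariant one-dimensional sub-bundle $E^c$ would be, contradicting the hypotheses of Theorem~\ref{main2}. Apply Theorem~\ref{C3options} to $\Lambda$ with the dominated splitting $(E^s\oplus E^c)\oplus F$ and a small neighborhood $U$ of $\Lambda$ with $\overline U\subset U_0$. Option~(\ref{lessindex}) cannot occur: a periodic orbit contained in $U$ has stable index at least $\dim E^s$ because $E^s$ is uniformly contracted on the maximal invariant set in $U$. If Option~(\ref{equalindex}) occurs, fix $\delta<\min(\theta,\lambda_0)$: one gets hyperbolic periodic orbits $\mathcal O_n$, homoclinically related together in $U$, of stable index $\dim E^s+1$, converging in the Hausdorff topology to a compact set $K\subset\Lambda$, and whose maximal Lyapunov exponent along $E^s\oplus E^c$ lies in $(-\delta,0)$; since the exponents along $E^s$ are $\le-\lambda_0<-\delta$, this maximal exponent is exactly $L^c(\mathcal O_n)\in(-\delta,0)\subset(-\theta,\theta)$. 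Being a Hausdorff limit of periodic orbits, $K$ is chain-transitive, and $K\subset\overline{H(\mathcal O_1,U)}=H(\mathcal O_1,U)$ since every $\mathcal O_n$ lies in $H(\mathcal O_1,U)$. Taking $O=\mathcal O_1$, the mechanism of the first paragraph concludes this case.

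The delicate case is Option~(\ref{subset}): there is a compact invariant set $K\subsetneq\Lambda$---the inclusion is strict, because some ergodic measure on $\Lambda$ has non-zero central exponent while, by Option~(\ref{subset}), every invariant measure on $K$ has zero central exponent along the central bundle of its partially hyperbolic splitting. Using that $E^s$ is uniformly contracted on $K$, this splitting must be $E^s\oplus E^c\oplus F$ over $K$ with $F$ uniformly expanded on $K$. Passing to a minimal subset, I may assume $K$ chain-transitive and then analyse the central models of $(K,f)$ from Section~\ref{central}. If the central dynamics of $K$ is thin trapped, then---after the pseudo-orbit argument showing that a forward-trapped central plaque of $K$ lies in the chain-stable set of $\Lambda$, which uses the chain-transitivity of $K\subset\Lambda$---Proposition~\ref{NS}, or Remark~\ref{r.related} with the $(C,\sigma,F)$-hyperbolic point furnished by the uniform expansion of $F$ on $K$, yields a periodic point lying in the local chain-stable and chain-unstable sets of $\Lambda$; if the central dynamics fails to be thin trapped in one of the two directions, Theorem~\ref{cm} produces a chain-recurrent central segment of $K$, and Corollary~\ref{c.central-segment}, fed with weak periodic orbits coming from Proposition~\ref{spread}, again places a weak periodic orbit in the local chain-stable and chain-unstable sets of $\Lambda$. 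In either sub-case one must still ensure that the periodic orbit is linked to \emph{all} of $\Lambda$ and keeps central exponent in $(-\theta,\theta)$; for this I would run, following the scheme of Lemma~\ref{l.class-domina}, a Zorn's lemma maximality argument inside $\Lambda$, repeatedly applying Proposition~\ref{spread} to propagate the dominated splitting $E_1\oplus E^c\oplus E_3$ together with its weak periodic orbits until the maximal set is all of $\Lambda$---where a connecting-lemma step (Corollary~\ref{c.local-class}) makes those orbits homoclinically related with local homoclinic class containing $\Lambda$---a proper maximal set being excluded since, by maximality and Proposition~\ref{spread}, it would carry a measure of non-zero central exponent allowing one to restart on a strictly larger chain-transitive subset of $\Lambda$.

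I expect Option~(\ref{subset}) to be the main obstacle: turning a partially hyperbolic structure available only on a \emph{proper} subset $K$ into a single periodic orbit whose local homoclinic class is all of $\Lambda$, while simultaneously keeping the relevant exponent close to zero. The delicate point throughout is to control \emph{which} exponent is small---it must be the central one, not an exponent along $E^s$ or along $F$---and this has to be maintained through the central-model analysis, the successive applications of Proposition~\ref{spread}, and the connecting-lemma steps. The same feature, namely that $F$ is not assumed uniformly expanded on $\Lambda$, is what rules out the shorter route through Proposition~\ref{hypm}.
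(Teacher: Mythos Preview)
You have misidentified the difficulty and as a result take an enormously longer route than necessary. Your sentence ``the naive idea\ldots does not apply directly, since $F$ is not assumed uniformly expanded on $\Lambda$, so such a $\mu$ need not be hyperbolic'' is precisely where you go wrong. Uniform expansion of $F$ on $\Lambda$ is irrelevant here; what matters is that the splitting $(E^s\oplus E^c)\oplus F$ is dominated. With an adapted norm one has $\|Df^n|_{E^c}(x)\|\le \lambda^n\, m(Df^n|_F(x))$ for every $x\in\Lambda$ and $n\ge 1$, where $m(\cdot)$ denotes the co-norm. Hence the smallest Lyapunov exponent of $\mu$ along $F$ is at least $L^c(\mu)-\log\lambda$, which is strictly positive as soon as $\epsilon<-\log\lambda$. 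Since $E^s$ is uniformly contracted and $L^c(\mu)<0$, the measure $\mu$ \emph{is} hyperbolic with Oseledets splitting $(E^s\oplus E^c)\oplus F$. Proposition~\ref{hypm} then gives, for any neighborhood $U$ of $\operatorname{supp}(\mu)$ with $\overline U\subset U_0$, a sequence of periodic orbits of index $\dim E^s+1$, homoclinically related together in $U$, converging in Hausdorff distance to $\operatorname{supp}(\mu)$; their central exponents therefore converge to $L^c(\mu)\in(-\theta,0)$. Your own mechanism from the first paragraph (the remark after Corollary~\ref{c.local-class}, applied with $K=\operatorname{supp}(\mu)$) then yields $\Lambda\subset H(O,U_0)$. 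This is exactly the paper's proof, and it is four lines long.

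Everything from your second paragraph onward---Theorem~\ref{C3options}, central models, Proposition~\ref{NS}, the Zorn argument of Lemma~\ref{l.class-domina}---is the machinery the paper reserves for the \emph{complementary} case (*), where no ergodic measure has $L^c(\mu)\in(-\epsilon_0,0)$ and the direct route via Proposition~\ref{hypm} is genuinely blocked. Invoking Lemma~\ref{l.class-domina} here would in any case be inappropriate, since that lemma sits in the deduction of Theorem~\ref{main} \emph{from} Theorem~\ref{main2}, not in the proof of Theorem~\ref{main2} itself.
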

\begin{proof}
Let $\mu$ be an ergodic measure supported on $\Lambda$ with $L^c(\mu)\in(-\epsilon,0).$
By the domination between $E$ and $F$, if $\epsilon$ is sufficiently small, we have that any Lyapunov exponent of $\mu$ along $F$ is positive. On the other hand, since $E^s$ is uniformly contracted  we have that any Lyapunov exponent of $\mu$ along $E^s$ is negative. Therefore, $\mu$ is a hyperbolic measure whose Oseledets splitting $E^s_\mu\oplus E^u_\mu$ coincides a.e. with $(E^s\oplus E^c)\oplus F.$ Proposition \ref{hypm} yields now the second option of Theorem \ref{main2}.
\end{proof}

Continuing with the proof we have to handle the following situation that we shall assume from now on:
\begin{itemize}
\item[(*)] \textit{There exists $\ep_0$ such that any ergodic measure $\mu$ in $\La$ satisfies
$$L^c(\mu)\cap (-\ep_0,0)=\emptyset.$$
Moreover there exists an ergodic measure $\mu_0$ such that $L^c(\mu_0)\le -\ep_0.$}
\end{itemize}

\subsection{$0$-CLE sets $K$.} A \emph{zero central Lyapunov exponent set} (or shortly a \emph{$0$-CLE set})
is a compact invariant chain-transitive set $K\subset \Lambda$ such that $L^c(\mu)=0$ for any
ergodic measure supported on $K$.

The Theorem~\ref{C3options} restates as:

\begin{lemma}
Assume that (*) holds. Then one of the following hold:
\begin{enumerate}
\item The first or the second option of Theorem \ref{main2} is true.
\item There exists a $0$-CLE set $K\subset \La$.
\end{enumerate}
\end{lemma}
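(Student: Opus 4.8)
The statement to prove is a restatement of Theorem~\ref{C3options} adapted to the current setting, so the plan is to apply that theorem to the set $\La$ (with its dominated splitting $E^s\oplus E^c\oplus F$, rewritten as $E\oplus F$ with $E=E^s\oplus E^c$) and to show that each of the three alternatives in Theorem~\ref{C3options} either is impossible under hypothesis~(*) or yields one of the two conclusions of the lemma. First I would observe that $E=E^s\oplus E^c$ is \emph{not} uniformly contracted: indeed the measure $\mu_0$ from~(*) has $L^c(\mu_0)\le -\ep_0<0$, but there is also an ergodic measure with nonzero central exponent by hypothesis of Theorem~\ref{main2}, and more to the point the assumption in the current subsection is that $L^c(\mu)\le 0$ for all ergodic $\mu$ while $E^c$ is not uniformly contracted (this is the standing hypothesis of Theorem~\ref{main2}); hence the Lyapunov exponent of some ergodic measure along $E$ is not uniformly negative, so $E$ is not uniformly contracted. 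Thus Theorem~\ref{C3options} applies to $K_0=\La$ and any neighborhood $U$ of $\La$ contained in $U_0$.

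Next I would dispatch the three cases of Theorem~\ref{C3options}. In case~\eqref{lessindex}, $\La$ intersects a local homoclinic class $H(O,U)$ with the stable index of $O$ strictly less than $\dim(E)=\dim(E^s)+1$, hence at most $\dim(E^s)$; since $E^s$ is uniformly contracted, any periodic orbit near $\La$ has at least $\dim(E^s)$ stable directions, so the index of $O$ is exactly $\dim(E^s)$. Using Corollary~\ref{c.local-class} (together with the fact that $\La$ is chain-transitive and meets $H(O,U)$), the local homoclinic class $H(O,U_0)$ contains $\La$, which is exactly the first conclusion of Theorem~\ref{main2}. In case~\eqref{equalindex}, $\La$ intersects local homoclinic classes $H(O_n,U)$ of stable index $\dim(E)=\dim(E^s)+1$ containing periodic orbits whose maximal exponent along $E$ lies in $(-\de,0)$ for arbitrarily small $\de$; since $E^s$ is uniformly contracted and $E^c$ is one-dimensional, this maximal exponent along $E$ is the central exponent, so these periodic orbits have central exponent in $(-\de,0)$, i.e.\ in $(-\theta,\theta)$. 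Again Corollary~\ref{c.local-class} places $\La$ inside $H(O,U_0)$, giving the second conclusion of Theorem~\ref{main2}. Finally, case~\eqref{subset} produces an invariant compact set $K\subset\La$ with a partially hyperbolic splitting $E^{s}_K\oplus E^c_K\oplus E^u_K$, $\dim E^s_K<\dim E=\dim(E^s)+1$ hence $\dim E^s_K\le\dim(E^s)$, with one-dimensional center and every invariant measure on $K$ having zero central exponent. Passing to a minimal (hence chain-transitive) subset of $K$ we get a $0$-CLE set in $\La$; this is alternative~(2) of the lemma.

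The one point requiring care is the bookkeeping of indices and of which bundle the ``central'' exponent refers to when moving between the splitting $E\oplus F$ of Theorem~\ref{C3options} and the splitting $E^s\oplus E^c\oplus F$ of Theorem~\ref{main2}: one must check that in cases~\eqref{lessindex},~\eqref{equalindex} the index produced by Theorem~\ref{C3options} is forced to equal $\dim(E^s)$ by the uniform contraction of $E^s$, and that in case~\eqref{subset} the bundle $E^c_K$ is compatible with (indeed, by domination, a sub-line of) $E^c$, so that a $0$-CLE set for $K$ is genuinely a subset of $\La$ with zero central exponent for \emph{its} one-dimensional central bundle. The passage to a minimal subset to ensure chain-transitivity, and the remark that $\dim E^s_K$ could a priori be strictly smaller than $\dim E^s$ but this is harmless since the definition of a $0$-CLE set only requires a chain-transitive subset of $\La$ with vanishing central exponent, complete the argument. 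I expect the main (mild) obstacle to be exactly this index/splitting compatibility check; the rest is a direct translation of Theorem~\ref{C3options} combined with Corollary~\ref{c.local-class}.
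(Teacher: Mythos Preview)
Your proposal is correct and follows exactly the approach the paper takes: apply Theorem~\ref{C3options} to $\Lambda$ with the bundle $E=E^s\oplus E^c$ and match its three alternatives to the two conclusions of the lemma. In fact you are considerably more careful than the paper's three-line proof, which simply asserts that the first two cases of Theorem~\ref{C3options} give the two options of Theorem~\ref{main2} and the third gives a $0$-CLE set; your discussion of why the index is forced to equal $\dim E^s$, why the weak exponent along $E$ is the central one, the passage to a minimal subset for chain-transitivity, and the compatibility $E^c_K=E^c|_K$ (which follows since $E^s$ uniformly contracted forces $\dim E^s_K=\dim E^s$ by uniqueness of dominated splittings) are all points the paper leaves implicit.
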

\begin{proof}
Let us apply Theorem~\ref{C3options} to the dominated splitting $E\oplus F=(E^s\oplus E^c)\oplus F$ on $\Lambda$.
The two first cases of Theorem~\ref{C3options} give the two options of Theorem~\ref{main2}.
The third case gives a $0$-CLE set $K\subset \Lambda$.
\end{proof}

Therefore, in order to prove our theorem, by the above lemma we will restrict ourselves to the following case:

\begin{itemize}
\item[(**)] \textit{There exists a $0$-CLE set $K\subset \La$.}
\end{itemize}

\subsection{The set $X$ of hyperbolic points} As explained in Section~\ref{ss.domination},
one can change the Riemannian norm and assume that there exists $\lambda\in (0,1)$
such that for any point $x\in \Lambda$ and any unitary vectors, $v^{cs}\in E^{s}_x\oplus E^c_x$ and $v^F\in F_x$ we have
$$\|Df.v^{cs}\|\leq \lambda \|Df.v^F\|.$$

We introduce $\epsilon\in (0,\epsilon_0)$ where $\epsilon_0$ is defined by (*)
and we assume that $e^{-\epsilon}>\lambda^{1/2}$.
Let us define the set $X$ of points that are $(1,e^{-\epsilon},E^c)$-hyperbolic for $f$, that is:
$$X=\{x\in\La: \|Df^n_{/E^c_x}\|\le e^{-n\ep}\,\,\,\forall\,\,\,n\ge 0\}.$$
Notice that $X$ is compact. The next lemma and (*) show that it is non-empty.
\begin{lemma}\label{l.0CLE}
For any invariant ergodic measure $\mu$ such that $L^c(\mu)<0$
the sets $X$ and $\operatorname{supp}(\mu)$ intersect.
\end{lemma}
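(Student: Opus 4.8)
The statement follows directly from the Birkhoff ergodic theorem combined with the Pliss-type selection in Lemma~\ref{l.infinite-pliss}. The key point to keep in mind is that although the hypothesis only asks $L^c(\mu)<0$, the standing assumption (*) forces the stronger bound $L^c(\mu)\le -\ep_0<-\ep$, so the contraction along $E^c$ is strictly better than the rate $e^{-\ep}$ defining $X$.

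\textbf{Step 1 (generic point in the support).} Since $E^c$ is one-dimensional, $\|Df^n_{/E^c_x}\|=\prod_{i=0}^{n-1}\|Df_{|E^c}(f^i(x))\|$, so membership of $x$ in $X$ is exactly the assertion that $x$ is $(1,e^{-\ep},E^c)$-hyperbolic for $f$. Apply the Birkhoff ergodic theorem to the continuous function $x\mapsto \log\|Df_{|E^c}(x)\|$: the set of points $x$ at which $\frac1n\sum_{i=0}^{n-1}\log\|Df_{|E^c}(f^i(x))\|$ converges to $L^c(\mu)$ has full $\mu$-measure, hence meets $\operatorname{supp}(\mu)$. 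Fix such a point $x\in\operatorname{supp}(\mu)$.

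\textbf{Step 2 (Pliss selection).} By (*) we have $L^c(\mu)\le -\ep_0<-\ep=\log(e^{-\ep})$, so
$$\limsup_{n\to\infty}\sum_{i=0}^{n-1}\frac1n\log\|Df_{|E^c}(f^i(x))\|=L^c(\mu)<\log(e^{-\ep}).$$
Lemma~\ref{l.infinite-pliss} (with $E=E^c$ and $\sigma=e^{-\ep}$) then provides an integer $k\ge 0$ such that $y:=f^k(x)$ is $(1,e^{-\ep},E^c)$-hyperbolic for $f$; that is, $y\in X$.

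\textbf{Step 3 (conclusion).} The support $\operatorname{supp}(\mu)$ is a compact invariant set, so $y=f^k(x)\in\operatorname{supp}(\mu)$. Thus $y\in X\cap\operatorname{supp}(\mu)$, which proves the two sets intersect. The argument is a routine application of Birkhoff's theorem and Lemma~\ref{l.infinite-pliss}; the only point requiring attention is the invocation of (*) to upgrade $L^c(\mu)<0$ to the strict inequality $L^c(\mu)<-\ep$ needed to run the selection lemma.
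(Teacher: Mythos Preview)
Your proof is correct and follows exactly the paper's approach: invoke (*) to strengthen $L^c(\mu)<0$ to $L^c(\mu)<-\ep$, pick a Birkhoff-generic point in $\operatorname{supp}(\mu)$, and apply Lemma~\ref{l.infinite-pliss} with $\sigma=e^{-\ep}$ to land a forward iterate in $X\cap\operatorname{supp}(\mu)$. The paper's proof is simply a terser version of the same three steps.
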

\begin{proof}
The continuous function $\varphi\colon \operatorname{supp}(\mu)\to \RR$ defined by
$\varphi(x)=\log\|Df_{|E^c}(x)\|$ satisfies $L^c(\mu)=\int \varphi d\mu$
and by (*) it holds $L^c(\mu)< -\varepsilon$.
By Birkhoff's theorem and Lemma~\ref{l.infinite-pliss},
the orbit of a.e. point $x$ meets $X$.
\end{proof}

In particular any compact invariant chain-transitive set $K\subset \Lambda$ that is disjoint from $X$ is a $0$-CLE set.

\subsection{The chain-unstable case}
We will now consider two cases depending if there exist a $0$-CLE set $K$ and a point $y\in \Lambda$ with $\alpha(y)\subset K$
such that the chain-unstable set $\hat W^{ch-u}(\hat K\cup \orb^-(\hat y))$ contains a non-trivial segment $\{\hat y\}\times [0,a]$, $a>0$, in some central model.
When such $K$ and $y$ exist we apply
the following proposition that will be proved in Section~\ref{s.connecting}.

\begin{prop}\label{p.unstable}
Assume that there is a $0$-CLE set $K\subset\Lambda$ and $y\in \Lambda$
such that $\alpha(y)\subset K$ and
for some central model $\hat W^{ch-u}(\hat K\cup \orb^-(\hat y))$
contains a non trivial segment $\{\hat y\}\times [0,a]$, $a>0$,
where $\hat y$ is a lift of $y$ in the central model.

Then for any $\theta>0$ and any neighborhood $U_\Lambda$ of $\Lambda$,
there is $\eta>0$ such that:
\begin{itemize}

\item considering the dynamics in a central model associated to the plaques $W^c_\eta$,
there is $x\in\Lambda$ having a chain-recurrent central segment $I$;

\item for any $z\in {\rm Int}(I)$ and for any neighborhood $V_z$ of $z$, there is a
periodic point $p\in V_z$ such that
\begin{itemize}

\item ${\rm Orb}(p)\subset U_\Lambda$.

\item $L^c(p)\in(-\theta,\theta)$.

\end{itemize}
\end{itemize}
\end{prop}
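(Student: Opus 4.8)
The plan is to show that the hypothesis --- a nontrivial chain-unstable central segment $\{\hat y\}\times[0,a]$ in a central model based on $\hat K\cup \orb^-(\hat y)$ --- forces, after shrinking the plaque size $\eta$, the presence of a genuinely chain-recurrent central segment somewhere in $\Lambda$, and then to produce weak periodic points accumulating on its interior. First I would use the central-model classification, Theorem~\ref{cm}: the existence of a nontrivial interval in the chain-unstable set means the dynamics of $\hat f$ (restricted to the appropriate model over $\hat K\cup\orb^-(\hat y)$) is \emph{not} thin trapped. The first task is to convert this one-sided chain-unstable segment over a non-invariant base (the orbit of $y$ only has $\alpha(y)\subset K$) into a \emph{two-sided} chain-recurrent central segment over an honest chain-transitive piece of $\Lambda$. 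Here I would push the segment forward: since $\alpha(y)\subset K$ and $K$ is a $0$-CLE set (so by Theorem~\ref{cm}(1) applied to $\hat K$ itself the chain-stable set of $\hat K\times\{0\}$ already contains a neighborhood, i.e. $\hat f$ over $\hat K$ is thin trapped), the chain-unstable segment at $\hat y$ can be chain-connected back into $\hat K\times\{0\}$ and then, by chain-transitivity of $\Lambda$, chain-connected forward to reach above some point $x\in\Lambda$, producing a point $x$ whose central segment lies simultaneously in the chain-stable and chain-unstable sets of $\hat K\times\{0\}$; reinterpreting in $M$ via the projection $\pi$, this is the desired chain-recurrent central segment $I$ of size controlled by $\eta$.

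Once $I=\{\hat x\}\times[0,a]$ (realized as an arc $W^c_\eta$-contained in central plaques) is in hand, the second task is to exhibit periodic points with weak central exponent accumulating on the interior of $I$. The mechanism is the hyperbolicity inside the segment: because the $f^k$-images of $I$ stay in central plaques of size $\eta$, the argument used in Lemma~\ref{basic} shows every point of $I$ is $(C_E,\sigma_E,E^s\oplus E^c)$-hyperbolic for $f$ and, by a symmetric estimate, the chain-unstable character makes the relevant backward products along $F$ controlled, so one can invoke the shadowing provided by the $0$-CLE structure of $K$ together with Proposition~\ref{spread} (extending the $E^s\oplus E^c\oplus F$ splitting) to produce periodic orbits in $U_\Lambda$ converging to interior points of $I$. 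Since these periodic orbits shadow pseudo-orbits that spend time following the central segment where the central derivative grows subexponentially at rate close to $\varepsilon$, a Franks-lemma / Mañé-type estimate (as in Theorem~\ref{C3options}, case~\eqref{equalindex}) forces their central Lyapunov exponent into $(-\theta,\theta)$. Finally I would localize: given any $z\in\mathrm{Int}(I)$ and neighborhood $V_z$, the connecting lemma for pseudo-orbits (Theorem~\ref{t.weaktransitive}) and Theorem~\ref{t.hausdorff} let me adjust the shadowing so the periodic point lands in $V_z$ while keeping $\orb(p)\subset U_\Lambda$.

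The hard part, which is deferred to Section~\ref{s.connecting}, is the first task: manufacturing a bona fide \emph{chain-recurrent} (two-sided) central segment from a merely chain-unstable one over a non-invariant base. The subtlety is that the segment at $\hat y$ only sees the backward orbit $\orb^-(\hat y)$ landing in $\hat K$, so one must carefully track how the chain-unstable interval propagates under $\hat f$ without shrinking to a point, and then use the thin-trapped (hence chain-stable-neighborhood) structure over $\hat K$ to close it up on the stable side --- all while respecting that these are \emph{local} plaque dynamics, so the neighborhoods $U_\Lambda$, $U'_\Lambda$ and the plaque size $\eta$ must be chosen in the right order. The heterodimensional freedom (we do not assume $f\notin\HT$) means we cannot appeal to a global dominated splitting on all of $\Lambda$ beyond $E^s\oplus E^c\oplus F$, so the whole argument must stay inside the given splitting; this is exactly why a new connecting strategy is required rather than a direct application of Liao--Mañé selecting lemmas.
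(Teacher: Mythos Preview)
Your proposal has two genuine gaps, one in each task.

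\textbf{First task (the chain-recurrent segment).} You claim that ``$K$ is a $0$-CLE set, so by Theorem~\ref{cm}(1) applied to $\hat K$ itself the chain-stable set of $\hat K\times\{0\}$ already contains a neighborhood, i.e.\ $\hat f$ over $\hat K$ is thin trapped''. This implication is false: being $0$-CLE is a statement about Lyapunov exponents of invariant measures on $K$, and it does \emph{not} force the central model over $\hat K$ to be thin trapped (indeed $K$ could carry a chain-recurrent central segment, in which case neither $\hat f$ nor $\hat f^{-1}$ is thin trapped). So you have no mechanism for the chain-\emph{stable} side of $I$. The paper obtains this side from a completely different source you do not mention: the set $X$ of $(1,e^{-\epsilon},E^c)$-hyperbolic points introduced in Section~\ref{s.main2}. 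For $\hat x\in\hat X$ the segment $\{\hat x\}\times[0,a]$ is contracted forward into $\hat\Lambda\times\{0\}$, so a neighborhood of $\hat X\times\{0\}$ lies in the chain-stable set. The point $\hat x$ carrying $I$ is then found by following $1/\ell$-pseudo-orbits in the central model from $\hat y$ toward $\hat X$, stopping at the last point outside a $1/n$-neighborhood $\hat U_n$ of $\hat X$, and catching the iterate where the lifted segment first reaches full size $a$; this gives $\hat x_{n}$ with $\{\hat x_n\}\times[0,a]$ in the chain-unstable set (inherited from $\hat y$) and in the $1/n$-chain-stable set (from proximity to $\hat X$), and with $\alpha(\hat x_n)$ disjoint from $\hat X$, hence projecting to a $0$-CLE set. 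The limit $\hat x$ carries the chain-recurrent segment $I$.

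\textbf{Second task (the weak periodic points).} Your plan to produce periodic orbits via ``the shadowing provided by the $0$-CLE structure of $K$ together with Proposition~\ref{spread}'' and ``a Franks-lemma / Ma\~n\'e-type estimate as in Theorem~\ref{C3options}(\ref{equalindex})'' does not work: Proposition~\ref{spread} extends a dominated splitting to a larger chain-transitive set and says nothing about creating periodic orbits near a prescribed point $z\in\operatorname{Int}(I)$, and Theorem~\ref{C3options} is a trichotomy applied to non-uniformly contracted bundles, not a localization device. The paper instead runs an explicit $C^1$-perturbation: one picks $z_n\in W^c_\eta(x_n)$ converging to $z$ with $\alpha(z_n)$ a $0$-CLE set (so every ergodic measure there has central exponent in $(-\theta/3,\theta/3)$), and then applies Hayashi's connecting lemma (Theorem~\ref{l.connecting}) \emph{three} times --- at $z$, at an entry point $b$ into a small neighborhood $U_A$ of $A:=\alpha(z_{n_0})$, and at a point $a\in A$ --- to close up a periodic orbit through $V_z$ whose period is arbitrarily long and which spends all but a bounded number of iterates inside $U_A$, forcing its central exponent into $(-\theta,\theta)$. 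This holds for some $g\in\cU$; the conclusion for $f$ itself then comes from the generic set $\cG$ of Section~\ref{ss.generic}, which you also do not invoke.
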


One can thus apply the Corollary~\ref{c.central-segment}. This shows that the second case of
Theorem~\ref{main2} holds.

\subsection{The thin trapped case}
We now consider the other case, i.e. the assumption of Proposition~\ref{p.unstable} does not hold.
We apply the following lemma in order to select such a $0$-CLE set. This will be proved in Section~\ref{s.trapped}.

\begin{lemma}\label{l.selection}
There exist some constants $C>0$, $\sigma\in (0,1)$, a sequence of $0$-CLE sets
$(K_n)$ and a sequence of points $(z_n)$ in $\Lambda$ such that:
\begin{itemize}
\item[(i)] for each $n$, the $\alpha$-limit set of $z_n$ is $K_n$,
\item[(ii)] the sequence $(z_n)$ converges toward a point of $X$,
\item[(iii)] for each $n$, the point $z_n$ is a $(C,\sigma,F)$-hyperbolic for $f^{-1}$.
\end{itemize}
\end{lemma}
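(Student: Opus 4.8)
\textbf{Proof plan for Lemma~\ref{l.selection}.}
The plan is to use Ma\~n\'e's ergodic closing lemma together with the assumptions (*) and (**) to produce, for each $0$-CLE set inside $\Lambda$, a point whose $\alpha$-limit is a $0$-CLE set and which is well-placed with respect to $X$. The key observation is that (**) guarantees a $0$-CLE set $K_0\subset\Lambda$, but we do not know a priori that its unstable side sees the hyperbolic set $X$; we want to replace $K_0$ by a (possibly different) $0$-CLE set $K$ whose chain-unstable behaviour reaches a point of $X$ along a negatively $F$-hyperbolic trajectory. First I would consider, for an ergodic measure $\mu_0$ on $\Lambda$ with $L^c(\mu_0)\le -\varepsilon_0$ as in (*), the set $X$ of $(1,e^{-\epsilon},E^c)$-hyperbolic points; by Lemma~\ref{l.0CLE} it meets $\operatorname{supp}(\mu_0)$, so it is non-empty. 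The point $x_\infty\in X$ that we want $(z_n)$ to converge to will be an accumulation point obtained from a selecting/closing argument applied near $\operatorname{supp}(\mu_0)$ or near a suitable $0$-CLE set.

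Next I would run the following selection scheme. Since (by the standing assumption of this subsection) the hypothesis of Proposition~\ref{p.unstable} fails, for \emph{every} $0$-CLE set $K'\subset\Lambda$ and every $y$ with $\alpha(y)\subset K'$ the chain-unstable set of $\hat K'\cup\operatorname{orb}^-(\hat y)$ in every central model is trivial; by Theorem~\ref{cm}(1) the central dynamics of each such $K'$ is thin trapped. I would then use the domination $(E^s\oplus E^c)\oplus F$ together with Lemma~\ref{l.hyperbolic-time}: applying it with $E$ there equal to $E^s\oplus E^c$ and $F$ there equal to $F$, and with $\rho,\sigma$ chosen so that $\lambda<\rho\sigma$ and $e^{-\epsilon}>\rho$, I would feed in a sequence $(y_k)$ of points whose backward orbits have average $\log\|Df^{-1}|_F\|$ bounded away from $0$ — such points exist because, by the domination, the positivity of the $F$-exponents of $\mu_0$ (forced by $L^c(\mu_0)<0$ and $\|Df.v^{cs}\|\le\lambda\|Df.v^F\|$) makes a.e. $\mu_0$-point eligible after an application of Lemma~\ref{l.infinite-pliss} for $f^{-1}$ along $F$. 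The first alternative of Lemma~\ref{l.hyperbolic-time} gives infinitely many $(C,\sigma,F)$-hyperbolic $y_k$ for $f^{-1}$ directly; the second gives backward iterates $z_k$ that are $(1,\sigma,F)$-hyperbolic for $f^{-1}$ and whose accumulation points are $(1,\rho,E^s\oplus E^c)$-hyperbolic for $f$, hence — since $E^s$ is uniformly contracted — $(1,\rho',E^c)$-hyperbolic for $f$ with $\rho'\le e^{-\epsilon}$ after adjusting constants, so the limit lies in $X$. In either case I then take $K_n=\alpha(z_n)$; this is a compact invariant chain-transitive set contained in $\Lambda$, and since the $z_n$ are obtained close to (backward orbits inside) the support of measures all of whose central exponents are $\le 0$, I would argue that $K_n$ is a $0$-CLE set: any ergodic $\nu$ on $K_n$ has $L^c(\nu)\le 0$ by (*), and $L^c(\nu)<0$ is impossible because then by Lemma~\ref{l.0CLE} $X$ would meet $K_n$ and one could extract, via Lemma~\ref{l.infinite-pliss} for $f^{-1}$, a backward-$F$-hyperbolic point giving a non-trivial chain-unstable segment, contradicting the failure of Proposition~\ref{p.unstable}'s hypothesis — or, more cheaply, one arranges the construction so $K_n$ is disjoint from $X$ and invokes the last sentence of Section~\ref{ss.domination}'s $X$-subsection. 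This yields (i), (iii), and (ii) with the single uniform pair $(C,\sigma)$ coming out of Lemma~\ref{l.hyperbolic-time}.

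\textbf{Main obstacle.} The delicate point is ensuring simultaneously that the limit point of $(z_n)$ actually lies in $X$ (not merely that it is weakly hyperbolic along $E^c$) \emph{and} that each $K_n=\alpha(z_n)$ is genuinely a $0$-CLE set with a \emph{uniform} choice of constants $C,\sigma$ valid for all $n$. Getting the uniform $(C,\sigma)$ requires that the ``bad'' first alternative in Lemma~\ref{l.hyperbolic-time} be handled once and for all rather than $n$ by $n$; I would do this by setting up a single sequence $(y_k)$ (ranging over approximations to all relevant $0$-CLE sets via Ma\~n\'e's closing lemma), applying Lemma~\ref{l.hyperbolic-time} a single time to it, and only afterwards extracting the subsequences defining $z_n$ and $K_n$. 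Reconciling ``$\alpha(z_n)=K_n$ is a $0$-CLE set'' with ``$z_n\to x_\infty\in X$'' is then a matter of choosing the $y_k$ so their negative orbits shadow $0$-CLE sets while their forward position is pinned near $X$ by the $(1,e^{-\epsilon},E^c)$-hyperbolicity surviving to the limit; this interplay between the forward ($E^c$) and backward ($F$) hyperbolic-time selections is where the real work lies, and it is carried out in Section~\ref{s.trapped}.
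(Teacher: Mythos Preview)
Your identification of Lemma~\ref{l.hyperbolic-time} as the engine for upgrading a preliminary sequence $(y_n)$ satisfying (i)--(ii) to one also satisfying (iii) is correct, and so is your diagnosis of the central tension: the $z_n$ must approach $X$ while each $\alpha(z_n)$ must be a $0$-CLE set (hence, essentially, avoid $X$). But your construction of the preliminary sequence does not work. If the $y_k$ are $\mu_0$-generic points, then for ergodic $\mu_0$ one has $\alpha(y_k)=\operatorname{supp}(\mu_0)$, and this set carries $\mu_0$ itself, with $L^c(\mu_0)\le -\varepsilon_0<0$; so $\alpha(y_k)$ is \emph{not} a $0$-CLE set. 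Passing to backward iterates $z_k$ via the second alternative of Lemma~\ref{l.hyperbolic-time} does not help, since $\alpha(z_k)=\alpha(y_k)$. Your attempted repair --- that $L^c(\nu)<0$ on $K_n$ would yield a non-trivial chain-unstable central segment contradicting the failure of Proposition~\ref{p.unstable}'s hypothesis --- also fails: that hypothesis concerns $0$-CLE sets $K'$ and points $y$ with $\alpha(y)\subset K'$, so it says nothing about a set already known to support a measure of negative central exponent. In fact neither Ma\~n\'e's closing lemma nor the thin-trapped assumption is used (or needed) in the proof of this lemma.

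The paper's construction of $(y_n)$ is purely topological and rests on a case split you did not find. In the \emph{non-isolated} case there is a sequence of $0$-CLE sets $K_n$ containing points $y_n$ that converge to some point of $X$; then $\alpha(y_n)\subset K_n$ is automatically $0$-CLE. In the \emph{isolated} case --- when some neighborhood of $X$ meets no $0$-CLE set --- one takes a shrinking family $U_n\downarrow X$ and, using chain-transitivity of $\Lambda$, runs $1/m$-pseudo-orbits from the given $0$-CLE set $K$ of (**) toward $X$, stopping each one at its \emph{first} entry into $U_n$. Limits (in $m$) of these stopping points give $y_n\in\overline{U_n}$ whose entire backward $f$-orbit stays in $\Lambda\setminus U_n$; hence $\alpha(y_n)$ is disjoint from $X$ and therefore $0$-CLE by Lemma~\ref{l.0CLE}. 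This first-entry pseudo-orbit device is the missing idea: it manufactures points arbitrarily close to $X$ whose past is forced away from $X$. Once $(y_n)$ is in hand, the $0$-CLE property of $\alpha(y_n)$ gives
\[
\limsup_{k\to\infty}\frac1k\sum_{i=0}^{k-1}\log\|Df^{-1}|_{E^c(f^{-i}(y_n))}\|\le 0<\log\lambda^{-1/2},
\]
and a single application of Lemma~\ref{l.hyperbolic-time} (with $\rho=\lambda^{1/2}$ and $\sigma\in(\lambda^{1/2},1)$, so that in the second alternative the limit is $(1,\lambda^{1/2},E^c)$-hyperbolic and hence lies in $X$) yields the $z_n$ with uniform $(C,\sigma)$, exactly along the lines you sketched for step~(iii).
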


The points of $X$ have a uniform stable manifold tangent to the bundle $E^{s}\oplus E^c$
and the points $y_n$ of the previous lemma have a uniform unstable manifold tangent to $F$.
As a consequence for $n$ large enough, the unstable manifold of $y_n$ meets the stable manifold of $X$.
So, the sets $K_n$ for $n$ large will be shown in Section~\ref{s.trapped}, to satisfy the following statement.

\begin{prop}\label{p.trapped}
Assume that for any $0$-CLE set $K_0$ and any point $y\in \Lambda$ such that
$\alpha(y)\subset K_0$ we have for any central model and any lift $\hat y$ of $y$,
$$\hat W^{ch-u}(\hat K_0\cup\orb^-(\hat y))\; \cap\; \left(\{\hat y\}\times [0,+\infty)\right)=\{(\hat y,0)\}.$$

Then, for any neighborhood $U_\Lambda$ of $\Lambda$,
there exists a $0$-CLE set $K\subset \Lambda$ and $\eta>0$ such that the plaques
$W^c_{\eta}(x)$ for $x\in K$ are contained in the chain-stable set $W^{ch-s}_{U_\Lambda}(\Lambda)$ of $\Lambda$ in $U_\Lambda$.
\end{prop}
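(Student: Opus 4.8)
The plan is to take the required $0$-CLE set to be $K=K_n$ for $n$ large, where $(K_n)$ and $(z_n)$ are the sequences produced by Lemma~\ref{l.selection}, and to obtain the size $\eta$ by combining the thin trapped central dynamics of $K_n$ with the connection, coming from the $F$-hyperbolicity of $z_n$, to the uniform stable manifold of the point of $X$ limiting $(z_n)$. First I would record that, under the standing hypothesis of the proposition, \emph{every} $0$-CLE set $K_0\subset\La$ has thin trapped central dynamics: applying the hypothesis with a point $y\in K_0$ (so that $\orb^-(\hat y)\subset\hat K_0$) shows that the chain-unstable set of $\hat K_0\times\{0\}$ is trivial in each of its central models, and Theorem~\ref{cm}(1) then gives that each $\hat f$ (resp.\ $\hat f^\pm$ in the two-chain-class case) is thin trapped. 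As in the proof of Proposition~\ref{NS}, extracting a continuous height function $\delta$ from a trapped interval-fibered neighborhood yields sub-plaques $P(x)\subset\D^c_x$ of the fixed plaque family forming a forward-trapped family over $K_0$, namely $f(\overline{P(x)})\subset P(f(x))$ for $x\in K_0$; since $\delta$ is continuous on the compact base, the sizes of the $P(x)$ are bounded below by some $\eta>0$, and all forward iterates of the $P(x)$ remain central plaques over $K_0$ with uniformly bounded size. It is then enough to prove $P(x)\subset W^{ch-s}_{U_\La}(\La)$ for every $x\in K_n$, since $W^c_\eta(x)\subset P(x)$.

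Next I would bring in the point $z_\infty\in X$ with $z_n\to z_\infty$. As $X$ consists of $(1,e^{-\ep},E^c)$-hyperbolic points for $f$, as $E^s$ is uniformly contracted and dominated by $E^c$, the point $z_\infty$ is $(C_1,\rho_1,E^s\oplus E^c)$-hyperbolic for $f$ for suitable constants; by Lemma~\ref{l.manifold1} it carries a uniform-size local stable manifold $W^s_{\mathrm{loc}}(z_\infty)$ tangent to $E^s\oplus E^c$ that is contracted by forward iteration, hence contained in $W^{ch-s}_{U_\La}(\La)$ because the forward orbit of $z_\infty$ lies in $\La$. Moreover, by local invariance, for $p\in W^s_{\mathrm{loc}}(z_\infty)$ close to $z_\infty$ the central plaque $P(p)$ of the extended family over $\La^+$ is contained in $W^s_{\mathrm{loc}}(z_\infty)$. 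On the other side, $z_n$ is $(C,\sigma,F)$-hyperbolic for $f^{-1}$, hence carries a uniform-size local unstable manifold tangent to $F$; for $n$ large $z_n$ is within the required radius of $z_\infty$, so Lemma~\ref{l.manifold-intersect} (applied to $E^s\oplus E^c$ and $F$ after enlarging $C,\sigma$) provides a point $w_n\in W^u_{\mathrm{loc}}(z_n)\cap W^s_{\mathrm{loc}}(z_\infty)$. Then $\alpha(w_n)=\alpha(z_n)=K_n$, we have $w_n\in W^{ch-s}_{U_\La}(\La)$, and $P(w_n)\subset W^s_{\mathrm{loc}}(z_\infty)\subset W^{ch-s}_{U_\La}(\La)$.

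I would then transport this backward to $K_n$. Fix $x\in K_n$. Since $\alpha(w_n)=K_n$, there are arbitrarily large integers $k$ for which $f^{-k}(w_n)$ is arbitrarily close to $x$; for such $k$ the orbit segment $f^{-k}(w_n),\dots,f^{-k_0}(w_n)$ stays in a fixed small neighborhood of $K_n$ where the forward-trapping still holds (with slightly enlarged size), the integer $k_0$ depending only on $z_n$, and the remaining $k_0$ iterates expand the central plaque by at most a bounded factor. Hence $f^{k}(P(f^{-k}(w_n)))$ is a central plaque over $w_n$ of size $O(\eta)$, lying for $\eta$ small inside $W^s_{\mathrm{loc}}(z_\infty)\subset W^{ch-s}_{U_\La}(\La)$; prepending to a chain from this image to $\La$ the orbit segment of length $k$ — which stays near $K_n$ and then near the finite arc $f^{-k_0}(z_n),\dots,z_n,f(z_\infty),\dots\subset\La$, hence in $U_\La$ — shows $P(f^{-k}(w_n))\subset W^{ch-s}_{U_\La}(\La)$. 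Because $P(f^{-k}(w_n))$ converges to $P(x)$ in the Hausdorff distance as $k\to\infty$, a routine gluing (jump from a point of $P(x)$ to a nearby point of $P(f^{-k}(w_n))$, then follow the preceding pseudo-orbit, everything inside $U_\La$ for $\eta$ small and $n$ large) gives $P(x)\subset W^{ch-s}_{U_\La}(\La)$. With $n$ now fixed large, $K:=K_n$ and $\eta$ satisfy the conclusion.

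The main obstacle I expect is this last step: keeping the central plaques over the points $f^{-k}(w_n)$ under uniform control, independently of $x\in K_n$, as their orbits make the transition from a neighborhood of $K_n$, through the finite portion of orbit near $z_n$, into $W^s_{\mathrm{loc}}(z_\infty)$, and checking that every pseudo-orbit produced remains inside the prescribed neighborhood $U_\La$. One must also handle uniformly the non-orientable central case (the two-fold cover $\hat K_n$) and the two-chain-class case: this is exactly why the hypothesis is stated for \emph{any} central model and \emph{any} lift $\hat y$, which forces all the relevant $\hat f$ (resp.\ $\hat f^\pm$) to be thin trapped, so that both half-plaques at each point are trapped.
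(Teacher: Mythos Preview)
Your approach diverges from the paper's and has a genuine gap at the step where you assert that the central plaque through $w_n$ lies in $W^s_{\mathrm{loc}}(z_\infty)$. ``Local invariance'' of the plaque family only gives $f(\D^c_p(U))\subset \D^c_{f(p)}$; it does not force a small $E^c$-curve through $w_n$ to sit inside the stable manifold of $z_\infty$. Such a curve is tangent to $E^c\subset T_{w_n}W^s_{\mathrm{loc}}(z_\infty)$ at $w_n$, but nothing prevents it from drifting in the $F$-direction: since $F$ is not assumed uniformly expanded along the forward orbit of $z_\infty$, the fact that forward iterates of the curve stay in bounded $\D^c$-plaques does not, by itself, pin it onto $W^s_{\mathrm{loc}}(z_\infty)$. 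The same issue recurs when you claim that $f^{k}(P(f^{-k}(w_n)))$, a size-$O(\eta)$ central curve through $w_n$, lies in $W^s_{\mathrm{loc}}(z_\infty)$ for $\eta$ small. In effect your construction connects only the single orbit through $w_n$ to $\Lambda$; the proposition requires a whole central plaque over $K$ to be in $W^{ch-s}_{U_\Lambda}(\Lambda)$. The obstacle you flag (uniform size control along the transition from a neighborhood of $K_n$ to $z_n$) is a secondary difficulty; the real missing idea is how to connect \emph{every} point of the central plaque, not just the base orbit.

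The paper handles this differently. It fixes one $y=z_n$ close to a point $x\in X$ with $y$ $(C,\sigma,F)$-hyperbolic for $f^{-1}$, takes a lift $\hat y$ with $\hat K=\alpha(\hat y)$, and makes a dichotomy on the backward iterates $\hat f^{-m}(\{\hat y\}\times[0,\delta])$ of a short central interval over $y$. Either some subsequence grows to full height (Case~1), giving a nontrivial interval in the chain-stable set of $\hat K\times\{0\}$, which combined with thin trapping (your first paragraph, which is correct) yields a full neighborhood $\hat K\times[0,a]$ in the chain-stable set. Or (Case~2) for some fixed $\delta_y>0$ all backward iterates stay in $[0,1]$; the hypothesis of the proposition then forces these backward intervals \emph{not to shrink}, so their union accumulates on all of $\hat K\times[0,\delta_{\hat K}]$. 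For every target $(\hat x,a)$ one chooses $t\in[0,\delta_y]$ and large $m$ with $\pi(\hat f^{-m}(\hat y,t))$ close to the target. Each point $\pi(\hat y,t)$ inherits $(C,\rho,F)$-hyperbolicity for $f^{-1}$ from $y$, so by Lemma~\ref{l.manifold2} its local unstable manifold (tangent to $F$) meets the strong stable manifold of some $x'$ in the central curve at $x\in X$; following the resulting genuine orbit backward $m$ steps lands near the target, and forward it converges to $\Lambda$. Thus the parameter $t$ replaces your attempted ``central plaque inside $W^s_{\mathrm{loc}}(z_\infty)$'' and furnishes, point by point, the connections your transport argument was missing.
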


Let $K$ be the set provided by the Proposition~\ref{p.trapped}.
If one considers a central model $(\hat K\times [0,+\infty),\hat f)$ associated to $(K,f)$,
the chain-unstable set of $\hat K\times \{0\}$ does not contain a
non-trivial interval $\{\hat y\}\times [0,a]$, $a>0$ by our assumptions.
The classification Theorem~\ref{cm}(1) implies that $(\hat K\times [0,+\infty),\hat f)$ is thin trapped. This proves that the central dynamics of $K$ is thin trapped.

The assumptions of Proposition~\ref{NS} are now satisfied by the set $K$: there exists a periodic orbit
contained in an arbitrarily small neighborhood of $K$ and both in the local chain-stable and chain-unstable sets
of $\Lambda$ inside a small neighborhood $U_\Lambda'$ of $\Lambda$. Since $K$ is a $0$-CLE set, the central exponents
of $p$ are contained in $(-\theta,\theta)$. By Corollary~\ref{c.local-class}, the local homoclinic class $H(p,U_0)$ contains $\Lambda$.
This shows that the second case of Theorem~\ref{main2} holds for $U_0$ and $\theta$.

\section{The chain-unstable case: proof of Proposition~\ref{p.unstable}}\label{s.connecting}
Let us fix $\theta>0$ and a small neighborhood $U_\Lambda$ of $\Lambda$.
By assumption, there exists a central model $(\hat f, \hat \Lambda)$, a point $\hat y\in \hat \Lambda$ and an invariant compact set $\hat K\subset \hat \Lambda$
such that:
\begin{itemize}
\item[--] $\hat K$ contains $\alpha(\hat y)$ and projects by $\pi\colon \hat \Lambda\times [0,+\infty)\to M$ as a $0$-CLE set $K\subset \Lambda$ of $f$.
\item[--] The chain-unstable set of $\hat y$ is non-trivial: $\hat W^{ch-u}(\hat K\cup \orb^-(\hat y))\setminus \{\hat y\}\neq
\emptyset$.
\end{itemize}

\paragraph{\bf Choice of $\eta$.} Let us choose $\eta>0$ small.
One can always reduce the central model to a neighborhood of $\hat \Lambda\times \{0\}$
such that it becomes a central model associated to the plaques $W^c_\eta$.
By a conjugacy of the form $(\hat x,t)\mapsto (\hat x,a.t)$,
one can ``normalize" the central model so that $\hat f$ is defined from
$\hat \Lambda \times [0,1]$ to $\hat \Lambda\times [0,+\infty)$.

Consider $\hat X$ the set of points of $\hat \Lambda$ whose projection by $\pi$ belongs to $X$.
There exists $\kappa$ such that for each $x\in X$ we have:
\begin{itemize}
\item[--] $f^k(W^c_\kappa(x))\subset W^c_\eta(f^k(x))$ for each $k\geq 0$,
\item[--] $W^c_\kappa(x)$ is contained in the stable set of $x$.
\end{itemize}
This implies the existence of $a>0$ such that for any $\hat x\in \hat X$ one has
\begin{itemize}
\item[--] $\hat f^k(\{\hat x\}\times [0,a])\subset \{\hat f^k(\hat x)\}\times [0,1]$ for each
$k\geq 0$,
\item[--] the length of $\hat f^k(\{\hat x\}\times [0,a])$ goes to $0$ as $k\to +\infty$.
\end{itemize}
Since the chain-unstable set of $\hat y$ is non-trivial,
one can reduce $a>0$ so that one has
$$\hat W^{ch-u}(\hat K\cup \orb^-(\hat y))\supset\{\hat y\}\times [0,a].$$

Choosing $\eta$ small, one can also require that the $\eta$-neighborhood of $\Lambda$ is contained in $U_\Lambda$
and for any $x\in \Lambda$ and  $z\in W^c_\eta(x)$ we have
$$\left| \log\|Df_{|E^c}(x)\| - \log\|Df_{|T_zW^c_\eta(x)}(z)\|\right|< \theta/4.$$
\bigskip

\paragraph{\bf The point $x$ and the segment $I$.} We apply the following lemma.

\begin{lemma}
There exists a sequence $(\hat x_n)$ in $\hat \Lambda$ such that for each $n$ we have:
\begin{itemize}
\item[--] $\alpha(\hat x_n)$ projects by $\pi$ on a $0$-CLE set,
\item[--] the set $\{\hat x_n\}\times [0,a]$ is contained in the chain-unstable set
of $\hat \Lambda\times \{0\}$,
\item[--] the set $\{\hat x_n\}\times [0,a]$ is contained in the $1/n$-chain-stable set of $\hat \Lambda\times \{0\}$.
\end{itemize}
In particular for each $k\geq 0$, we have
$$\hat f^{-k}(\{\hat x_n\}\times [0,1])\subset \{\hat f^{-k}(\hat x_n)\}\times [0,1].$$
\end{lemma}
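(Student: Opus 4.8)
The plan is to \emph{wrap} the non-trivial chain-unstable segment produced at $\hat y$ around the chain-transitive set, using the forward invariance of $\varepsilon$-chain-unstable sets together with the contracting behaviour of the central fibres over the set $X$.

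First I would reduce to the case where $\hat\Lambda$ is chain-transitive for $\hat f$: since $\Lambda$ is chain-transitive, the base of the central model is either a two-fold cover of $\Lambda$ (hence chain-transitive) or a disjoint union of two copies of $\Lambda$, so we may replace $\hat\Lambda$ by the chain-recurrence class of $\hat y$, which still contains $\hat K$ and $\alpha(\hat y)$. Then I would record the facts needed below. (a) $X\cap\Lambda\neq\emptyset$: by Lemma~\ref{l.0CLE} the measure $\mu_0$ of $(*)$, with $L^c(\mu_0)\le-\ep_0<0$, has its support meeting $X$; fix a lift $\hat x^{*}$ of a point of $X\cap\Lambda$. (b) Over $X$ the fibres $\{\hat x\}\times[0,a]$ are forward trapped and shrink; moreover the whole forward orbit of a point of $X$ is uniformly $(C_0,e^{-\varepsilon},E^c)$-hyperbolic (the product of central derivatives from time $1$ only loses a bounded factor), so these fibres contract geometrically with a uniform rate along such an orbit. (c) $X\cap K=\emptyset$ for every $0$-CLE set $K$, since a point of $X$ forces, on the closure of its orbit, an invariant measure with strictly negative central exponent. (d) For the chain-transitive base $\hat\Lambda$ the $\varepsilon$-chain-unstable set of $\hat\Lambda\times\{0\}$ is forward $\hat f$-invariant (extend a chain by one genuine step, truncating a fibre that would leave $[0,1]$), it does not depend on the base-point chosen in $\hat\Lambda\times\{0\}$, and $\hat W^{ch-u}(\hat\Lambda\times\{0\})$ is closed.

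Now, fixing $n$ and putting $\delta=1/n$, I would carry out the wrapping. Starting from $\{\hat y\}\times[0,a]\subset\hat W^{ch-u}(\hat\Lambda\times\{0\})$, I push this segment forward along the orbit of $\hat y$, truncating whenever a fibre would exceed $[0,1]$; by closedness and compactness a subsequential limit produces a point $\hat x_n$ with $\{\hat x_n\}\times[0,a]\subset\hat W^{ch-u}(\hat\Lambda\times\{0\})$ whose backward orbit accumulates on a $0$-CLE set, the $\alpha$-limit condition being inherited along the forward orbit of $\hat y$ whose $\alpha$-limit is the $0$-CLE set $\hat K$ (if the surviving segment shrinks below length $a$ in the limit one re-argues directly from the base chain-transitivity). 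For the $\delta$-chain-stability I append to the segment a $\delta$-pseudo-orbit in the base joining the base-point to $\hat x^{*}$, and then follow the genuine forward orbit of $\hat x^{*}$ for a large number $N$ of steps; by (b) the fibre coordinate is then $<\delta$, and a last $\delta$-jump reaches $\hat\Lambda\times\{0\}$. Choosing the parameters so that the fibre stays inside $[0,1]$ throughout (this is where the smallness of $\eta$, $a$ and $\delta$ enters) this exhibits $\{\hat x_n\}\times[0,a]$ in the $\delta$-chain-stable set of $\hat\Lambda\times\{0\}$. Finally the ``in particular'' clause: since $\alpha(\hat x_n)$ projects to a $0$-CLE set, the fibered central dynamics along the backward orbit of $\hat x_n$ is non-contracting in the covering sense, so $\hat f^{-k}$ is defined on the whole fibre $\{\hat x_n\}\times[0,1]$ and sends it into $\{\hat f^{-k}(\hat x_n)\}\times[0,1]$.

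The main obstacle I expect is the simultaneous control of the length of the central segment: it must stay at least $a$, to remain a genuine non-trivial segment, yet never exceed $1$, to stay in the domain of $\hat f$, and this has to survive three distinct operations — iteration along the forward orbit of $\hat y$ (where the central derivative is a priori uncontrolled), the $\delta$-pseudo-orbit routing through $\hat\Lambda$ toward $\hat x^{*}$ (where the fibered central dynamics may expand), and the contraction phase over the orbit of $\hat x^{*}$ — all while keeping the $\alpha$-limit of the base-point on a $0$-CLE set. Arranging the routing step so that the fibre never overshoots $1$ by more than $\delta$, and arranging the limit in the forward-push step so that the surviving segment still has length $\ge a$, are the delicate points; the rest is bookkeeping with pseudo-orbits and the hyperbolic-times estimates of Section~\ref{ss.domination}.
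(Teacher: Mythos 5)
Your plan assembles the right ingredients (the non-trivial chain-unstable segment at $\hat y$, a routing toward $X$, and the contraction of central fibres over $X$), but two of the three conclusions of the lemma are not actually established, and the missing ideas are exactly the ones the paper's proof is built around. The more serious gap is the first bullet. You produce $\hat x_n$ as a subsequential limit of forward iterates $\hat f^{m_j}(\hat y)$ and assert that the condition ``$\alpha(\hat x_n)$ projects to a $0$-CLE set'' is \emph{inherited along the forward orbit of $\hat y$}. It is true that $\alpha(\hat f^{m}(\hat y))=\alpha(\hat y)$ for every finite $m$, but the $\alpha$-limit set of a \emph{limit point} of the forward orbit is not controlled by $\alpha(\hat y)$: such a point typically lies in $\omega(\hat y)$ and its backward orbit, hence its $\alpha$-limit set, can perfectly well meet $X$ and carry a measure with negative central exponent, in which case the first bullet fails. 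The paper avoids this by never using the genuine forward orbit of $\hat y$: for each $\ell$ it takes a $1/\ell$-pseudo-orbit from $\hat y$ that stays outside the interior of the fixed $1/n$-neighborhood $\hat U_n$ of $\hat X$ until its terminal point, so that in the limit $\ell\to\infty$ the whole backward orbit of $\hat x_n$ avoids the interior of $\hat U_n$; then $\alpha(\hat x_n)$ is a chain-transitive set disjoint from $X$, hence $0$-CLE by the remark following Lemma~\ref{l.0CLE}. Nothing in your construction forces the backward orbit of the limit point away from $X$.

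The second gap is the height/length control, which you correctly identify as ``the delicate point'' but do not resolve; ``one re-argues directly from the base chain-transitivity'' and ``arranging the routing step so that the fibre never overshoots'' are not arguments. The paper's mechanism here is a first-hitting device: along the lifted pseudo-orbit one has continuous maps $s\mapsto t_k(s)$ with $t_0(s)=s$, and one takes the first parameter $s_0$ for which some height $t_k(s_0)$ equals $a$. This single choice simultaneously guarantees that the distinguished point $p_k=\hat x_{n,\ell}$ carries a segment of length exactly $a$ lying in the $1/\ell$-chain-unstable set of $\{\hat y\}\times[0,s]$, and that all heights along the remainder of the pseudo-orbit are $\le a$, so the same segment is $1/\ell$-chain-stable to a fibre over the terminal point $z_\ell$, which is $1/n$-close to $\hat X$ where the fibre contraction of your item (b) finishes the chain to $\hat\Lambda\times\{0\}$. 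Without this (or an equivalent) device, your forward-push phase can shrink the segment below length $a$, and your routing phase can push the fibre coordinate out of the domain $[0,1]$ of $\hat f$, so neither the second nor the third bullet is secured.
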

\begin{proof}
Let us fix $n\geq 1$.
Let $\hat U_n$ be the $1/n$-neighborhood of $\hat X$.
For any $\ell\geq 1$, consider a $1/\ell$-pseudo-orbit $O_\ell=\{p_0,\dots,p_{n_\ell}\}$ from
$\hat y$ to a point $z_{\ell}$ of the closure $\hat U_n$ such that $O_\ell\setminus\{z_{\ell}\}$
does not intersect the interior of $\hat U_n$.
One can define continuous maps $s\mapsto t_k(s)$ for each $0\leq k\leq n_\ell$ such that
$t_0(s)=s$ and
for each $s>0$ small, the sequence $(p_k,t_k(s))$ is a $1/\ell$-pseudo-orbit between
$(p_0,s)$ and $(p_{n_\ell},t_{n_\ell}(s))$.

When $s$ increases some $t_k(s)$ becomes equal to $a$.
We denote $\hat x_{n,\ell}:=p_k$. By construction $\{\hat x_{n,\ell}\}\times [0,a]$
is in $1/\ell$-chain-unstable set of $\{\hat y\}\times [0,s]$, hence of $\hat \Lambda\times \{0\}$.
It is also in the $1/\ell$-chain-stable set of $\{z_{\ell}\}\times [0,a]$. Since $z_{\ell}$
is $1/n$-close to $\hat X$, the set $\{\hat x_{n,\ell}\}\times [0,a]$ is in the $1/n$-chain-stable
set of $\hat X\times [0,a]$, hence of $\hat \Lambda\times \{0\}$.

Let $\hat x_n$ be a limit of $\hat x_{n,\ell}$ when $\ell$ goes to $+\infty$.
Passing to the limit $\{\hat x_{n}\}\times [0,a]$ is in the chain-unstable set of
$\hat \Lambda\times \{0\}$ and in the $1/n$-chain-stable set of $\hat \Lambda\times \{0\}$.
By construction the set $\alpha(\hat x_n)$ is disjoint from the interior of
$\hat U_n$. Thus its projection $K$ by $\pi$ is a chain-transitive invariant compact set disjoint from $X$.
So it is a $0$-CLE set.
\end{proof}

We define the point $\hat x\in \hat \Lambda$ as a limit of the points $\hat x_n$.
We set $x=\pi(\hat x)$ and the segment $I$ is the projection by $\pi$ of
$\{\hat x\}\times [0,a]$. By construction, it is a chain-recurrent central segment of $x$
for the central model we considered.
\bigskip

\paragraph{\bf The point $z$.}
Let $z$ be an interior point of $I$.
Note that it is enough to prove the Proposition~\ref{p.unstable} for a dense subset of points $z\in I$.
Since $f$ is $C^1$-generic, it has at most countably many periodic points, hence one can assume that $z$ is
not periodic.

One can find for each $n$ a point $z_n\in W^c_\eta(x_n)$ such that:
\begin{itemize}
\item[--] the sequence $(z_n)$ converges to $z$,
\item[--] for each $k\geq 0$, the point $f^{-k}(z_n)$ belongs to $W^c_\eta(f^{-k}(x_n))$,
\item[--] $z_n$ belongs to the chain-unstable set of $\Lambda$ in the $\eta$-neighborhood of $\Lambda$.
\end{itemize}
The modulus of the central Lyapunov exponents of the ergodic measures supported in $\alpha(z_n)$ are smaller than $\theta/3$:
\begin{lemma}\label{l.exponent}
Let us consider $x\in \Lambda$ and $z\in M$ such that
for each $k\geq 0$, the iterate $f^{-k}(z)$ belongs to $W^c_\eta(f^{-k}(x))$ and
$\alpha(x)$ is a $0$-CLE set. Then, any ergodic measure supported on $\alpha(z)$ has a central Lyapunov exponents in
$(-\theta/3,\theta/3)$.
\end{lemma}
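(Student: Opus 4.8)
The plan is to exploit the fact that $\alpha(x)$ is a $0$-CLE set together with the dominated splitting on $\Lambda$ and the smallness of $\eta$, which was chosen so that the central logarithmic derivative on a plaque $W^c_\eta$ differs from the central logarithmic derivative at the base point by at most $\theta/4$. First I would observe that, since $f^{-k}(z)\in W^c_\eta(f^{-k}(x))$ for all $k\geq 0$, the backward orbit of $z$ shadows the backward orbit of $x$ plaque by plaque; hence any accumulation point of $z$ under backward iteration projects (lies $\eta$-close) to the corresponding accumulation set of $x$, so $\alpha(z)$ is contained in the $\eta$-neighborhood of $\alpha(x)$, and in fact sits inside the forward-invariant set where the center-stable bundle extends. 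Consequently an ergodic measure $\nu$ supported on $\alpha(z)$ projects to a measure whose central behavior is controlled by $\alpha(x)$.

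The key step is to compare the Birkhoff average of $\log\|Df_{|T_zW^c_\eta}\|$ along the backward orbit of $z$ with the Birkhoff average of $\log\|Df_{|E^c}\|$ along the backward orbit of $x$. For each $k\geq 0$ we have the pointwise bound
\[
\left|\log\|Df_{|T_{f^{-k}(z)}W^c_\eta(f^{-k}(x))}\| - \log\|Df_{|E^c}(f^{-k}(x))\|\right| < \theta/4,
\]
by the choice of $\eta$. Averaging over $k=0,\dots,n-1$ and letting $n\to\infty$, any ergodic average of the central derivative along $\alpha(z)$ differs by at most $\theta/4$ from the corresponding ergodic average along $\alpha(x)$ (more precisely, from a convex combination of Lyapunov exponents of ergodic measures supported in $\alpha(x)$, obtained by an ergodic decomposition / weak-$*$ limit argument applied to the empirical measures along $f^{-k}(x)$). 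Since $\alpha(x)$ is a $0$-CLE set, every ergodic measure it carries has central exponent exactly $0$, so these averages along $\alpha(x)$ are all $0$; hence the central exponent of $\nu$ lies in $(-\theta/4,\theta/4)\subset(-\theta/3,\theta/3)$. One must take a little care that the empirical measures along $f^{-k}(z)$ converge (along a subsequence) and that their limit is supported on a set whose projection is contained in a small neighborhood of $\alpha(x)$ where the extended center-stable bundle and the plaque tangents agree up to the $\theta/4$ error; this is where the uniform continuity of $Df$ and the compactness of $\Lambda$ are used.

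The main obstacle I anticipate is the bookkeeping needed to pass from ``pointwise comparison along one backward orbit'' to ``statement about all ergodic measures on $\alpha(z)$,'' because $\alpha(z)$ need not be uniquely ergodic and the tangent direction $T_zW^c_\eta$ is not literally the bundle $E^c$ but only $\theta/4$-close to it; one resolves this by working with the continuous functions $\varphi_z(w)=\log\|Df_{|T_wW^c_\eta}\|$ and $\varphi(w')=\log\|Df_{|E^c}(w')\|$ and noting that for the relevant measures $\int\varphi_z\,d\nu$ and $\int\varphi\,d(\pi_*\nu)$ differ by at most $\theta/4$ while the latter is a Lyapunov exponent of a measure on the $0$-CLE set $\alpha(x)$, hence zero. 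The remaining routine point is that the Lyapunov exponent of $\nu$ along its own Oseledets central direction coincides with $\int\varphi_z\,d\nu$ up to the domination error, which is again absorbed into the $\theta/3$ slack.
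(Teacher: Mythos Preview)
Your approach is essentially the paper's: both compare Birkhoff averages of the central log-derivative along backward orbit segments of $z$ with those along $x$ via the pointwise $\theta/4$ estimate, and use that $\alpha(x)$ is a $0$-CLE set to conclude the averages are small. One caveat: your ``projection'' $\pi_*\nu$ is not actually well-defined (there is no canonical map $\alpha(z)\to\alpha(x)$), but this detour is unnecessary---the paper simply picks, for each ergodic $\mu$ on $\alpha(z)$, a segment $(f^{-\ell}(z),\dots,f^{-k}(z))$ whose plaque-tangent average approximates $L^c(\mu)$, compares it termwise to the average along $(f^{-\ell}(x),\dots,f^{-k}(x))$, and observes the latter is arbitrarily close to zero when $k$ and $\ell-k$ are large, which is exactly the orbit-segment comparison you describe before introducing $\pi_*$.
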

\begin{proof}
Consider any ergodic measure $\mu$ supported on $\alpha(z)$.
Since the bundle $E^c$ is one-dimensional, its central Lyapunov exponent is equal to
$$L^c(\mu)=\int \log(\|Df_{|E^c}\|)d\mu.$$
Hence there exists a segment of orbit
$(f^{-\ell}(z),\dots,f^{-k}(z))$, with $k$ large and $\ell>k$, such that
$$\frac 1 {\ell-k}\log \|Df_{|TW^c_\eta(f^{-\ell}(x))}^{\ell-k}(f^{-\ell}(z))\|$$ is close to $L^c(\mu)$.
By our choice of $\eta$, this last quantity is $\theta/4$-close to
$$\frac 1 {\ell-k}\log \|Df_{|E^c}^{\ell-k}(f^{-\ell}(x))\|,$$
which is arbitrarily close to zero if $k$ and $\ell-k$ are large since $\alpha(x)$ is a $0$-CLE set.
\end{proof}
\bigskip

\paragraph{\bf Conclusion of the proof of the Proposition~\ref{p.unstable}.}
Let us consider a neighborhood $V_z$ of $z$.
Note that $U_\Lambda$ and $V_z$ can be chosen in the countable basis $\cB$ introduced at Section~\ref{ss.generic}.
For any $C^1$-neighborhood $\cU$ of $f$, the following lemma will provide a diffeomorphism
$g\in \cU$ having a hyperbolic periodic orbit $O\subset U_\Lambda$ which meets $V_z$ and whose
$(\dim(E^s)+1)^\text{th}$ Lyapunov exponent belongs to $(-\theta,\theta)$.
Since $f$ belongs to the dense $G_\delta$ set $\cG$, then the same property holds for $f$,
ending the proof of the Proposition~\ref{p.unstable}. \qed

\begin{lemma}\label{l.perturbation}
For any $C^1$-neighborhood $\cU$ of $f$, there exists $g\in \cU$ having a periodic orbit
$O\subset U_\Lambda$ which meets $V_z$ and whose
$(\dim(E^s)+1)^\text{th}$ Lyapunov exponent belongs to $(-\theta,\theta)$.
\end{lemma}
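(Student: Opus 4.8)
The plan is to obtain $g$ and its periodic orbit by \emph{closing up}, with the connecting lemma, a periodic pseudo-orbit of $f$ that follows a $0$-CLE set for an arbitrarily long time (so that the derivative along $E^c$ is on average neutral) and spends only a \emph{uniformly bounded} number of iterates away from it; the bounded part is then negligible for the central Lyapunov exponent after dividing by the large period. Concretely, fix the $0$-CLE set $K\subset\La$ from assumption~(**) and a point $w\in K$. Since every invariant measure carried by $K$ has vanishing exponent along $E^c$, a compactness argument on empirical measures gives $m_0$ with $\bigl|\tfrac1m\sum_{k=0}^{m-1}\log\|Df_{|E^c}(f^k(w))\|\bigr|<\theta/3$ for all $m\ge m_0$ and all $w\in K$; thus $S_m:=(w,f(w),\dots,f^m(w))$ is a genuine orbit segment in $\La$ with central exponent as small as we wish (one could instead use the backward orbits of the points $z_n$ together with Lemma~\ref{l.exponent}). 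Now close $S_m$ up: since $z$ is an interior point of the chain-recurrent central segment $I$, projecting central-model pseudo-orbits by $\pi$ — which keeps them in the $\eta$-neighbourhood of $\La$, hence in $U_\La$ — shows that $z$ lies both in $W^{ch-s}_{U_\La}(\La)$ and in $W^{ch-u}_{U_\La}(\La)$. Using a fixed $\varepsilon$-pseudo-orbit from $\La$ to $z$, a fixed one from $z$ to $\La$, and the fact that chain-transitivity of the compact set $\La$ joins any two of its points by an $\varepsilon$-pseudo-orbit in $\La$ of length at most some $N_0(\varepsilon)$, one gets an $\varepsilon$-pseudo-orbit from $f^m(w)$ back to $w$ passing through $z$, staying in $U_\La$, of length $\le L=L(\varepsilon)$ independent of $m$ and $w$. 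Concatenating with $S_m$ gives a periodic $\varepsilon$-pseudo-orbit $P$ of period $m+r$, $r\le L$, contained in $U_\La$, meeting $V_z$, all of whose jumps lie in the length-$\le L$ return part.

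\textbf{Step 2 (realizing $P$ by a $C^1$-perturbation).} Given $\cU$ (which may be shrunk at will, since $g\in\cU'\subset\cU$ also lies in $\cU$), I would apply Hayashi's connecting lemma (Theorem~\ref{l.connecting}) successively at the $\le L$ jump points of the return part of $P$ — each chosen non-periodic and in general position, with perturbation support small, pairwise disjoint, and disjoint from $S_m$ except at the prescribed entry/exit times — to delete all the jumps one at a time; the connecting lemma does not lengthen the concatenated orbit beyond the input segments plus the bounded perturbation windows. This produces $g\in\cU$ and a genuine periodic orbit $O\subset U_\La$ of $g$ which meets $V_z$, has period $m+r'$ with $r'$ bounded by a constant depending only on $\cU$, coincides with $S_m$ along the long segment (where $g=f$), and along which $Dg$ is $C^1$-close to $Df$ on the remaining bounded set of iterates. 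As $O$ is Hausdorff-close to $\La$, the dominated splitting $E^s\oplus E^c\oplus F$ persists over $O$ as $\widetilde E^s\oplus\widetilde E^c\oplus\widetilde F$ with $\dim\widetilde E^s=\dim E^s$ and $\widetilde E^c$ one-dimensional, so the $(\dim E^s+1)^{\rm th}$ Lyapunov exponent of $O$ is the one along $\widetilde E^c$.

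\textbf{Step 3 (exponent estimate and hyperbolicity).} Along $S_m$ one has $Dg=Df$ and, by persistence, $\widetilde E^c$ is $C^0$-close to $E^c$, so that portion contributes less than $m\,\theta/2$ in modulus to $\sum_{x\in O}\log\|Dg_{|\widetilde E^c}(x)\|$ (using Step~1 and $\cU$ small); the return part contributes at most $r'C_1$, with $C_1$ depending only on $\cU$. Dividing by the period $m+r'$ and taking $m$ large (depending on $\theta,L,C_1,\cU$) makes the $(\dim E^s+1)^{\rm th}$ exponent of $O$ lie in $(-\theta,\theta)$, while $O\subset U_\La$ by construction. Finally, because $E^s$ is uniformly contracted over $\La$ and $E^c$ dominates $F$, for $\theta$ small and $O$ close to $\La$ the bundles $\widetilde E^s,\widetilde F$ are respectively contracted and expanded at the period, so $O$ is hyperbolic once its central exponent is nonzero; if that exponent vanishes, a final arbitrarily small Franks perturbation~\cite{franks} pushes it off $0$ while keeping it in $(-\theta,\theta)$. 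This is the $g\in\cU$ required by Lemma~\ref{l.perturbation}, whence — $f$ being generic — Proposition~\ref{p.unstable} follows as indicated above.

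\textbf{Main obstacle.} The delicate point is Step~2: arranging the successive connecting-lemma perturbations so that their supports are pairwise disjoint and are \emph{not revisited} by the long segment $S_m$ (except at the designated times), so that the total perturbation stays in $\cU$ and $S_m$ — hence the central-exponent estimate — is left untouched. This is the standard but technically demanding bookkeeping of iterated connecting-lemma arguments, made possible by the general position of a generic diffeomorphism.
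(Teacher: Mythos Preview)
Your overall strategy --- build a periodic orbit that spends all but a bounded number of iterates near a $0$-CLE set, so that the central exponent is dominated by the neutral part --- is exactly the paper's idea. The difficulty is entirely in Step~2, and the ``bookkeeping'' you flag as the main obstacle is in fact a genuine gap, not just a technicality.

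The problem is twofold. First, Hayashi's connecting lemma (Theorem~\ref{l.connecting}) connects \emph{true} orbit segments that both enter a small box, not pseudo-orbit pieces separated by an $\varepsilon$-jump; so ``delete the jumps one at a time'' is not what the lemma does. To turn an $\varepsilon$-pseudo-orbit into a true orbit one needs the full Bonatti--Crovisier machinery, and that machinery (Theorem~\ref{t.hausdorff}) gives Hausdorff control on the output, not the guarantee that the long segment $S_m$ survives untouched. Second, and more seriously, your return pseudo-orbit from $f^m(w)$ back to $w$ has length $L(\varepsilon)$, and its jump points lie in $\Lambda$ --- possibly in or accumulating on $K$ itself. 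Since $S_m\subset K$ and $S_m$ becomes dense in (a piece of) $K$ as $m\to\infty$, the perturbation domains around these jump points will generically meet $S_m$; there is no reason they can be made disjoint from it. So the central-exponent estimate on $S_m$ is destroyed.

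The paper avoids this by never working with pseudo-orbits at this stage. It uses the points $z_n$ built just before the lemma: these are points converging to $z$ whose \emph{true} backward orbits accumulate on a $0$-CLE-type set $A=\alpha(z_{n_0})$. Combined with Theorem~\ref{t.weaktransitive} (which, for generic $f$, produces \emph{true} forward orbit segments from near $z$ to near $A$ inside $U_\Lambda$), this reduces the closing to exactly three applications of Hayashi's lemma, at three carefully chosen non-periodic points $z$, $b$, $a$, with perturbation domains that can be taken disjoint from each other and from the long backward orbit of $z_{n_0}$ (because $z$ is assumed to lie at positive distance from $A$). The long ``neutral'' part of the resulting periodic orbit is then the piece living in a small neighbourhood $U_A$ of $A$, and the excursion outside $U_A$ has length bounded by a constant $\tau$ fixed before the period is sent to infinity. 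This is the mechanism that makes the bookkeeping actually close; your proposal is missing the use of the true orbits $z_n$ and the reduction to a fixed, finite number of connections.
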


\paragraph{\bf The perturbation argument.} We give now the proof of the Lemma~\ref{l.perturbation}.
We can assume that there is $r_0>0$ such that $\alpha(z_n)\cap B(z,r_0)=\emptyset$ for any $n$
and that in particular $z_n\notin \alpha(z_n)$ for $n$ large.
Since otherwise, the fact $f$ is $C^1$ generic implies that $\alpha(z_n)$ can be accumulated by periodic orbits
for the Haudorff topology.
Since the central Lyapunov exponent of any ergodic measure supported on $\alpha(z_n)$ belongs to $(-\theta/3,\theta/3)$, we know that the central Lyapunov exponent of these periodic orbits belongs to $(-\theta,\theta)$. Thus, we know that the proposition is satisfied in this case.
\medskip

\paragraph{\it The neighborhood $\cU$.}
One can always reduce the neighborhood $\cU$ and find $N_0\geq 1$ so that
for any diffeomorphism $g\in \cU$, for any point $x$ whose $g$-orbit is contained in $U_\Lambda$
and any point $z\in \Lambda$, such that $g^k(x)$ and $f^k(z)$ are close for each $|k|\leq N_0$, then
the spaces $E^c_x$ for $g$ and $E^c_z$ for $f$ are close.
Consequently, $\|Dg_{|E^c}(x)\|$ and $\|Df_{|E^c}(z)\|$ are $\theta/3$-close.

For the neighborhood $\mathcal U$, we know there is $L\in\NN$ associated to $\mathcal U$ by Lemma~\ref{l.connecting}.

\medskip

\paragraph{\it The perturbation domain at $z$.}
By Lemma \ref{l.connecting}, one can associate two neighborhoods $B_z\subset \hat B_z$ at $z$, small enough so that:
\begin{itemize}
\item[--] $\hat B_z\subset V_z$;
\item[--] $U_{L,z}:=\bigcup_{i=0}^{L-1}f^i(\hat B_z)$ is disjoint from $\alpha(z_n)$ for any $n\in\NN$
(using that $\alpha(z_n)\cap B(z,r_0)=\emptyset$);
\item[--] $U_{L,z}\subset U_\Lambda$.
\end{itemize}
\medskip

\paragraph{\it The set $A$ and the connecting orbit between $A$ and $z$.}
Choose $n_0\in\NN$ such that $z_{n_0}\in B_z$. We denote $A:=\alpha(z_{n_0})$.
Choose a small neighborhood $U_A$ of $A$ such that:
\begin{itemize}
\item[--] $\overline{U_A}\subset U_\Lambda$.

\item[--] $U_A\cap U_{L,z}=\emptyset$.

\item[--] There are $N_1\geq 1$ and $\delta>0$ such that for
any diffeomorphism $g\in \cU$ whose $C^0$-distance to $f$ on $U_A$ is smaller than $\delta$ and
for any orbit segment $\{x,g(x),\cdots,g^{N_1}(x)\}\subset U_A$ for $g$
of a point $x$ whose orbit is contained in $U_\Lambda$, then
$$-\frac 2 3 \theta<\frac{1}{N_1}\sum_{i=0}^{N_1-1}\log\|Dg|_{E^c(g^i(x))}\|<\frac 2 3 \theta.$$
\end{itemize}
The last item can be justified as follows. If $U_A$ is small and $g$ is $C^0$-close to $f$ on $U_A$, then any orbit segment
$\{x,g(x),\cdots,g^{N_1}(x)\}$ for $g$ is arbitrarily close to an orbit segment
$\{z,f(z),\cdots,f^{N_1}(z)\}$ for $f$, hence by our choice of $\cU$ we have
$$\frac{1}{N_1}\sum_{i=0}^{N_1-1}\log\|Dg|_{E^c(g^i(x))}\|\; - \; \frac{1}{N_1}\sum_{i=0}^{N_1-1}\log\|Df|_{E^c(f^i(z))}\|\in (-\theta/3,\theta/3).$$
If $N_1$ has been chosen large enough, the second sum belongs to $(-\theta/3,\theta/3)$ by the Lemma~\ref{l.exponent} above.
\medskip

\paragraph{\it The points $a$ and $b$.}
Since $z$ and $\Lambda$ are in a same chain-transitive set contained in $U_\Lambda$ and since $A=\alpha(z_{n_0})$ is in the local chain-unstable set of $\Lambda$ in the $\eta$-neighborhood of $\Lambda$, together with the fact that $f$ is $C^1$ generic, by Theorem \ref{t.weaktransitive}, there is a sequence of orbit segments $$\{y_{n},f(y_n),\cdots,f^{k_n}(y_n)\}_{n\in\NN}\subset U_\Lambda$$
such that
\begin{itemize}

\item[--] $\lim_{n\to\infty}y_n=z$,

\item[--] $\lim_{n\to\infty}f^{k_n}(y_n)=a\in A$.

\end{itemize}

Let us consider a smaller neighborhood $U'_A$ of $A$ whose closure is contained in the interior of $U_A$.
For each $n$, define $l_n\in[0,k_n]\cap\NN$ such that
\begin{itemize}

\item[--] $f^{l_n-1}(y_n)\notin U'_A$,

\item[--] $f^m(y_n)\in U'_A$ for any $l_n\le m\le k_n$.

\end{itemize}
By taking a subsequence if necessary, one can assume that $\lim_{n\to\infty}f^{l_n}(y_n)=b\in\overline{U'_A}$.
Moreover, the forward iterations of $b$ are contained in $U_A$.
\medskip

\paragraph{\it The perturbation domain at $b$ and the connecting orbit between $z$ and $b$.}
By Lemma \ref{l.connecting}, there are two neighborhoods $B_b\subset \hat B_b$ at $b$ small enough so that:
\begin{itemize}
\item[--] the connected components of $U_{L,b}$ are smaller than $\delta$;
\item[--] $U_{L,b}$ is disjoint from $A$ and from the backward orbit of $z_{n_0}$;

\item[--] $U_{L,b}\subset U_A$.

\end{itemize}

Choose $n_1$ such that $y_{n_1}\in B_z$ and $f^{l_{n_1}}(y_{n_1})\in B_b$.
We also choose $m_1\geq 1$ so that the backward orbit of $f^{-m_1}(z_{n_0})$ is contained in $U_A$.
Then we define $\tau=\max(m_1,l_{n_1})$.
We introduce an integer $T$ much larger than $\tau$.
\medskip

\paragraph{\it The perturbation domain at $a$ and the connecting orbit between $b$ and $a$.}
By Lemma \ref{l.connecting}, there are two neighborhoods $B_a\subset \hat B_a$ at $a$ small enough so that:
\begin{itemize}

\item[--] The connected components of $U_{L,a}$ are smaller than $\delta$;

\item[--] $U_{L,a}\subset U_A$;

\item[--] $U_{L,a}$ is disjoint from $U_{L,b}$;

\item[--] the points $f^{-m_1-k}(z_{n_0})$ for $0\leq k\leq T$ are not in $\hat B_a$.

\end{itemize}

Choose $n_2$ such that $f^{l_{n_2}}(y_{n_2})\in B_b$ and $f^{k_{n_2}}(y_{n_2})\in B_a$.
Note that the orbit segment $\{f^{l_{n_2}}(y_{n_2}), f^{k_{n_2}}(y_{n_2})\}$ is contained in $U_A$.
\medskip

We now realize successively three perturbations of $f$ in $\cU$ supported in the disjoint domains $U_{L,z},U_{L,b},U_{L,a}$ given by the Lemma~\ref{l.connecting}.
The composition of the three perturbations provides a diffeomorphism $g$ which also belongs to $\cU$
(see~\cite[remark 4.3]{BC}).
\medskip

\paragraph{\it The perturbation at $z$.}
We apply the Lemma \ref{l.connecting} to the perturbation domain $B_z,\hat B_z$
and the points $f^{-m_1}(z_{n_0})$ and $f^{l_{n_1}}(y_{n_1})$. We obtain a diffeomorphism $f_1\in \cU$
and a point $p\in \hat B_z\subset V_z$ whose orbit
contains $f^{-m_1}(z_{n_0}), f^{l_{n_1}}(y_{n_1})$ and in particular satisfies:

\begin{itemize}

\item[--] there exists a forward iterate $f_1^k(p)$ in $B_b$ for some $k\leq 2\tau$
and any point $f_1^i(p)$ with $0\leq i \leq k$ belongs to $U_\Lambda$,
\item[--] the backward orbit of $p$ for $f_1$ is contained in $U_\Lambda$, is disjoint from $U_{L,b}$ and its $\alpha$-limit is $A$,
\item[--] the backward orbit of $f_1^{-2\tau}(p)$ is contained in $U_A$,
\item[--] the backward iterates $f_1^{-k}(p)$ for any $0\leq k\leq T$ are not in $\hat B_a$,
\item[--] $f_1$ and $f$ coincide on $U_A$.
\end{itemize}
\medskip

\paragraph{\it The perturbation at $b$.}
We apply the Lemma \ref{l.connecting} to the perturbation domain $B_b,\hat B_b$
and the points $p$ and $f^{k_{n_2}}(y_{n_2})$ for the map $f_1$ which coincides with $f$
on $U_{L,b}$. We obtain a diffeomorphism $f_2\in \cU$ such that:

\begin{itemize}

\item[--] there exists a forward iterate $f_2^m(p)$ in $B_a$ for some $m\geq 0$,
the iterates $f_2^k(p)$ for $0\leq k \leq m$ are contained in $U_\Lambda$ and the iterates $f_2^k(p)$ for $2\tau \leq k \leq m$ are contained in $U_A$;
\item[--] the backward orbit of $f_2^{-2\tau}(p)$ is contained in $U_A$ and its $\alpha$-limit is $A$;
\item[--] the backward iterates $f_2^{-k}(p)$ for any $0\leq k\leq T$ are contained in $U_\Lambda\setminus \hat B_a$,
\item[--] $f_2$ and $f$ coincide on $U_{L,a}$.
\end{itemize}
\medskip

\paragraph{\it The perturbation at $a$.}
We apply the Lemma \ref{l.connecting} to the perturbation domain $B_a,\hat B_a$
and to the forward and backward orbit of $p$ for the map $f_2$ which coincides with $f$
on $U_{L,a}$. We obtain a diffeomorphism $f_3$ such that:

\begin{itemize}

\item[--] $p$ is periodic and its orbit is contained in $U_\Lambda$,
\item[--] the period is larger than $T$,
\item[--] all the iterates $f^k(p)$ different from
$\{f^{-4\tau}(p),\dots,p,\dots f^{4\tau}(p)\}$ are in $U_A$.
\end{itemize}
\medskip

\paragraph{\it End of the proof of the lemma.}
By an arbitrarily small perturbation which preserves the orbit of $p$ by $f_3$,
one gets a diffeomorphism $g\in \cU$ such that the orbit of $p$
is hyperbolic. By construction it is contained in $U_\Lambda$ and meets $V_z$.

Note that $g$ is $\delta$-close to $f$ for the $C^0$-distance on $U_A$.
If $P$ is the period of $p$,
one deduces that
$$-\frac 2 3 \theta<\frac{1}{P-8\tau}\sum_{i=4\tau}^{P-4\tau-1}\log\|Dg|_{E^c(g^i(x))}\|<\frac 2 3 \theta.$$
Since $P>T$ is much larger than $\tau$, one deduces that
$$\frac{1}{P}\sum_{i=0}^{P-1}\log\|Dg|_{E^c(g^i(x))}\|\in (-\theta,\theta).$$
Hence the central Lyapunov exponent of $p$ has a modulus smaller than $\theta$ and the Lemma~\ref{l.perturbation} is now proved. \qed

\section{The thin trapped case: proof of Lemma~\ref{l.selection} and Proposition~\ref{p.trapped}}\label{s.trapped}
We first prove the Lemma~\ref{l.selection} which allows to select the set $K$.

\begin{proof}[\bf Proof of Lemma~\ref{l.selection}]
We first build two sequences $(y_n)$ and $(K_n)$ satisfying the properties (i) and (ii) of the lemma.
For that, we will divide the proof into two cases: the non-isolated case and
the isolated case.

\begin{itemize}

\item Non-isolated case: there is a sequence of 0-CLE sets $\{K_n\}$
such that there is $y_n\in K_n$ for each $n$,
$y=\lim_{n\to\infty}y_n$ exists and $y\in X$.

\item Isolated case: there is a neighborhood $U$ of $X$ such that
for any 0-CLE set $K$, one has $K\cap U=\emptyset$.
\end{itemize}

In the non-isolated case, (i) and (ii) are satisfied by $(K_n)$, $(y_n)$ and $y$.

Now we discuss the isolated case. For any neighborhood $U$ of $X$,
let $K_{\Lambda\setminus U}=\cap_{n\in\ZZ}f^n(\Lambda\setminus U)$ be the maximal invariant
set in $\Lambda\setminus U$.
By Lemma~\ref{l.0CLE}, the central Lyapunov exponent of any ergodic measure supported on $K_{\Lambda\setminus U}$,
is 0. Take a sequence of neighborhoods
${U}_n$ of $X$ such that $\cap_{n\in\NN}\overline{U_n}=X$ and fix
$n$. For any $m\in\NN$, since $\Lambda$ is chain-transitive and
$\Lambda$ contains a 0-CLE set $K$, there is a $1/m$-pseudo orbit
from $K$ to $X$. As a consequence, there is a $1/m$-pseudo orbit
$\{z_a\}_{a=0}^{b(m)}$ such that
\begin{itemize}
\item[--] $z_{b(m)}\in U_n$ and $z_a\notin U_n$ for any $0\le a\le b(m)-1$.

\item[--] $z_0$ is $1/m$-close to $K$. Thus, $b(m)\to\infty$ as
$m\to\infty$.

\end{itemize}

By taking a subsequence if necessary, one can assume that
$y_n=\lim_{m\to\infty}z_{b(m)}$ exists and moreover,
\begin{itemize}

\item[--] $y_n\in \overline{U_n}$.

\item[--] ${\rm Orb}^-(y_n)\subset \Lambda\setminus U_n$. Hence, $\alpha(y_n)\subset \Lambda\setminus
U_n$. By the isolated assumption, $\alpha(y_n)\subset
K_{\Lambda\setminus U_n}$ is a 0-CLE set.

\end{itemize}
By taking a subsequence if necessary, one can assume that
$\lim_{n\to\infty}y_n$ exists and belongs to $X$.
This gives (i) and (ii) in the isolated case.
\medskip

Now, for both cases (the non-isolated case and the isolated case),
one gets: a sequence of $0$-CLE sets $(K_n)$ and a sequence of
points $(y_n)$ in $\Lambda$ such that:
\begin{itemize}
\item[--] for each $n$, the $\alpha$-limit set of $y_n$ is $K_n$,
\item[--] the sequence $(y_n)$ converges toward a point of $X$.
\end{itemize}
In particular we have
$$\limsup_{k\to\infty}\sum_{i=0}^{k-1}\frac{1}{k}\log\|Df^{-1}|_{E^c(f^{-i}(y_n))}\|<\log(\lambda ^{-1/2}).$$

We then explain how to get the points $(z_n)$ from the sequence $(y_n)$ in order to
satisfy (iii). We choose $\sigma\in (\lambda^{1/2},1)$, we set $\rho=\lambda^{1/2}$ and apply Lemma~\ref{l.hyperbolic-time}.
In the first case of the Lemma~\ref{l.hyperbolic-time}, the conclusion of Lemma~\ref{l.selection} holds directly
with the points $z_n=y_n$.

In the second case of the Lemma~\ref{l.hyperbolic-time}, the point $z_n$ are $(1,\sigma,F)$-hyperbolic and
(taking a subsequence) converge toward a point $z$ that is $(1,\rho,E^c)$-hyperbolic. Since
$\rho=\lambda^{1/2}<e^{-\epsilon}$, the point $z$ belongs to $X$ as required. Thus $\{z_n\}$ satisfies all the
requirements of the Lemma~\ref{l.selection}.
\end{proof}
\bigskip

Now we will discuss the thin trapped case.

\begin{proof}[\bf Proof of Proposition \ref{p.trapped}]
The Lemma~\ref{l.selection} gives us some constants $C>0$ and $\sigma\in (0,1)$.
We choose any $\rho\in (\sigma,1)$ and a constant $r>0$ given by Lemma~\ref{l.manifold2}
adapted to the constants of hyperbolicity $C,\rho$.

We choose $\chi>0$ and work with central plaques $W^c_\chi$.
We require several conditions on $\chi$:
\begin{itemize}
\item[--] $\chi<r/2$,
\item[--] for any $x\in\Lambda$ the plaque $W^c_\chi(x)$ is contained in $U_\Lambda$,
\item[--] for any $(C,\sigma,F)$-hyperbolic point $y\in \Lambda$ for $f^{-1}$
then any point $y'$ such that $f^{-n}(y')$ belongs to $W^c_\chi(f^{-n}(y))$ for each $n\geq 0$
is $(C,\rho,F)$-hyperbolic for $f^{-1}$.
\end{itemize}

By Lemma \ref{l.selection}, one can choose a 0-CLE set $K$
and $y\in \Lambda$ such that
\begin{itemize}

\item[--] $\alpha(y)=K$.

\item[--] There is $x\in X$ such that $d(x,y)<r/2$.

\item[--] $y$ is $(C,\sigma,F)$-hyperbolic.

\end{itemize}

Let us consider any central model $(\hat f,\hat \La\times [0,+\infty))$ of $\La$ associated to the plaques $W^c_\chi$
and let $\hat y$ be any lift of $y$ in the central model.
Then $\hat K=\alpha(\hat y)$ is a lift of $K$.

\begin{claim}

There is $\delta_{\hat K}>0$ such that $\pi(\hat K\times [0,\delta_{\hat K}])$ is contained
in the chain-stable set $W^{ch-s}_{U_\Lambda}(\Lambda)$ of $\Lambda$ in $U_\Lambda$.

\end{claim}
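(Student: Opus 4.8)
The plan is to study a central model $(\hat f,\hat K\times[0,+\infty))$ over $\hat K$, to show that $\hat f$ is thin trapped while $\hat f^{-1}$ is not, and then to conclude with Theorem~\ref{cm}(3) after projecting the resulting chain-stable central region down to $M$ by $\pi$. First, to see that $\hat f$ is thin trapped over $\hat K$: since $\hat K=\alpha(\hat y)$ is $\hat f$-invariant, for every $\hat w\in\hat K$ one has $\orb^-(\hat w)\subset\hat K$ and $\alpha(\pi(\hat w))\subset K$, so the hypothesis of Proposition~\ref{p.trapped}, applied with the $0$-CLE set $K_0=K$ and the point $\pi(\hat w)$, gives $\hat W^{ch-u}(\hat K)\cap(\{\hat w\}\times[0,+\infty))=\{(\hat w,0)\}$. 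As this holds for every $\hat w\in\hat K$, the chain-unstable set of $\hat K\times\{0\}$ is trivial, and Theorem~\ref{cm}(1) yields that $\hat f$ is thin trapped over $\hat K$.

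The heart of the argument is to show that $\hat f^{-1}$ is not thin trapped over $\hat K$; by Theorem~\ref{cm}(1) applied to $f^{-1}$ it suffices to produce a nontrivial interval in the $\hat f$-chain-stable set of $\hat K\times\{0\}$. Recall the data fixed above: $\alpha(y)=K$; the point $y$ is $(C,\sigma,F)$-hyperbolic for $f^{-1}$, hence has a uniform local unstable manifold tangent to $F$; and $d(x,y)<r/2$ for some $x\in X$, which, since $E^s$ is uniformly contracted and $x$ is $(1,e^{-\epsilon},E^c)$-hyperbolic for $f$, has a uniform local stable manifold tangent to $E^s\oplus E^c$. By Lemma~\ref{l.manifold-intersect} (equivalently by Lemma~\ref{l.manifold2}, using a central plaque through $x$ as the transverse curve) these manifolds intersect at a point $w$. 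Its forward orbit converges to the forward orbit of $x$, hence to $\omega(x)\subset\Lambda$, staying in $U_\Lambda$; so $w$, and any short central curve $\gamma\ni w$ contained in the local stable manifold of $x$, lies in $W^{ch-s}_{U_\Lambda}(\Lambda)$. Its backward orbit shadows that of $y$, whence $\alpha(w)=K$.

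Now I pull $\gamma$ back: $f^{-n}(\gamma)$ is a central curve through $f^{-n}(w)$ whose length is controlled by the products $\prod_{i=0}^{n-1}\|Df^{-1}_{|E^c}(f^{-i}(w))\|$. Since $\alpha(w)=K$ is a $0$-CLE set, the Birkhoff sums of $\log\|Df^{-1}_{|E^c}\|$ along the backward orbit of $w$ have vanishing time-average; combining this with a hyperbolic-time selection (Lemmas~\ref{l.infinite-pliss} and~\ref{l.hyperbolic-time}) one chooses a subsequence $n_j\to\infty$ with $f^{-n_j}(w)\to k_\infty\in K$ along which the curves $f^{-n_j}(\gamma)$ stay in $U_\Lambda$ with lengths bounded away from $0$, so $J:=\lim_j f^{-n_j}(\gamma)$ is a nontrivial central curve through $k_\infty$. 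As $W^{ch-s}_{U_\Lambda}(\Lambda)$ is relatively closed and invariant in $U_\Lambda$, we get $J\subset W^{ch-s}_{U_\Lambda}(\Lambda)$. Finally, $J$ is contained in the local stable manifold of $x$ and passes through $k_\infty$, so $f^n(J)$ shrinks to $\{f^n(x)\}$; being a central curve through $f^n(k_\infty)\in K$, this forces $f^n(J)$ to collapse onto $\hat K\times\{0\}$ in the central model, so the interval over $\hat k_\infty$ corresponding to $J$ lies in the $\hat f$-stable, hence chain-stable, set of $\hat K\times\{0\}$, and it is nontrivial. Having both that $\hat f$ is thin trapped and that $\hat f^{-1}$ is not, Theorem~\ref{cm}(3) provides $\delta_{\hat K}>0$ with $\hat K\times[0,\delta_{\hat K}]\subset\hat W^{ch-s}(\hat K\times\{0\})$. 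For $\chi$ small every plaque $W^c_\chi(z)$, $z\in\Lambda$, lies in $U_\Lambda$, so the $\pi$-image of any $\varepsilon$-pseudo-orbit of the central model ending in $\hat K\times\{0\}$ is, up to the modulus of uniform continuity of $\pi$, a pseudo-orbit of $f$ inside $U_\Lambda$ ending in $\pi(\hat K\times\{0\})=K\subset\Lambda$; hence $\pi(\hat K\times[0,\delta_{\hat K}])\subset W^{ch-s}_{U_\Lambda}(K)\subset W^{ch-s}_{U_\Lambda}(\Lambda)$, which is the claim.

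The main obstacle is the middle step, and precisely the non-degeneracy of the pulled-back curve $J$: one must prevent the central curves $f^{-n}(\gamma)$ from shrinking to a point and from escaping $U_\Lambda$, and it is here that the $0$-CLE property of $K$ (vanishing average central distortion along the backward orbit of $w$), the domination of $E^c$ by $F$, and the closedness and invariance of the chain-stable set must all be combined carefully.
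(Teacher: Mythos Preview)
The decisive error is the assertion that $J=\lim_j f^{-n_j}(\gamma)$ ``is contained in the local stable manifold of $x$''. Each $f^{-n_j}(\gamma)$ lies only in the \emph{global} stable manifold $W^s(x)$, and as $n_j\to\infty$ these curves follow $f^{-n_j}(w)$ toward $K$, away from $x$; since $W^s(x)$ is not closed, the Hausdorff limit $J$ need not lie in $W^s(x)$ at all. The subsequent claim that $f^n(J)$ shrinks to $\{f^n(x)\}$ is therefore unfounded, and with it your argument that $J$ yields a nontrivial interval in $\hat W^{ch-s}(\hat K\times\{0\})$. There are collateral gaps as well: $\gamma$ passes through $w\notin\Lambda$, so it is not a fiber of the central model, and only the single point $w\in\gamma$ has backward orbit tending to $K$---you have no control on $f^{-n}$ of the rest of $\gamma$, hence no reason that $f^{-n}(\gamma)\subset U_\Lambda$ or that $f^{-n}(\gamma)$ remains a central curve. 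And the non-degeneracy of $J$ is not established: the $0$-CLE property gives vanishing \emph{average} of $\log\|Df^{-1}_{|E^c}\|$ along the backward orbit of $w$, which does not bound the products $\prod\|Df^{-1}_{|E^c}\|$ away from zero; Lemmas~\ref{l.infinite-pliss} and~\ref{l.hyperbolic-time} do not provide this.

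The paper avoids all of this by working with the interval $\{\hat y\}\times[0,\delta]$ already sitting in the central model over $\hat y\in\hat\Lambda$, rather than with an external curve through $w$. It splits into two cases according to whether the backward iterates $\hat f^{-n}(\{\hat y\}\times[0,\delta])$ eventually reach $\{\cdot\}\times\{1\}$ or stay uniformly inside $[0,1]$. In the first case one gets a nontrivial interval in $\hat W^{ch-s}(\hat K\times\{0\})$ directly and concludes via thin-trapping, essentially as you intended. In the second case the lengths are bounded below \emph{by the standing hypothesis} that $\hat W^{ch-u}(\hat K\cup\orb^-(\hat y))$ is trivial at $\hat y$ (otherwise $\{\hat y\}\times[0,\delta]$ would lie in it), and the inclusion $\pi(\hat K\times[0,\delta_{\hat K}])\subset W^{ch-s}_{U_\Lambda}(\Lambda)$ is then proved pointwise in $M$: for each $(\hat x,a)$ one approximates $\pi(\hat x,a)$ by $y'=\pi(\hat f^{-n}(\hat y,t))$, whose forward iterate $\pi(\hat y,t)\in W^c_\chi(y)$ is $r$-close to $x\in X$ and $(C,\rho,F)$-hyperbolic for $f^{-1}$, and Lemma~\ref{l.manifold2} furnishes the intersection with a stable manifold in $\Lambda$. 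No nontrivial interval in $\hat W^{ch-s}(\hat K\times\{0\})$ is ever needed in this case.
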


\begin{proof}[Proof of the claim]
One can assume that the map $\hat f$ is defined from
$\hat \La\times [0,1]$ to $\hat \La\times [0,+\infty)$.
There are two possibilities:
\begin{enumerate}

\item There is a sequence $\{\delta_n\}$ and a sequence $\{k_n\}\subset \NN$ such that
\begin{itemize}
\item[--] $\delta_n\to 0$ as $n\to\infty$,

\item[--] $f^{-k}(\{\hat y\}\times [0,\delta_n])\subset \{\hat f^{-k}(\hat y)\}\times [0,1]$ for any $0\le k\le k_n$,
\item[--] $\hat f^{-k_n}(\{\hat y\}\times [0,\delta_n])=\{\hat f^{-k_n}(\hat y)\}\times [0,1]$.

\end{itemize}

\item There is $\delta_{\hat y}>0$ such that $\hat f^{-n}(\{\hat y\}\times [0,\delta_y])\subset
\{\hat f ^{-n}(\hat y)\}\times [0,1]$ for any $n\in\NN$.

\end{enumerate}
In case (1), the chain-stable set of $\hat K\times \{0\}$ contains
an interval $\{\hat z\}\times [0,1]$.
By our assumptions, the chain-unstable set does not contain a non-trivial interval
$\{\hat y'\}\times [0,1]$, hence the central dynamics of $\hat K\times \{0\}$
is thin trapped by the theorem~\ref{cm}(1).
Arguing as in the begining of the proof of Proposition~\ref{NS},
one deduces that there exists $a>0$ such that
$\hat K\times [0,a]$ is in the chain-stable set of $\hat K\times \{0\}$ in the central model.
Since the plaques $W^{c}_\chi$ are contained in $U_\Lambda$, the projection
$\pi(\hat K\times [0,1])$ is contained in $W^{ch-s}_{U_\Lambda}(\La)$.

In case (2), the length of $\hat f^{-n}(\{\hat y\}\times [0,\delta_y])$
is bounded away from zero, since otherwise $\{\hat y\}\times [0,\delta_y]$ would be contained in the
chain-unstable set of $\hat y$ in the central model
and this would contradict our assumption that $\hat W^{ch-u}(\hat y)$ is reduced to $\{\hat y\}$.
We let $\delta_{\hat K}$ be a lower bound for the length of
$\hat f^{-n}(\{\hat y\}\times [0,\delta_y])$.

Let us consider $(\hat x,a)\in \hat K\times [0,\delta_{\hat K}]$ and fix $\alpha>0$.
By construction, there exists $n\geq 0$ arbitrarily large and $t\in [0,\delta_y]$ such that
$y':=\pi(\hat f^{-n}(\hat y,t)))$ is $\alpha$-close to $\pi(\hat x,a)$.
Since $\hat f^{-n}(\hat y,t)$ are defined for any $n\geq 0$, the backward iterates
of $y'$ belong to the plaques $f^{-n}(W^{c}_\chi(y))$.
In particular $y'$ belongs to $\Lambda^-$ and by our choice of $\chi$ and $y$ we have
$d(y',x)<r$. We can thus apply the Lemmas~\ref{l.manifold1} and~\ref{l.manifold2}:
there exists a point $z$ whose orbit is contained in $U_\Lambda$ such that
$d(f^k(z), \Lambda)$ goes to zero as $k$ goes to $+\infty$ and such that
$d(f^{-n}(z), f^{-n}(y'))$ is exponentially small in $n$. If $n$ has been chosen large enough,
$f^{-n}(z)$ is at distance smaller than $2\alpha$ from $\pi(\hat x,a)$.
Since $\alpha$ is arbitrarily small, the point $\pi(\hat x,a)$ belongs to
$W^{ch-s}_{U_\Lambda}(\Lambda)$.
\end{proof}

By changing the lift of $\hat y$ (in the case the central dynamics of $\Lambda$ is non-orientable)
or by changing the central model, we get another lift $\hat K$, another constant $\delta'_{\hat K'}$
such that the union of the projections $\pi(\hat K\times [0,\delta_{\hat K}])\cup
\pi(\hat K'\times [0,\delta'_{\hat K'}])$ cover the plaques $W^c_\eta(x)$ with $x\in K$
for some $\eta>0$ small.
\end{proof}

\section{Consequences}\label{s.consequences}
In this section we prove the corollaries stated in the introduction.

\subsection{Central bundles of homoclinic classes. Proof of Corollary~\ref{c.ergodic}}
\label{ss.central}
Let us consider a homoclinic class $H(p)$ for a $C^1$-generic diffeomorphism $f\in \diff^1(M)\setminus \HT$, and
a one-dimensional central bundle $E^c$. Note that if $H(p)$ contains periodic orbits such that $E^c$ is stable
and others such that $E^c$ is unstable, then, the class contains a chain-transitive central segment associated
to $E^c$. Otherwise, the next result shows that the central dynamics along the bundle $E^c$ is thin trapped for
$f$ or $f^{-1}$ (implying that $E^c$ has type (H)-attracting or (H)-repelling according to the classification of
\cite[Section 2.2]{Cro08} and that it is ``chain-hyperbolic", according to the definition introduced in
\cite[Definition 7]{CP}).

\begin{prop}\label{p.TT}
Let $f$ be a diffeomorphism in a dense $G_\delta$ subset of $\diff^1(M)$
and let $H(p)$ be a homoclinic class endowed with a dominated splitting
$T_{H(p)}M=E\oplus E^c\oplus F$ such that:
\begin{itemize}
\item[--] the minimal index of $H(p)$ equals $\operatorname{dim}(E\oplus E^c)$,
\item[--] there exists periodic orbits related with $p$ whose Lyapunov exponent along $E^c$ is arbitrarily close to $0$.
\end{itemize}
Then, the central dynamics along $E^c$ is thin-trapped.
In particular $E$ is uniformly contracted.
\end{prop}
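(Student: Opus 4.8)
The plan is to reduce the statement to the classification Theorem~\ref{cm} for central models, by showing that the central dynamics along $E^c$ has no central model with a non-trivial chain-unstable interval and no chain-recurrent central segment. Suppose for contradiction that the central dynamics along $E^c$ is \emph{not} thin-trapped. By Theorem~\ref{cm}(1), applied to some central model $(\hat K\times[0,+\infty),\hat f)$ with $\hat K$ covering the unit bundle of $E^c$ over $H(p)$, the chain-unstable set of $\hat K\times\{0\}$ contains a non-trivial interval $\{\hat y\}\times[0,a]$ with $a>0$. The first step is to set up, exactly as at the start of Section~\ref{s.connecting}, normalized central models associated to plaques $W^c_\eta$ for $\eta$ small, so that we can speak of chain-recurrent central segments inside small neighborhoods $U_\Lambda$ of $H(p)$.

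The heart of the argument is then to invoke the machinery already built for Theorem~\ref{main2}: first one produces, as in Proposition~\ref{p.unstable} (whose proof uses only the chain-unstable interval hypothesis, a $0$-CLE subset --- here $H(p)$ itself together with the hypothesis that there are periodic orbits related to $p$ with central exponent arbitrarily close to $0$ --- and the generic connecting lemmas), a point $x\in H(p)$ with a chain-recurrent central segment $I$ and, for every interior $z\in I$ and every neighborhood $V_z$, a periodic point $p'\in V_z$ whose orbit lies in $U_\Lambda$ and whose Lyapunov exponent along $E^c$ lies in $(-\theta,\theta)$. Applying Corollary~\ref{c.central-segment} to $f$ and $f^{-1}$ produces such a periodic point $p'$ contained both in the local chain-stable and the local chain-unstable sets of $H(p)$; by Corollary~\ref{c.local-class} the local homoclinic class $H(p',U)$ contains $H(p)$, so $p'$ is homoclinically related to $p$ and $H(p')=H(p)$. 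Now the contradiction: $p'$ has small central exponent, so by a small Franks-type perturbation its index could be lowered (this is essentially Lemma~\ref{l.index}), but more directly, having a chain-recurrent central segment together with periodic orbits related to $p$ on which $E^c$ is stable forces periodic orbits related to $p$ on which $E^c$ is \emph{unstable}. That produces in $H(p)$ periodic points of index $\dim(E)$, strictly below $\dim(E\oplus E^c)$, contradicting the hypothesis that the minimal index of $H(p)$ equals $\dim(E\oplus E^c)$.

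Thus no central model of $H(p)$ along $E^c$ has a non-trivial chain-unstable interval \emph{and} (symmetrically, applying the same reasoning to $f^{-1}$, using the maximal-index side) the situation where both $\hat f$ and $\hat f^{-1}$ fail to be thin-trapped is excluded by Theorem~\ref{cm}(2), since a chain-recurrent central segment would again yield, via Corollary~\ref{c.central-segment} and Corollary~\ref{c.local-class}, a periodic point related to $p$ of index $\dim(E)$. Hence at least $\hat f$ is thin-trapped; combined with Lemma~\ref{l.model-uniqueness} this shows the central dynamics along $E^c$ is thin-trapped, which is the conclusion. Finally, for the last sentence: if $E$ were not uniformly contracted, then, $E^s$ being potentially non-uniform, one could apply the selecting-lemma arguments (as in Theorem~\ref{C3options} or in Remark~\ref{r.related}) to obtain periodic orbits in $H(p)$ of index strictly less than $\dim(E)<\dim(E\oplus E^c)$, again contradicting minimality of the index; so $E$ is uniformly contracted.

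The main obstacle I expect is organizing the ``would-give-a-lower-index-periodic-orbit'' argument cleanly: one must make sure that the periodic point produced by the central-segment machinery is genuinely \emph{homoclinically related} to $p$ (not merely in the chain class), so that its index is forced to be $\geq\dim(E\oplus E^c)$ by hypothesis, and then exhibit a companion periodic orbit on which $E^c$ is uniformly expanded --- this is where one needs the hypothesis that weak periodic orbits related to $p$ exist, feeding it into Proposition~\ref{p.unstable} correctly, and where the delicate point is that a chain-recurrent central segment yields \emph{both} a stable and an unstable weak direction along $E^c$, pushing the index strictly below $\dim(E\oplus E^c)$.
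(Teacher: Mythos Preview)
Your plan has a genuine gap: you cannot invoke Proposition~\ref{p.unstable} here, because that proposition requires a $0$-CLE set $K\subset\Lambda$, i.e.\ a chain-transitive invariant compact set on which \emph{every} ergodic measure has zero central exponent. The homoclinic class $H(p)$ is not such a set (the periodic orbits related to $p$ have negative central exponent, since their index is $\dim(E\oplus E^c)$), and nothing in the hypotheses guarantees that $H(p)$ contains any $0$-CLE subset at all. So the route ``chain-unstable interval $\Rightarrow$ Proposition~\ref{p.unstable} $\Rightarrow$ chain-recurrent central segment'' does not get off the ground. Your symmetric appeal to ``the maximal-index side'' is also unjustified: there is no maximal-index hypothesis in the statement.

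The paper's argument bypasses Proposition~\ref{p.unstable} entirely, via a one-line observation you are missing: since $H(p)$ contains periodic orbits on which $E^c$ is \emph{contracted} by $f$ (index $\dim(E\oplus E^c)$), no central model can be thin-trapped for $\hat f^{-1}$. Given that, Theorem~\ref{cm}(2) says directly that if $\hat f$ is not thin-trapped either, there \emph{must} exist a chain-recurrent central segment $I$ --- no need to construct one. Now $I\subset H(p)$ (it lies in the chain-recurrence class of $p$), so interior points of $I$ are accumulated by periodic points $p_n$ homoclinically related to $p$ with central exponent arbitrarily close to $0$; by Lemma~\ref{l.index} one may take these of index $\dim(E)$; by Corollary~\ref{c.central-segment} the $p_n$ lie in the chain-stable and chain-unstable sets of $H(p)$, hence in $H(p)$ itself. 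This contradicts the minimal-index hypothesis. The whole detour through the machinery of Section~\ref{s.connecting} is unnecessary --- the weak periodic points you need are already handed to you by hypothesis, and the chain-recurrent central segment comes for free from Theorem~\ref{cm}(2) once you observe $\hat f^{-1}$ is not thin-trapped.
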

\begin{proof}
Note that since $H(p)$ contains periodic orbits such that $E^c$ is stable, there is no central model associated
to $E^c$ such that the dynamics is trapped for $\hat f^{-1}$. According to Theorem~\ref{cm}(2) we have to rule out
the existence of a chain-recurrent central segment $I$ for $H(p)$. Since $I\subset H(p)$, for any $z$ in the
interior of $I$ there exists a sequence of periodic points $p_n$ converging to $z$ and whose central Lyapunov
exponent is close to zero. Since $f$ is $C^1$-generic, by Lemma~\ref{l.index} one can choose the points $p_n$ with index equal to
$\operatorname{dim}(E)$. By Corollary~\ref{c.central-segment} the points $p_n$ belong to $H(p)$ and we obtain a
contradiction.
\end{proof}

We prove the result about the ergodic closing lemma.
\begin{proof}[Proof of Corollary~\ref{c.ergodic}]
By Theorem~\ref{main}, there exists a dominated splitting
$T_{H(p)}M=E^s\oplus E^c\oplus F$ with $\dim(E^c)=1$
and $\dim(E^s\oplus E^c)=i$.
Moreover, there exists periodic orbits in $H(p)$ whose Lyapunov exponent along $E^c$
is close to zero.
By the previous lemma the central dynamics along $ E^c$ is thin trapped.

By Ma\~n\'e's ergodic closing lemma~\cite{mane-ergodic,abc-mesure},
there exists a sequence of periodic orbits $\bar O_n$ whose associated measures
converge towards $\mu$. Note that they are contained in an arbitrarily small neighborhood of $H(p)$.
Consider a plaque family $\D^{cs}$ tangent to $E^{s}\oplus E^c$ over the maximal invariant set
in a small neighborhood of $H(p)$, which is trapped by $f$ and whose plaques are small.
For some $\sigma\in (0,1)$, there exists a $(1,\sigma,F)$-hyperbolic point $x$
in the support of $\mu$. Its unstable manifold meets the plaque $\D^{cs}_{\bar q_n}$
of some point $\bar q_n\in \bar O_n$ for large $n$.
Since the plaques are trapped, it meets the stable manifold of some hyperbolic periodic point $q_n\in \D^{cs}_{\bar q_n}$.
This implies that $q_n$ belongs to the chain-unstable set of $H(p)$.
Considering smaller plaque families,
this construction shows that there exists a sequence of hyperbolic periodic orbits $O_n$
whose associated measures converge to $\mu$ and that are contained in the chain-unstable set of $H(p)$.

On the other hand, since the central exponent of $O_n$ is close to zero,
some point $q_n'\in O_n$ has a uniform unstable manifold tangent to $F$.
One can assume that $(q'_n)$ converges toward some point $x\in H(p)$.
Since the central dynamics along $E^c$ is thin trapped, the Lemma~\ref{basic}.
implies that the orbits $O_n$ also belong to the chain-stable set of $H(p)$.
As a consequence, $H(p)$ contains periodic orbits whose measure converge towards $\mu$.
\end{proof}

\subsection{More arguments about aperiodic classes}\label{ss.aperiodic}

We provide now an alternative argument for the item (1) of Theorem~\ref{main} about aperiodic classes, which is
shorter since it does not use Theorem~\ref{main2} (nor Lemma~\ref{l.class-domina}) and the Sections~\ref{s.main2}
to~\ref{s.trapped}.

\begin{prop}\label{p.aperiodic}
For any diffeomorphism $f$ in a dense $G_\delta$ subset of $\diff^1(M)\setminus \HT$, any aperiodic class $\C$
is partially hyperbolic with a one-dimensional central bundle.
\end{prop}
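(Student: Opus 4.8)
The plan is to show that for a $C^1$-generic diffeomorphism $f \in \diff^1(M)\setminus \HT$ any aperiodic class $\C$ admits a partially hyperbolic splitting with a one-dimensional central bundle, by running the same machinery used in Section~\ref{s.conclusion} but using the stronger version of Proposition~\ref{NS} stated in Remark~\ref{r.related} (which does not require uniform expansion of $E^u$) to avoid appealing to Theorem~\ref{main2}. First I would recall that by Theorem~\ref{t.hausdorff} an aperiodic class $\C$ is the Hausdorff limit of a sequence of periodic orbits, all of which (by genericity) have a common index $i$; by Theorem~\ref{wen} this yields a dominated splitting $T_\C M = E \oplus F$ with $\dim(E) = i$. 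Since $\C$ is aperiodic it is not hyperbolic, so (up to replacing $f$ by $f^{-1}$, which is legitimate since $\HT$ is invariant under inversion) we may assume $E$ is not uniformly contracted.

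Next I would apply Theorem~\ref{C3options} to this dominated splitting. Case~\eqref{lessindex} produces a local homoclinic class $H(O,U)$ intersecting $\C$ with $O$ of index $<\dim(E)$; since by Corollary~\ref{c.local-class} and the remarks following it this forces $\C$ to be a homoclinic class, this contradicts aperiodicity. Case~\eqref{equalindex} produces periodic orbits of index exactly $\dim(E)$ with arbitrarily weak maximal exponent along $E$ converging to a subset of $\C$; by Lemma~\ref{l.index} one then gets periodic orbits of index $\dim(E)-1$ accumulating on a subset of $\C$, and one can restart with a finer dominated splitting. So, arguing inductively (each step strictly decreases $\dim(E)$, hence the process stops), one reaches Case~\eqref{subset}: there is an invariant compact set $K \subset \C$ with a partially hyperbolic splitting $E^s \oplus E^c \oplus E^u$, $E^c$ one-dimensional, $\dim(E^s) < \dim(E)$, and every invariant measure on $K$ has zero central exponent. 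As in the proof of Lemma~\ref{l.class-domina} I would choose $K$ minimal with $\dim(E^s)$ as small as possible, so that $E^s$ is automatically uniformly contracted.

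The core of the argument is then to propagate the partially hyperbolic structure from $K$ to all of $\C$. Exactly as in Lemma~\ref{l.class-domina}, I would build the family $\F$ of chain-transitive subsets $\La \supset K$ carrying a dominated splitting $E'\oplus E^c\oplus F'$ with $\dim(E')=\dim(E^s)$ that are Hausdorff limits of periodic orbits with arbitrarily weak central exponent, use Theorem~\ref{wen}, Lemma~\ref{l.Ndomination} and Theorem~\ref{t.hausdorff} to check $\F$ is closed under increasing unions, and apply Zorn's Lemma to get a maximal element $\La$. If $\La = \C$ we are done after checking via Theorem~\ref{C3options} and the minimality of $\dim(E^s)$ that $E'=E^s$ is uniformly contracted; if $\La \subsetneq \C$, Proposition~\ref{spread} gives an invariant measure on $\La$ with non-zero central exponent. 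Here is where the argument diverges from Lemma~\ref{l.class-domina}: instead of Theorem~\ref{main2} I would invoke Proposition~\ref{p.unstable} together with Corollary~\ref{c.central-segment} in the chain-unstable case, and in the thin-trapped case use Lemma~\ref{l.selection}, Proposition~\ref{p.trapped} and the strengthened Proposition~\ref{NS} from Remark~\ref{r.related} (whose hypotheses — uniform contraction of $E^s$, thin-trapped central dynamics, existence of an $F$-hyperbolic point for $f^{-1}$ in $K$, and a central plaque in the chain-stable set — are supplied by Lemma~\ref{l.selection} and Proposition~\ref{p.trapped}). In either case one produces periodic points in $\C$ with arbitrarily weak central exponent, and — since $\C$ is aperiodic and hence contains no periodic point — this is a contradiction, forcing $\La = \C$. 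The main obstacle is verifying that the hypotheses of the Remark~\ref{r.related} version of Proposition~\ref{NS} are met — in particular that one may pass to a sub-$0$-CLE set on which an $F$-hyperbolic point for $f^{-1}$ exists while keeping a central plaque inside the chain-stable set of $\C$ — which is precisely what Lemma~\ref{l.selection} and Proposition~\ref{p.trapped} are designed to deliver. Finally, applying the whole argument also to $f^{-1}$ (so that both $E^s$ becomes uniformly contracted and the top bundle uniformly expanded), and noting that by the zero-central-exponent conclusion together with domination the complementary bundle $E^{cu}$ is uniformly expanded, gives that $\C = T_\C M = E^s \oplus E^c \oplus E^u$ is partially hyperbolic with $\dim(E^c)=1$, as claimed. \qed
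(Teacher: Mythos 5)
Your argument is essentially correct, but it is not the proof the paper gives for this proposition: what you have reconstructed is the paper's \emph{main-line} proof of item (1) of Theorem~\ref{main} (Section~\ref{s.conclusion}, via Lemma~\ref{l.class-domina}, Proposition~\ref{spread} and Theorem~\ref{main2}), whereas Proposition~\ref{p.aperiodic} exists precisely to give a \emph{shorter alternative} proof independent of Theorem~\ref{main2} and of Sections~\ref{s.connecting}--\ref{s.trapped}. Your claim to ``avoid appealing to Theorem~\ref{main2}'' is illusory: Proposition~\ref{p.unstable}, Lemma~\ref{l.selection} and Proposition~\ref{p.trapped} \emph{are} its proof, and the substitution of Remark~\ref{r.related} for Proposition~\ref{NS} is not needed there --- on a $0$-CLE set the domination makes every $F$-Lyapunov exponent of every ergodic measure uniformly positive, so $F$ is uniformly expanded and Proposition~\ref{NS} applies as written. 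The paper's proof instead works directly with the periodic orbits $O_n$ Hausdorff-approximating $\C$: if they carry no weak exponent, the uniform estimates~\eqref{e.wen} together with Lemmas~\ref{l.hyperbolic-time} and~\ref{l.manifold-intersect} put the $O_n$ in both the chain-stable and chain-unstable sets of $\C$, hence in $\C$, a contradiction; if they do, one gets $E\oplus E^c\oplus F$, and when this is not partially hyperbolic the central-model classification (Theorem~\ref{cm}, Corollary~\ref{c.central-segment}) forces the central dynamics to be thin trapped for $f$, after which Remark~\ref{r.related} --- applied to $\C$ itself, the one place where it is genuinely indispensable because $F$ there is \emph{not} uniformly expanded --- yields a periodic point in $\C$. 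What your route buys is uniformity with the rest of the paper; what the paper's route buys is a self-contained argument an order of magnitude shorter.

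Two loose ends in your write-up deserve attention. First, in case~\eqref{equalindex} of Theorem~\ref{C3options} the weak orbits only accumulate on a \emph{subset} of $\C$, so you cannot ``restart with a finer dominated splitting'' on all of $\C$ from them; the correct observation is that the local homoclinic class appearing in cases~\eqref{lessindex} and~\eqref{equalindex} is a transitive set meeting the chain class $\C$, hence contained in it, which contradicts aperiodicity outright. Second, the ``zero-central-exponent conclusion'' you invoke at the end is not a consequence of having forced $\La=\C$: you must run the same machinery once more on $\C$ itself to exclude an ergodic measure with nonzero central exponent, and only then does domination give the uniform expansion of the remaining bundle; the additional pass through $f^{-1}$ is superfluous and would in any case produce a possibly different splitting that you would still have to reconcile with the first.
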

\begin{proof}
By Theorem~\ref{t.hausdorff}, the class $\C$ is limit of a sequence of hyperbolic periodic orbit
$O_n$ for the Hausdorff distance.
\medskip

Let us assume by contradiction that these periodic orbits do not have a weak Lyapunov exponent.
By Theorem~\ref{wen}, this induces a dominated splitting $T_\C M=E\oplus F$
where $\operatorname{dim}(E)$ equals the index of the periodic orbits
and there exist $C>0$, $\sigma\in (0,1)$ and $N\geq 1$ such that inequalities~\eqref{e.wen} in Theorem~\ref{wen} hold
for any point of the orbits $O_n$. We will set $\lambda=1/2$
and we can assume that $\sigma^2>\lambda=1/2$.

Passing to the limit, this implies that $\C$ has a $(C,\sigma,E)$-hyperbolic point $y$ for $f^N$. Let us
consider a sequence of periodic points $y_k$ in the union of the orbits $O_n$ which converges toward $y$. By
Lemma~\ref{l.hyperbolic-time}, one can replace the points $y_k$ and the point $y$ so that the first are
$(C,\sigma,F)$-hyperbolic for $f^{-N}$ and the second is $(C,\sigma, E)$-hyperbolic for $f^N$.
Lemma~\ref{l.manifold-intersect} implies that for $n$ large enough, the periodic orbits $O_n$ belong to the
chain-stable set of $\C$.

The symmetric argument for $f^{-1}$ shows that for $n$ large enough, the periodic orbits $O_n$ belong to the
chain-unstable set of $\C$. This proves that the $O_n$ are contained in $\C$, contradicting the assumption that
$\C$ is aperiodic.
\medskip

We have shown that the periodic orbits $O_n$ have a weak Lyapunov exponent, hence induce by Theorem~\ref{wen} a
splitting $E\oplus E^c\oplus F$ on $\C$, with $\operatorname{dim}(E^c)=1$. If the conclusion of the
Proposition~\ref{p.aperiodic} does not hold, either $F$ is not uniformly expanded or $E$ is not uniformly
contracted. One can assume for instance that $F$ is not uniform. With the domination, this implies that $\C$
contains a $E^c$-hyperbolic point for $f$. In particular, for any central model, the dynamics can not be thin
trapped for $f^{-1}$. By Corollary~\ref{c.central-segment}, the set $\C$ has no chain-recurrent central segment.
Thus by Theorem~\ref{cm}(2), the central dynamics of $\C$ is thin trapped for $f$. With the domination, we
conclude that $E$ is uniformly contracted.
The existence of a $E^c$-hyperbolic point for $f$ in $\C$
also implies that there exists a central plaque of $\C$ contained in the chain-stable set of $\C$.

Passing to the limit with the periodic orbits $O_n$, we also deduce that there exists a
$(C,\sigma,F)$-hyperbolic point for $f^{-N}$ in $\C$. Thus the result stated in Remark~\ref{r.related} applies
concluding that $\C$ contains a periodic point, which is a contradiction. The proof is now complete.
\end{proof}

\subsection{Weak periodic points: proof of Corollary~\ref{c.noweak}}

By Theorem~\ref{main}, for any $C^1$ generic $f$ which is away from ones exhibit a homoclinic tangency, every homoclinic class $H(p)$ admits a
dominated splitting $E\oplus F$ such that $\operatorname{dim}(E)$ equals the index of $p$.
If $H(p)$ is not hyperbolic, either $E$ is not uniformly contracted or
$F$ is not uniformly expanded.
We will assume that we are in the first case.
Theorem~\ref{main} gives a dominated splitting $E=E'\oplus E^c$
with $\dim(E^c)=1$.

If there exist periodic points in $H(p)$ of index $\operatorname{dim}(E')$,
then by Theorem~\ref{indices} there exists periodic orbits in $H(p)$ with the same index as $p$
whose Lyapunov exponent along $E^c$ is arbitrarily weak.
By Corollary~\ref{c.class}, these periodic points are homoclinically related to $p$
as required.

Otherwise the index of $p$ coincides with the minimal index of the class.
By Theorem~\ref{main}, there exists periodic orbits in $H(p)$ with the same index as $p$
whose Lyapunov exponent along $E^c$ is arbitrarily weak.
We conclude in the same way.

\subsection{About Palis conjecture: proof of Corollary~\ref{c.cycle}}
For a $C^1$ generic $f$ which is away from ones exhibit a homoclinic tangency or a heterodimensional cycle, Theorem~\ref{main} applies and
any homoclinic class $H(p)$ has a partially hyperbolic splitting
$T_{H(p)}M=E^s\oplus E^c_1\oplus\dots\oplus E^c_k\oplus E^u$.
If $k\leq 2$ we are done, otherwise the class contains periodic points of different indices
and Theorem~\ref{t.diffind} gives a contradiction.

\subsection{Lyapunov stable classes: proof of Corollary~\ref{c.lyap-stable}}
Consider a $C^1$-generic diffeomorphism $f$
in $\diff^1(M)\setminus \HT$ and a Lyapunov stable chain-recurrence class
$\C$.

\begin{claim}[\cite{yang-lyapunov}] The class $\C$ is a homoclinic class.
\end{claim}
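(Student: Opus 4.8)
The plan is to reduce the statement to the production of a periodic point inside $\C$, then use the partially hyperbolic structure furnished by Theorem~\ref{main} together with the Lyapunov stability of $\C$ to create such an orbit. By~\cite{BC}, a chain-recurrence class of a $C^1$-generic diffeomorphism that contains a periodic point is the homoclinic class of that point, so I would argue by contradiction and assume $\C$ is aperiodic. By item~(1) of Theorem~\ref{main}, $\C$ then carries a partially hyperbolic splitting $T_\C M=E^s\oplus E^c\oplus E^u$ with $\dim E^c=1$ along which every invariant measure has vanishing Lyapunov exponent; moreover the inductive construction of the central bundle in Section~\ref{s.conclusion} (the successive applications of Lemma~\ref{l.class-domina} and its proof) shows that $\C$ is the Hausdorff limit of periodic orbits $O_n$ whose Lyapunov exponent along $E^c$ tends to $0$. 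I would then fix a locally invariant plaque family $\D^c$ tangent to $E^c$ over $\C$ and analyze the central dynamics by means of central models.

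The first real step is to rule out a chain-recurrent central segment for $\C$. If $I$ were one, then $I$ lies in the chain-stable and in the chain-unstable set of $\C$ inside a small neighborhood, so each point of $I$ is chain-bi-related to $\C$ and hence $I\subset\C$; choosing a non-periodic interior point $z$ of $I$ and points $p_n\in O_n$ with $p_n\to z$, Corollary~\ref{c.central-segment} would place $p_n$, for $n$ large, simultaneously in the local chain-stable and the local chain-unstable set of $\C$, forcing $p_n\in\C$ and contradicting aperiodicity. Thus $\C$ has no chain-recurrent central segment, and by the classification Theorem~\ref{cm}(2) the central dynamics of $\C$ is thin trapped for $f$ or for $f^{-1}$.

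If it is thin trapped for $f^{-1}$ but not for $f$, then Theorem~\ref{cm}(3) applied to $f^{-1}$ yields a neighborhood of the zero section $\hat K\times\{0\}$ of a central model contained in the chain-stable set of $\hat K\times\{0\}$ for $\hat f^{-1}$; projecting, some central plaque $W^c_\eta(x)$ with $x\in\C$ lies in $W^{ch-u}_{U_\Lambda}(\C)$ for a small neighborhood $U_\Lambda$ of $\C$, and Proposition~\ref{NS} applied to $f^{-1}$ (with $K=\C$) produces a periodic point $p$ whose orbit stays in an arbitrarily small neighborhood of $\C$ and which lies in the local chain-stable and chain-unstable sets of $\C$; hence $p\in\C$, a contradiction. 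If instead the central dynamics is thin trapped for $f$, I would invoke Lyapunov stability: fixing a small trapping neighborhood $U_\Lambda$ with $f(\overline{U_\Lambda})\subset U_\Lambda$, the forward invariance of the center-stable plaque family keeps the forward orbit of every point of a small central plaque $W^c_\eta(x)$, $x\in\C$, in $U_\Lambda$, and since $\C$ is the only chain-recurrence class meeting $\overline{U_\Lambda}$ once $U_\Lambda$ is small enough — a property available for $C^1$-generic $f$ through Conley theory, cf.~\cite{BC,bgly} — such a forward orbit chain-reaches $\C$ inside $U_\Lambda$, so that $W^c_\eta(x)\subset W^{ch-s}_{U_\Lambda}(\C)$. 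The hypotheses of Proposition~\ref{NS} are then met with $K=\C$, and it again produces a periodic point $p$ with orbit near $\C$ belonging to the local chain-stable and chain-unstable sets of $\C$, whence $p\in\C$, a contradiction. In every case $\C$ contains a periodic point, so it is a homoclinic class.

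The step I expect to be the main obstacle is the last one: when the central dynamics is thin trapped for $f$ (and possibly for $f^{-1}$ as well), producing a central plaque of $\C$ inside its chain-stable set. For a general aperiodic class this is out of reach — such classes need not be homoclinic classes — and it is precisely here that Lyapunov stability becomes indispensable, substituting for the ``non-zero central exponent'' hypothesis of Theorem~\ref{main2} (which fails for $\C$, all of whose central exponents vanish); making the underlying Conley-theoretic input about trapping neighborhoods precise is the delicate point of the argument.
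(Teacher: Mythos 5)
The paper disposes of this claim in one line: if $\C$ were aperiodic, Theorem~\ref{main}(1) gives the splitting $T_\C M=E^s\oplus E^c\oplus E^u$, and this contradicts Theorem~5 of~\cite{bonatti-gan-wen}. What you are doing is essentially reproving that cited theorem. Your reduction (assume $\C$ aperiodic, exclude a chain-recurrent central segment via Corollary~\ref{c.central-segment} applied to the weak periodic orbits $O_n$, invoke Theorem~\ref{cm}(2)), and your treatment of the case where the central dynamics is thin trapped for $f^{-1}$ but not for $f$ (Theorem~\ref{cm}(3) plus Proposition~\ref{NS} applied to $f^{-1}$), are sound and consistent with the paper's toolkit.

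The gap is exactly where you suspect it: in the case where the central dynamics is thin trapped for $f$, you justify that a central plaque lies in $W^{ch-s}_{U_\Lambda}(\C)$ by asserting that $\C$ is the \emph{only} chain-recurrence class meeting a small trapping neighborhood $\overline{U_\Lambda}$. That is not a generic fact and none of your citations supply it: a quasi-attractor can be accumulated by other chain-recurrence classes, and \cite{bgly} (whose ``residual attractor'' property you gesture at) itself relies on the very claim you are proving. Concretely, the forward orbit of a point $y\in W^c_\eta(x)$ stays in $U_\Lambda$, but its $\omega$-limit set is only a chain-transitive subset of the attractor $\bigcap_{n\ge 0}f^n(\overline{U_\Lambda})$, which may be a class different from $\C$; so $y\in W^{ch-s}_{U_\Lambda}(\C)$ does not follow. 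The correct way Lyapunov stability enters (and the way~\cite{bonatti-gan-wen} uses it) is through the inclusion $W^u(x)\subset \C$ for every $x\in\C$, which holds because $f^{-n}(y)$ eventually enters each trapping neighborhood $U$ and $f(U)\subset U$ forces $y\in U$. With that, the thin-trapped case closes without ever producing a plaque in the chain-stable set: the shadowing lemma for thin trapped partially hyperbolic sets (\cite[Lemma 2.9]{Cro08}, as in the proof of Proposition~\ref{NS}) yields a periodic orbit $\cO$ near $\C$ whose center-stable plaque $W^{cs}(q)$ is met by the strong unstable manifold of some $y\in\C$; the trapping of the center-stable plaques then gives a periodic point $p\in W^c(q)$ with $W^s(p)\cap W^u(y)\neq\emptyset$, and since $W^u(y)\subset\C$ and $\C$ is closed and invariant, $p\in\C$ — the desired contradiction. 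You should either make this argument or simply cite \cite{bonatti-gan-wen} as the paper does.
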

Indeed if it is an aperiodic class, by Theorem~\ref{main} it has
a partially hyperbolic decomposition $T_\C M=E^s\oplus E^c\oplus E^u$
and this contradicts Theorem 5 in~\cite{bonatti-gan-wen}.
\medskip

\begin{claim}[\cite{yang-lyapunov}] If $T_{\C}M=E^s\oplus E^c_1\oplus \dots
\oplus E^c_k\oplus E^u$ is the partialy hyperbolic structure on $\C$,
then the class $\C$ contains periodic points of stable index
$\dim(E^s)+k$. In particular if it is not a sink, the bundle $E^u$ is non-degenerated.
\end{claim}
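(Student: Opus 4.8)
The plan is to show that the maximal stable dimension of the periodic orbits of $\C$ is exactly $d:=\dim(E^s)+k$, by running the dichotomy of Proposition~\ref{p.TT} (applied to $f^{-1}$) against the Lyapunov stability of $\C$. By the previous Claim $\C=H(p)$ is a homoclinic class, so Theorem~\ref{main} applies: each $E^c_i$ is one-dimensional, $E^s$ is uniformly contracted, $E^u$ is uniformly expanded, the maximal stable dimension of the periodic orbits of $H(p)$ equals $d$ or $d-1$, and $H(p)$ contains periodic orbits whose Lyapunov exponent along $E^c_k$ is arbitrarily close to $0$. Everything then reduces to excluding the value $d-1$.

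So suppose the maximal stable dimension equals $d-1$. I would apply Proposition~\ref{p.TT} to the (again $C^1$-generic) diffeomorphism $f^{-1}$ and to the bundle $E^c_k$: for $f^{-1}$ the dominated splitting of $H(p)$ reads $E^u\oplus E^c_k\oplus(E^c_{k-1}\oplus\cdots\oplus E^c_1\oplus E^s)$, the minimal stable dimension of $H(p)$ for $f^{-1}$ is $\dim M-(d-1)=\dim(E^u)+1=\dim(E^u\oplus E^c_k)$, and the periodic orbits of $H(p)$ with Lyapunov exponent along $E^c_k$ close to $0$ are the weak periodic orbits required by the hypotheses. The conclusion is that the central dynamics of $H(p)$ along $E^c_k$ is thin trapped for $\hat f^{-1}$; in particular, by Theorem~\ref{cm}(2), $H(p)$ has no chain-recurrent central segment along $E^c_k$, so in the classification of \cite[Section~2.2]{Cro08} the bundle $E^c_k$ is of type (H)-repelling.

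Next I would use Lyapunov stability to force the opposite type. Since $\C$ has arbitrarily small neighbourhoods $U$ with $f(U)\subset U$ — which can be chosen so that $\overline{f(U)}\subset U$ — any $\varepsilon$-pseudo-orbit issued from $\C$ with $\varepsilon$ small stays in $U$, whence $W^{ch-u}(\C)=\C$. Fix a central model for $E^c_k$ (by Lemma~\ref{l.model-uniqueness} the conclusion does not depend on it). If its chain-unstable set were non-trivial it would contain an interval $\{\hat y\}\times[0,a]$ with $a>0$ (Theorem~\ref{cm}(1)); pushing it down by the parameterization $\pi$, which carries central pseudo-orbits near the zero section to genuine $f$-pseudo-orbits near $\C$, would produce a non-trivial arc $J$ tangent to $E^c_k$ contained in $W^{ch-u}(\C)=\C$. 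Because the central plaques have bounded size, $J$ carries zero Lyapunov exponent along $E^c_k$, and, lying inside the chain-transitive set $\C$, it also lies in the chain-stable set of the zero section; hence $J$ would be a chain-recurrent central segment, contradicting the previous paragraph. Therefore the chain-unstable set of the zero section is trivial, the central dynamics along $E^c_k$ is thin trapped for $\hat f$, and $E^c_k$ is of type (H)-attracting. But by the classification of central dynamics (\cite{crovisier-palis-faible}, \cite[Section~2.2]{Cro08}) a one-dimensional central bundle is of exactly one of the types (H)-attracting, (H)-repelling, or carries a chain-recurrent central segment; we have just excluded all three possibilities except a contradiction, so the value $d-1$ cannot occur.

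Hence the maximal stable dimension of the periodic orbits of $\C$ is $d=\dim(E^s)+k$, i.e. $\C$ contains periodic points of stable index $\dim(E^s)+k$. If moreover $E^u$ were degenerate, then $d=\dim M$ and those periodic points would be sinks; since a chain-recurrence class containing a sink coincides with that sink, $\C$ would be a sink, so contrapositively $E^u$ is non-degenerate whenever $\C$ is not a sink. I expect the main obstacle to be the third paragraph, and precisely the passage from the identity $W^{ch-u}(\C)=\C$ in $M$ to the triviality of the chain-unstable set of the central model — that is, the exclusion of non-trivial arcs of $\C$ tangent to $E^c_k$. This is genuinely a statement about central models, to be drawn from the machinery of \cite{crovisier-palis-faible} together with the absence of chain-recurrent central segments established on the $f^{-1}$-side; once this is granted, the incompatibility of the two (H)-types is immediate from the classification.
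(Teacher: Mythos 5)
Your first two paragraphs are sound and coincide with the paper's opening move: assuming the maximal index is $d-1$, Proposition~\ref{p.TT} applied to $f^{-1}$ (with the splitting $E^u\oplus E^c_k\oplus(E^c_{k-1}\oplus\cdots\oplus E^s)$) does give that the central dynamics along $E^c_k$ is thin trapped for $f^{-1}$. The third paragraph, however, contains a genuine and, I believe, unrepairable gap — and it is not merely the technical point you flag at the end. First, the step ``$J\subset\C$ and $\C$ is chain-transitive, hence $\{\hat y\}\times[0,a]$ lies in the chain-stable set of the zero section'' confuses chain-stability in $M$ with chain-stability \emph{in the central model}: the latter requires pseudo-orbits whose jumps stay inside the central plaques, and pseudo-orbits of $f$ in $M$ do not lift to such central pseudo-orbits. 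Second, the conclusion you are driving at (thin trapped for $\hat f$ along $E^c_k$) is actually \emph{false} under the contradiction hypothesis: if the maximal index is $d-1$, then by domination every periodic orbit of $\C$ has all its exponents along $E^c_k\oplus E^u$ positive, so over each such periodic point the fiber map genuinely expands and a non-trivial segment $\{\hat x\}\times[0,a]$ lies in the chain-unstable set of the zero section; by Theorem~\ref{cm}(1) the model is therefore not thin trapped for $\hat f$. (Your argument, if correct, would also apply verbatim to a hyperbolic attractor whose central direction is expanded, where the conclusion visibly fails.) Third, even if you could establish thin trapping in both directions, that is not a contradiction: Theorem~\ref{cm}(2) only states that \emph{both not} thin trapped is equivalent to a chain-recurrent central segment; being thin trapped for both $\hat f$ and $\hat f^{-1}$ is the consistent ``neutral'' type of the classification, so your claimed exclusive trichotomy is not available.

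The paper's proof uses Lyapunov stability quite differently. From the weak index-$(d-1)$ periodic orbits and Lemma~\ref{l.index} (Franks' lemma plus genericity) it produces hyperbolic periodic points $q$ of index $d$ \emph{arbitrarily close to} $\C$ but a priori outside it. The thin trapping for $f^{-1}$ along $E^c_k$ yields a plaque family $\D^{cu}$ tangent to $E^c_k\oplus E^u$ trapped by $f^{-1}$; combined with Lemmas~\ref{l.infinite-pliss} and~\ref{l.manifold1}, a dense set of periodic points $x\in\C$ satisfies $\D^{cu}_x\subset W^u(x)$, while the stable manifold of $q$ contains a uniform plaque $\D^{cs}_q$ tangent to $E^s\oplus E^c_1\oplus\cdots\oplus E^c_k$. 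For $q$ close enough to $\C$ these plaques intersect, so $q$ lies in the closure of $W^u(x)$ with $x\in\C$; Lyapunov stability then forces $q\in\C$, contradicting the assumption that $\C$ has no periodic point of index $d$. If you want to salvage your write-up, this ``drag the nearby index-$d$ periodic point into the class via invariant manifolds'' step is what must replace your third paragraph.
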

\begin{proof}
The proof is by contradiction.
Since it is similar to~\cite[Corollary 2.3]{CP}, we only give the idea.
Since $f$ is $C^1$-generic and $\C$ contains periodic points of index
$\dim(E^s)+k-1$ with Lyapunov exponent along $E^c_k$ arbitrarily close to zero.
Thus by Lemma~\ref{l.index}, there exist
periodic points $q$ of index $\dim(E^s)+k$
arbitrarily close to $\C$.
By Proposition~\ref{p.TT} the central dynamics along $E^c_k$ is thin trapped by $f^{-1}$.
One deduces that there exists a (small) plaque family
$\D^{cu}$ tangent to $E^{cu}=E^c_k\oplus E^u$ that is trapped by $f^{-1}$.
One also fix a (small) plaque family $\D^{cs}$ tangent to
$E^{cs}=E^s\oplus E^c_1\oplus \dots\oplus E^c_k$.
All the Lyapunov exponents of $q$ along $E^{cs}$ are uniformly bounded away from zero.
One deduces from Lemmas~\ref{l.infinite-pliss} and~\ref{l.manifold1} that up to replace $q$ by one of its iterates,
the stable manifold of $q$ contains $\D^{cs}_q$.
Similarly, $\C$ contains a dense subset of periodic points $x$ whose Lyapunov exponents
along $E^{cu}$ are uniformly bounded away from zero, implying that $\D^{cu}_{f^k(x)}$ is contained in $W^u(f^k(x))$ for some iterate $f^k(x)$ of $x$. The trapping property
on $\D^{cu}$ implies that $\D^{cu}_{x}$ is also contained in $W^u(x)$.
Since $q$ is close to $\C$, there exists such a point $x\in \C$
such that $\D^{cs}_q$ intersects $\D^{cu}_x$, hence $q$ belongs to
the closure of the unstable set of $x$.
Using that $\C$ is Lyapunov stable one deduces that $\C$ contains the periodic point
$q$. This contradicts the fact that $\C$ does not contain
any periodic point of index $\dim(E^s)+k$.
\end{proof}
\medskip

We now prove the two corollary.
\smallskip
\begin{proof}[Proof of Corollary~\ref{c.lyap-stable}]
We will assume that a Lyapunov stable class $H(p)$ has no robust heterodimensional cycle.
Arguing as in the previous section and using the previous claim, $H(p)$ has
a dominated splitting $T_{H(p)}M=E^s\oplus E^c\oplus E^s$,
$\dim(E^c)\leq 1$, all its periodic points have index $\operatorname{dim}(E^s\oplus E^c)$, and the central dynamics along the bundle $E^c$ is thin trapped for $f$.
One can thus apply~\cite[Theorem 13]{CP}
and deduce that $E^c$ is trivial. Hence $H(p)$ is hyperbolic.
\end{proof}
%
%

\subsection{Bound on the number of classes: proof of Corollary~\ref{c.centraldimension}}
We could argue as in \cite[section 6, page 724]{Cro08} but instead we give a different argument.
Let us assume by contradiction that there exists a countable collection of homoclinic classes
$H(p_n)$ having a same dominated splitting
$T_{H(p_n)}M=E\oplus E^c_1\oplus E^c_2\oplus F$, such that
$E^c_i$ are one-dimensional, and that there exists for each $n$
a periodic orbits $O_n^i$ homoclinically related to $p_n$ and whose Lyapunov exponent
along $E^c_i$ is arbitrarily weak.
One can assume that $H(p_n)$ converges towards a chain-transitive set $\Lambda$.
We claim that for $n$ large, the set $\Lambda$ is contained in the chain-unstable set of
$H(p_n)$. Arguing similarly, $\Lambda$ is contained in the chain-stable set of $H(p_n)$,
hence all the chain-recurrence class $H(p_n)$ contain $\Lambda$ and should coincide,
giving the contradiction.

Let us fix $\sigma\in (0,1)$ close to $1$. The domination and the fact that $O_n^2$ is weak along $E^c_2$
implies that there exists $x_n\in O_n^2$ that is a $(1,\sigma, E\oplus E^c_1)$-hyperbolic point for $f$. One can
assume that the sequence $(x_n)$ converges towards $x\in \Lambda$. One can also consider $y_n\in H(p_n)\cap
W^u(O_n^1)$ arbitrarily close to $x$. Since $O_n^1=\alpha(y_n)$ is weak along $E^c_1$, by
Lemma~\ref{l.hyperbolic-time}, there exists $C>0$ and one can replace the points $y_n$ and the point $x$, so
that the points $y_n$ are $(C,\sigma,E^c_2\oplus F)$-hyperbolic point for $f^{-1}$ and $x$ is $(1,\sigma,E\oplus
E^c_1)$-hyperbolic point for $f$. As a consequence, the unstable manifold of $y_n$ (and of $O_n^1\subset
H(p_n)$) meets the stable manifold of $x\in \Lambda$. This proves the claim and ends the proof.

\subsection{Index completeness: proof of Corollary~\ref{c.index-completeness}}

Let us consider $\Lambda$ as in the statement of Corollary~\ref{c.index-completeness}. By Theorem~\ref{main}, it
has a splitting $T_\Lambda M=E^s\oplus E^c_1\oplus\dots\oplus E^c_k\oplus E^u$. We fix $\delta>0$ and want to
prove the existence of a periodic orbit that is $\delta$-close to $\Lambda$ for the Hausdorff distance and of any
index in $\{\dim(E^s),\dots,\dim(E^s)+k\}$. We fix $U_0$ a neighborhood of $\Lambda$ contained in the
$\delta$-neighborhood $U$ of $\Lambda$.

In the case $k=0$, the set $\Lambda$ is hyperbolic and the shadowing lemma
shows that $\operatorname{ind}(\Lambda)=\dim(E^s)$.

In the case $k=1$ and all the invariant measure have a central Lyapunov exponent equal to zero,
then by Theorem~\ref{t.hausdorff} the set $\Lambda$ is the Hausdorff limit of a sequence of periodic orbits.
Their central Lyapunov exponent is arbitrarily close to zero, so by Lemma~\ref{l.index}
both indices $\dim(E^s)$ and $\dim(E^s)+1$ appear, as required.

In the remaining cases one has $k\geq 1$ and there exists an invariant measure whose
exponent along $E^c_1$ is non-zero.
Then Theorem~\ref{main2} and Lemma~\ref{l.index} imply that $\dim(E^s)$ is contained in $\operatorname{ind}(\Lambda)$.
Moreover, there exists a periodic point $p$ whose orbit is contained in $U_0$
and is $\delta$-close to $\Lambda$ for the Hausdorff distance,
whose local homoclinic class $H(p,U_0)$ contains $\Lambda$ and such that the index of $p$
is equal to $\dim(E^s)$ or $\dim(E^s)+1$.

Similarly, $\dim(E^s)+k\in \operatorname{ind}(\Lambda)$,
and there exists a periodic point $q$
whose orbit is contained in $U_0$,
whose local homoclinic class $H(q,U_0)$ contains $\Lambda$ and such that the index of $q$
is equal to $\dim(E^s)+k$ or $\dim(E^s)+k-1$.

Let us choose any $i\in \{\operatorname{ind}(p),\dots, \operatorname{ind}(q)\}$.
By Corollary~\ref{c.class}, since $H(p,U_0)$ and $H(q,U_0)$ intersect,
for any neighborhood $U_1\subset U$ of $\overline{U_0}$, the class
$H(p,U_1)$ contains $q$. Then, by Theorem~\ref{indices} and
Remark~\ref{r.indices}, one can choose $U_2\subset U$ containing $U_1$
such that $H(p,U_2)$ contains a hyperbolic periodic point $z$ of index $i$.
By Corollary~\ref{c.class}, the class $H(z,U)$ contains $p$
and a periodic point $x$ arbitrarily close to $p$,
whose orbit is contained in $U$ and of any index $i$.
By construction this orbit is $\delta$-close to $\Lambda$ for the Hausdorff
distance. Hence $\Lambda$ is index complete.

The end of the proof follows.
\bigskip

\subsection*{Acknowledgements.}
S.C was partially supported by the ANR project \emph{DynNonHyp} BLAN08-2 313375
and D.Y. by NSFC 11001101 and Ministry of Education of P. R. China 20100061120098
and the IMB of the Universit\'e of Bourgogne.
We are gratefull to the Universit\'e Paris 13 (and the LAGA), the ICTP,
Pekin University and the IHES for their hospitality.
We also thank C. Bonatti, S. Gan and L. Wen for interesting discussions
related to this work.

\small

\vskip 5pt

\noindent Sylvain Crovisier

\noindent CNRS - Laboratoire Analyse, G\'eom\'etrie et Applications, UMR 7539,

\noindent Institut Galil\'ee, Universit\'e Paris 13, 99 Avenue J.-B. Cl\'ement, 93430 Villetaneuse, France.

\noindent crovisie@math.univ-paris13.fr

\vskip 5pt

\noindent Martin Sambarino

\noindent CMAT-Facultad de Ciencias, Univ. de la Rep\'{u}blica,

\noindent Igu\'{a} 4225, Montevideo 11400, Uruguay.

\noindent samba@cmat.edu.uy

\vskip 5pt

\noindent Dawei Yang

\noindent School of Mathematics, Jilin University, Changchun, 130012, P.R. China

\noindent yangdw1981@gmail.com

\end{document}